 \documentclass[final,3p,times]{elsarticle}

\journal{Expositiones Mathematicae}

\usepackage{amssymb}
\usepackage{amsmath}

\usepackage{mathtools} 
\usepackage{amssymb}
\usepackage{enumitem,mathrsfs}
\usepackage{stackrel}

\usepackage{graphicx}

\usepackage{upgreek}

\usepackage[colorlinks=true]{hyperref}
\hypersetup{urlcolor=blue, citecolor=red}

\usepackage{placeins} 
\usepackage{flafter} 

\usepackage[usenames,dvipsnames]{xcolor}

\usepackage[normalem]{ulem}

\DeclareFontFamily{OT1}{pzc}{}
\DeclareFontShape{OT1}{pzc}{m}{it}{<-> s * [1.10] pzcmi7t}{}
\DeclareMathAlphabet{\mathpzc}{OT1}{pzc}{m}{it}

\setlist{noitemsep,topsep=0pt,partopsep=0pt}

\newcommand{\ZZ}{\ensuremath{{\mathbb Z}}}
\newcommand{\CC}{\ensuremath{{\mathbb C}}}
\newcommand{\CW}{\ensuremath{{\widehat{\mathbb C}}}}
\newcommand{\RR}{\ensuremath{{\mathbb R}}}
\newcommand{\NN}{\ensuremath{{\mathbb N}}}

\newcommand{\HH}{\ensuremath{{\mathbb H}}}

\newcommand{\esf}{\ensuremath{{\mathbb S}^2}}

\newcommand{\R}{\ensuremath{{\mathcal R}}}

\font\myfont=cmr10 at 12pt
\newcommand{\e}{{\text{\myfont e}}}

\newcommand{\Ai}{{\text{Ai}}}
\newcommand{\Bi}{{\text{Bi}}}

\newcommand{\abs}[1]{\left\lvert#1\right\rvert}

\renewcommand{\Re}[1]{{\mathfrak{Re}\left(#1\right)}}
\renewcommand{\Im}[1]{{\mathfrak{Im}\left(#1\right)}}

\newcommand{\del}[2]{\frac{\partial #1}{\partial #2}}

\newcommand{\ent}[2]{{}_{#1}\hspace{-1pt}\mathscr{E}_{#2}}
\newcommand{\htan}[2]{{}_{#1}\hspace{-2pt}\mathpzc{Th}_{#2}}    

\newcommand{\dominio}[3]{{}_{#2}{#1}_{#3}}

\usepackage{tikz,etoolbox}
\newcommand{\circled}[1]{ \text{\tikz[baseline=(char.base)]{
    \node[shape=circle,draw,inner sep=1pt] (char) {$#1$};}} }
\robustify{\circled}
\newcommand{\raiz}[1]{ \text{\tikz[baseline=(char.base)]{
    \node[shape=circle,draw,inner sep=0.001pt] (char) {
    \text{\tikz[baseline=(char.base)]{
    \node[shape=circle,draw,inner sep=1pt] (char) {$#1$};}}
    };}} }
\robustify{\raiz}

\newcommand{\msigma}{\ensuremath{{\mathfrak{a}}}}
\newcommand{\mrho}{\ensuremath{{\mathfrak{r}}}}

\usepackage{graphicx}
\makeatletter
\newcommand*\bigcdot{\mathpalette\bigcdot@{.5}}
\newcommand*\bigcdot@[2]{\mathbin{\vcenter{\hbox{\scalebox{#2}{$\m@th#1\bullet$}}}}}
\makeatother

\newlength\replength
\newcommand\repfrac{.33}

\newcommand\rulewidth{.6pt}
\newcommand\tdashfill[1][\repfrac]{\cleaders\hbox to \replength{%
  \smash{\rule[\arraystretch\ht\strutbox]{\repfrac\replength}{\rulewidth}}}\hfill}

\newcommand\tdotfill[1][\repfrac]{\cleaders\hbox to \replength{%
  \smash{\raisebox{\arraystretch\dimexpr\ht\strutbox-.1ex\relax}{.}}}\hfill}

\usepackage{amsthm}

\newtheorem{theorem}{Theorem}[section]

\newtheorem{corollary}[theorem]{Corollary}

\newtheorem{lemma}[theorem]{Lemma}
\newtheorem{proposition}[theorem]{Proposition}
\newtheorem{conjecture}[theorem]{Conjecture}

\theoremstyle{definition}
\newtheorem{definition}[theorem]{Definition}
\newtheorem{remark}[theorem]{Remark}

\newtheorem{example}{Example}[section]
\newtheorem*{theorem*}{Theorem}

\begin{document}

\begin{frontmatter}



\title
{Tessellations and Speiser graphs arising from
meromorphic functions on simply connected Riemann surfaces}

\author[1]{Alvaro Alvarez--Parrilla} 
\ead{alvaro.uabc@gmail.com} 
\affiliation[1]{organization={Grupo Alximia SA de CV},
            addressline={Ryerson 1268}, 
            city={Ensenada},
            postcode={22800}, 
            state={Baja California},
            country={M\'exico}}

\author[2]{Jes\'us Muci\~no--Raymundo\corref{cor2} } 
\ead{muciray@matmor.unam.mx} 
\affiliation[2]{organization={Centro de Ciencias Matem\'aticas},
             addressline={Universidad Nacional Aut\'onoma de M\'exico},
             city={Morelia},
             state={Michoac\'an},
             country={M\'exico}}

\begin{abstract}
Motivated by W. P. Thurston, we ask: What is the shape of a meromorphic function on a simply connected
Riemann surface $\Omega_z$? 
We consider Speiser functions, \emph{i.e.}\ meromorphic functions on 
a simply connected Riemann
surface, that have a finite number ${\tt q} \geq 2$ of singular (critical or asymptotic) values. 
As a first result, we make
precise the correspondence between: Speiser functions $w(z)$, Speiser Riemann surfaces 
$\R_{w(z)}$, Speiser $\tt q$--tessellation, and analytic Speiser graphs of index $\tt q$. 
As the second main result, we characterize tessellations with alternating colors 
(equivalently abstract pre--Speiser graphs) that are realized by Speiser functions on 
$\Omega_z$. 
The characterization is in terms of the $\tt q$--regular extension problem of 
bipartite planar graphs. 
As third main results, the Speiser Riemann surface $\R_{w(z)}$ 
can be constructed by isometric glueing of a finite number of types of sheets, 
where each sheet is a maximal domain of single--valuedness of $w^{-1}(z)$. 
Furthermore, a unique decomposition of $\R_{w(z)}$ into maximal logarithmic towers and a soul is provided. 
Using vector fields we recognize that 
logarithmic towers come in two flavors: 
exponential or $h$--tangent blocks, 
directly related to the exponential or the 
hyperbolic tangent functions on the upper half plane. 
The surface $\R_{w(z)}$ of a finite Speiser function is characterized 
by surgery of a rational block and a finite number of exponential or 
$h$--tangent blocks.
\end{abstract}



\begin{keyword}
 Riemann surfaces 
 \sep Speiser functions
 \sep tessellations
 \sep Speiser graphs
 \sep essential singularities
 \sep logarithmic singularities
\sep vector fields


\MSC[2020] Primary: 30D30 \sep
Secondary: 32S65 \sep
34M05
\end{keyword}

\end{frontmatter}




\tableofcontents

\section{Introduction}
\subsection{Brief statement of the results}
Let $w(z):\Omega_z \longrightarrow \CW_w$ be a meromorphic function 
on the simply connected Riemann surface
$\Omega_z$, \emph{i.e.}
the Riemann sphere $\CW_z$,
the complex plane $\CC_z$, 
or the Poincar\'e unit disk $\Delta_z$.
Allow us first to summarize the main results.

\begin{enumerate}[label=\Alph*),leftmargin=*]

\item
On $\Omega_z$,
we provide a bijective correspondence between:

\begin{enumerate}[label=\arabic*),leftmargin=*]
\item
\emph{Speiser functions}
with ${\tt q} \geq 2$ singular values.

\item 
\emph{Speiser Riemann surfaces}.

\item
\emph{Speiser $\tt q$--tessellations}.

\item
\emph{Analytic Speiser graphs of index $\tt q$}.

\end{enumerate}

\smallskip

\item
We answer the question
\begin{equation*}
\text{\emph{What is the shape of a Speiser function
on $\Omega_z$?}}
\end{equation*}

\noindent
by characterizing when 
an abstract \emph{pre--Speiser graph} represents 
a Speiser function. 
The solution presented in terms of Hall--type inequalities 
arises from the \emph{bipartite transportation problem}.

\item
We provide a 
decomposition of Speiser Riemann surfaces 
into \emph{maximal logarithmic towers} and a \emph{soul}.
This decomposition is unique 
and provides another anwser to Question (B): 

\centerline{
\emph{the shape of a Speiser function is its soul.}
}
 
\noindent 
Moreover, 

{\it 
\begin{center}
finite Speiser functions 
are those that can be constructed by surgery of
\\
maximal logarithmic towers and
a soul arising from a rational function. 
\end{center}
}

\end{enumerate}

\noindent
A few words on the above is in order.

Most of (A) is classical, we fill in the details to provide a structured and modern 
approach that allows us to prove (B) and (C).

Question (B) was first answered by W.\,P.\,Thurston for the 
generic\footnote
{Generic means that it only has simple critical points.} 
rational functions on $\CW_z$, we present an answer 
valid for all 
Speiser functions (which of course include rational functions).

Decomposition (C) is reminiscent of the dichotomy between 
Fatou and Julia sets; 
in the sense of presenting regions where a 
function behaves 
\emph{tamely} $\longleftrightarrow$ maximal logarithmic towers, or
\emph{wildly} $\longleftrightarrow$ soul.
It is to be noted that maximal logarithmic towers arise from 
considering the accurate behaviour of exponential or hyperbolic 
tangent functions near the
essential singularity at $\infty\in \CW_z$.

\subsection{Accurate results and comments}
Recalling W.\,P.\,Thurston's question on rational functions
on $\CW_z$, see 
\cite{Thurston2}, 
\cite{Koch-Lei}, it is natural to extend it to:
\begin{equation*}
\text{\emph{What is the shape of a meromorphic function
on $\Omega_z$?}}
\end{equation*}

\noindent
In order to answer the above question, we restrict ourselves to the family of 
\emph{Speiser functions}, which are meromorphic functions $w(z)$ on 
$\Omega_z$ with a finite set of
${\tt q} \geq 2$ distinct singular values in $\CW_w$. 
This is a large family 
that includes rational functions on $\CW_z$, and
many transcendental functions on $\Omega_z= \CC_z$ or $\Delta_z$.

As an appropriate first answer to the ``shape of a Speiser function'', 
we propose a 
\emph{Speiser $\tt q$--tessellation}, 
which is the output of the 
\emph{Schwarz--Klein--Speiser's algorithm} 
(see \S\ref{sec:Schwarz-Klein-Speiser-algorithm}),
with roots in the works of 
H.\,A.\,Schwarz \cite{Schwarz}, 
F.\,Klein \cite{Klein}, 
A.\,Speiser \cite{Speiser}; 
as far as we known, studied by
R.\,Nevanlinna \cite{Nevanlinna2}.
We briefly describe the algorithm as follows.
Let $\gamma$ be a Jordan path through the cyclically ordered 
singular values
$\mathcal{W}_{\tt q} \doteq [{\tt w}_1, \ldots , {\tt w}_{\tt q}]$, and consider the pullback graph 

\centerline{$w^*(\gamma) = \widehat{\Gamma}_{\tt q}$. }

\noindent 
Then, the underlying Speiser $\tt q$--tessellation, 
Definition \ref{def:Speiser-tessellation}, is
\begin{equation*}
\mathscr{T}_\gamma(w(z))
=
(\Omega_z \cup \partial_{\mathcal{I}} \Omega_z )
\backslash \widehat{\Gamma}_{\tt q} 
=
\underbrace{
T_1 \cup \ldots \cup T_\alpha \cup \ldots  }_{ n\text{ blue tiles} }
\cup 
\underbrace{ T^\prime_1 \cup  
\ldots  \cup T^{\prime}_\alpha  \cup \ldots }_{n \text{ grey tiles}}\, , 
\ \ \ 
2 \leq n \leq \infty,
\end{equation*}

\noindent
with a 
\emph{consistent $\tt q$--labelling 
$\mathcal{L}_{\mathcal{W}_{\tt q}}$ of the vertices of
the graph $\widehat{\Gamma}_{\tt q}$}, 
where  
$\partial_{\mathcal{I}} \Omega_z $ denotes the 
\emph{ideal boundary of $\Omega_z$ depending on $w(z)$}, namely
see Proposition \ref{prop:dos-tipos-singularidades}.
Summarizing,

{\it
\begin{center}
given a Speiser function $w(z)$,
provided with a cyclic order  
$\mathcal{W}_{\tt q}$,
\\
on its singular values,
the \emph{Schwarz--Klein--Speiser's algorithm}
determines 
\\
a Speiser $\tt q$--tessellation \
$
\big( \underbrace{ \mathscr{T}_\gamma(w(z)) }_{\text{tessellation}}, 
\underbrace{ \mathcal{L}_{\mathcal{W}_{\tt q}} }_{\substack{ \text{consistent}\\
{\tt q}-\text{labelling}} }  \big) 
$.
\end{center}
}

\noindent
The tessellation consists of tiles that are topological 
$\tt q$--gons with alternating colors, see
\cite{Chislenko-Tschinkel}, 
\cite{Koch-Lei}, 
\cite{GonzalezMucino} and
\cite{AlvarezGutierrezMucino}. 
It provides a simple and
straightforward visual description of the Speiser function
$w(z)$;
in particular,
if $\gamma=\RR\cup\{\infty\}$, then
it is clear that set theoretically
$\widehat{\Gamma}_{\tt q} $ 
is a real analytic curve simply given by $\{\Im{w(z)}=0\}$.

\noindent
We call the naturally associated underlying graph 
$\widehat{\Gamma}_{\tt q}$ an \emph{$\tt A$--map}.
Thus, the Speiser $\tt q$--tessellation is equivalent to 

\centerline{
$(\widehat{\Gamma}_{\tt q}, \mathcal{L}_{\mathcal{W}_{\tt q}})$.
}

\noindent
A clear understanding of this structure, 
naturally leads, through duality, to
\emph{analytic Speiser graphs of index $\tt q$},

\centerline{
$(
\underbrace{\mathfrak{S}_{w(z)} }_{\text{graph}}, 
\underbrace{\  \mathcal{L}_{\mathcal{W}_{\tt q}} \ }_{
\substack{ \text{consistent}\\
{\tt q}-\text{labelling}}
} )$.}

\noindent
In plain words, 
an analytical Speiser graph of index ${\tt q}$
in $\Omega_z$, is a countable, connected, bipartite,
planar multigraph\footnote{
A \emph{multigraph} is a graph that admits multiple edges 
between the same pair of vertices.} 
with vertices $\times$ and $\circ$, each with valence $\tt q$, whose ${\tt w}_{\tt j}$--faces in $\Omega_z$ are
labelled cyclically with $\mathcal{L}_{\mathcal{W}_{\tt q}}$, 
so that the labelling follows a clockwise order around $\times$ and 
anticlockwise order around $\circ$; 
see 
\cite{Nevanlinna2} and 
\cite{GoldbergOstrovskii}\,ch.\,4
for examples.
Furthermore, it is required that for each ${\tt w}_{\tt j}\in\mathcal{W}_{\tt q}$ 
at least one ${\tt w}_{\tt j}$--face is not a digon.
See Definitions \ref{def:Speiser-graph} and \ref{def:analytic-Speiser-graph} for full details.

\noindent
The analytical Speiser graph $\mathfrak{S}_{\tt q}$
of index ${\tt q}$ structure provides
an equivalent answer to the ``shape of a Speiser function''.
The Speiser $3$--tessellations on $\CW_z$
with $\mathcal{W}_3=[0,1,\infty]$ are naturally 
equivalent to the celebrated dessins d'enfants, see
\cite{Jones-Wolfart} for that theory. 

Furthermore, the \emph{Speiser Riemann surface} 

\centerline{$\R_{w(z)} \doteq 
\{(z,w(z))\} \subset\Omega_z\times\CW_w$, }

\noindent 
associated to a Speiser function $w(z)$,
see Definition \ref{def:Speiser-q-function}.2, 
is a powerful tool towards understanding the ``shape of a Speiser function''.  All this allows us to prove
the correspondence (A) previously announced.

\theoremstyle{plain}
\newtheorem*{theoMain}{Theorem \ref{th:main-theorem}}
\begin{theoMain}[Main Correspondence]
Let $\Omega_z$ be a simply connected Riemann surface, 
and let ${\tt q} \geq 2$.
There exists a one to one correspondence between:
\begin{enumerate}[label=\arabic*),leftmargin=*]

\item
Speiser functions 

\centerline{$w(z): \Omega_z \longrightarrow \CW_w$,}

\noindent  
provided with a cyclic order $\mathcal{W}_{\tt q}$ 
for its ${\tt q}$ singular values.

\item 
Speiser Riemann surfaces 

\centerline{$\R_{w(z)} \subset  \Omega_z \times \CW_w$, }

\noindent 
provided with a cyclic order $\mathcal{W}_{\tt q}$ 
for the $\tt q$ projections of its branch points on $\CW_w$.

\item
Speiser $\tt q$--tessellations 

\centerline{
$
\big( \underbrace{ \mathscr{T}_\gamma(w(z)) }_{\text{tessellation}}, 
\underbrace{ \mathcal{L}_{\mathcal{W}_{\tt q}} }_{\substack{ \text{consistent}\\
{\tt q}-\text{labelling}} }  \big) 
$.
}

\item
Analytic Speiser graphs of index $\tt q$

\centerline{
$(
\underbrace{\mathfrak{S}_{w(z)} }_{\text{graph}}, 
\underbrace{\  \mathcal{L}_{\mathcal{W}_{\tt q}} \ }_{
\substack{ \text{consistent}\\
{\tt q}-\text{labelling}}
} )$.}

\end{enumerate}
\end{theoMain}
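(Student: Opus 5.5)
The plan is to prove the cycle of implications $1)\Rightarrow 2)\Rightarrow 3)\Rightarrow 4)\Rightarrow 1)$. Each arrow will be an explicit construction, and each construction is compatible with the cyclic order $\mathcal{W}_{\tt q}$. Since every object carries the same cyclic order, it then suffices to check that the composite of all arrows is the identity on Speiser functions up to the natural equivalence, namely pre-composition by automorphisms of $\Omega_z$. Because the statement is understood modulo this equivalence, I will fix normalizations once and for all and work with classes.

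\textbf{Steps $1)\Rightarrow 2)$ and $2)\Rightarrow 1)$.} Given $w(z)$, take the graph $\R_{w(z)}=\{(z,w(z))\}$. The projection to $\CW_w$ has branch points exactly over the critical values. Its logarithmic (and more general transcendental) singularities lie over the asymptotic values, using the description of the ideal boundary $\partial_{\mathcal{I}}\Omega_z$ from Proposition \ref{prop:dos-tipos-singularidades}. Conversely, $\R_{w(z)}$ is a graph over $\Omega_z$, so the first projection is a biholomorphism and $w$ is recovered as the second projection. This pair of steps is formal.

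\textbf{Step $2)\Rightarrow 3)$: the Schwarz--Klein--Speiser algorithm.} Choose a Jordan path $\gamma$ through ${\tt w}_1,\dots,{\tt w}_{\tt q}$ in the given cyclic order; it splits $\CW_w$ into two ${\tt q}$--gons, coloured blue and grey. Since $\gamma$ contains all singular values, $w$ restricted to each component of $\Omega_z\setminus w^{*}(\gamma)$ is an unramified covering of a simply connected ${\tt q}$--gon, hence a conformal homeomorphism onto it. The components are therefore topological ${\tt q}$--gons. Some of their vertices lie on $\partial_{\mathcal{I}}\Omega_z$, which accounts for the logarithmic singularities. Pulling back the labels gives the consistent labelling $\mathcal{L}_{\mathcal{W}_{\tt q}}$, and colours alternate across edges because adjacent tiles map to opposite sides of $\gamma$. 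A different choice of $\gamma$ is isotopic in $\CW_w\setminus\mathcal{W}_{\tt q}$ to the first, and the isotopy lifts; so the tessellation is well defined up to isotopy.

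\textbf{Step $3)\Rightarrow 4)$: duality.} Place a vertex $\times$ in each blue tile and a vertex $\circ$ in each grey tile. Join two vertices by an edge for each common side of their tiles; that side is a lift of an arc $[{\tt w}_{\tt j},{\tt w}_{\tt j+1}]$ of $\gamma$. The resulting graph is bipartite and ${\tt q}$--valent. Its faces correspond to the vertices of the $\tt A$--map, so each face inherits a label ${\tt w}_{\tt j}$, with clockwise order around $\times$ and anticlockwise order around $\circ$. The condition that, for each ${\tt j}$, some ${\tt w}_{\tt j}$--face is not a digon is where analyticity enters. If every ${\tt w}_{\tt j}$--face were a digon, every preimage of ${\tt w}_{\tt j}$ would be a regular point of $w$ and none would be an ideal vertex, so ${\tt w}_{\tt j}$ would not be a singular value. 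This contradicts the choice of $\mathcal{W}_{\tt q}$ as the set of singular values.

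\textbf{Step $4)\Rightarrow 1)$: realization, the main obstacle.} Given an analytic Speiser graph $(\mathfrak{S}_{w(z)},\mathcal{L}_{\mathcal{W}_{\tt q}})$, I will rebuild a Riemann surface by gluing copies of the two hemispheres of $\CW_w\setminus\gamma$. Each $\times$ contributes a copy of the blue ${\tt q}$--gon and each $\circ$ a copy of the grey one. Glue them along sides as dictated by the edges, and use the face labels to decide which vertices are glued over which ${\tt w}_{\tt j}$. A finite face of $2k$ edges over ${\tt w}_{\tt j}$ gives a branch point of order $k$, and an infinite face gives a logarithmic end. The result is an open simply connected surface with a projection to $\CW_w$ that is a local homeomorphism away from these points, so it inherits a complex structure. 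The uniformization theorem then identifies it with $\CW_z$, $\CC_z$ or $\Delta_z$, and the projection becomes a meromorphic function with singular values contained in $\mathcal{W}_{\tt q}$. The non-digon condition gives equality of the singular set with $\mathcal{W}_{\tt q}$.

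The delicate points in this last step are the following. First, I must check that the glued surface is a genuine topological surface, including near infinite faces and at accumulation. Second, the statement that its conformal type equals the prescribed $\Omega_z$ must be read as part of the data: the graph is embedded in $\Omega_z$ with the analytic structure, and the conformal type is exactly what "analytic" records as opposed to abstract. Third, I must verify injectivity, namely that two functions with isotopic labelled tessellations differ by an automorphism of $\Omega_z$. For injectivity I would build the map tile-by-tile via $w_2^{-1}\circ w_1$ on matching tiles. These pieces agree on common edges, so they define a conformal homeomorphism of $\Omega_z$ away from a discrete set, and it extends across that set by removability.
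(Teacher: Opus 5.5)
Your proposal is correct at the paper's level of detail and follows essentially its route. (1)$\Leftrightarrow$(2) is formal; the passage to tessellations is the Schwarz--Klein--Speiser algorithm (Theorem \ref{teo:principal}.1); tessellations and analytic Speiser graphs correspond by the duality of Proposition \ref{prop:bijection-extends-to-action}; and the way back is the same glueing of blue and grey hemispheres followed by uniformization as in Theorem \ref{teo:principal}.2, which you perform on the graph side instead. Your tile-by-tile injectivity argument via $w_2^{-1}\circ w_1$ and removability also makes explicit what the paper only records through Lemma \ref{lem:no-unicidad-funciones-teselaciones}.

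One caveat, which you share with the paper's identification of $\mathcal{W}_{\tt q}$ with $[\gamma]$ in Definition \ref{def:orden-ciclico-y-losetas}. For ${\tt q}\geq 4$, a Jordan path through the values in a given cyclic order is not unique up to isotopy rel $\mathcal{W}_{\tt q}$: a Dehn twist about a curve enclosing two consecutive values produces a non-isotopic one. So your claim that any other $\gamma$ is isotopic to the first fails in general, and the class $[\gamma]$ must be fixed as part of the data, as the notation $\mathscr{T}_\gamma(w(z))$ in the statement suggests.
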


As a useful consequence of the Main Correspondence, 
a tessellation or Speiser graph with a 
chosen consistent $\tt q$--labelling
$\mathcal{L}_{\mathcal{W}_{\tt q}}$ produces a family 
of Speiser functions parametrized by 
$Aut(\Omega_z)\times \text{Stab}(\mathcal{W}_{\tt q})$, 
where $\text{Stab}(\mathcal{W}_{\tt q}) \subset Aut(\CW_w)$ 
is the isotropy group of the cyclic order $\mathcal{W}_{\tt q}$.
Very roughly speaking, $\mathcal{W}_{\tt q}$ 
provides the complex analytic information for
tessellations and Speiser graphs. 
See Lemma \ref{lem:no-unicidad-funciones-teselaciones}.

The proof of the Main Correspondence follows by showing 
that rows two and three of the following diagram commute

\begin{center}
\begin{picture}(180,215)(-30, -85)

\put(-173,0){\vbox{\begin{equation}\label{dia:correspondencia-completa}\end{equation}}}

\put(-10,112){$w(z)$}

\put(120,112){\ \ $\R_{w(z)}$}

\put(30,113){\vector(1,0){85}}
\put(115,113){\vector(-1,0){85}}
\put(55,120){classical}

\put(0,105){\vector(0,-1){23}}
\put(140,105){\vector(0,-1){23}}

\put(140,92){$\begin{array}{l}\text{adding cyclic} \\ \text{order } \mathcal{W}_{\tt q} \end{array}$ }

\put(-25,70){$(w(z), \mathcal{W}_{\tt q})$}

\put(120,70){$(\R_{w(z)} , \mathcal{W}_{\tt q} )$}

\put(30,73){\vector(1,0){85}}
\put(115,73){\vector(-1,0){85}}

\put(-40,10){$(\mathscr{T}_\gamma(w(z)),\mathcal{L}_{\mathcal{W}_{\tt q}})$}

\put(120,10){$(\mathfrak{S}_{w(z)}, \mathcal{L}_{\mathcal{W}_{\tt q}} )$} 

\put(135,-30){$\mathfrak{S}_{\tt q}$}
\put(-5,-30){$\widehat{\Gamma}_{\tt q}$}

\put(137,-69){$\mathfrak{S}$}
\put(-3,-69){$\Gamma$}

\put(-3,-59){\vector(0,1){22}}
\put(3,-37){\vector(0,-1){22}}
\put(140,-59){\vector(0,1){22}}

\put(0,-19){\vector(0,1){22}}
\put(140,-19){\vector(0,1){22}}

\put(0,60){\vector(0,-1){36}}
\put(-120,35){ $\begin{array}{r}\text{Schwarz--Klein--Speiser's}\\ \text{algorithm}\end{array}$ }

\put(40,24){\vector(2,1){75}}

\put(40,12){\vector(1,0){75}}
\put(115,12){\vector(-1,0){80}}
\put(40,-1){\text{Proposition } \ref{prop:bijection-extends-to-action} }

\put(140,-12){$\begin{array}{l}\text{adding consistent} \\ 
{\tt q}\text{--labelling } \mathcal{L}_{\mathcal{W}_{\tt q}} \end{array}$ }

\put(30,-27){\vector(1,0){85}}
\put(115,-27){\vector(-1,0){85}} 

\put(-82,-50){$\begin{array}{r}\text{adding vertices} \\ 
\text{of valence 2}  \end{array}$ }
\put(140,-50){$\begin{array}{l}\text{adding edges} 
\\ \text{to form digons.}  \end{array}$ }

\put(7,-50){$\begin{array}{l}\text{forgetting vertices} 
\\ \text{of valence 2}  \end{array}$ }

\put(30,-65){\vector(1,0){85}}
\put(115,-65){\vector(-1,0){85}}
\put(57,-80){duality}

\put(66,37){$\begin{array}{r}\text{lifting}\\ \text{the tessellation}\end{array}$}

\end{picture}
\end{center}

\noindent
Thus, from our point of view/perspective, the notion of 
``shape of a Speiser function''
is given by 
the third row of the above diagram:
a Speiser $\tt q$--tessellation or equivalently
an analytic Speiser graph of index $\tt q$.

It is interesting to note that in Diagram \ref{dia:correspondencia-completa}.

\noindent
$\bigcdot$ 
The top two rows contain analytical objects/information.

\noindent
$\bigcdot$ 
Rows two and three contain pairs, whose second entry is essentially the cyclic order $\mathcal{W}_{\tt q}$
of the singular values of $w(z)$.

\noindent
$\bigcdot$ 
In order to complete the whole picture and gain a better understanding of 
``shape of a Speiser function'', we introduce the two last rows,
containing topological and combinatorial objects, 
and information related to 
\emph{topological branched coverings of $\Omega_z$}.

\noindent
$\bigcdot$ 
In the fifth row of the above diagram, we have the
the most basic objects which can be completed with some structure
so that they characterize a Speiser function:
\emph{$\tt t$--graphs $\Gamma$} and their duals 
\emph{pre--Speiser graphs $\mathfrak{S}$}.
Consider a 
tessellation of $\Omega_z$ with alternating colors 
whose tiles are topological $\rho$--gons, 
where $\rho \leq {\tt q}$ depends on the tile.
The boundary of its $\rho$--gons, is by definition 
a graph, called a $\tt t$--map $\Gamma$.
See Definitions \ref{def:de-t-graph} 
and \ref{def:pre-Speiser-graph} for details.

\medskip
Formally, Question (B) can be restated as
the following inverse problem:

{\it
\begin{center}
Characterize tessellations 
$(\Omega_z \cup \partial_{\mathcal{I}} \Omega_z )\backslash \Gamma$
with alternating colors and 
\\
not necessarily homogeneous tiles, 
\\
that are realized by topological branched 
coverings of $\Omega_z$, 
\\
hence by Speiser functions.
\end{center}
}

\noindent
In these terms, the above inverse problem 
can be translated in our language as follows.
\begin{equation}
\label{eq:pregunta-mosaico-speiser}
\begin{array}{c}
\text{Question: \it is it possible to characterize whether a 
{\tt t}--graph }\Gamma, 
\\
\text{\it or equivalently a pre--Speiser graph }\mathfrak{S},
\text{\it represents a Speiser function?}
\end{array}
\end{equation}

\noindent
$\bigcdot$ 
Going from the bottom to the fourth row
in Diagram \ref{dia:correspondencia-completa}, 
a certain homogenization procedure is 
required: the $\tt A$--map $\widehat{\Gamma}$ is homogeneous
(each polygon 
of the tessellation $\mathscr{T}_\gamma(w(z))$
has  ${\tt q}$ edges, 
\emph{i.e.} it is a $\tt q$--gon) and
the Speiser graph $\mathfrak{S}_{\tt q}$ is regular
(all its vertices have valence $\tt q$); however
the $\tt t$--graph $\Gamma$ is not necessarily
homogeneous and the pre--Speiser graph 
not necessarily regular.

\smallskip

\noindent
As an advantage of Speiser graphs, the study of
Question \eqref{eq:pregunta-mosaico-speiser}
is more lucid using them, this motivates
our notion of  pre--Speiser graph. 
The solution arises from the equivalent problem:
the \emph{bipartite transportation problem} associated to
planar graphs, in our case
the pre--Speiser graph $\mathfrak{S}$, 
see \S\ref{sec:bipartite-transportation-problem}.

\smallskip

\noindent
\textbf{Theorem \ref{prop:balance-conditions-Speiser-graphs}},
{\it
provides the solution, in terms of Hall--type inequalities,
it is valid for finite and infinite pre--Speiser graphs; 
hence it solves 

\noindent 
$\bigcdot$ 
the
elliptic case, when $\mathfrak{S}$ is finite, $\Omega_z=\CW_z$, and 
also

\noindent 
$\bigcdot$ 
the parabolic and hyperbolic cases 
when $\mathfrak{S}$ is infinite, $\Omega_z=\CC_z$ or $\Delta_z$.
}

\medskip

Finally in \S\ref{sec:Soul-tower-decomposition},
the decomposition (C),
an answer to the ``shape of a Speiser function'', is provided by the \emph{soul}\footnote{
To fix ideas, the usual plane polygons in $\CC$
are examples of souls.}.
The motivation comes from Speiser graphs: the notions of \emph{logarithmic ends} 
and their complement the \emph{nucleus}; 
Definition \ref{def:Log-end-for-Speiser-graph}.1 
and \ref{def:Log-end-for-Speiser-graph}.2.
In the context of Riemann surfaces, the above
gives rise to \emph{maximal logarithmic towers} and 
their complement the \emph{soul};
Definitions \ref{def:logarithmic-tower-for-Riemann-surface} and 
\ref{def:soul-of-surface}.

\smallskip

\noindent
\textbf{Theorem \ref{theo:decomposition-soul-logtowers}},
{\it
provides
a unique decomposition of $\R_{w(z)}$ into

\noindent 
$\bigcdot$ 
$0 \leq {\tt p} \leq \infty$ 
\emph{maximal logarithmic towers}
and 
 
\noindent 
$\bigcdot$
their complement, the \emph{soul}.  

\noindent 
Conversely, since the soul can be geometrically recognized as a flat $\tt p$--gon, then we can glue (maximal) logarithmic towers to it to ``recover'' the Riemann surface $\R_{w(z)}$.
}

\smallskip 

\noindent 
Furthermore, logarithmic towers come in two flavors
exponential or $h$--tangent blocks. They are
directly related to the exponential $\exp(z)$
or the hyperbolic tangent $\tanh(z)$ on the upper half plane
$\HH$. 
Since the behavior of $w(z)$ is tame on the towers, 
the soul 
carries the essential information of $w(z)$. 
The proof of Theorem \ref{theo:decomposition-soul-logtowers}
uses the Main Correspondence, Theorem \ref{th:main-theorem}, 
and a decomposition of Riemann surfaces into 
maximal domains of single--valuedness, 
Proposition \ref{prop:Rw-is-a-union-of-sheets}.

\smallskip

As a corollary, we 
provide 
a constructive characterization of \emph{finite\footnote{
Functions $w(z)$ whose Riemann surface $\R_{w(z)}$ only have a finite number of branch points.} 
Speiser functions}.
Note 
that, for finite Speiser functions,
the hyperbolic case, 
$\Omega_z= \Delta_z$, does not appear;
moreover, the only finite Speiser functions in the elliptic case,
$\Omega_z= \CW_z$, are
the rational functions.
This leaves the parabolic case 
$\Omega_z= \CC_z$ as the only one left to 
consider.

\noindent
In Definitions \ref{def:rational-block} and \ref{def:piezas-elementales}, 
we introduce the elementary blocks arising from 
the soul and the maximal logarithmic towers:

\begin{enumerate}[label=\alph*),leftmargin=*]
\item 
\emph{rational--block}, 
$R(z): \overline{\mathscr{P}}\subset\CW_z\longrightarrow \CW_w$, 
for a Jordan domain $\mathscr{P}$,

\item 
\emph{exponential block}, 
$\exp(z): \overline{\HH}\subset\CW_z\longrightarrow\CW_w$,
Figure \ref{fig:piezas-elementales}.a,

\item 
\emph{$h$--tangent block}, 
$\tanh(z): \overline{\HH}\subset\CW_z \longrightarrow\CW_w$,
Figure \ref{fig:piezas-elementales}.b.
\end{enumerate}

\noindent 
With the above blocks we obtain the following characterization,
see Corollary \ref{Cor:caract-Speiser-finitas}. 

\smallskip

{\it 
\begin{center}
Finite Speiser functions 
are those that can be constructed by surgery of
\\
a rational block with 
$2\leq {\tt p} < \infty$
exponential and $h$--tangent blocks.
\end{center}
}

\smallskip

\noindent
A well studied subfamily of the finite Speiser functions are the 
Nevanlinna functions $w(z)$,
denoted in \cite{EremenkoMerenkov2} as $N$--functions;
functions that have $2\leq {\tt p} < \infty$ logarithmic singularities 
and no algebraic singularities of the inverse function\footnote{
Equivalently, that the Riemann surface $\R_{w(z)}$ associated to $w(z)$ only has $\tt p$ 
infinitely ramified branch points.}
$w^{-1}(z)$, 
see \cite{Nevanlinna1} \S8, \cite{Nevanlinna2} p.\ 301, \cite{BergweilerEremenko}, 
\cite{AlvarezMucino4}.
An immediate consequence of Corollary \ref{Cor:caract-Speiser-finitas} is that, 

\smallskip

{\it 
\begin{center}
$N$--functions 
are those that can be 
constructed by 
surgery of a rational block, 
without interior singular points, 
\\
with $2 \leq {\tt p} < \infty $ 
exponential and $h$--tangent blocks.
\end{center}
}
\smallskip

\noindent 
Figure \ref{fig:hexagono3} illustrates the construction.
The $N$--functions coincide with the meromorphic functions
on $\CW_z$ with exactly one essential singularity at $\infty$,
having
$2\leq {\tt p} < \infty$ logarithmic singularities
and no other transcendental singularities of $w^{-1}(z)$.
In plain words, $N$--functions are the simplest 
meromorphic functions on $\CW_z$ with one essential singularity.

The introduction
of the $h$--tangent elementary blocks extends the previous 
work of M.\,Taniguchi 
\cite{Taniguchi1},
\cite{Taniguchi2} 
to a natural/larger framework. 
As valuable and advantageous mechanisms, we recognize
the rational, exponential and $h$--tangent blocks,
arising from the \emph{sharp tools of 
singular complex analytic vector fields} canonically associated to 
meromorphic functions $w(z)$,
\begin{equation}
\label{eq:vector-field-from-function}
X_{w(z)}(z) \doteq \frac{1}{w^\prime(z)} \del{}{z}, 
\end{equation}

\noindent
see the ``Dictionary'' \cite{AlvarezMucino3}\, prop.\, 2.5.

\noindent
$\bigcdot$
The first is a tool that allows easy glueing and pasting of Riemann surfaces and functions, 
as in \S\ref{sec:Gluing Riemann surfaces} and
\S\ref{sec:Soul-tower-decomposition}.

\noindent
$\bigcdot$
The second is that, 
visualizing the phase portraits of the 
$X_{w(z)}(z)$ associated to $w(z)$, 
improves the global understanding of $w(z)$. 
The behaviour which is lost in the 
tessellations can readily be observed with the 
visualization\footnote{Throughout 
this entire work, the phase portrait of $X_{w(z)}(z)$
means the phase portrait of the
real vector field $\Re{X_{w(z)}}(z)$.
}  
of $X_{w(z)} (z)$, 
\emph{e.g.}\ simple poles of $w(z)$ can be clearly described 
as dipoles of the vector fields.

\noindent
$\bigcdot$
In particular, tessellations or Speiser graphs, because of their topological nature, 
can not distinguish between the (holomorphic) exponential block  
and the strictly meromorphic $h$--tangent block.
An advantage of vector fields is that it allows us to easily distinguish between them,
as can be observed in Remark \ref{rem:vec-fields-distinguish-elementary-blocks} and
in Figure \ref{fig:piezas-elementales}.
See \cite{Alvarez-Mucino-Solorza-Yee}, \cite{AlvarezMucino1}, \cite{AlvarezMucino2}, 
\cite{AlvarezMucino3}, \cite{AlvarezMucino4} for further details, references and 
applications.

\medskip
In \S\ref{sec:examples} 
we have collected a number of examples of the 
Main Correspondence (A):
Speiser functions with a cyclic order $\mathcal{W}_{\tt q}$,
the decomposition of $\R_{w(z)}$ into

\noindent
$\bigcdot$ 
maximal domains of single--valuedness 
(Proposition \ref{prop:Rw-is-a-union-of-sheets}),

\noindent
$\bigcdot$ 
maximal logarithmic towers and soul (Corollary \ref{Cor:caract-Speiser-finitas}),

\noindent 
Speiser $\tt q$--tessellations, and
analytic Speiser graphs of index $\tt q$.

\subsection{Epilogue} 
The underlying theme of several of our previous works has been the
study of essential singularities of functions and 
vector fields, see \cite{AlvarezMucino1}, \cite{Alvarez-Mucino-Solorza-Yee}, 
\cite{AlvarezMucino2}, \cite{AlvarezMucino3},
\cite{AlvarezMucino4}.
As a concluding remark of this introduction, we would like to point out that,

\begin{center}
{\it
the simplest functions $w(z)$ with a unique essential singularity on the Riemann sphere 
are those whose soul has no algebraic singularities, 
and has $ 2 \leq  {\tt q} < \infty$ exponential and $h$--tangent blocks 
(\emph{i.e.}\ $N$--functions).
}
\end{center}
As an immediate consequence of Equation \eqref{eq:vector-field-from-function}, 
\begin{center}
{\it
the simplest complex analytic vector fields $X_{w(z)}(z)$ with a unique essential singularity 
on the Riemann sphere are those whose distinguished parameter $w(z)$ 
is a single--valued $N$--function as above.
}
\end{center}

\section{Singularities of the inverse for meromorphic functions}

Let $w(z): \Omega_z \longrightarrow \CW_w$ be a  
meromorphic function. 

\begin{remark}[Natural boundary of $w(z)$]
\label{rem:natural-boundaries}
Throughout this work,  
$\Omega_z$ is either $\CW_z$, $\CC_z$ or $\Delta_z\doteq \{\abs{z}<1\}$. 
In the cases  $\CC_z$ or $\Delta_z$, 
we assume that 
$\infty$ or $\{ \abs{z}=1 \}$ are natural boundaries of $w(z)$, 
\emph{i.e.}\ $w(z)$
can not be analytically extended as a meromorphic
function across these boundaries.
\end{remark}

\begin{definition}[Singularities of $w^{-1}(z)$; \cite{Iversen}, 
\cite{BergweilerEremenko},
\cite{EremenkoReview}]
\label{def:eremenko1}
Take ${\tt w}\in\CW_{w}$ and denote by 
$D({\tt w},\rho) \subset \CW_w$
the disk of radius $\rho > 0$ (in the spherical metric) 
centered at $\tt w$. 
For every 
$\rho > 0$, choose a component 
$U_{\tt w}(\rho)\subset \Omega_z $ of  
$w^{-1}(D({\tt w},\rho))$ in such a way that 
$\rho_{1} < \rho_{2}$ implies $U_{\tt w}(\rho_1) \subset U_{\tt w}(\rho_{2})$. 
Note that the function $U_{\tt w} : \rho \to U_{\tt w}(\rho)$ 
is completely determined by its germ at 0. 

\noindent
The two possibilities below can occur for the germ of $U_{\tt w}$.
\begin{enumerate}[label=\arabic*),leftmargin=*] 
\item 
$\cap_{\rho>0} U_{\tt w}(\rho)=
\{z_k \},
\, z_k\in \Omega_z $. 
In this case, ${\tt w}=w(z_k )$. 

\noindent
Moreover, if ${\tt w}\in\CC_w$ and $w'( z_k )\neq0$, 
or 
${\tt w} = \infty$ and $ z_k $ is a simple pole of $w(z)$, 
then $z_k$ is called an \emph{ordinary point}. 

\noindent
On the other hand, 
if ${\tt w}\in\CC_w$ and $w'(z_k) = 0$, 
or 
if ${\tt w} = \infty$ and $z_k$ is a multiple pole of $w(z)$, 
then $z_k$ is called a \emph{critical point} and $\tt w$ 
is called a \emph{critical value} 
of $w(z)$. 
We also say that the critical point $z_k$ \emph{lies over $\tt w$}.
In this case, 
$U_{\tt w} : \rho \to U_{\tt w}(\rho)$ defines an \emph{algebraic singularity of $w^{-1}(z)$}.

\item $\cap_{\rho>0}U_{\tt w}(\rho) = \varnothing$. 
We then say that our choice $\rho \to
U_{\tt w}(\rho)$ defines a \emph{transcendental 
singularity of $w^{-1}(z)$}
and that the transcendental singularity 
$U_{\tt w}$ \emph{lies over $\tt w$}. 
\end{enumerate}

\noindent 
In both cases, 
the open set $U_{\tt w}(\rho) \subset \Omega_z$ is called a 
\emph{neighbourhood of the  singularity $U_{\tt w}$}. 
Therefore, 
when $\zeta_{m} \in \Omega_z$, 
we say that $\zeta_{m} \to U_{\tt w}$ 
if for every $\rho>0$ 
there exists $m_{0} \in \NN$ such that 
$\zeta_{m} \in U_{\tt w}(\rho)$, for $m \geq m_{0}$.
\end{definition}

A transcendental singularity $U_{\tt w}$, \emph{i.e.}\ the
germ in Definition \ref{def:eremenko1} case (2), 
can be understood as
the addition, to $\Omega_z$, of an ideal point $U_{\tt w}$, 
together with its corresponding family of neighbourhoods 
$\{ U_{\tt w}(\rho) \} \subset \Omega_z$.
If we perform the above for all the transcendental singularities
of $w^{-1}(z)$,
then a completion/compactification of 
$\Omega_z$ is constructed. See \cite{Ahlfors-Sario} Ch.\,I\,\S\,6, for the general
construction.

\begin{definition}
\label{def:punto-ideal}
\
\begin{enumerate}[label=\arabic*),leftmargin=*] 
\item
An \emph{ideal point $U_{\tt w}$} of 
$w(z)$ is a transcendental singularity of 
$w^{-1}(z)$.

\item
The set of ideal points 
is the \emph{ideal boundary of $\Omega_z$},
denoted as 

\centerline{$\partial_{\mathcal{I}}\Omega_z$.}
\end{enumerate}
\end{definition}

The ideal boundary of $\Omega_z$ is totally disconnected, separable 
and compact, see for instance 
\cite{Ahlfors-Sario} Ch.\,I\,\S\,6, or
\cite{Richards} proposition 3.

\begin{definition}\label{defasymptoticvaluepath}
\
\begin{enumerate}[label=\arabic*),leftmargin=*] 
\item 
Let $U_{\tt w}$ be a transcendental singularity of $w^{-1}(z)$.
An \emph{asymptotic value ${\tt w}\in\CW_w$ of $w(z)$}
means that, 
for sufficiently small $\rho>0$,
there exists a $C^1$ 
\emph{asymptotic path}
$\alpha_{\tt w} ( \tau ): [0, \infty) \longrightarrow 
U_{\tt w}(\rho)\subset\Omega_z$,
$\alpha_{\tt w}(0)=z_{\tt o}  \in \Omega_z \backslash \mathcal{S}$,
tending to 
$ z_{\iota} \in \partial_{\mathcal{I}} \Omega_z$ with well defined slope
at the limit $\tau\to\infty$, such that  
\begin{equation}
\label{valasintPsi}
{\tt w} =
\lim_{
\tau  \to \infty
}
w ( \alpha_{\tt w} ( \tau ) )
\in\CW_w .
\end{equation}
\noindent

\noindent 
We shall not distinguish between 
individual members $\alpha_{\tt w}$ of
the class of asymptotic
paths $[\alpha_{\tt w}]$ giving rise to the 
same transcendental singularity $U_{\tt w}$ over $\tt w$  
of $w^{-1}(z)$.
By Equation \eqref{valasintPsi}, the asymptotic path 
$\alpha_{\tt w} ( \tau )$
ends at the transcendental singularity $z_{\iota}=U_{\tt w}$.

\item
A pair $(\alpha_{\tt w}, {\tt w})$ is a 
\emph{branch point of 
the Riemann surface $\R_{w(z)}$ of $w(z)$}.
\end{enumerate}
\end{definition}

\begin{remark}
\label{rem:correspondencia-caminos-asintoticos}
There is a bijective correspondence between 
the following:

\noindent
i) 
classes $[\alpha_{{\tt w}}( \tau )]$ 
of asymptotic\footnote{
A slight abuse of notation is made here, when $U_{{\tt w} }$ is algebraic, 
the path $\alpha_{{\tt w} }(\tau)\to z_{\iota}$ is not an asymptotic path, 
it is just a path arriving to the critical point $z_{\iota}$.
}  paths $\alpha( \tau )$, with asymptotic value ${\tt w}$,

\noindent
ii) transcendental singularities $U_{{\tt w} }$ 
of $w^{-1}(z)$ over ${\tt w}$, and

\noindent
iii) ideal points $U_{{\tt w} }\in\partial_{\mathcal{I}}\Omega_z$ of $w(z)$,

\noindent
iv) branch points $(\alpha_{{\tt w} }, {{\tt w} })$ of $\R_{w(z)}$.

\end{remark}

According to Definition \ref{def:eremenko1}, throughout
all this work the points and singularities of $w(z)$ are of the
following kinds:

\noindent 
$\bigcdot$
simple zeros and poles are ordinary points in $\Omega_z$,

\noindent 
$\bigcdot$
critical points (in particular 
zeros and poles of order at least two),  
are algebraic singularities of $w^{-1}(z)$, in $\Omega_z$,

\noindent 
$\bigcdot$
transcendental singularities of $w^{-1}(z)$,
in $\partial_{\mathcal{I}} \Omega_z$.

\begin{definition}
\label{def:singular-value-point-cosingular}
\
\begin{enumerate}[label=\arabic*),leftmargin=*] 
\item
A \emph{singular value ${\tt w}_{\tt j}\in \mathcal{SV}_w \subset \CW_w$ of $w(z)$} is either a critical value or an asymptotic value.

\item
A \emph{singular point }
$z_\iota \in \mathcal{SP}_w
\subset \Omega_z \cup \partial_{\mathcal{I}} \Omega_z$ 
\emph{of $w(z)$} is 
either 

\noindent
$\bigcdot$ 
a critical point $z_\iota \in \Omega_z$ of $w(z)$ that lies over the critical value ${\tt w}_{\tt j}$, or 

\noindent
$\bigcdot$ 
a transcendental singularity $z_\iota \in \partial_{\mathcal{I}} \Omega_z$ of $w^{-1}(z)$ that lies over the asymptotic value ${\tt w}_{\tt j}$.

\item 
The \emph{cosingular points of  $w(z)$} are

\centerline{
$\mathcal{CS}_w \doteq  w^{-1}(\mathcal{SV}_w)  \backslash \mathcal{SP}_w \subset \Omega_z$, 
}

\noindent
\emph{i.e.}\ 
the points in the preimage of $\mathcal{SV}_w$ that are not 
singular points of $w(z)$.
\end{enumerate}
\end{definition}

In all that follows, we assume that $w(z)$ has
non empty singular value set $\mathcal{SV}_w$.
  
Note that if $z_\iota$ is a critical point and ${\tt w}_{\tt j}$ is its corresponding critical value, 
then of course ${\tt w}_{\tt j}= w(z_\iota)=\lim_{\tau  \to \infty} w ( \alpha_{{\tt w}_{{\tt j}(\iota)}} ( \tau ) )$,
for any path $\alpha_{{\tt w}_{{\tt j}(\iota)}} ( \tau ) \to z_\iota$.
Thus, using Definition \ref{def:singular-value-point-cosingular}.2 and abusing notation, 
we shall sometimes write 
$w(z_\iota)$ for the singular value associated to the singular point $z_\iota$, 
instead of the more cumbersome
$\lim_{\tau  \to \infty}
w ( \alpha_{{\tt w}_{{\tt j}(\iota)}} ( \tau ) )$.

\section{Speiser functions}
We now introduce the family of functions that will be the main subject in this work.
\begin{definition}
\label{def:Speiser-q-function}
\
\begin{enumerate}[label=\arabic*),leftmargin=*] 
\item
A  meromorphic function $w(z): \Omega_z \longrightarrow \CW_w$
with a finite set of
distinct singular values

\centerline{$
\mathcal{SV}_w=
\{{\tt w}_1, \ldots, {\tt w}_{\tt j}, \ldots, {\tt w}_{\tt q} \},
\ \ \ 
{\tt q} \geq 2 ,
$}

\noindent 
is a \emph{Speiser function with $\tt q$ singular values},
also know as a \emph{Speiser function of index $\tt q$}.

\item
The corresponding Riemann surface

\centerline{$
\R_{w(z)} = \{ (z, w(z))\} \subset \Omega_z \times \CW_w
$}

\noindent
is a \emph{Speiser Riemann surface with $\tt q$ singular values}. 

\item A meromorphic function 
$w(z)$ with a finite number of singularities of $w^{-1}(z)$ is a
\emph{finite Speiser function}\footnote{
In this case $\Omega_z$ is either $\CW_z$ or $\CC_z$.
}.
\end{enumerate}
\end{definition}

\begin{example}[Speiser functions and finite Speiser functions]
\label{ex:examples-finite-non-finite-Speiser-fns}
Speiser functions (of the appropriate index $\tt q$) 
comprise a large family of useful functions. Some examples are:
\begin{enumerate}[label=\arabic*.,leftmargin=*]
\item
Rational functions. Since rational functions of degree 
$2\leq n < \infty$ have a finite set of 
$2 \leq {\tt r} \leq 2n-2$
critical points and 
$2 \leq {\tt q} \leq 2n-2$
critical values, then
they belong
to both the Speiser and finite Speiser class.

\item 
Functions with an infinite number of critical points 
and no transcendental singularities of $w^{-1}(z)$. 
The Weirstrass $\wp(z)$ function, 
see \cite{AlvarezGutierrezMucino} example 5.1, 
is a Speiser function with (generically) $4$ critical values 
and zero asymptotic values; 
however it has an infinite number of critical points, 
hence it is not a finite Speiser function.

\item 
Functions with zero critical values and a finite number ${\tt p}<\infty$ of asymptotic values.
These are called $N$--functions, in honor of the Nevanlinna brothers. As examples we mention:
\begin{enumerate}[label=\alph*),leftmargin=*]

\item 
The simplest cases are $w(z)= \e^z$, with asymptotic values $\{0,\infty \}$, 
and $w(z)=\tanh(z)$ with asymptotic values $\{ -1, 1 \}$. 
Each has two logarithmic singularities of $w^{-1}(z)$.
See Example \ref{example:N-exp-tanh} for full details.

\item 
Quotients of Airy functions, say $w(z)=\frac{\Bi(z)}{\Ai(z)}$, 
that has $3$ logarithmic singularities 
over 3 distinct asymptotic values.
See Example \ref{example:tessellation-Airy} for full details.
\end{enumerate}

\item 
The function $w(z)=\exp(\exp(z))$ is a Speiser function with $3$ singular values, 
namely $\{0, 1, \infty\}$; 
however it is not a finite Speiser function since it has an infinite number of logarithmic singularities 
of $w^{-1}(z)$. Further details can be found in Example \ref{example:exp-exp}.
See Figure \ref{fig:mosaico-exp-exp}.a for the Speiser 3--tessellation and 
Figure \ref{fig:mosaico-exp-exp}.b for the Speiser graph of index 3.

\item 
Functions with both critical and asymptotic values. 
\begin{enumerate}[label=\alph*),leftmargin=*]
\item 
Consider $w(z)=\int_0^z P(\zeta) \e^{E(\zeta)} d\zeta$
with $P, E\in\CC[z]$ polynomials of degree $0\leq{\tt r}<\infty$ and 
$0< {\tt p} /2 < \infty$ 
respectively. 
These functions have (generically) ${\tt r}$ critical values and ${\tt p}$ asymptotic values,
thus they are in the Speiser class. Moreover they also are in the finite Speiser class since 
they have a finite number of singular points. 
See for instance \cite{AlvarezMucino2} and \cite{AlvarezMucino3}.

\item
The function $w(z)= \cos{\sqrt{z}}$ is in the Speiser family since its 
singular values are $\{ -1, 1, \infty \}$:
the critical values are $\{ -1,1 \}$ 
associated to an infinite number of critical points, and it has one transcendental singularity of 
$w^{-1}(z)$ over the asymptotic value $\{ \infty \}$. Clearly it is not a finite Speiser function.
See also \cite{GoldbergOstrovskii}\,p.\,360.

\item
Another example is $w(z)=\e^{\sin(z)}$, in this case the critical values are $\{ \e,\, \e^{-1} \}$ 
and the asymptotic values are $\{ 0, \infty \}$; 
thus it is a Speiser function with 4 distinct singular values. 
On the other hand, it has an infinite number of critical points and an
infinite number of transcendental singularities of $w^{-1}(z)$ 
over each of the the asymptotic values $\{ 0, \infty \}$; thus it is not a finite Speiser function.
See Example \ref{example:tessellation-expsin} for more details.
\end{enumerate}
\end{enumerate}
\end{example}

Speiser functions are simple in the following aspect:
the transcendental singularities of the inverse 
belong to the simplest kind.

\begin{definition}
\label{def:logaritmica}
A transcendental singularity $U_{{\tt w}_{\tt j}}$ of $w^{-1}(z)$ over ${\tt w}_{\tt j}$ is a
\emph{logarithmic singularity 
over ${\tt w}_{\tt j}$} if 

\centerline{
$w(z) : U_{{\tt w}_{\tt j}} (\rho) \subset \Omega_z \longrightarrow
D({{\tt w}_{\tt j}}, \rho) \backslash\{{\tt w}_{\tt j}\} \subset \CW_w$} 

\noindent 
is a universal covering for small enough
$\rho $. 
\end{definition}

\begin{proposition}
\label{prop:dos-tipos-singularidades}
Let $w(z)$ be an Speiser function. 

\begin{enumerate}[label=\arabic*),leftmargin=*]
\item
The singular values of $w(z)$ are isolated, 
hence the singularities of $w^{-1}(z)$ are either
\begin{enumerate}[label=\roman*),leftmargin=*]

\item
logarithmic or 

\item
algebraic.
\end{enumerate}

\item
Depending on $w(z)$, the following cases for $\Omega_z \cup \partial_{\mathcal{I}} \Omega_z$
appear:

\begin{enumerate}[label=\roman*),leftmargin=*]

\item
the Riemann sphere $\CW_z$,

\item
a non Hausdorff compactification

\centerline{
$\CC_z \cup \{\infty_{1}, \ldots , \infty_{\tt p}\}$,
}

\noindent
with $2\leq {\tt p} \leq \infty$ ideal points,

\item 
a Hausdorff compactification with an infinite number of ideal points 

\centerline{
$\Delta_z \cup_{\sigma =1}^\infty  \{ \e^{i \theta_{\sigma} } \}$.
}
\end{enumerate}

\end{enumerate}
\end{proposition}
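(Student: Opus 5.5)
For part (1), the finiteness of $\mathcal{SV}_w=\{{\tt w}_1,\ldots,{\tt w}_{\tt q}\}$ immediately makes each singular value isolated. Fix ${\tt w}_{\tt j}$ and choose $\rho>0$ so small that $D({\tt w}_{\tt j},\rho)$ contains no other singular value. Then $w(z)$ restricted to $w^{-1}(D({\tt w}_{\tt j},\rho)\backslash\{{\tt w}_{\tt j}\})$ has no critical values and no asymptotic values, so it is a covering map. This is the standard fact that a meromorphic function is a covering map away from its singular value set; it follows from path lifting, because asymptotic paths are exactly the obstructions to lifting. Let $U_{{\tt w}_{\tt j}}(\rho)$ be a component of $w^{-1}(D({\tt w}_{\tt j},\rho))$, and let $V$ be $U_{{\tt w}_{\tt j}}(\rho)$ with the preimages of ${\tt w}_{\tt j}$ removed. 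The restriction $V\to D({\tt w}_{\tt j},\rho)\backslash\{{\tt w}_{\tt j}\}$ is a connected covering of a punctured disk, so it is either $k$--sheeted with $k<\infty$ or universal.

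\emph{Finite case.} The covering is conformally $\zeta\mapsto\zeta^k$ on a punctured disk. The puncture is then filled by a single point $z_k\in\Omega_z$ (removable singularity), which gives case 1 of Definition \ref{def:eremenko1}: an algebraic singularity if $k\geq2$, an ordinary point if $k=1$.

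\emph{Infinite case.} $V$ is a half plane mapped by $\exp$. The set $U_{{\tt w}_{\tt j}}(\rho)$ then contains no preimage of ${\tt w}_{\tt j}$ (a preimage would make the covering finite near it), and $\cap_\rho U_{{\tt w}_{\tt j}}(\rho)=\varnothing$. So the singularity is transcendental and, by Definition \ref{def:logaritmica}, logarithmic. The main point to check is that the component $U_{{\tt w}_{\tt j}}(\rho)$ contains at most one preimage of ${\tt w}_{\tt j}$ and is simply connected, so that the dichotomy above is exhaustive. For this I would use that $\Omega_z$ is simply connected together with the maximum principle applied to $1/(w-{\tt w}_{\tt j})$.

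For part (2) I would split by the uniformization type of $\Omega_z$. If $\Omega_z=\CW_z$, then $w$ is rational, there are no transcendental singularities, and $\partial_{\mathcal I}\Omega_z=\varnothing$, which is case (i).

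If $\Omega_z=\CC_z$, every asymptotic path tends to $\infty$. The ideal points are the logarithmic singularities, and each one is attached along a tract $U_{{\tt w}}(\rho)$ going to $\infty$. Different tracts have neighbourhoods that all accumulate at $\infty$, and distinct ideal points cannot be separated as $\rho$ shrinks, so the compactification is non Hausdorff. To get ${\tt p}\geq2$, I would argue that if ${\tt p}=0$ then $w$ has no asymptotic values and is a finite-degree branched cover from $\CC_z$ to $\CW_w$, which is impossible for a transcendental function; by Remark \ref{rem:natural-boundaries}, $\infty$ is a genuine essential singularity. If ${\tt p}=1$, then the complement of a single logarithmic tract would carry a finite-degree proper map, and I would derive a contradiction, for instance via Iversen's theorem applied to both $w$ and $1/(w-{\tt w})$ for a suitable ${\tt w}$, or via the covering-space count of the ends of $\CC_z\backslash\overline{U}$.

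If $\Omega_z=\Delta_z$, the unit circle is a natural boundary. An asymptotic path has a well defined slope, hence lands at a point $\e^{i\theta_\sigma}$. Disjoint tracts land at distinct points (Lindelöf-type argument), which gives a Hausdorff structure. There are infinitely many such points, since otherwise $w$ would extend across arcs of the circle, contradicting the natural boundary assumption.

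The main obstacle is the $\CC_z$ case: proving rigorously that ${\tt p}\geq2$ and that the topology is non Hausdorff. For both I would lean on the cited references \cite{Ahlfors-Sario}, \cite{Nevanlinna2}, \cite{BergweilerEremenko}.
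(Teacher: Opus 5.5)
Your part (1) is correct and more self-contained than the paper, which only cites Nevanlinna's theorem on isolated asymptotic values \cite{Nevanlinna2}. Classifying the connected coverings of $D({\tt w}_{\tt j},\rho)\backslash\{{\tt w}_{\tt j}\}$ is the standard proof of that theorem. You do not need the maximum principle (it would not control poles of $w$ enclosed by a loop in $U_{{\tt w}_{\tt j}}(\rho)$). The set $U_{{\tt w}_{\tt j}}(\rho)\backslash w^{-1}({\tt w}_{\tt j})$ is connected. In both models the preimage of a smaller punctured disk is connected, and comparing it with a punctured neighbourhood of a preimage point gives at most one preimage and simple connectivity. What does need justification is why, in the finite-degree case, the puncture end cannot escape to $\partial_{\mathcal I}\Omega_z$. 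On $\CC_z$ this is where the essential singularity at $\infty$ (Remark \ref{rem:natural-boundaries}) enters via removability; on $\Delta_z$ it is a short topological argument.

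The genuine gap is in part (2). Your step ``${\tt p}=0$, so $w$ has no asymptotic values and is a finite-degree branched cover'' is false. With no asymptotic values $w$ is a branched covering of $\CW_w$, but of infinite degree whenever it has infinitely many critical points. The bound ${\tt p}\geq 2$ you are aiming at actually fails for general Speiser functions, and the paper's own examples show this:
\begin{itemize}
\item $\wp$ (Example \ref{ex:examples-finite-non-finite-Speiser-fns}.2) is a Speiser function on $\CC_z$ with ${\tt p}=0$.
\item $\cos\sqrt{z}$ (Example \ref{ex:examples-finite-non-finite-Speiser-fns}.5.b, and the remark after Corollary \ref{Cor:caract-Speiser-finitas}) has exactly one logarithmic singularity. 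So your ${\tt p}=1$ argument, which assumes finite degree off a single tract, fails too.
\end{itemize}
The finiteness you implicitly use is exactly the hypothesis of a \emph{finite} Speiser function. Under it a monodromy count does exclude ${\tt p}=0,1$, which is how the paper argues in the proof of Theorem \ref{theo:decomposition-soul-logtowers}.4.

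The disk case has the same defect. ``Finitely many ideal points $\Rightarrow$ $w$ continues across an arc'' is false: the automorphic function of a cocompact hyperbolic triangle group is a Speiser function on $\Delta_z$ with three critical values, natural boundary $\{\abs{z}=1\}$, and no asymptotic values.

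Your non-Hausdorff argument is also wrong. For small $\rho$ the tracts of two distinct logarithmic singularities are disjoint: they are distinct components of $w^{-1}(D({\tt w},\rho))$, or preimages of disjoint disks. So tract neighbourhoods do separate ideal points, and non-Hausdorffness can only come from the particular compactifying topology of \cite{AlvarezMucino3}.

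The paper's proof of (2) attempts none of these bounds; it only refers to the ideal-boundary constructions in \cite{Ahlfors-Sario} and \cite{AlvarezMucino3}. So (2) is defensible only as a list of shapes that can occur, not as the exhaustive classification your argument tries to establish.
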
 

\begin{proof}
Assertion (1) follows from the classical theorem of Nevanlinna
on isolated asymptotic values, 
\cite{Nevanlinna2} Ch.\, XI, \S1.3, also see
\cite{AlvarezMucino4} theorem 4.4.

The general construction for (2) is in \cite{Ahlfors-Sario}
Ch.\,I\,\S\,6. 
In particular, regarding case (ii), the ad hoc construction
of the non--Hausdorff compactification of $\CC_z$, for $N$--functions
(see \S\ref{sec:N-functions}) and
for the family of functions 
$\{ w(z)= \int^z P(\zeta)\e^{E(\zeta)} d\zeta \}$,
appeared in \cite{AlvarezMucino3}\,p.\,12. 
For (iii) the ideal points originate from 
classes $[\alpha_{{\tt w}_{\tt j}}(\tau)]$ of asymptotic paths
as usual in the boundary of the hyperbolic disk $\Delta_z$. 
\end{proof}

\begin{corollary}
For $w(z)$ a Speiser function, 
its domain is
$\Omega_z= \CW_z$ if and only if $w(z)$ is a rational function.
\hfill\qed
\end{corollary}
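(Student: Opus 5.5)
The corollary is the classical fact that the meromorphic functions on the whole Riemann sphere are exactly the rational functions. The Speiser hypothesis enters only to make sense of the setting. I will prove the two directions separately.

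\emph{($\Leftarrow$).} Suppose $w(z)$ is rational. By Remark \ref{rem:natural-boundaries}, the domain $\Omega_z$ is the maximal domain of meromorphy of $w(z)$. If $\Omega_z$ were $\CC_z$ or $\Delta_z$, then $\infty$ or $\{\abs{z}=1\}$ would be a natural boundary of $w(z)$. But a rational function extends meromorphically to all of $\CW_z$, so neither can be a natural boundary. Hence $\Omega_z=\CW_z$. Example \ref{ex:examples-finite-non-finite-Speiser-fns}.1 already records that rational functions of degree $\geq 2$ are Speiser functions, so this direction is consistent with the class under consideration.

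\emph{($\Rightarrow$).} Suppose $\Omega_z=\CW_z$ and $w(z):\CW_z\to\CW_w$ is meromorphic. I will show that $w(z)$ is rational in four steps.

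\begin{enumerate}[label=\arabic*),leftmargin=*]
\item Since $\CW_z$ is compact and $w$ is not identically $\infty$, the poles of $w$ are isolated. They therefore form a finite set $\{p_1,\ldots,p_m\}$, which may include $\infty$.
\item Subtract the principal parts $P_k$ at each finite pole $p_k$; these are polynomials in $1/(z-p_k)$. Also subtract the principal part at $\infty$, which is a polynomial in $z$.
\item The difference $h = w - \sum_k P_k - P_\infty$ is holomorphic on all of $\CW_z$. Since $\CW_z$ is compact, $h$ is bounded, so by Liouville's theorem (or the maximum principle on a compact surface) $h$ is constant.
\item It follows that $w(z)$ is a finite sum of rational terms, hence rational.
\end{enumerate}

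The same conclusion is consistent with Proposition \ref{prop:dos-tipos-singularidades}.2(i). In that case $\partial_{\mathcal{I}}\Omega_z=\varnothing$, so there are no transcendental singularities of $w^{-1}(z)$. Then $w:\CW_z\to\CW_w$ is a proper holomorphic map of finite degree, which is another standard characterization of rational maps.

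I do not expect any step to be a real obstacle. The only point needing care is the bookkeeping in the ($\Leftarrow$) direction. The statement "its domain is $\Omega_z$" has to be read through the natural-boundary convention of Remark \ref{rem:natural-boundaries}. Otherwise a rational function restricted to $\CC_z$ would be a counterexample. So the proof should explicitly invoke that convention: the domain of a Speiser function is its maximal simply connected domain of meromorphy.
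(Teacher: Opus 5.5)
Your proof is correct and follows essentially the argument the paper leaves implicit (the corollary is closed with \qed right after Proposition~\ref{prop:dos-tipos-singularidades}). The natural--boundary convention of Remark~\ref{rem:natural-boundaries} forces $\Omega_z=\CW_z$ when $w(z)$ is rational, and the classical fact that a meromorphic function on the compact sphere is rational gives the converse. You are also right that the ($\Leftarrow$) direction genuinely depends on that convention.
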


Because of Proposition \ref{prop:dos-tipos-singularidades}.1.i, 
from this point forward, 
we convene that whenever we are considering Speiser functions with transcendental singularities,
the terms,

\centerline{
$
\begin{array}{c} \text{logarithmic singularity} \\ \text{of } w^{-1}(z) \end{array} 
\longleftrightarrow 
\begin{array}{c} \text{transcendental singularity} \\ \text{of } w^{-1}(z) \end{array} 
\longleftrightarrow 
\begin{array}{c} \text{ideal point} \\ \text{of } w(z) \end{array}
$
}

\noindent
are referring to the same object.

The Riemann surfaces $\R_{w(z)}$ are valuable tools, they have natural projections 
$\pi_1$ and $\pi_2$ as in the following commutative diagram.
Note that $\pi_1$ is in fact a biholomorphism.

\begin{center}
\begin{picture}(180,80)(0,10)

\put(-150,40){\vbox{\begin{equation}
\label{diagramaRX}\end{equation}}}

\put(30,75){$\Omega_z$}

\put(119,75){$\R_{w(z)} \subset \Omega_z\times\CW_w$}

\put(108,78){\vector(-1,0){60}}
\put(75,85){$\pi_1$}

\put(133,65){\vector(0,-1){30}}
\put(138,47){$ \pi_2 $}

\put(45,68){\vector(2,-1){75}}
\put(59,38){$w(z) $}

\put(125,20){ $\CW_w$.} 

\end{picture}
\end{center}

If ${\tt w}\in\CW_w$ 
is an asymptotic value of $w(z)$, then 
there is at least one logarithmic singularity $U_{\tt w}$ 
of the inverse 
$w^{-1}(z)$ over ${\tt w}$.
Certainly, there can be many
(finite or even infinite) 
different logarithmic singularities 
as well as critical and ordinary points over the same 
singular value ${\tt w}$. 

\begin{definition}
\label{def:multiplicity-of-sing-point}
Let $w(z)$ be a Speiser function, and consider the singularities of the inverse $w^{-1}(z)$.
\begin{enumerate}[label=\arabic*),leftmargin=*] 
\item 
The 
\emph{multiplicity $m$ of an ordinary point point $z \in \Omega_z$} is $1$. 

\item
The 
\emph{multiplicity $m_\iota$ of an algebraic singularity or critical point 
$z_\iota \in \Omega_z$}
is the number $ 2 \leq m_\iota < \infty$ such that 
$w(z)$ is locally equivalent to $\{z \mapsto z^{m_\iota} \}$.

\item
The \emph{multiplicity $m_\iota$ of a logarithmic singularity 
$z_\iota\in \partial_{\mathcal{I}} \Omega_z$ of $w^{-1}(z)$}
is $\infty$.

\item
In all the cases the multiplicity is 
also known as the \emph{ramification index}.
\end{enumerate}
\end{definition}

\begin{definition}
\label{def:multiplicity-of-sing-values}
The 
\emph{multiplicity $\mu_{\tt j} \in \NN \cup \{ \infty\}$
of a singular value ${\tt w}_{\tt j}$},
is the number of branch points of 
the Riemann surface $\R_{w(z)}$ that 
project via $\pi_2$ to ${\tt w}_{\tt j}$.

\noindent 
The \emph{total number of branch points of $\R_{w(z)}$} is
\vspace{0.1cm}

\centerline{$\delta = \mu_{1 }+  \cdots + \mu_{\tt q}
\in \NN \cup \{ \infty \}$.}
\end{definition}

\begin{remark}[The existence of non--trivial multiplicities of the singular values 
makes enumerating singular points a non--trivial issue]
\label{rem:Branch-points-enumeration}
In order to enumerate singular points and values, 
consider the following.

\noindent
1.\ Choose a singularity of $w^{-1}(z)$, thus we either have: 

$\bigcdot$
a critical point (algebraic singularity of $w^{-1}(z)$),
$z_\iota\in\Omega_z$, or 

$\bigcdot$
an ideal point (logarithmic singularity of $w^{-1}(z)$),
$z_\iota\in \partial_{\mathcal{I}} \Omega_z$.

\noindent
2.\ We can then obtain its corresponding singular value ${\tt w}_{{\tt j}(\iota)}=w(z_\iota)$.

\noindent
3.\ So, a branch point of $\R_{w(z)}$, namely the pair 
$(\alpha_{{\tt w}_{{\tt j}(\iota)} }, {\tt w}_{{\tt j}(\iota)})$,
can be naturally identified with the pair
\begin{equation*}
(z_\iota, {\tt w}_{{\tt j}(\iota)} ) =
\Bigg( 
\underbrace{z_\iota = \lim_{\tau \to \infty} \alpha_{{\tt w}_{{\tt j}(\iota)} } 
(\tau ) }_{\in \,\Omega_z \cup \partial_{\mathcal{I}}\Omega_z } \, , 
\underbrace{ {\tt w}_{{\tt j}(\iota)} = w(z_\iota)}_{\in \,\CW_w} 
\Bigg) .
\end{equation*}

\noindent 
4.\ Finally, a reordering of the singular points 
$\{ z_\iota \} \subset \Omega_z \cup \partial_{\mathcal{I}}\Omega_z$ 
so that the singular values ${\tt w}_{{\tt j}(\iota)}$ are grouped together, provides 
the relationship
\begin{equation}
{\tt j}(\iota) = \begin{cases}
\ 1 & \text{ for } \iota=1,\ldots,\mu_1, \\
\ 2 & \text{ for } \iota=\mu_1 +1, \ldots, \mu_1 + \mu_2, \\
\ \, \vdots \\
\ {\tt q} & \text{ for } \iota =
\mu_1+ \cdots  +  \mu_{{\tt q}- 1} +
1,\ldots,\delta ,
\end{cases}
\end{equation}

\noindent
between the singular values, identified by ${\tt j}(\iota)$, and the 
corresponding branch point identified by the unique index 
$\iota\in\{1,\ldots,\delta\}$.

\noindent
5.
Additionally, 
letting $m_{\iota} \in \NN \cup \{ \infty\}$ denote the ramification index of $w(z)$ 
at $(z_\iota, {\tt w}_{{\tt j}(\iota)} )$, we can use the triplet

\centerline{
$( z_\iota ,  {\tt w}_{{\tt j}(\iota)} , m_{\iota} )$,
}

\noindent
to easily distinguish between algebraic and logarithmic singularities of $w^{-1}(z)$.

\noindent
The triplet $(z_\iota, {\tt w}_{{\tt j}(\iota)}, m_{\iota})$ represents an
\begin{equation}
\label{eq:multiplicities-of-branch-points}
\begin{array}{rcccl}
\begin{array}{c}
\text{algebraic}
\\
\text{singularity of $w^{-1}(z)$}  
\end{array} 
 &  \iff  &
2\leq m_{\iota}<\infty 
& \iff & 
\begin{array}{c}
z_\iota \in \{ z_\kappa\} \subset \Omega_z
\\
\text{for some } \kappa \in \{ 1,\ldots,{\tt r}\},
\end{array}
\\
&&&&
\\
\begin{array}{c}
\text{logarithmic}
\\
\text{singularity of $w^{-1}(z)$}  
\end{array}
 &   \iff &  
m_{\iota}=\infty
& \iff &  
\begin{array}{c} 
z_\iota \in \{  z_\sigma \} \subset \partial_{\mathcal{I}}\Omega_z
\\ 
\text{for some }\sigma \in \{ 1,\ldots,{\tt p}\}.
\end{array}
\end{array}
\end{equation}

\end{remark}

\subsection{Speiser functions: notation for singular values and singular points}
Summarizing, 
the distinct singular values of $w(z)$ 
shall be denoted as
\begin{equation}
\label{eq:singular-values-details}
\mathcal{SV}_w=
\sum\limits_{{\tt j}=1}^{{\tt q}} \mu_{\tt j} {\tt w}_{\tt j} =
\big \{
({\tt w}_1, \mu_1 ), \ldots , 
({\tt w}_{\tt j} ,\mu_{\tt j} ), \ldots, 
({\tt w}_{\tt q}, \mu_{\tt q} )
\big \},
\end{equation}

\noindent
where $1\leq\mu_{\tt j}\leq\infty$ indicates the multiplicity of the singular value ${\tt w}_{\tt j}$.

\noindent
The singular points of $w(z)$ are 
\begin{equation}
\mathcal{SP}_w = \{ z_1, \ldots, 
z_\iota , \ldots ,
z_\delta \} 
\subset \Omega_z \cup \partial_{\mathcal{I}} \Omega_z,
\text{ where }
2 \leq \delta \leq \infty.
\end{equation}

\noindent
The corresponding branch points of $\R_{w(z)}$
are as follows.
\begin{multline}
\label{eq:enumerating-branch-points}
\mathcal{BP}_w  = 
\Big\{
\underbrace{(z_1,{\tt w}_1,m_{1}), 
(z_2, {\tt w}_1,m_{2}), 
\ldots (z_{\mu_1}, {\tt w}_1, m_{\mu_1})}_{\mu_1},
\hfill
\\
\underbrace{(z_{\mu_1 + 1},{\tt w}_2,m_{\mu_1 + 1}), 
(z_{\mu_1+2}, {\tt w}_2,m_{\mu_1+2}), 
\ldots (z_{\mu_1+\mu_2}, {\tt w}_2, m_{\mu_1+\mu_2})}_{\mu_2},
\\
\vdots
\\
\underbrace{(z_{\mu_1 +\ldots+\mu_{{\tt j}-1} + 1},{\tt w}_{\tt j} ,m_{\mu_1 + \ldots+\mu_{{\tt j}-1} + 1}), 
(z_{\mu_1 + \ldots + \mu_{{\tt j}-1}+2}, {\tt w}_{\tt j} ,m_{\mu_1 + \ldots+\mu_{{\tt j}-1}+2}), 
\ldots (z_{\mu_1 + \ldots+\mu_{\tt j}}, {\tt w}_{\tt j}, m_{\mu_1 + \ldots+\mu_{\tt j} })}_{\mu_{\tt j}},
\\
\vdots
\\
\underbrace{(z_{\delta-\mu_{\tt q} + 1}, {\tt w}_{\tt q}, m_{\delta-\mu_{\tt q}+ 1 }), 
(z_{\delta-\mu_{\tt q} + 2}, {\tt w}_{\tt q}, m_{\delta-\mu_{\tt q}+2 }),
\ldots (z_\delta, {\tt w}_{\tt q}, m_{\delta })}_{\mu_{\tt q}} 
\Big\}
\\
= 
\big\{
( z_1, {\tt w}_1, m_{1} ), \ldots, ( z_\iota, {\tt w}_{ {\tt j}(\iota) }, m_{\iota} ),\ldots, ( z_\delta, {\tt w}_{\tt q}, m_{\delta} )
\big\}
=\sum_{\iota=1}^{\delta} ( z_\iota, {\tt w}_{{\tt j}(\iota)}, m_{\iota} )\, .
\end{multline}

\noindent
Each type of branch point is identified by the value of its ramification index $m_{\iota}$,
as in \eqref{eq:multiplicities-of-branch-points}.
Our notation is:

\noindent
${\tt q}$ is the number of distinct singular values, 
\\
${\tt p}= \#\{m_{\iota}=\infty\}$ is the total number of logarithmic singularities of $w^{-1}(z)$,
\\
${\tt r} = \#\{2\leq m_{\iota} < \infty\}$ is the total number of algebraic singularities (counted with multiplicity), and hence 
\\
$\delta = {\tt p}+ {\tt r}$ 
is the total number of singularities of $w^{-1}(z)$. 

\noindent
Note that $2\leq {\tt q} <\infty$, 
$0\leq{\tt p}\leq \infty$, 
and $0\leq {\tt r} \leq \infty$.

\begin{remark}
\label{rem:val-asint-con-as}
As is usual in the literature,
we shall denote a singular value by ${\tt a}_{{\tt j}(\iota)}$ when we want to 
emphasize that it is an asymptotic value. 
Otherwise it will be denoted by ${\tt w}_{{\tt j}(\iota)}$.
\end{remark}

We provide some features for the simplest 
families of Speiser functions.

\begin{remark}[Rational functions]
For rational $R(z)$ only a finite number of algebraic singularities appear and no logarithmic singularities of $R^{-1}(z)$.
Of course, the algebraic singularities of the inverse are the critical points of the function.
\end{remark}

\subsection{$N$--functions: only a finite number of logarithmic singularities and no algebraic singularities}
\label{sec:N-functions}
The original definition of an $N$--function is due to R. Nevanlinna who considered functions $w(z)$ 
on $\CC_z$, that are solutions
to the Schwarzian differential equation 
\begin{equation}
\label{eq:Schwarzian-diff-eqn}
Sw\{ w, z \} = P(z),
\end{equation}

\noindent
 where $P(z)$ is a polynomial, and 

\centerline{
$Sw\{ f, z \} \doteq 
\frac{f'''(z)}{f'(z)} - 
\frac{3}{2} \left(\frac{f''(z)}{f'(z)} \right)^2$,
}

\noindent 
is the usual Schwarzian derivative.
It is a deep, and classical, result that the above is equivalent to having only a finite number of logarithmic singularities and no algebraic singularities, see 
\cite{Nevanlinna1}, \cite{Nevanlinna2} and
\cite{EremenkoMerenkov2}.
Since there are no critical values, then according to Remark \ref{rem:val-asint-con-as},
the $\tt q$ distinct singular values are 
asymptotic values denoted by:

\centerline{
$
\mathcal{AV}_w =
\sum\limits_{{\tt j} =1}^{\tt q} \mu_{\tt j}  {\tt a}_{\tt j}=
\{({\tt a}_1, \mu_1 ), \ldots , 
({\tt a}_{\tt j} ,\mu_{\tt j} ), \ldots, 
({\tt a}_{\tt q}, \mu_{\tt q}  ) \},
\ \ \
{\tt a}_{\tt j} \in \CW_w, \
{\tt q} \geq 2
$.
}

\noindent
The singular points are now asymptotic points denoted by

\centerline{
$\mathcal{AP}_w = \{ \infty_1,\ldots,\infty_\sigma,\ldots,\infty_{\tt p} \}
\subset \CC_z\backslash \{ \infty_1,\ldots,\infty_\sigma,\ldots,\infty_{\tt p} \}
$.
}

\noindent
The corresponding branch points are all infinitely ramified, so $m_{\sigma}= \infty$ for $\sigma \in \{1,\ldots,{\tt p} \}$:
\begin{equation*}
\mathcal{BP}_w =
\big\{
( \infty_1, {\tt a}_1, \infty ), \ldots, ( \infty_{\mu_1}, {\tt a}_1, \infty ),
\ldots, ( \infty_\sigma, {\tt a}_{ {\tt j}(\sigma)}, \infty ),
\ldots, 
( \infty_{{\tt p}-\mu_{\tt p}}, {\tt a}_{\tt q}, \infty ),\ldots, ( \infty_{\tt p}, {\tt a}_{\tt q}, \infty )
\big\}
=\sum_{\sigma=1}^{\tt p} 
( z_\sigma, {\tt a}_{{\tt j}(\sigma)}, \infty )\, ,
\end{equation*}

\noindent  
thus there are finitely many, namely
${\tt p} = \delta = \sum_{{\tt j}=1}^{\tt q} \mu_{\tt j}$
infinitely ramified branch points (logarithmic singularities of $w^{-1}(z)$).
Once again ${\tt q} < {\tt p}$ if and only if at least one $\mu_{\tt j} \geq 2$.

See Examples \ref{example:N-exp-tanh}.a, \ref{example:tessellation-Airy}.a, 
\ref{example:function-wM4alt}.a for $N$--functions with index ${\tt q}=2, 3, 3$ respectively.
For an example of a (non finite) Speiser function of index ${\tt q}=4$, that is
not an $N$--function,
consider Example \ref{example:tessellation-expsin}.a.
 
\section{Speiser Riemann surfaces}
\label{sec:Speiser-Riemann-surfaces}

As in Definition \ref{def:Speiser-q-function}, a \emph{Speiser Riemann surface} is 

\centerline{$
\R_{w(z)}= 
\left\{
\big(z,w(z) \big) \ \vert \  z \in \Omega_z \right\} 
\subset 
\Omega_z \times\CW_{w}$,}

\noindent
where $w(z)$ is a Speiser function with ${\tt q}$ singular values.

\noindent
Each  $\R_{w(z)}$ is simply connected 
with branch points as previously 
described in Diagram \eqref{diagramaRX}.
Roughly speaking, $\R_{w(z)}$ is the domain where the inverse function 
$w^{-1}(z)$ is single--valued.
In fact, considering Diagram \eqref{diagramaRX}, 
$\pi_1$ ``is the inverse'' of the Speiser function $w(z)$.

Recall that in $\R_{w(z)}$, the $2\leq\delta\leq\infty$ branch points can be described, when 
displayed in the notation of a ``divisor'', as
\begin{equation}
\sum_{\iota=1}^{\delta} ( z_\iota, {\tt w}_{{\tt j}(\iota)}, m_{\iota} ),
\quad
\text{ with } {\tt j}(\iota)\in\{1,\ldots,{\tt q} \}\, ,
\ {\tt q} \geq 2 \, , 
\end{equation}
so that ${\tt w}_{{\tt j}(\iota)}\in\CW_w$ indicates over which 
singular value the branch point lies over, and $2\leq m_{\iota} \leq \infty$, 
as in \eqref{eq:multiplicities-of-branch-points}, 
indicates the ramification index of the corresponding branch point.

\begin{remark}
\label{rem:ramindex-subindex}
1.\ In order to specify the location (in $\R_{w(z)}$) of a branch point, 
the ramification index $m_{\iota}$ is not needed, 
thus it will sometimes be omitted.

\noindent
2.\ 
Note that only one subindex, namely $\iota$, is needed to identify the branch points, however the
other indices are sometimes convenient for what follows. 
A shorthand notation for the branch point shall be 

\centerline{
$\circled{\iota} \doteq ( z_\iota, {\tt w}_{{\tt j}(\iota)}, m_{\iota} )$.}
\end{remark}

A natural question to ask about Riemann surfaces of meromorphic functions $w(z)$ is:

\begin{center}
{\it
Can $\R_{w(z)}$ be expressed in terms of maximal domains\\ 
of single--valued branches of $w^{-1}(z)$?
}
\end{center}

\noindent
In order to answer this we shall need the following.
\subsection{Surgery of Riemann surfaces}
\label{sec:Gluing Riemann surfaces}
In the Riemann surfaces
category, surgery tools are widely used, {\it v.g.} 
\cite{Strebel}\,p.\,56 ``welding of surfaces'', 
\cite{MR}, 
or \cite{Thurston} \S 3.2.--3.3 for general 
discussion on geometric structures.
Let $w(z)$ be a Speiser function, 
the singular complex analytic vector field 

\centerline{$X_{w(z)}(z) \doteq 
\dfrac{1}{w^\prime(z)} \dfrac{\partial }{ \partial z}$ }

\noindent 
is canonically associated to it;
see the ``Dictionary'' \cite{AlvarezMucino3}\,prop.\,2.5.
Moreover,
a complex analytic vector field $X$ on a Riemann 
surface $M$ has an associated singular flat metric $g_X$
on $M^0$, the surface minus the singular points of the metric. 
The real trajectories of $\Re{X}$ are unitary geodesics
on $(M^0, g_X)$; see 
\cite{AlvarezMucino1}\,lemma\,2.6 
and
the singular complex analytic dictionary
\cite{AlvarezMucino3}\,prop.\,2.5.  
Throughout the entire work 

\centerline{$(\CW_w, \del{}{w})$}

\noindent  
denotes the Riemann sphere furnished with the 
holomorphic vector field $\del{}{w}$. 
Equivalently, this pair denotes the flat Riemannian metric with a 
singularity at $\infty$ on $\CW_w$. 
The concepts of unitary geodesics,
euclidean segments and trajectories of 
the real vector field 
$\Re{\e^{i \theta} \del{}{w}} \doteq 
\cos(\theta)\del{}{x}+
\sin(\theta)\del{}{y}$ 
(which are circles through $\infty$ in $\CW_w$),
are used in the same way.

The use of vector fields allows us to isometrically glue Riemann surfaces, 
as in the following Corollary, whose proof can be found in the above references.

\begin{corollary}[Surgery of flat surfaces]
\label{cor:pegado-isometrico}  
Let $(M^0, g_X)$, $(N^0, g_Y)$ be two flat surfaces arising 
from two singular complex analytic vector fields $X$ and $Y$. 
Assume that both spaces $M^0$, $N^0$
have as geodesic boundary components of the same length: the trajectories
$\sigma_1 (\tau)$, $\sigma_2(\tau)$ of $\Re{X}$ and $\Re{Y}$, 
$\tau \in I \subset \RR$,
respectively. Then,
the isometric glueing of them 
along these geodesic boundary,  
is well defined, and provides a new flat surface 
on $M^0 \cup N^0$ arising from a new complex analytic vector field
$Z$ that extends $X$ and $Y$.
\hfill $\Box$
\end{corollary}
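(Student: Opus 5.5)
The plan is to use the flat structure carried by the vector fields and to construct the glued surface by charts. A complex analytic vector field $X$ on $M$ is, away from its zeros and poles, locally the pullback of $\partial/\partial t$ under a distinguished parameter $\Psi_X(z)=\int^z dz/X(z)$. This coordinate is unique up to translation $t\mapsto t+c$, and $g_X=\Psi_X^*\abs{dt}^2$. The first step is to fix, near each boundary trajectory $\sigma_1\subset M^0$ and $\sigma_2\subset N^0$, a distinguished parameter $\Psi_X$, respectively $\Psi_Y$. In these coordinates $\sigma_1,\sigma_2$ are horizontal segments in $\CC_t$, since trajectories of $\Re{X}$ are exactly where $\Im{\Psi_X}$ is constant and $\Psi_X$ increases with unit speed in $\tau$. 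Because the lengths agree, after a translation $\Psi_X(\sigma_1(\tau))=\Psi_Y(\sigma_2(\tau))=\tau$ for $\tau\in I$, with $M^0$ lying on one side ($\Im t\le 0$) and $N^0$ on the other ($\Im t\ge 0$). If the sides do not match, I will reverse orientation, $\tau\mapsto -\tau$, using $-Y$; this is harmless for the metric, and choosing which side to place each surface on is part of the hypothesis that the glueing is along boundaries.

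The second step is to define $M^0\cup_{\sigma_1\sim\sigma_2}N^0$ as a topological surface via $\sigma_1(\tau)\sim\sigma_2(\tau)$. I will give it a complex atlas consisting of the original charts of $M^0$ and $N^0$ away from the seam. Near a seam point, the chart is the map that equals $\Psi_X$ on the $M$ side and $\Psi_Y$ on the $N$ side. This map is a homeomorphism onto a neighbourhood of a point of $I\subset\RR\subset\CC_t$, since it sends the two half-neighbourhoods to the lower and upper half-discs and agrees on the seam. Transition maps between seam charts are translations, and transitions with interior charts are the old holomorphic ones, so the atlas is holomorphic. The vector field $Z$ is then defined as $X$ on $M^0$, $Y$ on $N^0$, and $\partial/\partial t$ in seam charts. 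These definitions agree on overlaps because $\Psi_X^*\partial_t=X$ and $\Psi_Y^*\partial_t=Y$, so $Z$ is a well-defined holomorphic vector field extending both. Likewise $g_Z=\abs{dt}^2$ in seam charts restricts to $g_X$ and $g_Y$, so the glueing is an isometry, as claimed.

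The main obstacle is the endpoints of $I$ when $I$ is a finite interval, or the ends when it is not: there the seam meets other boundary or singular points, and the local charts must be checked there. My approach is to treat the endpoints as singular points of the flat metric, where $Z$ may have a zero, pole or essential behaviour, and exclude them from the ``$^0$'' part as in the statement. They may be added back afterwards by the local normal forms from the dictionary in \cite{AlvarezMucino3}: the cone angles add, and a sum of two half-planes meeting at a point gives a cone of angle $2\pi k$, which corresponds to a removable or critical point of the distinguished parameter. Apart from this bookkeeping, the result is a direct consequence of the uniqueness of the distinguished parameter up to translation.
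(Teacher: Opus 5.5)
Your proof is correct and is essentially the argument the paper relies on; the paper gives no proof of its own and defers to the distinguished-parameter description of $g_X$ in the references cited just before the statement, which is exactly your construction: normalize $\Psi_X,\Psi_Y$ so that the seam maps to $I\subset\RR$ with the two surfaces on opposite sides, use the glued map as a chart across the seam, and set $Z=\partial/\partial t$ there. Two small corrections: first, in the mismatched configuration your $-Y$ device produces a field extending $X$ and $-Y$, not $Y$, and no $Z$ can extend both $X$ and $Y$ there, because an orientation-compatible gluing reverses the trajectories and forces $X=-Y$ along the seam, so ``opposite sides'' should be stated as an implicit hypothesis of the corollary rather than patched; second, your closing remark about re-inserting the endpoints as cones of angle $2\pi k$ is not needed (the statement lives on $M^0\cup N^0$) and is not right in general, since after a single gluing an endpoint is typically still a boundary corner, and in the paper's logarithmic towers it eventually becomes a logarithmic branch point rather than a finite cone.
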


The notion of a segment of $(\CW_z, \del{}{w})$
passing through the singular point $\infty$ will be useful.

\begin{definition}
\label{def:geodesic-segments}
Given two distinct points ${\tt w}_\alpha, {\tt w}_\beta \in \CW_w$, 
a \emph{geodesic segment  
in $(\CW_w, \del{}{w})$} is defined as follows.

\begin{enumerate}[label=\arabic*),leftmargin=*] 
\item 
If ${\tt w}_\alpha, {\tt w}_\beta \in \CC_w$, as
\begin{enumerate}[label=\roman*),leftmargin=*]

\item 
the oriented straight line segment $\overline{{\tt w}_\alpha {\tt w}_\beta} \subset \CC_w$, 
or 

\item
the oriented arc of a circle in $\CW_w$, 
starting at ${\tt w}_\alpha$, 
passing through $\infty$ and 
ending at ${\tt w}_\beta$;
it is denoted
by $\overline{{\tt w}_\alpha\, \infty\, {\tt w}_\beta}$.
\end{enumerate}

\noindent
Note that 
$\overline{{\tt w}_\alpha\, \infty\,  {\tt w}_\beta}
\cup \overline{{\tt w}_\beta {\tt w}_\alpha} $
is the unique circle in $\CW_w$ passing through 
${\tt w}_\alpha$, $\infty$ and ${\tt w}_\beta$.

\item
If ${\tt w}_\alpha = \infty$ and ${\tt w}_\beta \in\CC_w$, as 
one of 
the oriented arcs of a circle in $\CW_w$ with $\Im{w}=\Im{{\tt w}_\beta}$,
denoted by ${}_{\pm}\overline{\infty {\tt w}_\beta}$.

\item
If ${\tt w}_\alpha \in \CC_w$ and ${\tt w}_\beta = \infty$, as 
one of 
the oriented arcs of a circle in $\CW_w$ with 
$\Im{w}=\Im{{\tt w}_\alpha}$, 
denoted by ${}_{\pm}\overline{ {\tt w}_\alpha \infty}$.
\end{enumerate}
\end{definition}

Note that for any pair
${\tt w}_{\alpha}, \, {\tt w}_{\beta}$  
there are two choices of geodesics segments between them.

As usual, a \emph{branch cut}
is the operation of removing from $\CW_w$ 
a geodesic segment 
$\overline{ {\tt w}_{{\tt j}( \msigma )} 
{\tt w}_{{\tt j}( \mrho )} }$ in $(\CW_w, \del{}{w})$. 

\begin{definition}
\label{def:sheet-branchcut}
A \emph{sheet with branch cuts} is 
\begin{equation}
\label{eq:sheet}
\mathfrak{L}_\Upxi = 
\CW_w \big\backslash \, 
\Bigg(\bigcup\limits_{
{\tt j}( \msigma ) , {\tt j}( \mrho ) \in  \Upxi } 
\overline{ {\tt w}_{{\tt j}( \msigma )} 
{\tt w}_{{\tt j}( \mrho )} } \Bigg)  \, ,
\end{equation}

\noindent 
such that 

\begin{enumerate}[label=\roman*),leftmargin=*] 
\item
the subindex
$\Upxi=\{
\overline{ {\tt w}_{{\tt j}( \msigma )} 
{\tt w}_{{\tt j}( \mrho )} } 
\ \vert \
{\tt j}( \msigma ) \neq {\tt j}( \mrho )
\} 
\neq \emptyset$ 
enumerates the particular collection of branch cuts,

\item
the geodesic segments
$\overline{ {\tt w}_{{\tt j}( \msigma )} {\tt w}_{{\tt j}( \mrho )} }$,
$\overline{ {\tt w}_{{\tt j}( \msigma^\prime )} {\tt w}_{{\tt j}( \mrho^\prime )} }$ 
intersect at most at their endpoints,

\item
$\mathfrak{L}_\Upxi$ is simply connected.
\end{enumerate}
\end{definition}

In particular if both ${\tt w}_{{\tt j}( \msigma )}, {\tt w}_{{\tt j}( \mrho )} \in\CC_w$ we use the usual geodesic segment with length
$\abs{ {\tt w}_{{\tt j}( \msigma )} - {\tt w}_{{\tt j}( \mrho )} }$.
In case either 
${\tt w}_{{\tt j}( \msigma )}$ or ${\tt w}_{{\tt j}( \mrho )}$ is $\infty\in\CW_w$, 
we shall choose $\overline{ {\tt w}_{{\tt j}( \msigma )} {\tt w}_{{\tt j}( \mrho )} }$ 
as an appropriate geodesic segment in
$(\CW_w, \del{}{w})$ that ensures that condition (iii) of Definition \ref{def:sheet-branchcut} is satisfied.

\begin{lemma}
Given a distinct set $\{{\tt w}_1,\ldots,{\tt w}_{\tt q} \}\subset\CW_w$ of ${\tt q}$ singular values, 
there are a finite number of \emph{types} $\mathfrak{L}_\Upxi$
(\emph{i.e.}\ non isometric sheets with branch cuts, 
understood as translation surfaces) 
that can be formed.
\end{lemma}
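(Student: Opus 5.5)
The plan is a counting argument: every sheet with branch cuts is determined, up to isometry, by a finite amount of combinatorial data, and each choice of that data yields at most one isometry class.

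First, note what fixes a sheet. By Definition \ref{def:sheet-branchcut}, $\mathfrak{L}_\Upxi$ is determined by the collection $\Upxi$ of geodesic segments $\overline{{\tt w}_{{\tt j}(\msigma)}{\tt w}_{{\tt j}(\mrho)}}$ whose endpoints lie in the fixed finite set $\{{\tt w}_1,\ldots,{\tt w}_{\tt q}\}$. The translation structure is the restriction of $(\CW_w,\del{}{w})$, so two sheets with the same set of cuts are equal, hence isometric. It therefore suffices to bound the number of admissible collections $\Upxi$.

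Next, count the possible segments. For an unordered pair of distinct singular values there are only finitely many geodesic segments joining them, by Definition \ref{def:geodesic-segments}.
\begin{enumerate}[label=\roman*),leftmargin=*]
\item If both points lie in $\CC_w$, the options are the straight segment or the complementary arc through $\infty$.
\item If one point is $\infty$, the options are the two arcs ${}_{\pm}\overline{\infty\,{\tt w}_\beta}$ on the horizontal circle.
\end{enumerate}
So each of the $\binom{{\tt q}}{2}$ pairs contributes at most two segments, ignoring orientation, which does not change the removed set. The total pool of candidate segments therefore has at most $2\binom{{\tt q}}{2}={\tt q}({\tt q}-1)$ elements.

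Now bound the collections. Each $\Upxi$ is a nonempty subset of this pool, so there are at most $2^{{\tt q}({\tt q}-1)}-1$ of them. Conditions (ii) and (iii) of Definition \ref{def:sheet-branchcut} only discard candidates. Simple connectivity forces the union of cuts to be a connected forest; in fact a tree on its vertex set, since a cycle would disconnect $\CW_w$. This gives a sharper bound, but finiteness does not need it.

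The only delicate point is the phrase ``non isometric, understood as translation surfaces.'' I would argue that the map from admissible $\Upxi$ to isometry classes of $\mathfrak{L}_\Upxi$ is well defined. Its fibers may contain more than one $\Upxi$, since different cut systems can be isometric via a translation of $\CC_w$ when the configuration of singular values has symmetries. That can only lower the count, so the number of types is at most the number of admissible $\Upxi$, which is finite.

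I expect the main obstacle to be making precise that the metric completion of $\mathfrak{L}_\Upxi$, with the two sides of each cut and the singular point at $\infty$, is determined by $\Upxi$ alone. This is immediate because $\mathfrak{L}_\Upxi$ is literally a subset of $\CW_w$ with the induced flat structure from $\del{}{w}$.
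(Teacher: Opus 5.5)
Your argument is correct and is essentially the paper's proof: the paper bounds the number of sheets by the number of subgraphs of the complete graph $K_{\tt q}$ on the ${\tt q}$ values, which is finite, and you likewise count subsets of a finite pool of possible cuts. Your version is slightly more careful, because it counts subsets of a pool of at most ${\tt q}({\tt q}-1)$ segments. This accounts for the two geodesic choices per pair allowed by Definition \ref{def:geodesic-segments}, which a bare comparison with subgraphs of $K_{\tt q}$ tacitly ignores.
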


\begin{proof}
Let $K_{\tt q}$ be the complete graph with $\tt q$ vertices formed by the set $\{{\tt w}_1,\ldots,{\tt w}_{\tt q} \}\subset\CW_w$ of distinct $\tt q$ values.
Clearly the number of possible sheets is less than the number of subgraphs of $K_{\tt q}$, which is finite.
\end{proof}

\begin{definition}
\label{def:diagonal}
\
\begin{enumerate}[label=\arabic*),leftmargin=*] 
\item
A segment $\Delta_{\vartheta \msigma \mrho}\subset\R_{w(z)}$ 
\emph{is a diagonal of $\R_{w(z)}$} \,
when 

\begin{enumerate}[label=\roman*),leftmargin=*] 
\item 
the projection 

\centerline{
$\pi_2(\Delta_{\vartheta \msigma \mrho}) = 
\overline{{\tt w}_{{\tt j}(\msigma)} {\tt w}_{{\tt j}(\mrho)}}$
}

\noindent 
is a geodesic segment in $(\CW_w, \del{}{w})$,

\item
the interior 
of $\pi_{1}(\Delta_{\vartheta \msigma \mrho})$ is in 
$\Omega_{z}$, and 

\item
the endpoints, $z_{\msigma}$ and $z_{\mrho}$, of
$\pi_{1}(\Delta_{\vartheta \msigma \mrho})$
are algebraic or logarithmic singularities of $w^{-1}(z)$.
\end{enumerate} 

\item
An oriented diagonal $\Delta_{\vartheta \msigma \mrho}$
\emph{starts at the branch point
$\circled{\msigma}=(z_{\msigma}, {\tt w}_{{\tt j}(\msigma)}, m_{\msigma})$ 
and ends at the branch point
$\circled{\mrho}=(z_{\mrho}, {\tt w}_{{\tt j}(\mrho)}, m_{\mrho})$}.
In this case, we shall say that the branch points,
$\circled{\msigma}$ and 
$\circled{\mrho}$,
\emph{share the sheet identified\footnote{
In this case, the index $\vartheta$ enumerates the 
sheets that share the branch points $\circled{\msigma}$ and 
$\circled{\mrho}$. 
Also if two diagonals have the same $\vartheta$ then their 
corresponding branch points share the same sheet.
} 
by the diagonal $\Delta_{\vartheta \msigma \mrho}$ in $\R_{w(z)}$.}
\end{enumerate}
\end{definition}

From the above definitions, notation, 
and the repeated use of isometric glueing
(\emph{i.e.}\ Corollary \ref{cor:pegado-isometrico}) between sheets along their branch cuts,
the following result is clear.

\begin{proposition}[Decomposition of $\R_{w(z)}$ into maximal domains of single--valuedness]
\label{prop:Rw-is-a-union-of-sheets}
Let $w(z)$ be a Speiser function with $\tt q \geq 2$ 
distinct singular values.

\begin{enumerate}[label=\arabic*),leftmargin=*]

\item 
The Riemann surface $\R_{w(z)}$ 
associated to $w(z)$
can be constructed by 
isometric glueing of sheets, denoted by $\sim$,  as follows
\begin{equation}
\label{eq:Rw-decomposition-in-sheets}
\R_{w(z)} 
= \left[ \bigcup_\vartheta \mathfrak{L}_{\Upxi,\vartheta} \right]
\Big/ \sim \ = 
\left[
\bigcup_\vartheta \left( \CW_w \big\backslash \, 
\bigg(\bigcup\limits_{
{\tt j}( \msigma ) , {\tt j}( \mrho ) \in  \Upxi 
} 
\overline{ {\tt w}_{{\tt j}( \msigma )} 
{\tt w}_{{\tt j}( \mrho )} } \bigg)
\right)_\vartheta 
\right]  \, \Bigg/ \sim ,
\end{equation}

\noindent
In the above expression the following conventions are observed.

\noindent
$\bigcdot$
The singular values 
of $w(z)$
are denoted by $ \{{\tt w}_{{\tt j} (\iota) } \}_{\iota=1}^\delta $,
recall Equation \eqref{eq:enumerating-branch-points}.

\noindent
$\bigcdot$
The index $\Upxi$
indicates  the type of sheet, {\it i.e.}\ 
the particular collection  

\centerline{
$\{ \overline{ {\tt w}_{{\tt j}( \msigma ) } 
{\tt w}_{{\tt j}( \mrho )} } \ \vert\ 
{\tt j}( \msigma ) \neq {\tt j}( \mrho )\}$ 
}

\noindent
that is considered on each sheet $\vartheta$,
a finite number of 
types of sheets 
$\mathfrak{L}_\Upxi$ appears.

\item 
The number of sheets in 
\eqref{eq:Rw-decomposition-in-sheets} is 
$n=\abs{ \{\vartheta\} }$, with $2 \leq n \leq \infty$.
Note that $n < \infty$ if and only if 
$w(z)$ is a rational function of 
degree $n \geq 2$. 
\end{enumerate}
\hfill 
\qed
\end{proposition}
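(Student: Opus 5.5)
The plan is the classical Schwarz--Klein--Speiser cut-and-paste construction, made metric via Corollary \ref{cor:pegado-isometrico}. First choose a system of branch cuts on $\CW_w$ joining the singular values: a Jordan path $\gamma$ through ${\tt w}_1,\ldots,{\tt w}_{\tt q}$, chosen as a concatenation of geodesic segments of $(\CW_w,\del{}{w})$, passing through $\infty$ where needed. Then $\CW_w\backslash\gamma$ is simply connected and contains no singular value. Since $w(z)$ is a Speiser function, Proposition \ref{prop:dos-tipos-singularidades} tells us the only singularities of $w^{-1}(z)$ are algebraic or logarithmic, all lying over $\mathcal{SV}_w$. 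By the monodromy theorem, every germ of $w^{-1}$ at a point of $\CW_w\backslash\gamma$ therefore extends to a single--valued branch on all of $\CW_w\backslash\gamma$. Its image under $\pi_1^{-1}$ is an open sheet in $\R_{w(z)}$ biholomorphic, indeed isometric for $g_{X_{w}}$ versus $\del{}{w}$, to $\CW_w\backslash\gamma$. Each such sheet is a maximal domain of single--valuedness, and it is an $\mathfrak{L}_{\Upxi,\vartheta}$ in the sense of Definition \ref{def:sheet-branchcut}, with $\Upxi$ the set of segments of $\gamma$.

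Second, I will show these sheets, indexed by $\vartheta$ (the branches, equivalently the components of $w^{-1}(\CW_w\backslash\gamma)$), cover $\R_{w(z)}$ minus the preimage of $\gamma$. Each branch is a component mapped homeomorphically by $w$, because $w$ restricted to the preimage of the simply connected, singular-value-free set $\CW_w\backslash\gamma$ is an unramified covering, hence trivial. Across the interior of each cut segment $\overline{{\tt w}_{{\tt j}(\msigma)}{\tt w}_{{\tt j}(\mrho)}}$, analytic continuation of a branch lands in another branch; the two geodesic edges are trajectories of $\Re{\e^{i\theta}\del{}{w}}$ of equal length. Corollary \ref{cor:pegado-isometrico} then says the identification $\sim$ gluing the corresponding edges yields a flat surface with a vector field extending $\del{}{w}$ on each sheet. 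The preimages of the segment endpoints are cosingular points or branch points (diagonal endpoints as in Definition \ref{def:diagonal}), so the glued surface is exactly $\R_{w(z)}$, giving \eqref{eq:Rw-decomposition-in-sheets}. The finiteness of the sheet types follows from the preceding Lemma on subgraphs of $K_{\tt q}$. Sheets may be chosen with varying $\Upxi$ when a subcollection of cuts suffices locally; this only changes types within that finite list.

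Third, for (2): $n=\abs{\{\vartheta\}}$ is the number of preimages of a generic point, i.e.\ the degree of $w$. We have $n\ge2$ because ${\tt q}\geq 2$ forces $w$ nonconstant and not a Möbius map, since a Möbius map has no singular values. If $n<\infty$, $w$ is a proper finite branched covering, so $\Omega_z$ is compact and $w$ is rational of degree $n$, by the Corollary after Proposition \ref{prop:dos-tipos-singularidades}. Conversely, a rational $w$ has finite degree.

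The main obstacle is the bookkeeping at $\infty$ and at logarithmic singularities. The cuts must be geodesic segments satisfying (ii)--(iii) of Definition \ref{def:sheet-branchcut} even when $\infty\in\mathcal{SV}_w$, which requires choosing the appropriate arc ${}_{\pm}\overline{\infty{\tt w}}$. The gluing near a logarithmic point involves infinitely many sheets, and I expect to handle this by verifying the gluing locally on each $U_{\tt w}(\rho)$ as a universal covering (Definition \ref{def:logaritmica}).
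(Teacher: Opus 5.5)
Your covering/monodromy argument combined with Corollary~\ref{cor:pegado-isometrico} is the same mechanism the paper invokes; its proof is just ``repeated isometric glueing along branch cuts''. It does show that $\R_{w(z)}$ is obtained by isometrically glueing $n=\deg w$ copies of a single slit sphere, and your part (2) is fine. A minor point: the paper's $\gamma$ is a closed Jordan curve, whose complement is the two tiles $T\cup T^\prime$ of Definition~\ref{def:orden-ciclico-y-losetas}. You must drop one segment and cut along an open arc; only then is $\CW_w\backslash\gamma$ simply connected.

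The genuine gap is your assertion that each component of $w^{-1}(\CW_w\backslash\gamma)$ is a maximal domain of single--valuedness. That is the point of the proposition and of Corollary~\ref{cor:sheet-is-maximal-domain}. With the same arc on every sheet, this fails whenever the lift, on some sheet, of an endpoint of the arc is a cosingular point. Crossing the adjacent segment is then the local monodromy around that endpoint, which is trivial there. So the sheet is glued to itself and its branch continues across the cut.

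Here is a concrete counterexample. Take $w(z)=z^3-3z$, so $\mathcal{SV}_w=\{-2,2,\infty\}$, and cut along $\overline{-2\;2}\cup\overline{2\,\infty}=[-2,+\infty]$. Then $w^{-1}([-2,+\infty])$ is $[-2,+\infty)$ together with two arcs from the critical point $z=-1$ to $\infty$. The sheet containing $(-\infty,-2)$ lies on both sides of the slit $(-2,-1)$, which $w$ maps onto $\overline{-2\;2}$. Its closure meets $w^{-1}(-2)$ only at the cosingular point $z=-2$. Hence its branch of $w^{-1}(z)$ extends single--valuedly to $\CW_w\backslash\overline{2\,\infty}$, so $\CW_w\backslash\gamma$ is not maximal for it.

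Your own remark that segment endpoints may lift to cosingular points says exactly this: your cuts are not projections of diagonals in the sense of Definition~\ref{def:diagonal}(iii). The cuts in \eqref{eq:Rw-decomposition-in-sheets}, by contrast, are indexed by branch points $\msigma,\mrho$ lying on the sheet.

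The missing step is the one you relegate to a single sentence. The type $\Upxi$ must depend on $\vartheta$: on the sheet $\vartheta$ you keep only the segments that are projections of diagonals $\Delta_{\vartheta\msigma\mrho}$, i.e.\ you discard the segments along which the sheet is glued to itself. Compare Example~\ref{example:tessellation-Airy}, where $\mathfrak{L}_{1,\vartheta_1}=\CW_w\backslash\overline{-i\,i}$ carries a single cut.

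After this reduction you must re--check conditions (ii)--(iii) of Definition~\ref{def:sheet-branchcut}. Discarding a segment from the interior of the arc disconnects the cut set, and the complement of two disjoint slits is doubly connected. In that case a connecting segment has to be kept, so maximality can only be claimed among simply connected domains. Carrying out this sheet--by--sheet selection explicitly, and proving maximality for the resulting domains, is what turns your glueing argument into the stated decomposition.
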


Note that the decomposition is in no way unique.
Also note that this answers the question posed at the end of last subsection. 

\begin{corollary}
\label{cor:sheet-is-maximal-domain}
Each sheet $\mathfrak{L}_{\Upxi}$ is a maximal domain in which $w^{-1}(z)$ admits a single--valued branch.
\qed
\end{corollary}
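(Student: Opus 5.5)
We prove Corollary \ref{cor:sheet-is-maximal-domain} in two steps: first that $w^{-1}(z)$ has a single--valued branch on each sheet $\mathfrak{L}_{\Upxi,\vartheta}$, and then that this branch cannot be continued to any strictly larger domain of $\CW_w$. Everything takes place on the decomposition \eqref{eq:Rw-decomposition-in-sheets} of Proposition \ref{prop:Rw-is-a-union-of-sheets}. On $\R_{w(z)}$ the projection $\pi_1$ is a biholomorphism onto $\Omega_z$ and inverts $w(z)$ (Diagram \eqref{diagramaRX}). So once a sheet is realized as an open subset of $\R_{w(z)}$ on which $\pi_2$ is injective, the map $\pi_1\circ(\pi_2\vert_{\mathfrak{L}})^{-1}$ is the required branch.

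\textbf{Existence.} By construction, each sheet $\mathfrak{L}_{\Upxi,\vartheta}=\CW_w\setminus\bigcup\overline{{\tt w}_{{\tt j}(\msigma)}{\tt w}_{{\tt j}(\mrho)}}$ embeds isometrically in $\R_{w(z)}$, and $\pi_2$ restricted to it is the identity onto its image in $\CW_w$. By Definition \ref{def:sheet-branchcut}.iii the sheet is simply connected. Its interior contains no singular value, because all singular values lie on the branch cuts, which are removed. Hence the monodromy theorem applies to $w^{-1}(z)$ on it, and $z=\pi_1\circ(\pi_2\vert_{\mathfrak{L}_{\Upxi,\vartheta}})^{-1}(w)$ is a single--valued holomorphic branch.

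\textbf{Maximality.} Suppose this branch extends single--valuedly to a domain $V\supsetneq\mathfrak{L}_{\Upxi,\vartheta}$. Then $V$ contains a point $w_0$ of some removed cut $\overline{{\tt w}_{{\tt j}(\msigma)}{\tt w}_{{\tt j}(\mrho)}}$.
\begin{enumerate}[label=\roman*),leftmargin=*]
\item If $w_0$ is an endpoint, it is a singular value, and it is a branch point in the sheet's boundary. The branch point has ramification index $m\geq2$ (or $m=\infty$), so going around it sends the branch to a different determination, which contradicts single--valuedness near $w_0$. If the extension attempts to stay single--valued only on a slit neighbourhood of $w_0$, that neighbourhood still crosses the interior of the cut, and we are in case (ii).
\item If $w_0$ is interior to the cut, then by Definition \ref{def:diagonal} the corresponding diagonal $\Delta_{\vartheta\msigma\mrho}$ of $\R_{w(z)}$ has its endpoints at singular points $z_\msigma, z_\mrho$. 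The glueing $\sim$ identifies the two banks of the cut on sheet $\vartheta$ with banks on adjacent sheets. The analytic continuation of the branch across the cut therefore coincides with the branch on the adjacent sheet.
\end{enumerate}
A single--valued extension to a neighbourhood of $w_0$ together with its complement in the sheet gives, by continuation around one endpoint of the cut (a small loop crossing the cut once near $w_0$ and encircling ${\tt w}_{{\tt j}(\msigma)}$), a nontrivial monodromy. This contradicts single--valuedness.

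The delicate point is case (ii): making precise that crossing an interior point of a cut genuinely changes the determination. We would show this by noting that the banks of a cut are glued to different sheets, $\vartheta'\neq\vartheta$. If the banks were glued to each other, the cut would not be a genuine branch cut, and its endpoints would not be branch points. We would also use that $\R_{w(z)}$ is simply connected, so that distinct sheets give distinct values of $\pi_1$ over the same $w$, since $\pi_1$ is injective on $\R_{w(z)}$.
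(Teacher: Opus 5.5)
Your existence step is fine as long as it rests only on the embedding of the sheet in $\R_{w(z)}$ with $\pi_2$ injective. That is how the paper treats the statement: it gives no separate proof and regards it as immediate from Proposition \ref{prop:Rw-is-a-union-of-sheets}. Drop the monodromy-theorem sentence, though, because its premise is false. A sheet need not have every singular value on its cuts: for example $\infty\in\CW_w\setminus\overline{-i\,i}$ for $\Bi(z)/\Ai(z)$, and $1\in\CW_w\setminus\overline{\infty 0}$ for $\exp(\exp(z))$, where these points are cosingular values.

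\textbf{The gap is in case (ii).}

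\emph{The loop argument fails.} The cuts of a sheet form a tree. A loop crossing the cut once near an arbitrary interior point $w_0$ and encircling ${\tt w}_{{\tt j}(\msigma)}$ therefore generally also crosses other cuts on that side of the tree. Those crossings lie outside $V$, so single-valuedness on $V$ constrains nothing along the loop. You also never show that the monodromy along this loop is nontrivial.

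\emph{The key fact is not justified.} What you finally need is that the two banks of a cut are glued to different sheets, and this does not follow from the endpoints being branch points. Take a degree-$3$ rational map with four simple critical values, with local monodromies $(12),(12),(23),(23)$ with respect to generators adapted to the chain of cuts $\overline{{\tt w}_1{\tt w}_2},\overline{{\tt w}_2{\tt w}_3},\overline{{\tt w}_3{\tt w}_4}$.
\begin{itemize}
\item Crossing the middle cut has monodromy $(12)(12)=\mathrm{id}$, so on every branch its banks are glued to each other.
\item Yet on branch $2$ both ${\tt w}_2$ and ${\tt w}_3$ lift to critical points.
\end{itemize}
That sheet satisfies Definition \ref{def:sheet-branchcut} and is not maximal in your sense. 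So the genuineness of each cut (gluing only between distinct sheets) has to be taken from the construction of the decomposition, which is the input the paper uses silently. Case (i) is superfluous anyway, since an open $V$ containing an endpoint contains interior points of the incident cut.

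\textbf{How to close it.} With that input the argument is local.
\begin{itemize}
\item Pick $w_1\in V$ in the interior of one cut $c$, not a singular value, and a disc $B\subset V$ around $w_1$ meeting the removed set only in $c$. Then $B\setminus c=B_+\cup B_-\subset\mathfrak{L}_{\Upxi,\vartheta}$.
\item The extension $G$ is holomorphic on $B$. By the gluing and the identity theorem, $G$ is $\pi_1\circ\pi_2^{-1}$ taken on sheet $\vartheta$ over $B_+$ and on the adjacent sheet $\vartheta'\neq\vartheta$ over $B_-$.
\item On $B_-$, $G$ also equals $g$, i.e.\ $\pi_1$ of points of sheet $\vartheta$.
\item Injectivity of $\pi_1$ and disjointness of the sheets give the contradiction. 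Simple connectivity of $\R_{w(z)}$ plays no role.
\end{itemize}
With this replacing your loop argument, your proof becomes a correct and more detailed version of what the paper leaves as evident.
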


\begin{remark}[The relevance of the sheets and of the decomposition of $\R_{w(z)}$]
Even though the sheets $\mathfrak{L}_{\Upxi}$, as in \eqref{def:sheet-branchcut}, 
appear to be very innocent (they are only copies of the Riemann sphere with certain 
branch cuts),
when they are ``mounted'' on the Riemann surface $\R_{w(z)}$ they gain relevance
in the sense that: 

\noindent
$\bigcdot$
they are now maximal domains where $w^{-1}(z)$ admits a 
single--valued branch,

\noindent
$\bigcdot$
they also inherit the vector field structure of $\R_{w(z)}$ provided by $\pi_2^* \del{}{w}$ 
(see Diagram \eqref{diagramaRX}).
\end{remark}

For examples of decomposition of $\R_{w(z)}$ in sheets, 
see Example \ref{example:N-exp-tanh}.b for the simplest case of an $N$--function, 
Examples \ref{example:tessellation-Airy}.b and \ref{example:function-wM4alt}.b 
for a presentation of two non--trivial cases of $N$--functions when ${\tt q} = 3$;
Examples \ref{example:tessellation-expsin}.b and \ref{example:sinexpsin}.b 
show Speiser functions of index ${\tt q}=4$ that are not $N$--functions.

\smallskip
\noindent
The Riemann surface $\R_{w(z)}$ of a Speiser--function $w(z)$, has:
\begin{itemize}[label=$\bigcdot$ ,leftmargin=*]
\item 
$0\leq{\tt r}\leq\infty$ finitely ramified branch points, and 

\item 
$0\leq{\tt p}\leq\infty$ infinitely ramified branch points 
\end{itemize}

\noindent
over ${\tt q}$ distinct singular values (with 
$2\leq {\tt q} \leq {\tt r} + {\tt p}$).

\begin{remark}[Features of some families of functions]
1.
Let $w(z)=R(z)$ be a rational function of degree $n\geq 2$.
The surface $\R_{w(z)}$ has 
${\tt r}<\infty$ 
finitely ramified branch points 
over $2\leq {\tt q} \leq 2n-2$ 
distinct critical values.

\noindent 
2.
Consider a polynomial $w(z)=P(z)$ of degree $n\geq 2$, then
$\R_{w(z)}$ has a finitely ramified branch point of ramification 
index $n-1$ over $\infty\in\CW_w$ and whose projection 
via $\pi_1$ is $\infty\in\CW_z$, 
and up to $n-1$ finitely ramified branch points over 
${\tt q}-1$ distinct finite critical values.

\noindent 
3.
Given an $N$--function $w(z)$, its surface
$\R_{w(z)}$ has:
\begin{itemize}[label=$\bigcdot$ ,leftmargin=*]

\item
${\tt p}<\infty$ 
infinitely ramified branch points (logarithmic singularities of $w^{-1}(z)$) over 
${\tt q} \leq {\tt p}<\infty$
distinct asymptotic values, and 

\item
no finitely ramified branch points (algebraic singularities of $w^{-1}(z)$). 
\end{itemize}
\end{remark}

\section{Schwarz--Klein--Speiser tessellations}   
\label{sec:tessellations}

We roughly follow the classical works of 
H. A. Schwarz \cite{Schwarz}, 
F. Klein \cite{Klein}, \cite{Chislenko-Tschinkel},
R. Nevanlinna \cite{Nevanlinna2}\,ch.\,XI\,\S2, and 
A. Speiser \cite{Speiser}.
However, we make some precisions\footnote{
Regarding the vertices of infinite valence.}
that we consider improve the presentation and our understanding.
We develop, in an axiomatic way, tessellations, their associated graphs and certain labellings associated to them.

Let $\Omega_z$ be a simply connected Riemann surface.
For our purposes, we shall require the compactifications of $\Omega_z$
provided by Proposition \ref{prop:dos-tipos-singularidades}.2.ii--iii. 
Namely,

\centerline{
$(\Omega_z \cup \partial_{\mathcal{I}} \Omega_z) 
\doteq 
\begin{cases}
\CW_z 
\\
\CC_z\cup\{\infty_1,\ldots,\infty_{\tt p} \} 
\\
\Delta_z \cup_{\sigma =1}^\infty  \{ \e^{i \theta_{\sigma} } \}
\end{cases}$}

\noindent
as the case may arise.

\begin{definition}
\label{def:de-teselacion}
\
\begin{enumerate}[label=\arabic*),leftmargin=*] 
\item
A  \emph{tessellation} of a
surface $\Omega_z$ is a collection 
of alternating colored tiles
\begin{equation}
\label{teselacion}
\mathscr{T}
=
\underbrace{
T_1 \cup \ldots \cup T_\alpha \cup \ldots  }_{ n\text{ blue tiles} }
\cup 
\underbrace{ T^\prime_1 \cup  
\ldots  \cup T^{\prime}_\alpha  \cup \ldots }_{n \text{ grey tiles}} \subset 
\Omega_z \cup \partial_{\mathcal{I}} \Omega_z, 
\ \ \ 
2 \leq n \leq \infty,
\end{equation}

\noindent
where the \emph{tiles}
$\{ T_{\alpha}, \, T_{\alpha}^\prime \}$
are open Jordan domains, 
such that: 

\begin{enumerate}[label=\roman*),leftmargin=*]
\item
The union of their closures 
$\cup_{{\alpha}}
\big( \overline{T_{\alpha}} 
\cup
\overline{T_{\alpha}^\prime } \big)$
is $\Omega_z \cup \partial_{\mathcal{I}} \Omega_z$.

\item 
The boundary of the closure of each tile 
$\partial\overline{T_\alpha}$ (resp. 
$\partial\overline{ T_{\alpha}^\prime}$) has 
$\rho$ vertices and $\rho$ edges, where $2 \leq \rho \leq {\tt q}$
and $\rho$ depends on the particular tile.

\item
If the intersection of the closures of any
two tiles is non--empty, 
then it consists of a finite number of 
simple paths (edges) and their extreme points (vertices).
\end{enumerate}

\item
If all the tiles $\{ T_{\alpha} \}$
(resp. $\{T_{\alpha}^\prime \}$) have 
the same number of vertices and edges,
we shall say that the tessellation is \emph{homogeneous}.
\end{enumerate}
\end{definition}

Note that, if $n<\infty$, then 
a  tessellation $\mathscr{T}$
has $n$ blue tiles and $n$ grey tiles; this is 
called the {\it global balance condition} in
\cite{Koch-Lei}, see \S \ref{sec:planar-graphs-Thrurston} for
further discussion. 
In the case of $n=\infty$, we shall say that the tessellation $\mathscr{T}$ satisfies
the global balance condition if the cardinality of the blue
and gray tiles are equal.
By looking at the boundaries of the tiles,
say $\partial \overline{T_\alpha}, \,  
\partial \overline{T_{\alpha}^\prime}$,
a tessellation $\mathscr{T}$ 
determines an underlying graph $\Gamma$.

\begin{definition}
\label{def:de-t-graph}
A {\it ${\tt t}$--graph} $\Gamma$ is an oriented 
connected graph 
embedded in $(\Omega_z \cup \partial_{\mathcal{I}} \Omega_z)$, 
with vertices $V(\Gamma)$ of infinite or
even valence 
greater than or equal to $4$  
and edges $E (\Gamma)$, such that:

\begin{enumerate}[label=\roman*),leftmargin=*]
\item
\begin{equation}
\label{eq:tessellation-t-map}
\begin{array}{rl}
\mathscr{T}(\Gamma) \doteq & 
(\Omega_z \cup \partial_{\mathcal{I}} \Omega_z) \backslash \Gamma
\\
& \vspace{-.3cm}
\\ 
= &
\underbrace{
T_1 \cup \ldots \cup T_\alpha \cup \ldots  }_{ n\text{ blue tiles} }
\cup 
\underbrace{ T^\prime_1 \cup  
\ldots  \cup T^{\prime}_\alpha  \cup \ldots }_{n \text{ grey tiles}} \subset 
\Omega_z \cup \partial_{\mathcal{I}} \Omega_z, 
\ \ \ 
2 \leq n \leq \infty, 
\end{array}
\end{equation}

\noindent
is a tessellation,   
as in Definition \ref{def:de-teselacion}. 

\item
Each blue tile $T_\alpha$ is on
the left side of the oriented edges of $\Gamma$.

\item
If there are vertices $V(\Gamma)$ with 
infinite valence, they are on the ideal boundary 
$\{\infty_1,\ldots,\infty_{\tt p} \}$ or 
$\partial_{\mathcal{I}} \Delta_z$. 
Moreover, there are no finite valence vertices on the ideal boundary.
\end{enumerate}
\end{definition}

With the above in mind, a tessellation $\mathscr{T}$ and a 
${\tt t}$--graph $\Gamma$ are essentially 
equivalent objects, where the alternating 
colouring in Equation 
\eqref{teselacion} corresponds 
to the orientation of the edges in Definition 
\ref{def:de-t-graph}. 
In simple words, 
a ${\tt t}$--graph must be understood as the simplest
oriented graph describing a tessellation. 

The tessellations arising from complex analytic functions
are homogeneous and require a more accurate notion, as follows. 

\begin{definition}
\label{definicion-de-A-map}
\begin{enumerate}[label=\arabic*),leftmargin=*] 
\item
An {\it ${\tt A}$--map} $\widehat{\Gamma}_{\tt q}$ is an 
oriented, connected graph 
embedded in  $\Omega_z \cup \partial_{\mathcal{I}} \Omega_z$, 
with vertices $V(\widehat{\Gamma}_{\tt q})$ of
infinite or 
even valence  greater than or equal to $2$
and edges $E (\widehat{\Gamma}_{\tt q})$, such that:

\begin{enumerate}[label=\roman*),leftmargin=*]
\item
The subset of vertices of valence greater
than or equal to 4 is non empty. 

\item
If we forget all the vertices
of valence 2 of $\widehat{\Gamma}_{\tt q}$, then
we obtain a ${\tt t}$--graph $\Gamma$ such that:
\begin{equation}
\label{eq:tessellation}
\mathscr{T}(\widehat{\Gamma}_{\tt q}) \doteq 
\mathscr{T}(\Gamma)
\end{equation}

\noindent
is, set theoretically, a tessellation as in Definition 
\ref{def:de-teselacion}.
\end{enumerate}

\item 
The boundary 
$\partial \overline{T_\alpha}$ 
(resp. $\partial \overline{T_{\alpha}^\prime}$) of  
each tile consists of exactly $\tt q$ vertices and $\tt q$ edges 
of $\widehat{\Gamma}_{\tt q}$,
{\it i.e.} the tessellation $\mathscr{T}(\widehat{\Gamma}_{\tt q})$ is homogeneous.

\item 
We shall say that $\Gamma$ and $\widehat{\Gamma}_{\tt q}$, as in (ii) above, are \emph{compatible}.
\end{enumerate}
\end{definition}

The {\it forgetting vertices operation}
in part (ii) above is as follows. 
We consider a vertex, say $z_1=0$ of valence 2 
in $\widehat{\Gamma}_{\tt q}$ and its two adjacent edges, say $(-1,0)$ and $(0,1)$, 
thus we have
$(-1,0) \cup \{0\} \cup (0,1)$.
The operation of forgetting
the vertex $0$ replaces the above by a unique
edge $(-1,1)$. 

\begin{example}
In Figure \ref{fig:contraejemplos} we show three tessellations corresponding to $\tt t$--graphs
$\Gamma$ that do not represent Speiser functions on $\CW_z$.
For Figure \ref{fig:contraejemplos}.a this follows by 
observing that it only 
has one branch point. 
For Figures \ref{fig:contraejemplos}.b--c this will be shown 
in \S\ref{sec:planar-graphs-Thrurston}.

\begin{figure}[h!]
\begin{center}
\includegraphics[width=0.7\textwidth]{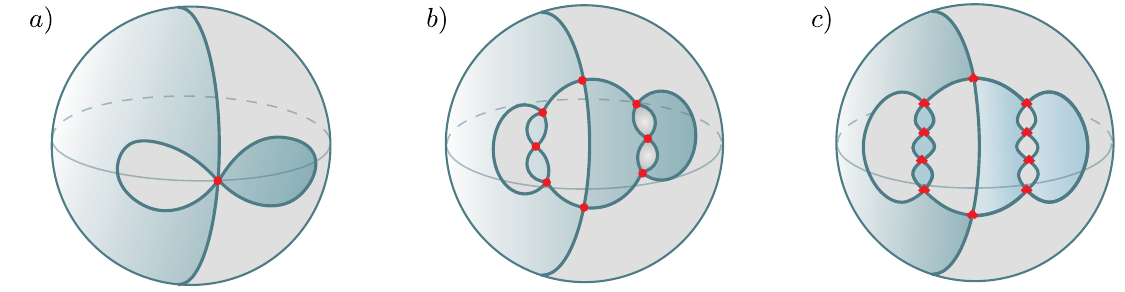}
\caption{
Examples of $\tt t$--graphs $\Gamma$, whose corresponding tessellations 
$\mathscr{T}(\Gamma)$ do not represent Speiser functions.}
\label{fig:contraejemplos}
\end{center}
\end{figure}
\end{example}

\begin{example}
Consider the non generic rational function 

\centerline{
$R(z)=\dfrac{z(z^2-1)(z^2-4)}{(z-3)}$
}

\noindent
of degree 5. 
It has 6 critical points (5 simple ones located on the plane 
and an multiplicity 4 critical point at
$\infty\in\CW_z$), and 6 critical values
$\mathcal{SV}_R=\{ {\tt w}_1, {\tt w}_2, {\tt w}_3, {\tt w}_4, {\tt w}_5, {\tt w}_6=\infty \}$ 
lying on $\gamma=\RR\cup\{\infty\}$.
The inverse image of $\gamma$ under $R(z)$, namely
$\Gamma= R^{-1} (\RR \cup \{ \infty\})$ is a $\tt t$--graph with tessellation 
$\mathscr{T}(\Gamma)$ as in Figure \ref{fig:tres-mosaicos-etiquetas-h}.a--b.
Figure \ref{fig:tres-mosaicos-etiquetas-h}.c shows an $\tt A$--map $\widehat{\Gamma}_6$,
constructed by edge subdivision of $\Gamma$ (hence they are compatible), 
and its corresponding homogeneous tessellation
$\mathscr{T}(\widehat{\Gamma}_6) = \CW_z \backslash \ \widehat{\Gamma}_6$ whose
tiles are 6--gons with two types of vertices: red vertices of valence greater than or equal to 4
corresponding to the critical points of $R(z)$, and green vertices of valence 2 corresponding 
to the cocritical points of $R(z)$.
All tiles are $6$--gons having labelled vertices 
with cyclic order $\mathcal{W}_{6}=[{\tt w}_1, \ldots , {\tt w}_6]$.

\begin{figure}[h!]
\begin{center}
\includegraphics[width=0.9\textwidth]{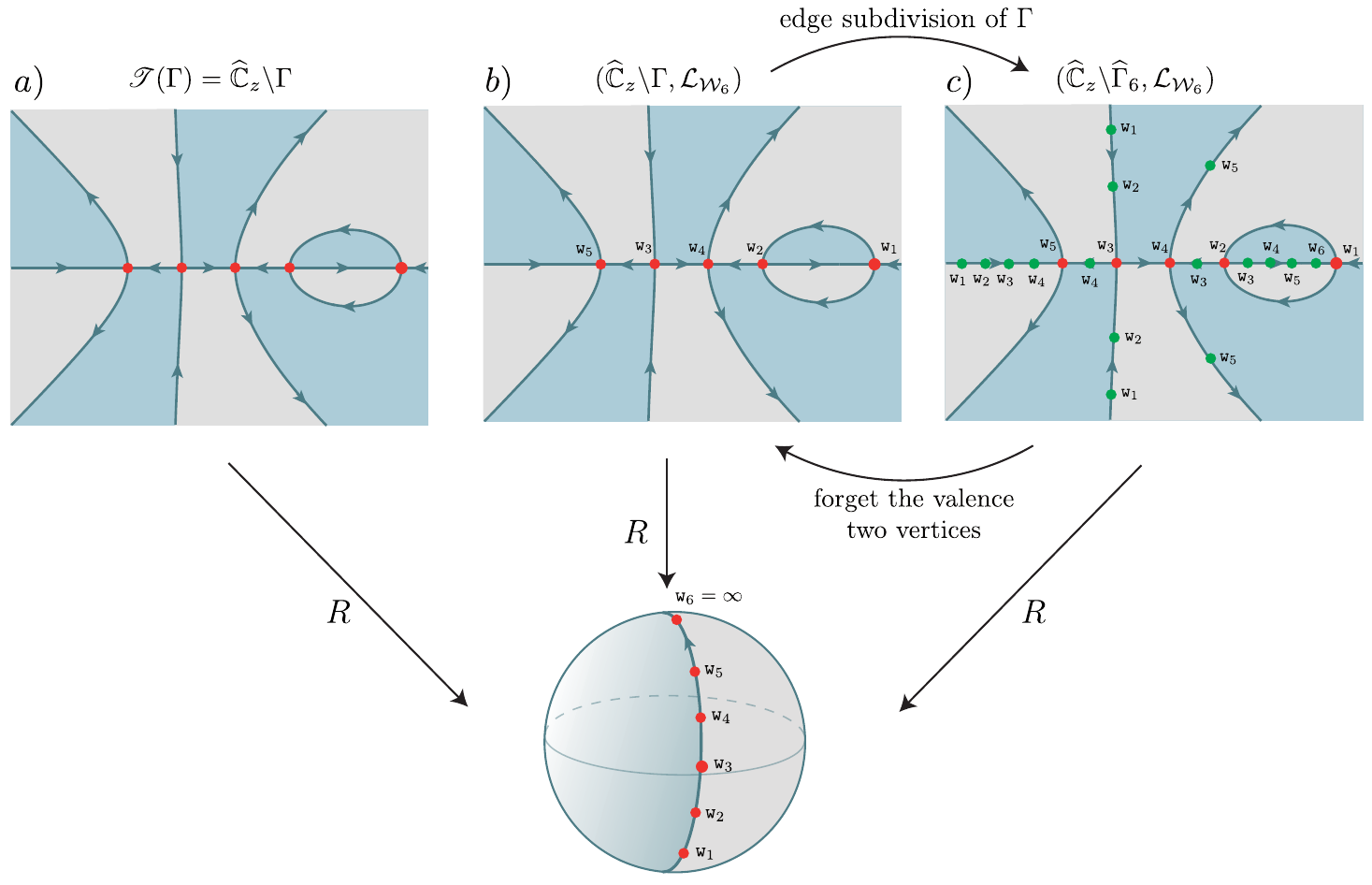}
\caption{
Affine view of the tessellation of the 
non generic rational function $R(z)={z(z^2-1)(z^2-4)}/{(z-3)}$ of degree 5. 
It has 6 critical points, one of them being $\infty\in\CW_z$ and 6 critical values
$\mathcal{SP}_R=\{ {\tt w}_1, {\tt w}_2, {\tt w}_3, {\tt w}_4, {\tt w}_5, {\tt w}_6=\infty \}$ 
lying on $\gamma=\RR\cup\{\infty\}$.
a) 
The ${\tt t}$--graph $\Gamma= R^{-1} (\RR \cup \{ \infty\})$ and 
its non homogeneous tessellation
$\mathscr{T}(\Gamma)$. 
b) 
The ${\tt t}$--graph $\Gamma$ with consistent $6$--labelling 
$\mathcal{L}_{\mathcal{W}_6}: V(\Gamma) 
\longrightarrow \mathcal{W}_6$, where $\mathcal{L}_{\mathcal{W}_6}(\infty)= {\tt w}_6$. 
c) 
The ${\tt A}$--map $\widehat{\Gamma}= R^* \gamma$, 
its homogeneous tessellation
$\mathscr{T}(\widehat{\Gamma}_6)$,
and its
consistent $6$--labelling 
$\mathcal{L}_{\mathcal{W}_6}$; each tile 
is a $6$--gon, with
vertices at the (red) critical points $\mathcal{SP}_R$, 
the point $\infty\in\CW_z$ (which has label ${\tt w}_6$), and
the (green) cocritical points $\mathcal{CS}_R$.
This figure appears as figure 1 of \cite{AlvarezGutierrezMucino} with slightly different notation.
}
\label{fig:tres-mosaicos-etiquetas-h}
\end{center}
\end{figure} 
\end{example}

\begin{example}
In Figure \ref{fig:SinExpSin}.a we observe a tessellation 
$\mathscr{T}(\widehat{\Gamma}_4)$ 
corresponding to the Speiser function of index 4

\centerline{$w(z) = \sin (z) \exp (\sin (z))$.}

\noindent
The $\tt t$--graph $\Gamma$ consists of: the black edges, 
an infinite number of red vertices of valence 4 or 8 (corresponding to the real critical points of $w(z)$
with critical values $\e$ and $-\e^{-1}$), 
and an infinite number of vertices ``at infinity'' of infinite valence
(ideal points $\{\infty_1, \infty_2,\ldots,\infty_{\tt p},\ldots \}$ in the non--Hausdorff 
compactification $\CC_z \cup \{\infty_1,\infty_2,\ldots,\infty_{\tt p},\ldots \}$ 
corresponding to the 
logarithmic singularities of $w^{-1}(z)$ with asymptotic values $0$ and $\infty$).
A compatible $\tt A$--map is obtained by edge subdivision consisting 
of adding an infinite number of green vertices of valence 2 at the cosingular points (with cosingular 
values $0$, $\e$ and $-\e^{-1}$).
See Example \ref{example:sinexpsin} for further details.
\end{example}

\begin{definition}
\label{def:orden-ciclico-y-losetas}
\
\begin{enumerate}[label=\arabic*),leftmargin=*] 
\item
Consider ${\tt q}\geq 2$ distinct values $\{ {\tt w}_\ell \}_{ \ell=1}^{\tt q} \subset \CW_w$, 
and assign them a \emph{cyclic order}, say  

\centerline{$
\mathcal{W}_{\tt q} =
[{\tt w}_1, \ldots, 
{\tt w}_{\tt j}, \ldots,
{\tt w}_{\tt q} ] $;
}

\noindent 
further, consider a representative 
$\gamma$ of the isotopy class of Jordan paths relative to 
the $\tt q$ distinct values traversed in the order given above. Thus

\centerline{ 
$\gamma \subset \CW_w$ 
\ runs through \ 
$\mathcal{W}_{\tt q}$ \ (in the chosen order).
}

\noindent 
The isotopy class $[\gamma]$ realizes the above 
\emph{cyclic order $\mathcal{L}_\gamma$ for the $\tt q$ distinct values}.

\item
The path $\gamma$ determines a \emph{trivial tessellation} of the sphere

\centerline{
$\mathscr{T}(\gamma)  = 
\CW_w \backslash \gamma = T \cup T^\prime$}

\noindent
with two tiles (which are topological $\tt q$--gons), 
the \emph{blue tile $T$} is on the left side of $\gamma$, 
the \emph{grey tile $T^\prime$} is on the right side of $\gamma$.
\end{enumerate}
\end{definition}

\begin{remark}[Cyclic order]
By definition, the cyclic order $\mathcal{L}_\gamma$ 
and the cyclic order of the $\tt q$ distinct values 

\centerline{$\mathcal{W}_{\tt q}=[ {\tt w}_{1}, \ldots, {\tt w}_{\tt j}, \ldots, {\tt w}_{\tt q} ] $}

\noindent
are to be thought of as equivalent.
\end{remark}

\begin{remark}[Graph and geodesic structures on $\gamma$]
\label{rem:gamma-poligonal-y-grafica}
1. 
As a graph, $\gamma$ is a cyclic graph with ${\tt q}$ ordered vertices,
namely 
$[{\tt w}_{1}, \ldots, {\tt w}_{\tt j}, \ldots, 
{\tt w}_{\tt q} ] \subset\CW_w$, 
and the respective segments 
$\{ \overline{{\tt w}_{\tt j} {\tt w}_{{\tt j}+1}} \} \subset \gamma $
as edges. 

\noindent 2.
Moreover, 
when it is convenient, one may choose
$\gamma$ as a polygonal with ${\tt q}$
geodesic segments 

\centerline{ 
$\overline {{\tt w}_1 {\tt w}_2 } \cup
\ldots \cup
\overline {{\tt w}_{{\tt q}-1} {\tt w}_{\tt q} } \cup
\overline {{\tt w}_{\tt q} {\tt w}_1 }
$.}
\end{remark}

The orientation of $\widehat{\Gamma}_{\tt q}$ is 
inherited by the cyclic order $\mathcal{L}_\gamma$, \emph{i.e.}\ anticlockwise
for the blue tiles $T_\alpha$ of the tessellation.

After Theorem \ref{teo:principal}, the
name ${\tt A}$--map for $\widehat{\Gamma}_{\tt q}$ 
must be understood as a 
coarse abbreviation of
``complex analytic function''.

By condition (iii) of Definition \ref{def:de-t-graph}, 
$\Gamma$ or $\widehat{\Gamma}_{\tt q}$ are on 
$\Omega_z=\CW$ if and only if  
they do not have vertices of infinite valence.

\begin{definition}
\label{def:etiquetado-consistente}
A {\it consistent $q$--labelling 

\centerline{$\mathcal{L}_{\mathcal{W}_{\tt q}}: V(\Gamma) \longrightarrow \mathcal{W}_{\tt q}, 
\ \ \
{\tt q} \geq 2$, } 

\noindent 
for a ${\tt t}$--graph $\Gamma$} satisfies the following conditions:

\begin{enumerate}[label=\roman*),leftmargin=*]
\item
For each blue tile $T_\alpha$ of the tessellation
$\mathscr{T}(\Gamma)$, 
if $\{ z_\iota \}$ are the vertices of its
boundary $\partial \overline{T_\alpha}$, ordered 
with cyclic anti--clockwise sense, then the 
labels (values) $\{ \mathcal{L}_{\mathcal{W}_{\tt q}} ( z_\iota ) \} \subset \mathcal{W}_{\tt q}$
appear exactly once and with the same cyclic 
order provided by $\mathcal{L}_\gamma$.

\item
Each label (value) ${\tt w}_{j} \in \mathcal{W}_{\tt q}$ appears under
$\mathcal{L}_{\mathcal{W}_{\tt q}}$ for at least one
vertex $z_\iota \in V(\Gamma)$ of
$\Gamma$, which by definition have
valence greater than or equal to $4$.
\end{enumerate}  
\end{definition}

\begin{remark}[Consistent $\tt q$--labelling for ${\tt A}$--maps]
\noindent 1.\ The notion of consistent $\tt q$--labelling for 
a ${\tt t}$--graph $\Gamma$ extends to any 
compatible ${\tt A}$--map
$\widehat{\Gamma}_{\tt q}$, with a notable distinction:

\noindent
$\bigcdot$ all the labels 
of $\mathcal{W}_{\tt q}$ appear on the vertices  
of each blue tile $T_\alpha$, since all the tiles of 
$\mathscr{T}(\widehat{\Gamma}_{\tt q})$ are $\tt q$--gons.

\noindent 2.\ On the other hand, for 
a $\tt t$--graph $\Gamma$ some labels
of $\mathcal{W}_{\tt q}$ are usually hidden  in the boundary
of each blue tile, since the tiles of the tessellation 
$\mathscr{T}(\Gamma)$ can be 
$\rho$--gons, for $2 \leq \rho \leq {\tt q} $,
see example 2 and figure 1 of \cite{AlvarezGutierrezMucino}.
By abuse of notation, we use the notion of consistent 
$\tt q$--labelling for ${\tt t}$--graphs and ${\tt A}$--maps.
\end{remark}

A precise statement for the above remark is as follows.

\begin{lemma}[Consistent $\tt q$--labellings for $\Gamma$ and $\widehat{\Gamma}_{\tt q}$]
\label{lem:consistent-labelings-Gamma-hatGamma}
Let $\Gamma$ be a $\tt t$--graph and $\widehat{\Gamma}_{\tt q}$ an $\tt A$--map that
are compatible 
($\Gamma$ can be obtained from $\widehat{\Gamma}_{\tt q}$ by forgetting vertices of valence 2).

\noindent
1.\ If $\Gamma$ supports a consistent $\tt q$--labelling $\mathcal{L}_{\mathcal{W}_{\tt q}}$, 
then $\mathcal{L}_{\mathcal{W}_{\tt q}}$ extends to a consistent $\tt q$--labelling on
$\widehat{\Gamma}_{\tt q}$.

\noindent
2.\ If $\widehat{\Gamma}_{\tt q}$ supports a consistent $\tt q$--labelling 
$\mathcal{L}_{\mathcal{W}_{\tt q}}$, 
then $\mathcal{L}_{\mathcal{W}_{\tt q}}$ restricts to a consistent $\tt q$--labelling on
$\Gamma$.
\end{lemma}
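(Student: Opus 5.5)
The plan is to work tile by tile, using that compatibility means each edge of $\Gamma$ is subdivided by finitely many valence $2$ vertices of $\widehat{\Gamma}_{\tt q}$. Then every blue tile $T_\alpha$ of $\mathscr{T}(\Gamma)=\mathscr{T}(\widehat{\Gamma}_{\tt q})$ is the same open set in both tessellations. Its boundary is a $\rho$--gon in $\Gamma$, $2\leq\rho\leq{\tt q}$, and a ${\tt q}$--gon in $\widehat{\Gamma}_{\tt q}$; the ${\tt q}-\rho$ extra vertices are the valence $2$ vertices sitting on its $\rho$ edges.

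For part 2, restriction is the easy direction. On $\widehat{\Gamma}_{\tt q}$ the anticlockwise boundary of each blue tile carries every label of $\mathcal{W}_{\tt q}$ exactly once, in the order $\mathcal{L}_\gamma$. Deleting the valence $2$ vertices leaves a cyclic subsequence, so the labels still appear at most once and in the induced cyclic order, which gives condition (i) of Definition \ref{def:etiquetado-consistente}. For condition (ii) I need every label to occur at some vertex of valence $\geq 4$. I would argue by contradiction: if ${\tt w}_{\tt j}$ occurred only at valence $2$ vertices, I would use the analytic origin of $\widehat{\Gamma}_{\tt q}$ (or the nontriviality built into the definitions) to force a vertex of valence $\geq 4$ with that label. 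If the definitions do not supply this, condition (ii) has to be included in the hypothesis, and I would state that explicitly.

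For part 1, extension is the substantive direction. Take an edge $e$ of $\Gamma$ from $u$ to $v$, with labels ${\tt w}_a=\mathcal{L}(u)$ and ${\tt w}_b=\mathcal{L}(v)$. In $\widehat{\Gamma}_{\tt q}$ it is subdivided by $k_e$ valence $2$ vertices. The edge $e$ lies on the boundary of exactly one blue tile, since blue tiles lie to the left of oriented edges. In that tile, consistency forces ${\tt w}_a$ and ${\tt w}_b$ to be the consecutive labels along the anticlockwise boundary among those present, so the labels strictly between them in $\mathcal{L}_\gamma$ form a well-defined block. I assign this block, in order, to the $k_e$ new vertices on $e$.

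Two counts must then be checked, and this is the main obstacle. First, $k_e$ has to equal the number of skipped labels, and this count must agree when $e$ is read from the grey tile on its other side. I would use homogeneity: the $\widehat{\Gamma}_{\tt q}$--boundary of the blue tile has exactly ${\tt q}$ vertices. Summing the gaps $k_e+1$ between consecutive $\Gamma$--vertices around the tile gives ${\tt q}$, and summing the label gaps in $\mathcal{L}_\gamma$ around the tile also gives ${\tt q}$ by the cyclic order. So the total number of new vertices matches the total number of skipped labels. This does not yet give edge-by-edge equality. Here I would argue that the grey tile across $e$ imposes the same gap from ${\tt w}_a$ to ${\tt w}_b$, traversed clockwise, and use the rigidity of ${\tt q}$--gons in both tiles to pin down each $k_e$. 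If the subdivision given by compatibility is not forced to match, I would reinterpret the statement as: some compatible $\widehat{\Gamma}_{\tt q}$, obtained by subdividing each edge with the appropriate number of vertices, carries the extension.

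Once the counts match, each new vertex gets exactly one label, since it lies on exactly one edge, so the labelling is well defined. Each blue ${\tt q}$--gon then shows all ${\tt q}$ labels once in cyclic order. Condition (ii) is inherited from $\Gamma$.
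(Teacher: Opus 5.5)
Part 1 of your proposal fails at the step where you hope the grey tile and the ``rigidity of ${\tt q}$--gons'' will force $k_e$ to equal the number of skipped labels, edge by edge. That step cannot be proved, because it is false for an arbitrary compatible $\widehat{\Gamma}_{\tt q}$.

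\begin{enumerate}
\item On an ${\tt A}$--map, a consistent labelling advances by exactly one step along every edge. Each edge lies on exactly one blue ${\tt q}$--gon, and its orientation agrees with the anticlockwise order there. So $\mathcal{L}$ extends to a given $\widehat{\Gamma}_{\tt q}$ if and only if $k_e=g_e-1$ for every edge $e$ of $\Gamma$, where $g_e\in\{1,\ldots,{\tt q}-1\}$ is the number of steps from the label of the tail of $e$ to the label of its head.
\item The tile conditions $\sum_{e\subset\partial T}(k_e+1)={\tt q}$ do not determine the $k_e$. Around a vertex $z$ of $\Gamma$ the edges alternate outgoing/incoming, and every tile cornered at $z$ contains exactly one edge of each kind. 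So lowering $k_e$ by one on all outgoing edges of $z$ and raising it by one on all incoming edges keeps every tile, blue and grey, a ${\tt q}$--gon.
\item Whenever all outgoing edges of $z$ are subdivided, this gives a second ${\tt A}$--map $\widehat{\Gamma}'$ compatible with $\Gamma$: the subdivision dictated by moving the label of $z$ up by one. By the first point, $\mathcal{L}$ does not extend to $\widehat{\Gamma}'$.
\item This already happens analytically. Take a degree $4$ polynomial with monodromy $(12)$, $(34)$, $(23)$ around ${\tt w}_1,{\tt w}_2,{\tt w}_3$ and a $4$--cycle at ${\tt w}_4=\infty$, with blue tiles indexed by the fibre over a point on the blue side of $\gamma$. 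The critical point over ${\tt w}_1$ is a corner of blue tiles $1,2$, whose vertices over ${\tt w}_2$ are cocritical. So both of its outgoing edges are subdivided.
\end{enumerate}
Hence your fallback is not optional. Statement 1 holds only for the ${\tt A}$--map obtained by subdividing each edge according to the labels. That is exactly what the paper's one-line proof uses: the edge subdivision of Step 2 in the proof of Theorem \ref{teo:principal}. Drop the rigidity argument and state part 1 in that form.

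Two further points. First, with that reading you must still check that the subdivided graph is an ${\tt A}$--map, i.e.\ that the grey tiles also become ${\tt q}$--gons; you only check blue tiles. For finite $\Gamma$ this follows by counting. The boundary of a grey tile is a directed cycle along which the labels advance by $g_e$, so its total gap is a positive multiple of ${\tt q}$. Each edge bounds exactly one blue and one grey tile, and there are $n$ of each, so the grand total is $n{\tt q}$ on both sides. This forces every grey total to equal ${\tt q}$. For infinite $\Gamma$ this counting is unavailable and a separate argument is needed; the paper is silent on this point too. Second, in part 2 you need neither a contradiction argument nor an appeal to analytic origin. Condition (ii) of Definition \ref{def:etiquetado-consistente} for $\widehat{\Gamma}_{\tt q}$ is already phrased in terms of vertices of valence $\geq 4$ (compare Example \ref{example-not-all-Amaps-are-consistent}). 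Forgetting valence--$2$ vertices leaves exactly these vertices, with their valences unchanged, so (ii) is part of your hypothesis and transfers verbatim. Condition (i) follows from the cyclic--subsequence observation you already make.
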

\begin{proof}
Statement 1 follows by edge subdivision that 
adds valence two vertices to $\Gamma$ to obtain $\widehat{\Gamma}_{\tt q}$.
Statement 2, the converse, is also true by the forgetting vertices of valence 2 operation.
\end{proof}

Note that a given $\tt t$--graph $\Gamma$ can support several consistent $\tt q$--labellings:

\noindent
$\cdot$ 
Figure \ref{fig:tres-mosaicos-etiquetas-h}.a
shows a $\tt t$--graph $\Gamma$ that supports the 
consistent $6$--labelling shown in 
Figure \ref{fig:tres-mosaicos-etiquetas-h}.b.
The consistent $6$--labelling of a compatible 
$\tt A$--map $\widehat{\Gamma}_{6}$ 
is shown in Figure \ref{fig:tres-mosaicos-etiquetas-h}.c.

\noindent
$\cdot$ 
Figure 2.a of \cite{AlvarezGutierrezMucino}, shows a consistent $4$--labelling for another
$\tt A$--map obtained from the same $\tt t$--graph $\Gamma$ 
of Figure \ref{fig:tres-mosaicos-etiquetas-h}.a. 
Figures 2.b and 2.c 
of \cite{AlvarezGutierrezMucino}
show two different
consistent $5$--labellings for two different $\tt A$--maps, whose subjacent $\tt t$--graphs are 
also the same $\Gamma$.

\noindent
The above shows that, for a given $\tt t$--graph, there might be several different consistent 
$\tt q$--labellings for fixed $\tt q$ and for different $\tt q$'s.

\begin{definition}
\label{def:Speiser-tessellation}
A \emph{Speiser $\tt q$--tessellation}
is a pair $\big(\mathscr{T}(\widehat{\Gamma}_{\tt q}),\, \mathcal{L}_{\mathcal{W}_{\tt q}} \big)$
where
\begin{enumerate}[label=\roman*),leftmargin=*] 
\item
$\mathscr{T}(\widehat{\Gamma}_{\tt q})$ is a tessellation on 
$\Omega_z$, 
arising from an $\tt A$--map $\widehat{\Gamma}_{\tt q}$
as in Definition \ref{definicion-de-A-map}, 
and 

\item
$\mathcal{L}_{\mathcal{W}_{\tt q}}$ is a consistent $\tt q$--labelling of $\widehat{\Gamma}_{\tt q}$.
\end{enumerate}
\end{definition}

\subsection{Schwarz--Klein--Speiser's algorithm}
\label{sec:Schwarz-Klein-Speiser-algorithm}
 
\smallskip 

\noindent 
Let $w(z):\Omega_z \longrightarrow \CW_w$ be a 
Speiser function. 
Recall that $\Omega_z$ is either $\CW_z$, $\CC_z$, or 
$\Delta_z$.

\noindent 
{\bf Step 1.}
Choose a cyclic order for the $\tt q$ singular values of $w(z)$

\centerline{$
\mathcal{W}_{\tt q} =
[{\tt w}_1, \ldots, 
{\tt w}_{\tt j}, \ldots,
{\tt w}_{\tt q} ] \subset \CW_w$,
}

\noindent 
and
consider
a Jordan path $\gamma$ realizing the cyclic order $\mathcal{L}_\gamma$ of the
singular values, as in Definition \ref{def:orden-ciclico-y-losetas}.1.

\noindent 
{\bf Step 2.} 
Compute the inverse image of $\gamma$,

\centerline{
$w^{-1}(\gamma) \subset \Omega_z$.
}

\noindent
and complete it to $\Gamma\subset ( \Omega_z \cup \partial_{\mathcal{I}}\Omega_z )$ 
by adding the ideal points (logarithmic singularities, 
see \eqref{eq:multiplicities-of-branch-points}),
$\{ z_\sigma \} \subset \partial_{\mathcal{I}}\Omega_z$.
Note that $\partial_{\mathcal{I}}\Omega_z $ can be 
$\varnothing$, $\{ \infty_1,\ldots,\infty_{\tt p} \}$
or a subset of $\{ \abs{z}  = 1\}$, 
according to whether the Riemann surface $\R_{w(z)}$ has elliptic, parabolic or 
hyperbolic conformal type, respectively.

\noindent 
In graph theory, the pullback graph 
$$
\widehat{\Gamma}_{\tt q}=w(z)^* \gamma  
\subset ( \Omega_z \cup \partial_{\mathcal{I}}\Omega_z )
$$
is well defined.
It has the singular points $\mathcal{SP}_w$
and the cosingular points $\mathcal{CS}_w$ of $w(z)$ as vertices $V(\widehat{\Gamma}_{\tt q})$,
and the respective segments in $\widehat{\Gamma}_{\tt q}$ as edges.
Moreover, the logarithmic singularities are vertices located in $\partial_{\mathcal{I}}\Omega_z$ 
and with infinite valence.

\noindent 
Set theoretically $\Gamma = \widehat{\Gamma}_{\tt q}$,  
however they are isomorphic as graphs if and only 
the cosingular point set $\mathcal{CS}_w$ of $w(z)$ 
is empty.

\noindent 
{\bf Step 3.} 
The tessellation 
determined by $w(z)$ and $\gamma$ is 
$$
( \Omega_z \cup \partial_{\mathcal{I}}\Omega_z ) \backslash \widehat{\Gamma}_{\tt q}
=
\underbrace{
T_1 \cup \ldots \cup T_\alpha \ldots  }_{n \text{ blue tiles} }
\cup 
\underbrace{ T^\prime_1 \cup  
\ldots  \cup T^\prime_\beta \ldots  }_{ n\text{ grey tiles}} \ , 
\ \ \ 
2 \leq n \leq \infty .
$$

\noindent 
{\bf Step 4.} 
The cosingular points $\mathcal{CS}_w$ play a crucial role
on the boundary of the tiles; they the are vertices of 
$\widehat{\Gamma}_{\tt q}$ of valence 2. 
In fact, the tiles of the tessellation 
are also topological $\tt q$--gons.
For each tile $T_\alpha$, the pullback of the 
cyclic order $\mathcal{L}_\gamma$
\begin{equation}
\label{eq:pullback-etiquetado-de-gamma}
\begin{array}{cccccc}
w^{-1}({\tt w}_1) & \ldots & w^{-1}({\tt w}_{\tt j}) & \ldots & w^{-1}({\tt w}_{\tt q}) & \in \partial \overline{T_\alpha} \subset \mathcal{SP}_w \cup \mathcal{CS}_w
\\
\downarrow & & \downarrow & & \downarrow & 
\\
{\tt w}_1 & \ldots & {\tt w}_{\tt j} & \ldots & {\tt w}_{\tt q} & \in \mathcal{W}_{\tt q} \, ,
\end{array}
\end{equation}

\noindent
determines 
a consistent\footnote{
Since $\{ {\tt w}_{\tt j} \}$ is the set of singular values of $w(z)$, 
condition (ii) of Definition \ref{def:etiquetado-consistente} is trivially satisfied by $w(z)^*\mathcal{L}_{\gamma}$.
} 
$\tt q$--labelling, $w(z)^*\mathcal{L}_\gamma$, for $\widehat{\Gamma}_{\tt q}$. 
We have constructed the analytic
Speiser $\tt q$--tessellation 
\begin{equation}\label{eq:tesellation-of-w}
(\mathscr{T}_\gamma(w(z)),w(z)^*\mathcal{L}_\gamma ) =
\big( \underbrace{(\Omega_z \cup \partial_{\mathcal{I}}\Omega_z ) \backslash w(z)^* \gamma}_{
\text{tessellation}}, 
\underbrace{w(z)^*\mathcal{L}_\gamma}_{\substack{\text{consistent}\\ {\tt q}-\text{labelling}}} 
\big),
\end{equation}

\noindent
that is the output of the algorithm.

\begin{example}[Speiser $\tt q$--tessellations for some functions]
On $\CW_z$, 
for Speiser $\tt q$--tessellations of rational functions, 
see \cite{AlvarezGutierrezMucino}, figures 1, 2, 5, and 6. 
Examples of consistent $\tt q$--labellings are in 
\cite{GonzalezMucino}, figures 4, 6.
Obviously, Speiser $\tt q$--tessellations are natural for
meromorphic functions on compact Riemann surfaces
of genus $g\geq 1$, 
see \cite{AlvarezGutierrezMucino} figure 7.

\noindent
On $\CC_z$, for Speiser $\tt q$--tessellations arising from transcendental Speiser functions $w(z)$ 
(with an essential singularity at $\infty\in\CW_z$), 
see Figures 
\ref{fig:Tessellation-Exp}.a,
\ref{fig:Tessellation-AiBi}.a, 
\ref{fig:Tessellation-p=4}.a,
\ref{fig:mosaico-exp-exp}.a, 
\ref{figExpSin}.a, 
and
\ref{fig:SinExpSin}.a.
\end{example}

\begin{example}[Not all labelled ${\tt A}$--maps are consistent 
$\tt q$--labelled $\tt A$--maps]
\label{example-not-all-Amaps-are-consistent}
We provide two examples of this issue. 

\noindent
1.\ The first one is for a finite ${\tt A}$--map and is based upon
figure 10 of \cite{Koch-Lei}, 
which we have reproduced as 
Figure \ref{fig:Non-consistent-6-labbeled} 
to make things easier for the reader. 
The figure depicts an ${\tt A}$--map with 4 tiles of each color, 
where each tile is a 6--gon,
and a labelling with labels $\mathcal{W}_6=[1,2,3,4,5,6]$.  
However, all the vertices labelled 5 have valence 2, thus,

\noindent
$\bigcdot$ 
the labelling is not a consistent $6$--labelling,

\noindent
$\bigcdot$ 
the vertices labelled 5 are fake 
cocritical points, equivalently, 
5 is a fake critical value.

\noindent
However, 
by forgetting the vertices labelled 5, we obtain another 
${\tt A}$--map, presumably corresponding to  a rational function of degree $4$,
whose tiles are $5$--gons,
in fact a consistent $5$--labelling
exists, 
see Example \ref{example:non-uniqueness-extension-Speiser-graph}, 
case ${\tt q}=5$, particularly Figure \ref{fig:MosaicoSpeiser-q5}.b.

\begin{figure}[h!tbp]
\begin{center} 
\includegraphics[width=0.3\textwidth]{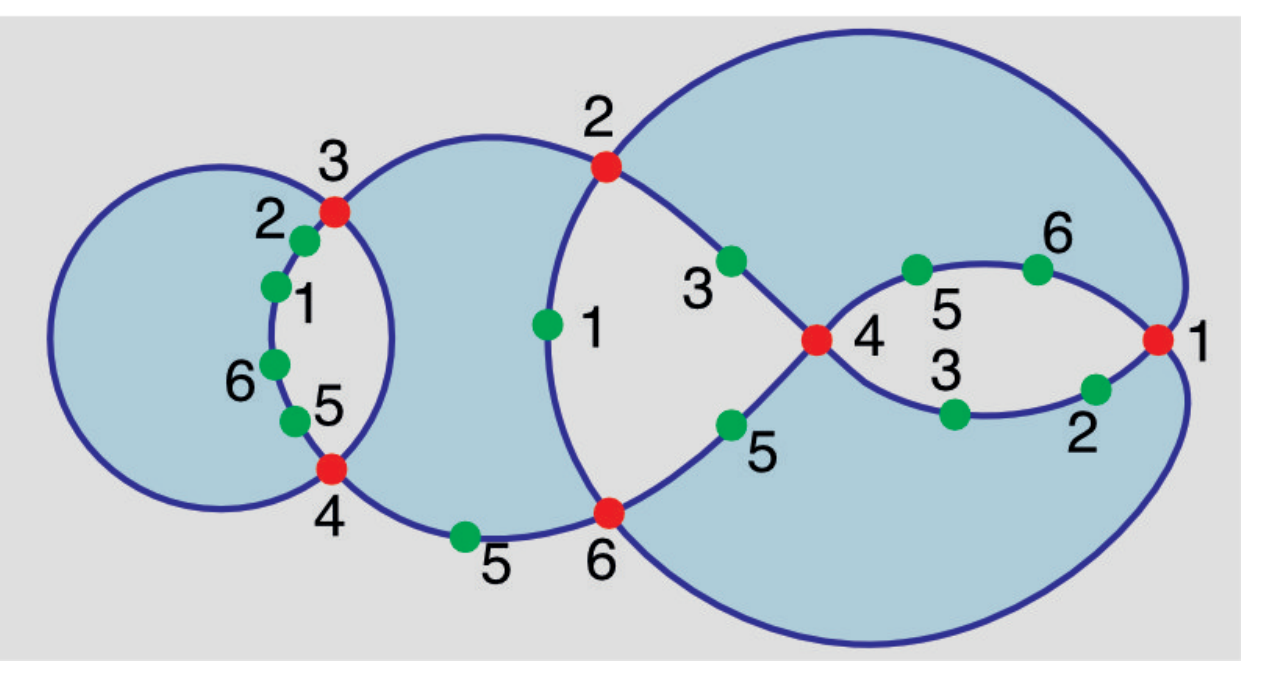}
\caption{
${\tt A}$--map with 4 tiles of each color, 
where each tile is a 6--gon,
and a non consistent 6--labelling with labels $\mathcal{W}_6=[1,2,3,4,5,6]$; 
label `5' is only assigned to vertices of valence 2.
This is figure 10 of \cite{Koch-Lei} and is attributed to 
W.\,P.\,Thurston.
}
\label{fig:Non-consistent-6-labbeled}
\end{center}
\end{figure}

\noindent
2.\ The second example is for an infinite $\tt A$--map, shown in Figure \ref{fig-4-Speiser}.f;
in this case the $\tt A$--map depicted 
has a labelling in $\mathcal{W}_4=[1,2,3,4]$. Once again, there is a label (in this case 4)
that does not appear as a vertex of valence greater than or equal to 4, thus 

\noindent
$\bigcdot$ 
the labelling is not a consistent $4$--labelling,

\noindent
$\bigcdot$ 
the vertices labelled 4 are fake cosingular 
points, equivalently, 4 is a fake singular value.

\noindent
However, 
by forgetting the vertices labelled 4, we obtain another ${\tt A}$--map 
with ${\tt q}=3$, that is
tiles which are 3--gons and a consistent 
$3$--labelling; Figure \ref{fig-4-Speiser}.d shows the dual graph\footnote{
A Speiser graph of index 3, as will be seen in \S\ref{sec:Speiser-graphs}.
} $\mathfrak{S}_3$ 
of the new ${\tt A}$--map.

\noindent
For a more general discussion and further examples see
\S\ref{sec:planar-graphs-Thrurston}.
\end{example}

\begin{remark}[Properties of Speiser $\tt q$--tessellations]
1.
The conformal type of $\R_{w(z)}$ determines 
the ambient space 
$\Omega_z \cup \partial_{\mathcal{I}}\Omega_z $
of the oriented graph $\widehat{\Gamma}_{\tt q}$ in
Definition \ref{definicion-de-A-map}, namely 
$\CW_z$, $\CC_z\cup \{\infty_1,\ldots,\infty_{\tt p} \}$ 
or $\Delta_z \cup \partial_{\mathcal{I} } \Delta_z $.

\noindent
2.\ 
The tiles of the Speiser $\tt q$--tessellation \eqref{eq:tesellation-of-w}
are $\tt q$--gons; blue and grey tiles 
corresponding to the inverse image under $w^{-1}(z)$ of 
the blue and grey tiles
$T$, $T^\prime$ in Step 3, respectively.

\noindent
3.\ Considering the graph $\widehat{\Gamma}_{\tt q}=w(z)^*\gamma$,
the vertices of 
\begin{itemize}[label=$\bigcdot$ ,leftmargin=*]
\item
valence 2 are cosingular points of $w(z)$ in $\Omega_z$,

\item 
finite valence greater than or equal to 4 are algebraic singularities of $w^{-1}(z)$ (critical points of $w(z)$) in $\Omega_z$, and

\item
infinite valence are logarithmic singularities of $w^{-1}(z)$ in 
$\partial_{\mathcal{I}}\Omega_z$.
\end{itemize}
\end{remark}

We provide some features for the simplest 
families of Speiser functions.

\begin{remark}[Tessellations for rational functions]
Let 
$R(z)$ be a rational function of degree $n \geq 2$,
due to Remark \ref{rem:natural-boundaries}, 
we have  $\Omega_z = \CW_z$.
The set of asymptotic values of $R(z)$ is empty 
and 
the set of singular values consists exclusively of 
critical values.
The finite number of critical points is $ 2\leq {\tt r} \leq 2n-2$,
the cosingular points are called cocritical points.
Obviously,
$\widehat{\Gamma}_{\tt q}\subset \CW_z$ 
is a finite graph\footnote{
Thus condition (iii) of Definition \ref{def:de-t-graph} is satisfied.};  
with vertices of 
valence 2 at the cocritical points of $R(z)$, 
and even valence greater than
or equal to 4 at the critical points of $R(z)$.

\noindent
Furthermore, 
if the distinct ${\tt q}$ critical values of $R(z)$ lie in $\RR$,
the computation of the topological tessellation is readily available. 
In fact, $\mathscr{T}_\gamma(R(z)) = \CW_z\backslash R^{-1}(\RR\cup\{\infty\})$,
where $R^{-1}(\RR\cup\{\infty\})$ is a real algebraic curve.
\end{remark}

\begin{remark}[Tessellations for $N$--functions]
Let $w(z):\CC_z\longrightarrow\CW_w$ be an $N$--function.
The set of critical values is empty and
the set of singular values consists exclusively of 
asymptotic values:

\centerline{
$\mathcal{AV}_w=\{(a_1, \mu_1), (a_2,\mu_2), \ldots, (a_{\tt q},\mu_{\tt q}) \}$.
}

\noindent
Moreover, the cosingular points are called coasymptotic points, and they can be defined by

\centerline{$\{w^{-1} (a_{\tt j})\}_{{\tt j}=1}^{\tt q} \cap \Omega_z$. }

\noindent
In this case $\widehat{\Gamma}_{\tt q}$ is an infinite graph  with 
vertices of valence 2 at the coasymptotic points of $w(z)$. 

\noindent
Furthermore, 
as was shown by 
R. Nevanlinna \cite{Nevanlinna1} \S 8, 
\cite{Nevanlinna2} \S XI.3.5, 
the fact that $w(z)$ satisfies 
the Schwarzian differential equation 
\eqref{eq:Schwarzian-diff-eqn}, 
implies that $\Omega_z=\CC_z$,
and that the set of logarithmic singularities is finite, \emph{i.e.}\ ${\tt p}<\infty$.
Thus, the ${\tt p}$ vertices of infinite valence $\{\infty_1,\ldots,\infty_{\tt p} \}$ of
$\widehat{\Gamma}_{\tt q}$,
are the logarithmic singularities of $w(z)$.
The compactification 
$\CC_{z} \cup \{\infty_1, \ldots , \infty_{\tt p} \}$
is non Haussdorff. 

\noindent 
Example \ref{example:N-exp-tanh}.c illustrates 
the simplest case of the Speiser $\tt q$--tessellations for  
an $N$--function. 
Additionally, 
two non--trivial cases of $N$--functions when ${\tt q} = 3$ are in
Examples \ref{example:tessellation-Airy}.c and 
\ref{example:function-wM4alt}.c. 
Examples \ref{example:exp-exp}.c, \ref{example:tessellation-expsin}.c,
and \ref{example:sinexpsin}.c show $\tt q$--tessellations 
for Speiser functions that are not $N$--functions, 
the first for ${\tt q}=3$ and the last two for ${\tt q}=4$.
\end{remark}

\begin{remark}[Bounds on $\tt q$ for a consistent $\tt q$--labelling of a $\tt t$--graph, 
depending on its tessellation]
\label{rem:bounds-for-q}
Given a $\tt t$--graph $\Gamma$, 
there are natural upper and lower bounds on the 
positive integer $\tt q$ of a consistent $\tt q$--labelling $\mathcal{L}_{\mathcal{W}_{\tt q}}$ that can be
assigned to $\Gamma$. 
Recall from Definition \ref{def:de-teselacion}.ii that for each tile $T_\alpha$, its boundary
$\partial \overline{T_\alpha}$ consists of $2\leq \rho_\alpha \leq {\tt q}$ vertices, 
thus taking the largest $\rho_\alpha$ in $\Gamma$ provides a lower bound for $\tt q$.
On the other hand, from Definition \ref{def:etiquetado-consistente}.ii it follows that 
$\tt q$ is bounded above by the number of vertices of valence greater than or equal to 4.
Summarizing,
\begin{equation}
\label{cota-q-mosaicos}
{\tt q}_{\text{min}} \doteq 
\max \# \left \{ \begin{array}{c}
\text{vertices on } \partial \overline{T_\alpha},  \\ 
\text{ for }
T_\alpha \in \mathscr{T}(\Gamma)
\end{array}
\right \}
\leq  \, {\tt q} \leq 
{\tt q}_{\text{max}} \doteq
\# \left \{ \begin{array}{c}
\text{vertices of } \Gamma \text{ with}\\
\text{valence } \geq 4
\end{array}
\right\} .
\end{equation}

\noindent
Note that ${\tt q}_{\text{max}}$ can be infinite.
\end{remark}

The following is a surprising but useful result.

\begin{lemma}
\label{lem:A-mapa-admite-etiquetados}
An $\tt A$--map $\widehat{\Gamma}_{\tt q}$ supports at least one consistent 
${\tt q}_{\rm o}$--labelling $\mathcal{L}_{\mathcal{W}_{{\tt q}_{\rm o}}}$, for 
$2 \leq {\tt q}_{\text{min}} \leq {\tt q}_{\rm o} \leq {\tt q} \leq {\tt q}_{\text{max}}$.
\end{lemma}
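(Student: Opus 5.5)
The plan is to produce the consistent labelling explicitly from the combinatorics of $\widehat{\Gamma}_{\tt q}$, and then to remove the labels that fail condition (ii) of Definition \ref{def:etiquetado-consistente}.

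\textbf{Step 1: a $\tt q$--labelling satisfying condition (i).} By Definition \ref{definicion-de-A-map}.2, every tile of $\mathscr{T}(\widehat{\Gamma}_{\tt q})$ is a $\tt q$--gon, and the orientation of $\widehat{\Gamma}_{\tt q}$ puts each blue tile on the left of its edges. Fix one blue tile $T_1$ and label its vertices ${\tt w}_1,\ldots,{\tt w}_{\tt q}$ anticlockwise. Propagate the labelling across edges: a blue tile and an adjacent grey tile share an edge, and the labelling of one tile determines the labelling of its neighbour by matching the shared endpoints and continuing cyclically (clockwise on the grey tile). Well--definedness reduces to monodromy around each vertex. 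Around a vertex $v$ the tiles alternate in colour, and each edge leaving $v$ joins $v$ to a vertex labelled with a neighbour of $\mathcal{L}(v)$ in the cyclic order. Going once around $v$, these neighbours alternate between the successor and the predecessor of $\mathcal{L}(v)$. Even valence of the vertices (Definition \ref{definicion-de-A-map}.1) makes this alternation close up. Simple connectivity of $\Omega_z$ (punctured at the finitely or countably many vertices, plus the ideal points) then gives trivial global monodromy, since loops are generated by small loops around vertices. Vertices of infinite valence on $\partial_{\mathcal{I}}\Omega_z$ need a separate local check, but the alternation argument is the same. This yields $\mathcal{L}: V(\widehat{\Gamma}_{\tt q}) \to \mathcal{W}_{\tt q}$ satisfying condition (i).

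\textbf{Step 2: enforcing condition (ii).} Let $J\subset\mathcal{W}_{\tt q}$ be the set of labels carried by at least one vertex of valence $\geq 4$. This set is nonempty by Definition \ref{definicion-de-A-map}.1(i). Every vertex whose label lies outside $J$ has valence 2. Forget all these vertices, as in the forgetting operation used for Lemma \ref{lem:consistent-labelings-Gamma-hatGamma} (and in Example \ref{example-not-all-Amaps-are-consistent}). Each tile loses exactly ${\tt q}-|J|$ vertices, one for each removed label, so the tiles become ${\tt q}_{\rm o}$--gons with ${\tt q}_{\rm o}=|J|$. The restriction of $\mathcal{L}$ keeps the induced cyclic order $\mathcal{W}_{{\tt q}_{\rm o}}$ and now satisfies both (i) and (ii). The result is again an $\tt A$--map compatible with the same $\tt t$--graph $\Gamma$, and it carries a consistent ${\tt q}_{\rm o}$--labelling. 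By Lemma \ref{lem:consistent-labelings-Gamma-hatGamma}, this labelling can be transferred to $\Gamma$ or back to $\widehat{\Gamma}_{\tt q}$ as needed.

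\textbf{Step 3: the bounds.} Clearly ${\tt q}_{\rm o}\leq{\tt q}$. Also ${\tt q}_{\rm o}\leq{\tt q}_{\text{max}}$, because distinct labels in $J$ are realized by distinct vertices of valence $\geq 4$. For the lower bound, every vertex of $\Gamma$ on $\partial\overline{T_\alpha}$ has valence $\geq 4$ and receives a distinct label of $J$ along that tile. Hence ${\tt q}_{\text{min}}\leq {\tt q}_{\rm o}$, and ${\tt q}_{\text{min}}\geq 2$ by Definition \ref{def:de-teselacion}.ii.

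The main obstacle is Step 1. One must check that the local propagation rule, together with even valence, really gives trivial monodromy around every vertex, and in particular around the infinite--valence vertices on the non--Hausdorff ideal boundary. The approach I would try first is to verify that the labelling is well defined on the universal cover of the complement of the vertex set and then show that it descends. For the infinite--valence vertices, the natural approach is to treat them as ends of the graph, so that no loop encircles them at all.
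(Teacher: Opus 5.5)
Your proposal is correct and follows essentially the same route as the paper. Like the paper, you label one blue tile, propagate across edges using homogeneity and simple connectivity of $\Omega_z$, and then discard the labels carried only by valence--2 vertices by forgetting those vertices. The differences are minor: you remove all such labels at once rather than one at a time, and you justify the monodromy and the bounds ${\tt q}_{\text{min}}\le{\tt q}_{\rm o}\le\min\{{\tt q},{\tt q}_{\text{max}}\}$ more explicitly than the paper does.

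One small correction: when ${\tt q}_{\rm o}<{\tt q}$, the ${\tt q}_{\rm o}$--labelling cannot be ``transferred back'' to $\widehat{\Gamma}_{\tt q}$ via Lemma \ref{lem:consistent-labelings-Gamma-hatGamma}. The tiles of $\widehat{\Gamma}_{\tt q}$ are ${\tt q}$--gons, so some label would repeat on a tile. As in the paper, the consistent labelling lives on the compatible $\tt A$--map $\widehat{\Gamma}_{{\tt q}_{\rm o}}$ (or on $\Gamma$).
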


\begin{proof}
Given the $\tt A$--map $\widehat{\Gamma}_{\tt q}$, 
let $\Gamma$ be the $\tt t$--graph, alluded to in Definition \ref{definicion-de-A-map}.ii
(in other words $\Gamma$ and $\widehat{\Gamma}_{\tt q}$ are compatible).
Note that ${\tt q}_{\text{min}} \leq {\tt q} \leq {\tt q}_{\text{max}}$ as in the above Remark.
The existence of a consistent ${\tt q}$--labelling is as follows:
\begin{enumerate}[label=\arabic*),leftmargin=*]
\item Choose ${\tt q}_{\rm o} = {\tt q}$,

\item 
assign the labels $\mathcal{W}_{{\tt q}_{\rm o}}$ to the ${\tt q}_{\rm o}$ vertices of any 
blue tile $T_\alpha$ of $\widehat{\Gamma}_{\tt q}$, in an anticlockwise order,

\item
propagate the labelling to all the neighbor grey tiles, 
this can be done since
the tessellation $\mathcal{T}(\widehat{\Gamma}_{\tt q})$ is homogeneous, \emph{i.e.}\, 
its tiles are topological ${\tt q}_{\rm o}$--gons,

\item
continue as above to all the tiles of $\mathcal{T}(\widehat{\Gamma}_{{\tt q}_{\rm o}})$,
using the fact that $\Omega_z$ is simply connected.
\end{enumerate}
This provides a labelling 

\centerline{$
\mathcal{L}_{\mathcal{W}_{{\tt q}_{\rm o}}}: V(\widehat{\Gamma}_{{\tt q}_{\rm o}}) \longrightarrow \mathcal{W}_{{\tt q}_{\rm o}}, 
$}

\noindent
to $\widehat{\Gamma}_{{\tt q}_{\rm o}}$.
Clearly, $\mathcal{L}_{\mathcal{W}_{{\tt q}_{\rm o}}}$ satisfies condition (i) of a consistent ${\tt q}_{\rm o}$--labelling,
see Definition \ref{def:etiquetado-consistente}. 

\noindent
If the choice of ${\tt q}_{\rm o}$ satisfies condition (ii) then we are done.

\noindent
Otherwise, there is a label (value), say ${\tt w}_{{\tt j}_{\rm o}}$, that does not appear under 
$\mathcal{L}_{\mathcal{W}_{{\tt q}_{\rm o}}}$ for a vertex of $\widehat{\Gamma}_{{\tt q}_{\rm o}}$
of valence greater than or equal to 4. 
We can erase this label from the cyclic order $\mathcal{W}_{{\tt q}_{\rm o}}$, 
obtaining a new cyclic order with ${\tt q}_{\rm o} - 1$ distinct values.
Now, go back to (2) with a new ${\tt q}_{\rm o}$ being one less than the previous one and repeat the process.
Since the original $\tt A$--map $\widehat{\Gamma}_{\tt q}$ has a non empty subset of vertices of valence greater than or equal to 4, this process eventually stops at a ${\tt q}_{\rm o} \geq {\tt q}_{\text{min}}$.
\end{proof}

The reader is invited to keep in mind the above result when considering 
Remark \ref{rem:minimality-condition} (an interpretation of the
consistent $\tt q$--labeling as a minimality condition).

\begin{theorem}[From Speiser functions to tessellations and back]
\label{teo:principal}
\hfill\\
Let $\Omega_z$ be a simply connected 
Riemann surface.
\begin{enumerate}[label=\arabic*),leftmargin=*]
\item 
A Speiser function $w(z): \Omega_z \longrightarrow \CW_w$ of
index $2\leq {\tt q} < \infty$, 
provided with a cyclic order $\mathcal{W}_{\tt q}$ 
and a path $\gamma$ realizing it, determine
a  homogeneous tessellation 
$$
\begin{array}{rl}
\mathscr{T}_\gamma(w(z)) = &
( \Omega_z \cup \partial_{\mathcal{I}}\Omega_z ) \backslash \Gamma
= 
\begin{cases}
\CW_z \backslash \Gamma
\\
(\CC_z\cup\{\infty_1,\ldots,\infty_{\tt p} \}) 
\backslash \Gamma
\\
(\Delta \cup \partial_{\mathcal{I}} \Delta ) \backslash \Gamma
\end{cases}
\\
& \vspace{-.20cm}
\\
= &
\underbrace{
T_1 \cup \ldots \cup T_\alpha \cup \ldots  }_{ n\text{ blue tiles} }
\cup 
\underbrace{ T^\prime_1 \cup  
\ldots  \cup T^{\prime}_\alpha  \cup \ldots }_{n \text{ grey tiles}} \subset 
\Omega_z \cup \partial_{\mathcal{I}} \Omega_z, 
\end{array}
$$

\noindent  
whose tiles are topological $\tt q$--gons 
with alternating colors, and
a consistent $\tt q$--labelling $w(z)^*\mathcal{L}_\gamma$.

\smallskip 

\item
Let $\mathscr{T}$ be a possibly non homogeneous 
tessellation of $\Omega_z$.
Assume in addition that $\mathscr{T}$ is 
provided with a consistent $\tt q$--labelling $\mathcal{L}_{\mathcal{W}_{\tt q}}$.
Then, they determine 
a Riemann surface $\Omega_z$,
a non unique Speiser function

\centerline{$w(z): \Omega_z \longrightarrow \CW_w$,} 

\noindent 
and a Jordan path $\gamma$ 
satisfying that the tessellation

\centerline{
$(\mathscr{T}_\gamma(w(z)) , w(z)^* \mathcal{L}_\gamma )$ \quad is \quad
$(\mathscr{T}, \mathcal{L}_{\mathcal{W}_{\tt q}} )$, 
}

\noindent
up to
orientation preserving homeomorphism of $\Omega_z$.

\end{enumerate}
\end{theorem}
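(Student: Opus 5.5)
Part (1) is essentially the output of the Schwarz--Klein--Speiser algorithm of \S\ref{sec:Schwarz-Klein-Speiser-algorithm}, so the plan is to verify its claims. Since $\gamma$ passes through all singular values, $w(z)$ restricted to $\Omega_z\setminus w^{-1}(\gamma)$ is an unramified covering of $\CW_w\setminus\gamma = T\cup T^\prime$, whose two components are simply connected. Each component of the preimage of $T$ (resp.\ $T^\prime$) is therefore mapped biholomorphically onto $T$ (resp.\ $T^\prime$); these are the blue (resp.\ grey) tiles. Next I extend each such biholomorphism to the closure in $\Omega_z\cup\partial_{\mathcal{I}}\Omega_z$, using the local models of Proposition \ref{prop:dos-tipos-singularidades}.1: $z\mapsto z^{m}$ at algebraic singularities, the identity near cosingular and ordinary points, and $\exp$ at logarithmic singularities, which become the ideal vertices. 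With this extension, each boundary $\partial\overline{T_\alpha}$ is a lift of $\gamma$. It is then a Jordan $\tt q$--gon with exactly one vertex over each ${\tt w}_{\tt j}$, and these vertices appear in the cyclic order of $\mathcal{L}_\gamma$, with the blue tile on the left. Alternation of colours holds because each edge of $w^{-1}(\gamma)$ has a preimage of $T$ on one side and of $T^\prime$ on the other. The numbers of blue and grey tiles agree because both equal the number of sheets (Proposition \ref{prop:Rw-is-a-union-of-sheets}). Valences come from the local models: $2m_\iota$ at critical points, $\infty$ at logarithmic singularities, and $2$ at cosingular points. Condition (ii) of Definition \ref{def:etiquetado-consistente} holds because every ${\tt w}_{\tt j}$ is a genuine singular value, hence has a vertex of valence $\geq 4$ over it.

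For part (2) I first homogenize. By Lemma \ref{lem:consistent-labelings-Gamma-hatGamma}, the consistent labelling on the $\tt t$--graph extends, after edge subdivision, to a compatible $\tt A$--map $\widehat{\Gamma}_{\tt q}$ whose tiles are $\tt q$--gons with the full cyclic labels. Choose distinct points ${\tt w}_1,\dots,{\tt w}_{\tt q}\in\CW_w$ and a Jordan curve $\gamma$ through them in the given order. For each blue tile $T_\alpha$, take a homeomorphism $\overline{T_\alpha}\to\overline{T}$ sending each labelled vertex to its value and each edge to the corresponding arc of $\gamma$; do the same for grey tiles onto $\overline{T^\prime}$. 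Choose these homeomorphisms edgewise first, as one fixed parametrization of each arc of $\gamma$, so that adjacent tiles agree on common edges. This produces a continuous map $f$ from the union of closed tiles to $\CW_w$. On $\Omega_z$ minus the vertices of valence $\geq 4$, $f$ is a local homeomorphism. At a finite vertex of valence $2m$, $f$ is a branched cover of degree $m$, and $m\geq 2$ there. At an infinite-valence ideal vertex, the infinitely many tiles around it form a half-plane-like chain, and $f$ is a universal covering of a punctured disk.

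The step I expect to be the main obstacle is obtaining the complex structure. Pull back the complex structure of $\CW_w$ by $f$ on the complement of the vertices. The result is a Riemann surface $S$ that is topologically the plane or sphere minus a discrete set. I then fill in the finite vertices via the charts $z^{m}$; these are removable. The ideal vertices are not added as points; only their punctured neighbourhoods remain, which are logarithmic ends. The resulting surface is a simply connected open surface or a sphere, so the uniformization theorem gives a biholomorphism $\phi$ from it to some $\Omega_z\in\{\CW_z,\CC_z,\Delta_z\}$. Set $w=f\circ\phi^{-1}$. The delicate points are the following.
\begin{itemize}
\item $w$ is meromorphic by construction.
\item $w$ is a Speiser function with singular values exactly $\{{\tt w}_{\tt j}\}$. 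Away from $\gamma$'s vertices $w$ is a covering, so no other singular values occur. Every ${\tt w}_{\tt j}$ is genuinely singular by condition (ii) of Definition \ref{def:etiquetado-consistente}.
\item The ideal vertices correspond bijectively to logarithmic singularities, so they match $\partial_{\mathcal{I}}\Omega_z$.
\end{itemize}
Then $w^{*}\gamma=\phi(\widehat{\Gamma}_{\tt q})$ with $w^{*}\mathcal{L}_\gamma=\mathcal{L}_{\mathcal{W}_{\tt q}}\circ\phi^{-1}$, so the tessellation is recovered up to the orientation-preserving homeomorphism $\phi$. Non-uniqueness comes from the choices of $\gamma$, of the positions of the ${\tt w}_{\tt j}$, of $Aut(\Omega_z)$, and of the initial homeomorphisms. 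This is the classical line-complex argument of Nevanlinna \cite{Nevanlinna2}~ch.~XI, which I would follow.
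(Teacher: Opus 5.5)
\paragraph{Review of the proposal.} Your route is the paper's. Part (1) is the Schwarz--Klein--Speiser algorithm, and you actually verify its claims via the covering over $\CW_w\backslash\gamma$ and the local models. Part (2) is edge subdivision to an ${\tt A}$--map, gluing the tiles onto $\overline{T}$ and $\overline{T^\prime}$, pulling back the complex structure, and uniformizing.

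\paragraph{The gap: homogeneity of the grey tiles.} The homogenization step is unjustified, and your gluing depends on it. Lemma \ref{lem:consistent-labelings-Gamma-hatGamma} assumes a compatible ${\tt A}$--map already exists, so it cannot produce one. Subdividing each edge according to its blue tile does make every blue tile a ${\tt q}$--gon. However, Definition \ref{def:etiquetado-consistente} constrains only blue tiles. Read a grey tile along the edge orientation (tile on the right). Each edge advances by $1$ to ${\tt q}-1$ steps of $\mathcal{W}_{\tt q}$, so after subdivision the grey tile is a $k{\tt q}$--gon for some $k\geq 1$, and you need $k=1$. If $k\geq 2$, its boundary runs $k$ times around $\gamma$, so the homeomorphism onto $\overline{T^\prime}$ that you prescribe does not exist. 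Any extension is then a $k$--fold branched cover with a critical value inside $T^\prime$, i.e.\ off $\gamma$. The paper's Step 2 asserts homogeneity just as quickly, so this is not a departure from the paper, but it is a genuine gap.

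\paragraph{Finite versus infinite case.} For finite tessellations the gap closes by counting. With $n$ blue tiles, the subdivided graph has ${\tt q}n$ edges, each lying on exactly one grey tile, so $n=\sum_\beta k_\beta$. Global balance ($n$ grey tiles) then forces every $k_\beta=1$.

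For infinite tessellations the count is vacuous and the step can really fail. Take a blue triangle of an infinite Speiser $3$--tessellation whose corners carry ${\tt w}_1,{\tt w}_2,{\tt w}_3$, e.g.\ a half--strip tile of $\sin z$ with $\gamma=\RR\cup\{\infty\}$. Draw inside it a grey triangle on the same three corners, leaving three blue digons. Then:
\begin{itemize}
\item all blue tiles are still consistently labelled;
\item the valences stay even;
\item both colour classes are still countable.
\end{itemize}
But the new grey tile, read with the tile on its right, carries ${\tt w}_1,{\tt w}_3,{\tt w}_2$. So it subdivides into a hexagon, and no Speiser function realizes this labelled tessellation.

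\paragraph{Repair.} You therefore need an extra hypothesis. For example, require that each grey tile's labels, read clockwise, also appear once and in the cyclic order of $\mathcal{W}_{\tt q}$; equivalently, that subdivision yields an ${\tt A}$--map. With it, the rest of your argument goes through:
\begin{itemize}
\item the glued map is a covering over $\CW_w$ minus the ${\tt q}$ values;
\item the finite vertices fill in and uniformization applies;
\item condition (ii) makes every ${\tt w}_{\tt j}$ singular.
\end{itemize}
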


\begin{remark}
Note that in statement (2), the fact that the consistent $\tt q$--labelling 
$\mathcal{L}_{\mathcal{W}_{\tt q}}$ is provided to $\mathscr{T}$, 
ensures that the choice and ordering of the values $\mathcal{W}_{\tt q}$ 
are an essential part of the hypothesis.

\noindent 
Clearly, the number of tiles $2n$ of $\mathscr{T}$ is finite 
if and only if $w(z)$ is rational function of degree $n$ on $\CW_z$.
\end{remark}

\begin{proof}
Statement (1) follows directly from the Schwarz--Klein--Speiser's algorithm.

\medskip
For statement (2), we proceed with the following steps:

\noindent
Step 1.\ Recall that the tessellation $\mathscr{T}$ is equivalent to a $\tt t$--graph 
$\Gamma \subset \Omega_z \cup \partial_{\mathcal{I}} \Omega_z$, thus in fact we have $(\Gamma,\mathcal{L}_{\mathcal{W}_{\tt q}})$.

\smallskip
\noindent 
Step 2.\ By using the consistent $q$--labelling 
$\mathcal{L}_{\mathcal{W}_{\tt q}}$ and edge subdivision operation 
for $\Gamma$, we get an associated ${\tt A}$--map 
$\widehat{\Gamma}_{\tt q} \subset \Omega_z \cup \partial_{\mathcal{I}} \Omega_z$ with a 
consistent $q$--labelling 

\centerline{
$\mathcal{L}_{\mathcal{W}_{\tt q}}: V(\widehat{\Gamma}_{\tt q}) \longrightarrow \mathcal{W}_{\tt q}$,}

\noindent 
as follows.

\noindent
{\it Edge subdivision operation.}
Let $\overline{z_\iota z_\sigma}$ be an edge of $\Gamma$ with 
labels, say 
$\mathcal{L}_{\mathcal{W}_{\tt q}}(z_\iota)={\tt w}_{\tt h}$ and 
$\mathcal{L}_{\mathcal{W}_{\tt q}}(z_\sigma)={\tt w}_{\tt j}$. 

\noindent 
If $ {\tt j} - {\tt h} =(z_\iota) = 
1  \pmod{\tt q} $, 
then $\overline{z_\iota z_\sigma}$ is an edge of 
$\widehat{\Gamma}_{\tt q}$.

\noindent 
If $ {\tt j} - {\tt h} =
\nu +1 \geq 2  \pmod{\tt q} $, 
then we consider $\nu$ new vertices,
$\zeta_1, \ldots, \zeta_\nu$,
in the original edge 
$\overline{z_\iota z_\sigma}$, 
which is replaced by $\nu +1$ new edges 

\centerline{$ 
\overline{z_\iota \zeta_1}, \,
\overline{\zeta_1 \zeta_2}, \ldots ,
\overline{\zeta_\nu z_\sigma}
$}

\noindent 
of $\widehat{\Gamma}_{\tt q}$. Moreover, the labels of these new 
vertices of valence 2 of $\widehat{\Gamma}_{\tt q}$ are

\centerline{$
\mathcal{L}_{\mathcal{W}_{\tt q}}(z_\iota) = {\tt w}_{\tt h} ,\ \ 
\mathcal{L}_{\mathcal{W}_{\tt q}}(\zeta_1) = {\tt w}_{{\tt h}+1} , \ \ 
\ldots \ ,  \ \ 
\mathcal{L}_{\mathcal{W}_{\tt q}}(\zeta_\nu) = {\tt w}_{{\tt h} + \nu} , \ \ 
\mathcal{L}_{\mathcal{W}_{\tt q}}(z_\sigma) = {\tt w}_{\tt j} ;
$}

\noindent
with arithmetic \hspace{-10pt} $\mod{\tt q} $ in the subindices.

\smallskip
\noindent
Step 3.\ Since $\widehat{\Gamma}_{\tt q}$ is homogeneous, we can recognize
that $\mathscr{T}(\widehat{\Gamma}_{\tt q})$ inherits a natural conformal structure 
from the glueing of the contiguous tiles $T_\alpha$ and $T_\alpha^\prime$ according
to the consistent $\tt q$--labelling $\mathcal{L}_{\mathcal{W}_{\tt q}}$.
In fact, we recognize that there is a Speiser Riemann surface 
$\R(\widehat{\Gamma}_{\tt q},\mathcal{L}_{\mathcal{W}_{\tt q}}) \subset \Omega_z \times \CW_w$, 
with a tessellation as above, that projects via $\pi_1$, see \eqref{diagramaRX}, 
to $\mathscr{T}(\widehat{\Gamma}_{\tt q})$.

\smallskip
\noindent
Step 4.\ Finally, the Speiser Riemann surface $\R(\widehat{\Gamma}_{\tt q},\mathcal{L}_{\mathcal{W}_{\tt q}})$ provides 
the Speiser function $w(z)$.

The non--uniqueness of the Speiser function $w(z)$ arises from the following.
\begin{definition}\label{def:isotropy-Wq}
Let \emph{$\text{Stab}(\mathcal{W}_{\tt q})$ be the isotropy group of $\mathcal{W}_{\tt q}$}, 
that is the subgroup of $Aut(\CW_w)$ that leaves invariant the set $\{ {\tt w}_{\tt j} \}_{{\tt j}=1}^{\tt q}$ 
and also preserves the chosen cyclic order on them.
\end{definition}

\begin{lemma}[Non uniqueness of Speiser functions arising from tessellations]
\label{lem:no-unicidad-funciones-teselaciones}
Let $w(z)$ be a Speiser function provided with a cyclic order 
$\mathcal{W}_{\tt q}$ for its $\tt q$ singular values.
Consider the action 
\begin{align*}
Aut(\Omega_z) \times \text{Stab}(\mathcal{W}_{\tt q}) \times (\Omega_z \times \CW_w )
& \longrightarrow  \Omega_z \times \CW_w \\
(g,h,z,w) & \longmapsto  (g(z), h(w)).
\end{align*}

\noindent 
Each non--trivial element in 
$Aut(\Omega_z) \times \text{Stab}(\mathcal{W}_{\tt q})$ provides a
different function with the same $\mathcal{W}_{\tt q}$.
\begin{enumerate}[label=\roman*),leftmargin=*]
\item 
Since $Aut(\Omega_z)$ is a Lie group, it gives rise to an infinite 
number of functions.

\item
For the group $\text{Stab}(\mathcal{W}_{\tt q})$ we have the following 
(up to conjugation in $Aut(\CW_w)$) cases.

\noindent
$\bigcdot$ If $\tt q=2$ and $\text{Stab}(\mathcal{W}_{\tt q})\neq Id$, then 
$\text{Stab}(\mathcal{W}_{\tt q})\cong\CC^*$ are the homotheties.

\noindent
$\bigcdot$ If $\tt q\geq 3$ and $\text{Stab}(\mathcal{W}_{\tt q})\neq Id$, then 
$\text{Stab}(\mathcal{W}_{\tt q})$ is one of the finite subgroups of $PSL(2,\CC)$.  
\end{enumerate}
\end{lemma}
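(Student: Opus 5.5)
The plan is to check directly that the composite $\widetilde w \doteq h\circ w\circ g^{-1}$ has the same combinatorial data, and then to classify $\text{Stab}(\mathcal{W}_{\tt q})$ using the classification of subgroups of $PSL(2,\CC)$.

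\emph{The action produces Speiser functions with the same $\mathcal{W}_{\tt q}$.} I take $(g,h)\in Aut(\Omega_z)\times \text{Stab}(\mathcal{W}_{\tt q})$ and act on the graph $\R_{w(z)}\subset\Omega_z\times\CW_w$ by $(z,w)\mapsto(g(z),h(w))$. The image is the graph of $\widetilde w$. Since $g$ and $h$ are biholomorphisms, Definition \ref{def:eremenko1} transports directly:
\begin{itemize}[label=$\bigcdot$ ,leftmargin=*]
\item critical points of $w$ map under $g$ to critical points of $\widetilde w$, with the same multiplicity;
\item asymptotic paths $\alpha$ map to asymptotic paths $g\circ\alpha$, and logarithmic singularities stay logarithmic (Definition \ref{def:logaritmica});
\item singular values map under $h$, so $\mathcal{SV}_{\widetilde w}=h(\mathcal{SV}_w)=\{{\tt w}_{\tt j}\}$.
\end{itemize}
Because $h$ preserves the cyclic order, the pair $(\widetilde w,\mathcal{W}_{\tt q})$ has the same cyclic order. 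Taking $h(\gamma)$ as the realizing path, which is isotopic to $\gamma$ relative to $\mathcal{W}_{\tt q}$, the Schwarz--Klein--Speiser algorithm gives $\widetilde w^{*}h(\gamma)=g(w^*\gamma)$. So the tessellation and the consistent labelling are carried to themselves up to the orientation preserving homeomorphism $g$. This is exactly the non-uniqueness left in Theorem \ref{teo:principal}.2.

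\emph{Distinct elements give distinct functions.} Suppose $h\circ w\circ g^{-1}=w$. I will use the normalization available when $\mathcal{L}_{\mathcal{W}_{\tt q}}$ is fixed pointwise on vertices: the labelling $\mathcal{L}$ on $\widehat\Gamma_{\tt q}$ is then preserved, which forces $g$ to permute tiles compatibly. I will argue that "different" should be read as different as a map (graph in $\Omega_z\times\CW_w$), not as different up to the action. For $g\neq Id$ and $h=Id$, the function $w\circ g^{-1}$ differs from $w$ except for the discrete set of deck-type symmetries, and the point is only that non-trivial elements generically move $\R_{w(z)}$.

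\emph{Part (i).} $Aut(\Omega_z)$ is one of $PSL(2,\CC)$, the affine group, or $PSU(1,1)$, each a positive-dimensional Lie group. The stabilizer of $w$ inside it is discrete: it must permute the discrete set of critical points and cosingular points, which is nonempty since ${\tt q}\ge2$ and $w$ is nonconstant. The orbit is therefore uncountable, which gives infinitely many functions.

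\emph{Part (ii).} Here $\text{Stab}(\mathcal{W}_{\tt q})$ preserves a finite set of ${\tt q}$ points.
\begin{itemize}[label=$\bigcdot$ ,leftmargin=*]
\item For ${\tt q}=2$, I conjugate so that $\{{\tt w}_1,{\tt w}_2\}=\{0,\infty\}$. A Möbius map fixing the set either fixes both points, giving $w\mapsto\lambda w$ with $\lambda\in\CC^*$, or swaps them. A 2-cycle is preserved under reversal, but a swap $w\mapsto\lambda/w$ reverses orientation of the pair relative to the labelling. Checking which maps preserve the ordered cyclic data, I expect the answer to be $\CC^*$ as stated.
\item For ${\tt q}\ge3$, any Möbius map preserving a finite set of at least $3$ points has finite order, since some power fixes $3$ points and is therefore the identity. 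The group is thus a finite subgroup of $PSL(2,\CC)$: cyclic, dihedral, $A_4$, $S_4$ or $A_5$. The cyclic-order condition further restricts it, to cyclic rotations along $\gamma$ possibly together with reflections reversing the order if those are allowed.
\end{itemize}

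The main obstacle is the second step, injectivity of the action. It needs a careful statement of "different function", since functions with symmetries, such as $\e^z$ under $z\mapsto z+2\pi i$, show that a nontrivial $g$ can fix $w$. I would handle this by declaring that the parametrization is modulo the (discrete) symmetry group of $w$.
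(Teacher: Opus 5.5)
Your route is more explicit than the paper's proof, which only notes that the action descends to functions, $(g,h,w)\mapsto h\circ w\circ g$, and then quotes the list of finite subgroups of $PSL(2,\CC)$. You are also right that the clause ``each non--trivial element provides a different function'' cannot hold literally. However, several of your steps fail as written.

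\emph{The case ${\tt q}=2$.} Your exclusion of the swaps $w\mapsto\lambda/w$ is unfounded. A cyclic order on two points is invariant under the transposition, and $w\mapsto -1/w$ even maps the oriented path $\gamma=\RR\cup\{\infty\}$ (traversed $0\to 1\to\infty\to -1\to 0$) onto itself with the same orientation, sending the upper (blue) half plane to itself. With Definition \ref{def:isotropy-Wq} one therefore gets $\text{Stab}(\mathcal{W}_2)=\{w\mapsto\lambda w^{\pm 1},\ \lambda\in\CC^*\}\cong\CC^*\rtimes\ZZ_2$, in which the homotheties are only the index--two identity component. So ``$\cong\CC^*$'' cannot be derived. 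You should either state the corrected group or adopt a pointwise--fixing convention, which however makes the ${\tt q}\geq 3$ bullet vacuous.

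\emph{The injectivity repair.} Your fix ``modulo the (discrete) symmetry group of $w$'' breaks down precisely for ${\tt q}=2$. Under your action $h\circ w\circ g^{-1}$, the pairs $(z\mapsto z+c,\ w\mapsto \e^{c}w)$, $c\in\CC$, all fix $\e^{z}$, and the pairs $(z\mapsto\lambda z,\ w\mapsto\lambda^{n}w)$ all fix $z^{n}$. Every Speiser function with ${\tt q}=2$ is one of these up to $Aut(\Omega_z)\times Aut(\CW_w)$ (cf.\ Remark \ref{rem:no-unicidad}), so its isotropy in $Aut(\Omega_z)\times\text{Stab}(\mathcal{W}_2)$ is always positive--dimensional. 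Discreteness of the isotropy holds only for ${\tt q}\geq 3$, where $\text{Stab}(\mathcal{W}_{\tt q})$ is finite.

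\emph{Part (i).} The set of critical and cosingular points need not be nonempty: $\e^{z}$ and $\tanh z$ have no critical points and omit both of their singular values. Argue with fibers instead. The set $G_w=\{g\in Aut(\Omega_z)\ :\ w\circ g=w\}$ is a closed subgroup. Each orbit of its identity component $G_w^{0}$ is connected and lies in a discrete fiber $w^{-1}(w(z_0))$, hence is a point. So $G_w^{0}$ fixes every point, $G_w$ is discrete, and the $Aut(\Omega_z)$--orbit of $w$ is uncountable.

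\emph{Finiteness for ${\tt q}\geq 3$.} Knowing that every element has finite order does not by itself make the group finite: the rational rotations about a fixed axis form an infinite torsion subgroup of $PSL(2,\CC)$. Use instead that $h\mapsto h|_{\{{\tt w}_{\tt j}\}}$ is injective, since a M\"obius map fixing three points is the identity. Because $h$ must preserve the cyclic order, the image lies in the rotation group $\ZZ_{\tt q}$. Hence $\text{Stab}(\mathcal{W}_{\tt q})$ is cyclic of order dividing ${\tt q}$, which is sharper than the paper's list of all finite subgroups.
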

\begin{proof}[Proof of Lemma]
The action of $Aut(\Omega_z) \times \text{Stab}(\mathcal{W}_{\tt q})$ extends to functions as 

\centerline{$(g,h,w(z)) \mapsto h(w(g(z)))$.}

\noindent
Note that the cyclic order of the $\tt q$ singular
values $\mathcal{W}_{\tt q}$, may have non--trivial isotropy

\centerline{
$Id\neq\text{Stab}(\mathcal{W}_{\tt q})\subset Aut(\CW_w)$.}

\noindent
The finite subgroups of $PSL(2,\CC)$ are:
the rotations $\ZZ_n$ for $n\geq2$, 
the dihedral group $\mathbb{D}_n$ for $n\geq2$ 
(generated by the rotations, and the inversion $w\mapsto 1/w$), and
the groups $H_{p,q,r}$ with $p,q,r\geq 2$, associated to the symmetries of the 
regular polyhedra inscribed in $\CW_w$.
See \cite{Sullivan-notes} for more details.
\end{proof}

\begin{example} 1. 
A simple family is 
$\{ h(\sin (g(z))) 
\ \vert \ 
g \in Aut(\CW_z), \ h \in \text{Stab} ([-1,1,\infty]) \}$. 
Obviously $\cos(z)$ is an element of it.

\noindent 
2. Let $G \subset Aut(\CW_z)$ be a finite group. For the classical rational $G$--invariant functions $R(z)$ the Lemma applies, giving
origin to different explicit expressions for $R(z)$ in 
the literature. Compare with \cite{Schwarz} and \cite{Pakovich}.  
\end{example}

Theorem \ref{teo:principal} is proved.
\end{proof}

\section{Speiser graphs}
\label{sec:Speiser-graphs} 
Recalling Equation \eqref{eq:tesellation-of-w}, Speiser $\tt q$--tessellations
$\big( (\Omega_z \cup \partial_{\mathcal{I}}\Omega_z ) \backslash w(z)^* \gamma, 
w(z)^*\mathcal{L}_\gamma \big)$
 arising from Speiser functions $w(z)$, 
as in Theorem \ref{teo:principal}.1, 
are very concrete objects.
The dual graph $\mathfrak{S}_{w(z)}$ of the 
$\tt A$--map $\widehat{\Gamma}_{\tt q} = w(z)^*\gamma$ 
is the Speiser graph of index $\tt q$, of $w(z)$.
Here,
we develop Speiser graphs in an ad hoc axiomatic way.
As departure point, our Speiser graphs are embedded in 
$\esf$  or $B(0,1)$; because of the uniformization 
theorem this will be enough to completely specify
the conformal type of the domain of the associated Speiser 
functions.

The original concept is in  \cite{Speiser}.
We roughly follow 
\cite{Nevanlinna2},
\cite{GoldbergOstrovskii} p.\,355, and \cite{Masoero} p.\,54.
Once again, we make some precisions that we consider improve
the presentation and our understanding.

\begin{definition}[\cite{GoldbergOstrovskii} p.\,355]
\label{def:Speiser-graph}
A \emph{Speiser graph of index ${\tt q} \geq 2$} 
(or \emph{line complex of index $\tt q$}), 
$$
\mathfrak{S}_{\tt q} =\Big( V(\mathfrak{S}_{\tt q})= 
\underbrace{ \{\times_\alpha, \ \circ_\beta \} }_{\text{vertices}}, 
\,
E(\mathfrak{S}_{\tt q}) = 
\underbrace{\{ \overline{\times_\alpha \circ_\beta} \}
}_{\text{edges}}    \Big) \, ,
$$

\noindent
is a connected, locally finite\footnote{
“Locally finite’’ means every vertex has finite valence 
and each compact subset of $\esf$ or $B(0,1)$ meets only finitely 
many edges.}, multigraph satisfying the following:

\begin{enumerate}[label=\roman*),leftmargin=*]
\item
The graph
$\mathfrak{S}_{\tt q}$ is
properly embedded in $\esf$ when it is finite, 
or in $B(0,1)$ when it is infinite.

\item 
The set of vertices $V(\mathfrak{S}_{\tt q})$ is a finite or countable set.

\item 
The graph $\mathfrak{S}_{\tt q}$ is bipartite, with vertices in $\{\times, \circ\}$.

\item
Every vertex has valence $\tt q$.
\end{enumerate}
\end{definition} 

Note that a Speiser graph of index $\tt q$ is the dual of 
an $\tt A$--map $\widehat{\Gamma}_{\tt q}$.
Of course, the dual of a $\tt t$--graph $\Gamma$ also exists.

\begin{definition}
\label{def:pre-Speiser-graph}
A \emph{pre--Speiser graph $\mathfrak{S}$} is a graph satisfying Definition \ref{def:Speiser-graph}, 
with (iv) replaced by:

\smallskip
\noindent
iv$^\prime$) each vertex has a valence $2\leq\rho\leq {\tt q}$; the valence of each vertex 
is allowed to differ.
\end{definition}

\begin{remark}[Regular graph / homogeneous tessellation]
\label{rem:regular-description}
If a graph satisfies condition (iv) of Definition 
\ref{def:Speiser-graph}, it is said to be 
\emph{$\tt q$--regular}, or just \emph{regular}.

\noindent
1.\ Through duality, the fact that the Speiser graph 
$\mathfrak{S}_{\tt q}$ 
of index $\tt q$ is regular is equivalent  to the fact that the tessellation 
$\mathscr{T}(\widehat{\Gamma}_{\tt q})$ is 
homogeneous, recall Definition \ref{def:de-teselacion}.2.

\noindent
2.\ 
A priori,  pre--Speiser graphs $\mathfrak{S}$ are not regular, similarly
the tessellations $\mathscr{T}(\Gamma)$ arising from a $\tt t$--graph $\Gamma$
are usually not homogeneous.
\end{remark}

The concept of consistent $\tt q$--labelling $\mathcal{L}_{\mathcal{W}_{\tt q}}$, for
$\tt t$--graphs $\Gamma$ and $\tt A$--maps $\widehat{\Gamma}_{\tt q}$, 
has its corresponding dual for pre--Speiser graphs $\mathfrak{S}$ and 
Speiser graphs $\mathfrak{S}_{\tt q}$ of index $\tt q$. 
We shall convene on using the same name and symbol $\mathcal{L}_{\mathcal{W}_{\tt q}}$
when applied to $\mathfrak{S}$ or $\mathfrak{S}_{\tt q}$.

\begin{definition}
\label{def:etiquetado-consistente-Speiser}
Given a cyclic order $\mathcal{W}_{\tt q}$, 
a {\it consistent $q$--labelling 

\centerline{$\mathcal{L}_{\mathcal{W}_{\tt q}}: E(\mathfrak{S}) \longrightarrow \mathcal{W}_{\tt q}, 
\ \ \
{\tt q} \geq 2$, } 

\noindent 
for a pre--Speiser graph $\mathfrak{S}$ (Speiser graph $\mathfrak{S}_{\tt q}$ of index $\tt q$)} 
satisfies the following conditions:
\begin{enumerate}[label=\roman*),leftmargin=*]
\item 
The edges have labels 
in $\mathcal{W}_{\tt q}=[{\tt w}_1, \ldots, {\tt w}_{\tt q} ]$,
with no label repeated around each vertex,
the ordering of the edges around a vertex is according to their labels, 
cyclic clockwise for a $\times$-vertex, cyclic anticlockwise for a $\circ$--vertex.

\item
For ${\tt w}_{\tt j} \in \mathcal{W}_{\tt q}$, 
a \emph{${\tt w}_{\tt j}$--face of $\mathfrak{S}$} is a component of 
$\esf \backslash \mathfrak{S}$ (when $V(\mathfrak{S})$ is finite), or 
of $B(0,1) \backslash \mathfrak{S}$ (when $V(\mathfrak{S})$ is infinite),
with alternating edges labeled ${\tt w}_{j-1}$ and ${\tt w}_{\tt j}$.
We require that, for each ${\tt w}_{\tt j}\in\mathcal{W}_{\tt q}$, 
there is at least one ${\tt w}_{\tt j}$--face of $\mathfrak{S}$ 
that is not a digon. 
\end{enumerate}

\noindent 
The same applies to a Speiser graph 
$\mathfrak{S}_{\tt q}$ of index $\tt q$.
\end{definition}

\begin{definition}
\label{def:analytic-Speiser-graph}
An \emph{analytic Speiser graph of index $\tt q$} is a pair

\centerline{$(\mathfrak{S}_{\tt q}, \mathcal{L}_{\mathcal{W}_{\tt q}})$,}

\noindent
where $\mathfrak{S}_{\tt q}$ is a Speiser graph of index $\tt q$ and
$\mathcal{L}_{\mathcal{W}_{\tt q}}$ is a consistent 
$\tt q$--labelling.
\end{definition}

\begin{remark}[On the notation for the labels]
\label{rem:notation-of-labels}
As is usual in the literature, unless explicitly stated, 
we shall consider the labels to be 
$\mathcal{W}_{\tt q}=[{1},\ldots,{\tt q}]\subset\CW_w$ 
to make the discussion simpler.
In general,
  
\centerline{$\mathcal{W}_{\tt q}=
[{\tt w}_1, \ldots , {\tt w}_{\tt q}]
\doteq
[1, \ldots, {\tt q}]\subset\CW_w$, }

\noindent 
according to Definition \ref{def:orden-ciclico-y-losetas}.1.
\end{remark}

\begin{remark}[Requirement (ii) of $\mathcal{L}_{\mathcal{W}_{\tt q}}$ 
is a \emph{minimality condition}]
\label{rem:minimality-condition}
Suppose a labelling for a Speiser graph $\mathfrak{S}_{\tt q}$ fails to
satisfy (ii) 
for exactly one label, say ${\tt w}_{j_0}\in\mathcal{W}_{\tt q}$. 
Then all  
${\tt w}_{j_0}$--faces are digons.
Forgetting\footnote{
The forgetting edge operation for $\mathfrak{S}_{\tt q}$ is the analogue of the forgetting vertex operation
for the corresponding $\tt A$--map $\widehat{\Gamma}_{\tt q}$.
} 
the edges labelled ${\tt w}_{j_0}$, also forgets the 
${\tt w}_{j_0}$--face, and the resulting graph satisfies all the requirements of a Speiser graph
of index ${\tt q}-1$.

\noindent
In other words, when considering analytic Speiser graphs and relaxing the labelling 
so that it only satisfies (i) but not (ii), then several such Speiser graphs of 
different indices $\tt q$
give origin to the same function $w(z)$. 

\noindent
For instance, Figure \ref{fig-4-Speiser}.e is a graph, with ${\tt q}=4$, with a labelling 
satisfying the requirements of 
Definition \ref{def:etiquetado-consistente-Speiser} except for condition (ii); 
Figure \ref{fig-4-Speiser}.d is a graph, with ${\tt q}=3$, satisfying all the requirements: 
both represent the same
function. Furthermore, by replacing each digon labelled $4$ in Figure \ref{fig-4-Speiser}.e
with two digons labelled $4$ and $5$, we obtain another graph, with ${\tt q}=5$, that satisfies 
Definition \ref{def:etiquetado-consistente-Speiser} except for condition (ii).
Clearly this can be continued to an arbitrary ${\tt q}>3$.
Compare with \cite{GoldbergOstrovskii}\,p.\,355, where 
the minimality condition is not included.

\smallskip
\noindent
An alternate description of condition (ii) of Definition \ref{def:etiquetado-consistente-Speiser},
appears in \cite{Masoero}\,p.\,54 in terms of the monodromy:

\smallskip 

\begin{enumerate}[label=ii$^\prime$),leftmargin=*]
\item
For any ${\tt j}\in\ZZ_{\tt q}$,  define a map 
$\nu_{\tt j}$
from 
$V(\mathfrak{S}_{\tt q})$ to itself as follows
$\nu_{\tt j}(v)$ is the vertex adjacent to $v$ 
with respect to the edge ${\tt j}$. 
The composition $\Sigma_{\tt j}(v)\doteq \nu_{\tt j} \circ \nu_{{\tt j}-1}(v)$ of
two maps is a permutation of the vertices $\circ$ and $\times$. 
Require that for each ${\tt j}\in\ZZ_{\tt q}$, $\Sigma_{\tt j}(v)\not = v$ for some $v\in V(\mathfrak{S}_{\tt q})$,
{\it i.e.} none of the maps $\Sigma_{\tt j}$ are the identity.
\end{enumerate}
\end{remark}

\begin{remark}\label{rem:despues-de-def-Speiser-graph}
1.\ 
An analytic Speiser graph $(\mathfrak{S}_{\tt q}, \mathcal{L}_{\mathcal{W}_{\tt q}})$ 
naturally induces a cell decomposition 

\centerline{
$\esf \backslash \mathfrak{S}_{\tt q}$  
\ or \
$B(0,1)\backslash \mathfrak{S}_{\tt q}$,
}

\noindent 
depending on whether $\mathfrak{S}_{\tt q}$ is finite or infinite, respectively.

\noindent
$\bigcdot$ 
The cells of dimension 0 correspond to $V(\mathfrak{S}_{\tt q})$.

\noindent 
$\bigcdot$ The cells of dimension 1 correspond to the edges 
$E(\mathfrak{S}_{\tt q})$.

\noindent 
$\bigcdot$ 
The cells of dimension 2 are the connected components of 
the decomposition 
of $\esf \backslash \mathfrak{S}_{\tt q}$ or
$B(0,1) \backslash \mathfrak{S}_{\tt q}$,
which are called \emph{faces}.

\noindent
2.\ 
The faces inherit the cyclic order of the edges;
the faces have cyclic clockwise order, around each
$\times$--vertex, and the cyclic anticlockwise order 
around each $\circ$--vertex. 
Thus, the order of the faces coincides with 
the order of the edges.  As matter of record:

\centerline{
\emph{labelling edges of $\mathfrak{S}_{\tt q}$ or labelling faces of 
the cell decomposition is equivalent.}
}

\noindent
3.
It is easy to see that when we go around the boundary of a face, 
the edges have labels ${\tt w}_{j-1}$ and ${\tt w}_{\tt j}$ (for some ${\tt w}_{\tt j}\in\mathcal{W}_q$) 
and the labels alternate,
{\it i.e.} each face is a ${\tt w}_{\tt j}$--face for some label ${\tt w}_{\tt j}\in\mathcal{W}_q$.
 
\noindent
4.
Each face is bounded by either  

\noindent 
$\bigcdot$
an finite even set of edges, 
a \emph{bounded face} 
(\emph{algebraic elementary region} according to \cite{GoldbergOstrovskii}),
or 

\noindent
$\bigcdot$ 
by an infinite set of edges, 
an \emph{unbounded face}  
(\emph{logarithmic elementary region} according to \cite{GoldbergOstrovskii}).

\noindent
5.
Several edges with consecutive labels, 
having common vertices $\circ$ and $\times$, 
form a so--called \emph{edge bundle}. 
Clearly, two edges belonging
to the same edge bundle and having labels ${\tt j}-1$ and ${\tt j}$ form a boundary of a face, which is a digon.
In graph theory language, a multigraph admits edge bundles,
thus our Speiser graphs of index $\tt q$ are 
(generically) multigraphs.

\noindent
6.\ 
If a face is not a digon, 
its label will be written inside it. 
Because of condition (ii) of Definition \ref{def:etiquetado-consistente-Speiser}, 
it is not necessary to write labels inside digons.
\end{remark}

\begin{example}[Speiser graphs of index $\tt q$ for some functions]
For Speiser graphs of index $\tt q$ arising from transcendental Speiser functions $w(z)$ 
on $\CC_z$, with an essential singularity at $\infty\in\CW_z$,
see
\cite{GoldbergOstrovskii}\,ch.\,4  and 
our Figures 
\ref{fig:Tessellation-Exp}.c,
\ref{fig:Tessellation-AiBi}.c, 
\ref{fig:Tessellation-p=4}.c,
\ref{fig:mosaico-exp-exp}.b, 
\ref{figExpSin}.b, 
\ref{fig:SinExpSin}.b.
\end{example}

\subsection{Duality: Tessellations and Speiser graphs}
\label{sec:graph-results}
The duality between the $\tt A$--maps $\widehat{\Gamma}_{\tt q}$ 
and Speiser graphs $\mathfrak{S}_{\tt q}$ of index $\tt q$ provides the following bijections.

\begin{proposition}[Bijection between Speiser tessellations and analytic Speiser graphs]
\label{prop:bijection-extends-to-action}
Let $\mathcal{W}_{\tt q}$ be fixed (that is the set of $\tt q$ distinct values 
$\{ {\tt w}_{\tt j} \}_{{\tt j}=1}^{\tt q}$ and the cyclic order on them are fixed).
\begin{enumerate}[label=\arabic*),leftmargin=*]
\item
There is a bijection between Speiser $\tt q$--tessellations and 
analytic Speiser graphs of index ${\tt q}$,

\centerline{$
\big( \mathscr{T}(\widehat{\Gamma}_{\tt q}), 
\mathcal{L}_{\mathcal{W}_{\tt q}} \big) 
\longleftrightarrow 
(\mathfrak{S}_{\tt q}, \mathcal{L}_{\mathcal{W}_{\tt q}} )
$.}

\item
The above bijection extends to a bijection that includes the action of 
$Aut(\Omega_z) \times \text{Stab}(\mathcal{W}_{\tt q})$,
\emph{i.e.}\ 
%
\begin{equation*}
Aut(\Omega_z) \times \text{Stab}(\mathcal{W}_{\tt q}) \times
\big( (\Omega_z \cup \partial_{\mathcal{I}}\Omega_z ) \backslash \widehat{\Gamma}_{\tt q}, 
\mathcal{L}_{\mathcal{W}_{\tt q}} \big) 
\quad
\longleftrightarrow \quad
Aut(\Omega_z) \times \text{Stab}(\mathcal{W}_{\tt q}) \times
(\mathfrak{S}_{\tt q}, \mathcal{L}_{\mathcal{W}_{\tt q}} ).
\end{equation*}
\end{enumerate}
\end{proposition}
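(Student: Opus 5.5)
The proof is a direct construction of the dual graph, followed by a check that the labellings and the group actions correspond under duality.

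Starting from a Speiser $\tt q$--tessellation $\big(\mathscr{T}(\widehat{\Gamma}_{\tt q}),\mathcal{L}_{\mathcal{W}_{\tt q}}\big)$, I build $\mathfrak{S}_{\tt q}$ as follows. Place a vertex $\times_\alpha$ inside each blue tile $T_\alpha$ and a vertex $\circ_\beta$ inside each grey tile $T'_\beta$. For every edge $e$ of $\widehat{\Gamma}_{\tt q}$, join the two vertices of the tiles on either side of $e$ by a dual edge crossing $e$ once.

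Since the coloring alternates, every edge of $\widehat{\Gamma}_{\tt q}$ separates a blue tile from a grey one, so the dual graph is bipartite. Homogeneity (Definition \ref{definicion-de-A-map}.2) says each tile has exactly $\tt q$ edges, so every dual vertex has valence $\tt q$; this is Remark \ref{rem:regular-description}.1. Connectedness comes from connectedness of $\Omega_z$. Local finiteness and the countability of $V(\mathfrak{S}_{\tt q})$ come from the tiles being Jordan domains that meet the finite-valence vertices only locally finitely.

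For the embedding, I use $\esf$ when the tessellation is finite ($\Omega_z=\CW_z$). When it is infinite, I first remove the ideal points, which are the vertices of infinite valence. These become unbounded faces of the dual, matching Remark \ref{rem:despues-de-def-Speiser-graph}.4. The resulting graph embeds properly in $B(0,1)$, after a homeomorphism $\CC_z\cong B(0,1)$ in the parabolic case.

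Next I transport the labelling. A dual edge $e^*$ crosses an edge $e$ of $\widehat{\Gamma}_{\tt q}$ whose endpoints carry consecutive labels ${\tt w}_{{\tt j}-1},{\tt w}_{\tt j}$; I give $e^*$ the label ${\tt w}_{{\tt j}-1}$, i.e. the label of the initial vertex of $e$ for the orientation in which blue is on the left. Condition (i) of Definition \ref{def:etiquetado-consistente} requires the labels around a blue tile to appear once each in anticlockwise order. Around the dual $\times$-vertex this gives edge labels without repetition, cyclically ordered, and the orientation reverses between blue and grey tiles. Matching this to the clockwise/anticlockwise convention of Definition \ref{def:etiquetado-consistente-Speiser}.(i) is the main bookkeeping point. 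I would check it on the trivial tessellation $\mathscr{T}(\gamma)$ first and then propagate, since the conventions are local.

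The faces of the dual correspond to vertices of $\widehat{\Gamma}_{\tt q}$. A vertex labelled ${\tt w}_{\tt j}$ of valence $2k$ gives a ${\tt w}_{\tt j}$--face with $2k$ sides (alternating labels ${\tt w}_{{\tt j}-1},{\tt w}_{\tt j}$), and a vertex of infinite valence gives an unbounded face. So a ${\tt w}_{\tt j}$--face is a non-digon exactly when the corresponding vertex has valence $\geq 4$. Hence condition (ii) of Definition \ref{def:etiquetado-consistente} is equivalent to condition (ii) of Definition \ref{def:etiquetado-consistente-Speiser}. This is the step I expect to need the most care, in particular the treatment of ideal vertices as faces.

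The inverse map is the same construction run backwards. Faces of $\mathfrak{S}_{\tt q}$ become vertices, recovering ideal points for unbounded faces, and edges become crossing edges. Each label of a face is transferred to the corresponding vertex. Both constructions are defined up to orientation-preserving homeomorphism, and the double dual is isotopic to the original, so the two maps are mutually inverse on equivalence classes.

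For part (2), I note that the duality construction is natural. A homeomorphism $g$ of $\Omega_z$ carries tiles, edges and vertices to their images, so it commutes with dualization. An element $h\in\text{Stab}(\mathcal{W}_{\tt q})$ acts only on the labels, through the same cyclic relabelling on both sides. Therefore the bijection in (1) is equivariant, which gives the extension to $Aut(\Omega_z)\times\text{Stab}(\mathcal{W}_{\tt q})$ as in Lemma \ref{lem:no-unicidad-funciones-teselaciones}.
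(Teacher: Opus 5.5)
Your proposal is correct and follows the paper's route. You dualize the $\tt A$--map after removing its infinite-valence (ideal) vertices, using $\CC_z\cong\Delta_z\cong B(0,1)$ so that these become unbounded faces, and you get (2) because the action commutes with dualization; your explicit labelling transfer and the check that condition (ii) for $\widehat{\Gamma}_{\tt q}$ matches the non-digon condition for $\mathfrak{S}_{\tt q}$ fill in details the paper leaves implicit.

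The one bookkeeping fix is a swap of vertex types. Your rule makes the edge labels run anticlockwise around the dual vertex in a blue tile, so Definition \ref{def:etiquetado-consistente-Speiser}(i) requires the $\circ$-vertices in the blue tiles and the $\times$-vertices in the grey ones. This matches the paper's later identification of $\circ$ with $\mathfrak{H}^+$.
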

\begin{proof}
Because of the duality between the $\tt A$--maps $\widehat{\Gamma}_{\tt q}$ 
and the Speiser graphs $\mathfrak{S}_{\tt q}$ of index $\tt q$, 
the bijection should follow immediately.

\noindent
However, since 

\centerline{$
\mathscr{T}(\widehat{\Gamma}_{\tt q} )= 
(\Omega_z \cup \partial_{\mathcal{I}}\Omega_z ) \backslash \widehat{\Gamma}_{\tt q}
$,}

\noindent
care must be taken with the ideal boundary.

\noindent
\emph{Finite $\widehat{\Gamma}_{\tt q}$ case.} 
Note that $\widehat{\Gamma}_{\tt q} \subset \CW_z$.
Since the ideal boundary is empty, 
$\partial_{\mathcal{I}}\Omega_z=\varnothing$,
the cell decomposition $\CW_z \backslash\widehat{\Gamma}_{\tt q}$, 
is the dual of $\CW_z \backslash\mathfrak{S}_{\tt q}$. 

\noindent
\emph{Infinite $\widehat{\Gamma}_{\tt q}$ case.} 
Recalling condition (iii) of Definition \ref{def:de-t-graph}, 
it follows that the ambient space for $\widehat{\Gamma}_{\tt q}$ is either 
$\CC_z\cup \{\infty_1,\ldots,\infty_{\tt p} \}$ or $\Delta_z \cup \partial_{\mathcal{I}} \Delta_z$.
Let $\widehat{\Gamma}_0=\widehat{\Gamma}_{\tt q}\backslash V_\infty$, where
$V_\infty $ denotes the vertices with infinite valence of
$V(\widehat{\Gamma}_{\tt q})$. 
Considering the cell decomposition 
$\CC_z \backslash \widehat{\Gamma}_0$ or $\Delta_z\backslash\widehat{\Gamma}_0$, 
note that within the $C^1$--category $\CC_z \cong \Delta_z \cong B(0,1)$, hence 
the cell decomposition is $B(0,1)\backslash\widehat{\Gamma}_0$.
Its dual, is $B(0,1)\backslash\mathfrak{S}_{\tt q}$.

\noindent
This finishes the proof of statement 1.

For statement 2,
note that the action commutes with the duality.
\end{proof}

Speiser graphs have been used extensively in various contexts, one of the most
common is for studying Speiser functions $w(z)$ from a combinatorial perspective. 
From Theorem \ref{teo:principal} and the above bijection 
we immediately obtain.

\begin{corollary}[An analytic Speiser graph determines a family of Speiser functions]
\label{cor:Speiser-graph-a-Speiser-function}
An analytic Speiser graph $(\mathfrak{S}_{\tt q},\mathcal{L}_{\mathcal{W}_{\tt q}})$ 
of index $\tt q$, determines a 
non unique Speiser function $w(z)$ 
provided with $\tt q$ distinct singular values $\mathcal{W}_{\tt q}$.
\hfill\qed
\end{corollary}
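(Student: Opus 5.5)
The plan is to compose the two correspondences already established: Proposition \ref{prop:bijection-extends-to-action}.1 and Theorem \ref{teo:principal}.2. Given an analytic Speiser graph $(\mathfrak{S}_{\tt q},\mathcal{L}_{\mathcal{W}_{\tt q}})$, I first pass through the duality to a Speiser ${\tt q}$--tessellation $\big(\mathscr{T}(\widehat{\Gamma}_{\tt q}),\mathcal{L}_{\mathcal{W}_{\tt q}}\big)$. Here the edge labelling of $\mathfrak{S}_{\tt q}$ becomes a vertex labelling of the dual ${\tt A}$--map. Condition (i) of Definition \ref{def:etiquetado-consistente-Speiser} (clockwise order around $\times$, anticlockwise around $\circ$) translates into the anticlockwise order of labels on the boundary of each blue tile. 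Condition (ii) says that some ${\tt w}_{\tt j}$--face is not a digon; dually, this means that some vertex labelled ${\tt w}_{\tt j}$ has valence $\geq 4$, which is condition (ii) of Definition \ref{def:etiquetado-consistente}. I would check this translation explicitly, since it is exactly what makes the dual labelling \emph{consistent}.

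Second, I determine the ambient surface. If $\mathfrak{S}_{\tt q}$ is finite, it sits in $\esf$ and the dual lives in $\CW_z$. If it is infinite, it sits in $B(0,1)$, and the dual ${\tt A}$--map is obtained by adding one ideal vertex of infinite valence inside each unbounded face (logarithmic elementary region), placed in $\partial_{\mathcal{I}}\Omega_z$. This is the reverse of the construction in the proof of Proposition \ref{prop:bijection-extends-to-action}. The resulting object is a homogeneous tessellation with a consistent ${\tt q}$--labelling.

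Third, I apply Theorem \ref{teo:principal}.2 to it. Gluing the $2n$ tiles as copies of the two ${\tt q}$--gons $T,T'$ of $\CW_w\backslash\gamma$, according to the labels, produces a simply connected Riemann surface with a branched covering to $\CW_w$. Uniformization then gives $\Omega_z\in\{\CW_z,\CC_z,\Delta_z\}$ and the function $w(z)$. Its singular values lie among $\{{\tt w}_1,\ldots,{\tt w}_{\tt q}\}$, since the covering is unbranched over $\CW_w\backslash\mathcal{W}_{\tt q}$. Each ${\tt w}_{\tt j}$ is genuinely singular by consistency (ii): a vertex of valence $\geq 4$ gives a critical point, and an ideal vertex gives a logarithmic singularity. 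So $w$ is a Speiser function with exactly the ${\tt q}$ singular values $\mathcal{W}_{\tt q}$.

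Non-uniqueness comes from Lemma \ref{lem:no-unicidad-funciones-teselaciones}: precomposition with $Aut(\Omega_z)$ and postcomposition with $\text{Stab}(\mathcal{W}_{\tt q})$ preserve the graph and its labelling. By Proposition \ref{prop:bijection-extends-to-action}.2, this action commutes with the duality. The main obstacle is the infinite case, namely making sure that the unbounded faces of $\mathfrak{S}_{\tt q}$ correspond exactly to infinite-valence vertices on $\partial_{\mathcal{I}}\Omega_z$ and yield logarithmic singularities. This rests on the local model: the ${\tt w}_{\tt j}$-tiles glued cyclically without end form a universal covering of a punctured disk, as in Definition \ref{def:logaritmica}, which I would verify by hand.
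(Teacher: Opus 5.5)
Your proposal follows the paper's argument: the paper obtains this corollary immediately by composing the duality of Proposition~\ref{prop:bijection-extends-to-action} with Theorem~\ref{teo:principal}.2, with non-uniqueness as in Lemma~\ref{lem:no-unicidad-funciones-teselaciones}, and your checks on the consistency conditions and on the ideal vertices in unbounded faces only spell out details the paper leaves implicit. One small correction: for ${\tt q}\geq 3$, a nontrivial $h\in\text{Stab}(\mathcal{W}_{\tt q})$ cyclically permutes the labels rather than preserving them (a M\"obius map fixing three points is the identity), so non-uniqueness for a fixed labelled graph should be credited to precomposition with $Aut(\Omega_z)$, which already suffices.
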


In fact, because of the complete duality/bijection 
given by Proposition \ref{prop:bijection-extends-to-action},
once a consistent $\tt q$--labelling is chosen, working with either of the following  pairs is the same

\centerline{$
\underbrace{
\big( 
\widehat{\Gamma}_{\tt q}, 
\mathcal{L}_{\mathcal{W}_{\tt q}} \big) 
}_{\substack{ \tt A-\text{map \&} \\ 
\text{consistent }{\tt q}-\text{labelling} } } 
\quad
\longleftrightarrow 
\quad
\underbrace{
\big( \mathscr{T}_\gamma(w(z)), 
w(z)^*\mathcal{L}_\gamma \big)
}_{\substack{ \text{Tessellation arising from} \\
\text{Speiser function } w(z) }}
\quad
\longleftrightarrow 
\underbrace{
(\mathfrak{S}_{\tt q}, \mathcal{L}_{\mathcal{W}_{\tt q}} ) 
}_{\substack{ \text{analytic Speiser graph } \& \\
\text{consistent }{\tt q}-\text{labelling} } } .
$}

\begin{proposition}[The faces of  
$(\mathfrak{S}_{\tt q}, \mathcal{L}_{\mathcal{W}_{\tt q}} )$ 
and 
their relation to singularities of the inverse  $w^{-1}(z)$]
\label{prop:speiser-graph-implies-speiser-function}
The cell decomposition provides the following relationships.

\begin{enumerate}[label=\arabic*),leftmargin=*]
\item
If a ${\tt w}_{\tt j}$--face is a digon, 
then 
the corresponding point $\zeta_\iota \in \Omega_z$ 
(a vertex of valence 2 of the $\tt A$--map
$\widehat{\Gamma}_{\tt q}$)
is an ordinary point,
equivalently a cosingular point 
with cosingular value $w(\zeta_\iota)={\tt w}_{\tt j}$. 

\item
If a ${\tt w}_{\tt j}$--face is a $2m$--gon, for $2 \leq m < \infty$,
then the corresponding point $z_\iota \in \Omega_z$ 
(a vertex of finite even valence greater than or equal to 4 of 
the $\tt A$--map $\widehat{\Gamma}_{\tt q}$)
is an algebraic singularity of the inverse 
$w^{-1}(z)$, with critical value $w(z_\iota)={\tt w}_{\tt j}$.
Moreover, $m_\iota$ is the ramification index of $z_\iota$, 
equivalently the multiplicity of the critical point $z_\iota$.

\item 
If a ${\tt w}_{\tt j}$--face is unbounded 
(an $\infty$--gon), then it 
corresponds to  a  logarithmic
singularity of $w^{-1}(z)$ over the asymptotic value 
${\tt a}_{\tt j} \doteq {\tt w}_{\tt j}$.
\end{enumerate}

\end{proposition}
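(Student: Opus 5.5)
The plan is to read the faces of $\mathfrak{S}_{\tt q}$ through the duality of Proposition \ref{prop:bijection-extends-to-action}. Under this duality each face of $(\mathfrak{S}_{\tt q},\mathcal{L}_{\mathcal{W}_{\tt q}})$ is dual to a vertex $v$ of the $\tt A$--map $\widehat{\Gamma}_{\tt q}=w(z)^*\gamma$. The edges of $\mathfrak{S}_{\tt q}$ on the boundary of the face are dual to the edges of $\widehat{\Gamma}_{\tt q}$ incident to $v$. Hence a face bounded by $2m$ edges corresponds to a vertex of valence $2m$, and an unbounded face corresponds to a vertex of infinite valence.

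The labels also match. A ${\tt w}_{\tt j}$--face has alternating edges labelled ${\tt w}_{{\tt j}-1}$ and ${\tt w}_{\tt j}$, and the dual edges of $\widehat{\Gamma}_{\tt q}$ incident to $v$ are the lifts of the two edges $\overline{{\tt w}_{{\tt j}-1}{\tt w}_{\tt j}}$ and $\overline{{\tt w}_{\tt j}{\tt w}_{{\tt j}+1}}$ of $\gamma$ meeting at ${\tt w}_{\tt j}$. So the consistent labelling of Step 4 of the Schwarz--Klein--Speiser algorithm, \eqref{eq:pullback-etiquetado-de-gamma}, gives $w(v)={\tt w}_{\tt j}$. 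When $v$ is an ideal point, $v$ lies over ${\tt w}_{\tt j}$ in the sense of Definition \ref{def:eremenko1}.

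The three cases then follow from local models. Take a small disk $D({\tt w}_{\tt j},\rho)$ containing no other singular value. The arc $\gamma\cap D$ is a diameter-like Jordan arc through ${\tt w}_{\tt j}$ separating $D$ into one blue and one grey half.

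\emph{Finite $v$.} If $v\in\Omega_z$, then locally $w$ is conjugate to $z\mapsto z^{m_\iota}$ with $m_\iota$ the local degree (Definition \ref{def:multiplicity-of-sing-point}). The preimage of $\gamma\cap D$ consists of $2m_\iota$ arcs emanating from $v$, alternately bounding $m_\iota$ blue and $m_\iota$ grey sectors. So the valence is $2m_\iota$ and the dual face is a $2m_\iota$--gon. If the face is a digon, then $m_\iota=1$, so $v$ is an ordinary point with $w(v)={\tt w}_{\tt j}$, i.e.\ a cosingular point in $\mathcal{CS}_w$ (Definition \ref{def:singular-value-point-cosingular}.3). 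If the face is a $2m$--gon with $m\ge 2$, then $m=m_\iota\ge 2$ and $v$ is a critical point over ${\tt w}_{\tt j}$. This gives (1) and (2).

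\emph{Unbounded face.} The vertex must have infinite valence. By Definition \ref{def:de-t-graph}(iii) it therefore lies in $\partial_{\mathcal{I}}\Omega_z$, and it is a transcendental singularity over ${\tt w}_{\tt j}$. By Proposition \ref{prop:dos-tipos-singularidades}.1 it is logarithmic. Consistently with this, the universal covering model $w=e^{\zeta}+{\tt w}_{\tt j}$ on a half plane (Definition \ref{def:logaritmica}) pulls $\gamma\cap D$ back to infinitely many arcs, matching the infinite valence. This gives (3), with ${\tt a}_{\tt j}={\tt w}_{\tt j}$.

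The main obstacle is making rigorous that the vertex dual to a face really is the unique preimage point at which those incident edges meet, especially in the ideal case. The concern is that the neighbourhood $U_{{\tt w}_{\tt j}}(\rho)$ is exactly the union of the tiles around one vertex, so that distinct faces do not correspond to the same ideal point. I would handle this by taking the component of $w^{-1}(D({\tt w}_{\tt j},\rho))$ containing the germs of the edges of the face, and using that $\gamma$ meets $D$ only in one arc. Then the component's preimage structure is determined by the covering type: finite cyclic or universal. In the universal case, the edges dual to the face all converge to the same ideal point.
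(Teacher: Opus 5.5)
Your proposal is correct and takes essentially the paper's route. The paper's one-line proof (citing Definition~\ref{def:analytic-Speiser-graph} and Remark~\ref{rem:despues-de-def-Speiser-graph}.4) relies on the same duality between faces of $\mathfrak{S}_{\tt q}$ and vertices of $\widehat{\Gamma}_{\tt q}=w(z)^*\gamma$, together with the classification of those vertices by valence ($2$, finite $\geq 4$, infinite); you make explicit the local $z\mapsto z^{m}$ and universal-covering models that justify that classification, which the paper takes for granted.
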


\begin{proof}
Follows from Definition 
\ref{def:analytic-Speiser-graph} and 
Remark \ref{rem:despues-de-def-Speiser-graph}.4.
\end{proof}

\begin{remark}[Recognizing conformal type of $(B(0,1),J)$
directly from the infinite Speiser graphs $\mathfrak{S}_{\tt q}$]
\label{rem:Conformal-type-Speiser-graph}
It is clear that the Riemann surface $\R_{w(z)}$ associated to a finite Speiser graph 
$\mathfrak{S}_{\tt q}$ has parabolic type.
The recognition of the conformal type of $\R_{w(z)}$ can be done in several different ways:
all of which are equivalent on \emph{any} infinite, finitely--ended, locally finite planar graph, 
in particular on the Speiser graph of a Speiser function $w(z)$. 

\noindent
1.\  
Random--walk criterion; see \cite{Doyle-Snell}, \cite{Woess}, \cite{Lyons-Peres}.

\noindent
2.\  
Resistance (Nash–Williams) criterion; see \cite{Nash-Williams}, \cite{Soardi}.

\noindent
3.\  
Modulus (extremal length) criterion; see \cite{Duffin}, \cite{Benjamini-Schramm}, \cite{Heinonen-Koskela}. 

\noindent
4.\  
Circle-packing criterion; see \cite{He-Schramm}, \cite{Stephenson}. 

\noindent
5.\ 
Isoperimetric or Cheeger Constant $h(\mathfrak{S}_{\tt q})$ criterion; see \cite{Dodziuk}.
\end{remark}

Recalling Remark \ref{rem:bounds-for-q} and using duality, we have the following bounds, in terms of the pre--Speiser graph $\mathfrak{S}$, for $\tt q$ to be a 
consistent $\tt q$--labeling of $\mathfrak{S}$.
\begin{equation}
\label{cota-q-graficas}
{\tt q}_{\text{min}} \doteq 
\max \left\{ 
\begin{array}{c}
\text{valence of the}\\ 
\text{vertices of } \mathfrak{S}
\end{array}
\right \} 
\leq  \, {\tt q} \leq 
{\tt q}_{\text{max}} \doteq
\# \left \{ \begin{array}{c}
\text{faces of } \mathfrak{S}, \\
\text{that are not digons}
\end{array}
\right\} .
\end{equation}

\noindent
Once again, note that ${\tt q}_{\text{max}}$ can be infinite.

\begin{lemma}
\label{lem:Speiser-graph-admite-etiquetados}
A Speiser graph $\mathfrak{S}_{\tt q}$ of index $\tt q$ 
supports at least one consistent 
${\tt q}_{\rm o}$--labelling $\mathcal{L}_{\mathcal{W}_{{\tt q}_{\rm o}}}$, for 
$2 \leq {\tt q}_{\text{min}} \leq {\tt q}_{\rm o} \leq {\tt q} \leq {\tt q}_{\text{max}}$.
\end{lemma}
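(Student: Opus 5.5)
The plan is to deduce this lemma from its dual, Lemma \ref{lem:A-mapa-admite-etiquetados}, through the duality of Proposition \ref{prop:bijection-extends-to-action}. A direct argument on the graph side would also work, and would follow the same propagation-and-erasure scheme. Concretely, let $\widehat{\Gamma}_{\tt q}$ be the $\tt A$--map dual to $\mathfrak{S}_{\tt q}$. Under this duality the objects correspond as follows:
\begin{itemize}[label=$\bigcdot$,leftmargin=*]
\item vertices $\times,\circ$ of $\mathfrak{S}_{\tt q}$ correspond to blue and grey tiles;
\item edges correspond to edges;
\item faces correspond to vertices of $\widehat{\Gamma}_{\tt q}$, with digons corresponding to vertices of valence 2.
\end{itemize}
Since valences of vertices of $\mathfrak{S}$ equal the numbers of sides of tiles, and non-digon faces correspond to vertices of valence $\geq 4$, the bounds \eqref{cota-q-graficas} are exactly the dual of \eqref{cota-q-mosaicos}.

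Step 1 is to construct a labelling satisfying condition (i) of Definition \ref{def:etiquetado-consistente-Speiser} with ${\tt q}_{\rm o}={\tt q}$. Fix a $\times$--vertex and label its $\tt q$ edges clockwise by $[1,\ldots,{\tt q}]$. At an adjacent $\circ$--vertex one edge label is already forced, and regularity ($\tt q$ edges at every vertex) forces the anticlockwise labels of the remaining edges. Propagate along the connected graph. The key point, which I expect to be the main obstacle, is well-definedness: different paths must not produce conflicting labels. I would handle it as the paper does for $\tt A$--maps, using simple connectivity of $\esf$ or $B(0,1)$. The consistency around any closed loop reduces to consistency around the boundary of each face. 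Around a face, the two edges at each corner are consecutive in the cyclic order (this is precisely how a face is formed in the planar embedding). With the orientation reversing between $\times$ and $\circ$, the labels therefore alternate between ${\tt j}-1$ and ${\tt j}$, and the monodromy around the face is trivial. Equivalently, I may simply transport the labelling produced in Lemma \ref{lem:A-mapa-admite-etiquetados} across the duality, labelling each edge of $\mathfrak{S}_{\tt q}$ by the label of the corresponding vertex of $\widehat{\Gamma}_{\tt q}$ via Remark \ref{rem:despues-de-def-Speiser-graph}.2.

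Step 2 enforces condition (ii). If for some label ${\tt w}_{{\tt j}_{\rm o}}$ every ${\tt w}_{{\tt j}_{\rm o}}$--face is a digon, apply the edge-forgetting operation of Remark \ref{rem:minimality-condition}. This deletes the edges with one of the two labels bounding those digons, so each vertex loses exactly one edge. The result is a Speiser graph of index ${\tt q}_{\rm o}-1$ with an induced labelling still satisfying (i), after erasing ${\tt w}_{{\tt j}_{\rm o}}$ from the cyclic order.

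Iterate. The process terminates because some face is not a digon: the $\tt A$--map has a vertex of valence $\geq 4$, by Definition \ref{definicion-de-A-map}.i. It cannot go below ${\tt q}_{\min}$, since ${\tt q}_{\rm o}$ always counts the labels present and each tile must still carry its corners. This gives ${\tt q}_{\min}\le{\tt q}_{\rm o}\le{\tt q}$. Finally, condition (ii) forces each of the ${\tt q}_{\rm o}$ labels to own a distinct non-digon face, which yields ${\tt q}_{\rm o}\le{\tt q}_{\max}$.
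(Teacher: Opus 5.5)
Your proposal is correct and is essentially the paper's proof. The paper's proof is the one-line deduction from Lemma \ref{lem:A-mapa-admite-etiquetados} via the duality of Proposition \ref{prop:bijection-extends-to-action}, and your graph-side propagation and edge-forgetting argument is that lemma's proof transported across the duality; your face-monodromy check supplies the well-definedness the paper leaves implicit. As in the paper, ${\tt q}_{\text{min}}$ has to be read on the underlying pre--Speiser graph, with edge bundles collapsed, and not on $\mathfrak{S}_{\tt q}$ itself, where every valence is ${\tt q}$; your corner argument for ${\tt q}_{\rm o}\geq {\tt q}_{\text{min}}$ uses exactly this reading.
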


\begin{proof}
Follows directly from duality and Lemma \ref{lem:A-mapa-admite-etiquetados}.
\end{proof}

\begin{remark}[Non uniqueness of the functions]
\label{rem:no-unicidad}
\hfill

\noindent 
1.
Let us consider in more detail the case when $\text{Stab}(\mathcal{W}_{\tt q})\neq Id$.

\noindent
For ${\tt q}=2$. The subcase of two algebraic singularities
of $w^{-1}(z)$ leads to $w(z)=\lambda(z-a)^n/(z-b)^n$, for
$n \geq 2$, $a\neq b$. 
The subcase of two logarithmic singularities of
$w^{-1}(z)$, leads to 

\centerline{
$w(z)=\ent{}{} (z)=\e^{z}$ \ and \ $w(z)=\htan{}{} (z)=\tanh(z)$,
}

\noindent 
which will give rise to ``elementary blocks'' as in Definition \ref{def:piezas-elementales}.  
See also Example \ref{example:N-exp-tanh}.

\noindent
For ${\tt q}=3$. Up to $Aut(\CC_w)$ the choice of the 
singular values
is $\{0,\, 1,\, \infty \}$, that is $w(z)$ is an algebraic
or transcendental Bely{\u \i}'s function. The theory 
of dessins d' enfants applies for $w(z)^*\gamma$ in this case, 
see \cite{Jones-Wolfart}.

\noindent
2.\ 
E. Drape \cite{Drape}, and C.\ Blanc \cite{Blanc} studied the classification of the Speiser graphs 
associated to $N$--functions with only one branch point over each asymptotic value, 
later W.\ Lotz \cite{Lotz}, in his thesis, dropped the assumption of only one branch point 
over each asymptotic value. 

\noindent
3.\ Given a pair ($\tt A$--map, consistent $\tt q$--labelling), say 
$(\widehat{\Gamma}, \mathcal{L}_{\mathcal{W}_{\tt q}})$, 
note that choosing any representative $\gamma$ of the isotopy class of simple closed paths
relative to the $\tt q$ distinct values $\{ {\tt w}_{1},\ldots,{\tt w}_{\tt q} \}$ does not change
the cyclic order $\mathcal{L}_\gamma = \mathcal{W}_{\tt q}$ and thus does not change the 
consistent $\tt q$--labelling $\mathcal{L}_{\mathcal{W}_{\tt q}}$. 
However, by relaxing the condition of isotopy relative to the $\tt q$ distinct values 
$\{ {\tt w}_{1},\ldots,{\tt w}_{\tt q} \}$, that is by choosing $\widetilde{\gamma}\notin [\gamma]$
but still requiring that $\widetilde{\gamma}$ visit the $\tt q$ distinct values, 
the cyclic order $\mathcal{L}_{\widetilde{\gamma}} \neq \mathcal{L}_\gamma$ changes. Thus
the corresponding consistent $\tt q$--labelling also changes, say to 
$\mathcal{L}_{ \widetilde{\mathcal{W}}_{\tt q} }$, 
and the new pair 
$(\widehat{\Gamma}, \mathcal{L}_{ \widetilde{\mathcal{W}}_{\tt q} } ) \neq 
(\widehat{\Gamma}, \mathcal{L}_{\mathcal{W}_{\tt q}} )$. 
Compare with \cite{Habsch}, \cite{Tairova1}, \cite{Tairova2}.

\end{remark}

We provide some features for the simplest 
families of Speiser functions.

\begin{remark}[Speiser graphs for rational functions]
Let $(\mathfrak{S}_{w(z)}, \mathcal{L}_{\mathcal{W}_{\tt q}} )$
be the analytic Speiser graph of a rational function $w(z)=R(z)$ 
of degree $n \geq 2$.
The dual of $\mathfrak{S}_{w(z)}$ is an 
$\tt A$--map
$\widehat{\Gamma}_{\tt q}$ embedded in $\CW_z$. 
In particular, there are no unbounded faces of $\CW_z \backslash \mathfrak{S}_{w(z)}$.
Furthermore, 
for each ${\tt w}_{\tt j}\in\mathcal{W}_{\tt q}$, 
at least one ${\tt w}_{\tt j}$--face is a 
$2m$--gon for some $m \geq 2$ 
(where $m$ is the multiplicity of the corresponding critical point, 
see Definition \ref{def:multiplicity-of-sing-point}).
In simple words, each label 
${\tt w}_{\tt j}$ comes from a critical point of $w(z)$,
\emph{i.e.}\ the labels are exactly the critical values.

\noindent 
In the case of polynomials of degree $r \geq 2$, once again 
$\mathfrak{S}_{w(z)}$ is finite and embedded in $\CW_z$.
Furthermore, the ${\tt w}_{\tt j}$--face containing $\infty \in \CW_z$ 
has $2r$ edges, 
and in fact ${\tt w}_{\tt j}=\infty\in\CW_w$.
\end{remark}

\subsection{Speiser graphs for $N$--functions}
\label{subsec:Speiser-for-N-functions}
From Proposition \ref{prop:speiser-graph-implies-speiser-function} and the definition of
$N$--function, it follows immediately that an analytic Speiser graph of index $\tt q$
$(\mathfrak{S}_{w(z)}, \mathcal{L}_{\mathcal{W}_{\tt q}})$ for an $N$--function 
$w(z):\Omega_z\longrightarrow \CW_w$, requires that:
\begin{enumerate}[label=\roman*),leftmargin=*]

\item 
its conformal type is parabolic, so $\Omega_z = \CC_z$,

\item
the only bounded faces of $\mathfrak{S}_{w(z)}$ are digons,

\item
the labels $\mathcal{W}_{\tt q}$
are exactly the asymptotic values of $w(z)$,

\item
there are $2\leq{\tt p}<\infty$ unbounded faces of 
$\mathfrak{S}_{w(z)}$:
for $\iota=1,\ldots,\tt p$, the unbounded face with label 
${\tt w}_{{\tt j}(\iota)}={\tt a}_{{\tt j}(\iota)}\in\mathcal{W}_{\tt q}$
corresponds to the class of asymptotic paths $[\alpha_{{\tt j}(\iota)}]$ 
associated to the asymptotic value ${\tt a}_{{\tt j}(\iota)}$ 
(recall \eqref{eq:enumerating-branch-points} for notation),

\item
$\mathfrak{S}_{w(z)}$ 
has $\tt p$ ``logarithmic ends'' and no other ends.
\end{enumerate}

\begin{remark}[Historical origin and remarks on logarithmic ends]
\label{rem:historical-origin-log-ends}
In the literature, the structure appearing in (v) above can be found with different names 
and also for different objects:
the term ``logarithmic end'' appears in 
\cite{Nevanlinna2}\,p.\,292 (who attributes it to A.\ Speiser), 
also \cite{GoldbergOstrovskii}\,p.\,379--380 uses it for
the combinatorial and analytic objects. 
The terms ``logarithmic tower'', ``helicoid'', ``half--logarithmic spiral'' appear in  
\cite{AlvarezMucino1}\,p.\,152,\,194, and
\cite{AlvarezMucino3}\,p.\,23, once again for the combinatorial and analytic objects.
The term ``logarithmic staircase'' is used in \cite{Geyer-Merenkov}\,p.\,362 for the analytic object. 
We shall use the term ``logarithmic end'' for the combinatorial objects,
and ``logarithmic tower'' for the analytic objects
(see Definition \ref{def:logarithmic-tower-for-Riemann-surface} 
and Remark \ref{nomenclatura-logarithmic-tower}).
\end{remark}

\noindent
To make a precise definition in the combinatorial case,
we shall need the following.

\begin{definition}
\label{def:neighbors-of-sets}
Let $v\in V(\mathfrak{S}_{\tt q})$ be a vertex of a Speiser graph $\mathfrak{S}_{\tt q}$,
and $S\subset V(\mathfrak{S}_{\tt q})$ be a subset of vertices a Speiser graph $\mathfrak{S}_{\tt q}$.

\noindent
1.\ 
An \emph{open neighborhood $N(v)$ of the vertex $v$} consists 
of vertices directly adjacent to $v$ by an edge of $\mathfrak{S}_{\tt q}$.

\noindent
2.\ 
The \emph{open neighborhood complex 
of $S$, denoted\footnote{
As far as we know, there is no commonly used notation for the
union of the open neighborhoods the vertices of the set $S$.
} 
by $N^{\rm o}(S)$},
is the union of the open neighborhoods of each vertex in $S$,
that is 

\centerline{
$N^{\rm o}(S) = \bigcup\limits_{v\in S} N(v)$.
}

\noindent
3.\ 
The \emph{open neighborhood of the set $S$, denoted, 
$N(S)$} is $N^{\rm o}(S)\backslash S$.
\end{definition}

\begin{example}
Note that $N^{\rm o}(S)$ may or may not contain $S$. 
For instance $v\notin N(v)$,
since there are no loops in $\mathfrak{S}_{\tt q}$;
that is why $N(v)$ is called an \emph{open} neighborhood.
However, if the subgraph $\mathfrak{S}_{\tt q}[S]$ spanned by $S$ is connected 
and contains more than one vertex, then 
$S\subset N^{\rm o}(S)$.
As is usual in graph theory, saying ``the neighborhood of $S$'' 
should be understood as ``the open neighborhood of $S$''.
\end{example}

\begin{definition}
\label{def:Log-end-for-Speiser-graph}
1.\ A \emph{logarithmic end
$\mathcal{T}$, 
of a Speiser graph $\mathfrak{S}_{\tt q}$ of index 
${\tt q} \geq 3$}, 
is a subset $\mathcal{T}\subset \mathfrak{S}_{\tt q}$
such that:
\begin{enumerate}[label=\roman*),leftmargin=*]

\item
It has an infinite number of ordered vertices 
$v_{2\uptau-1}, v_{2\uptau} \in \{ \times, \circ \}$ with 
$\uptau \in\NN$.

\item
All even vertices $v_{2\uptau}$ are of the same type
($\times $ or $\circ$), 
and all odd vertices $v_{2\uptau-1}$ are of the other type. 

\item 
There are $1\leq\rho_1 < {\tt q}$ edges connecting 
$v_{2\uptau-1}$ to $v_{2\uptau}$ and 
$1 \leq \rho_2 < {\tt q}$ edges connecting $v_{2\uptau}$ to $v_{2 \uptau+1}$,
where ${\tt q} = \rho_1 + \rho_2$.
In other words, $\mathcal{T}$ is formed by a sequence of edge bundles
with alternating number of edges $\rho_1$ and $\rho_2$.

\item The open neighborhood $N(\mathcal{T})$ 
consists of only one vertex.

\item
$\mathcal{T}$ is maximal in $\mathfrak{S}_{\tt q}$, 
that is;
if given any $\mathcal{T}^\prime$ satisfying (i)--(iv) such that
$\mathcal{T} \subset \mathcal{T}^\prime$,
then $\mathcal{T}= \mathcal{T}^\prime$.
\end{enumerate}

\noindent 
2.\ The \emph{nucleus\footnote{
This concept 
appears as ``nucleus'' in \cite{Nevanlinna2} p.\,299,
and ``soul'' in 
\cite{AlvarezMucino1}\,p.\,196 and 
\cite{AlvarezMucino3}\,p.\,56.} 
$\mathfrak{N}_\mathfrak{S}$,  
of a Speiser graph $\mathfrak{S}_{\tt q}$ of index $\tt q$},  
is the subset obtained as the complement of the
logarithmic ends in $\mathfrak{S}_{\tt q}$.
\end{definition}

\begin{remark}
\label{rem:On-logarithmic-ends}
1.\
Note that logarithmic ends $\mathcal{T}$ of a 
Speiser graph $\mathfrak{S}_{\tt q}$ are not Speiser graphs in themselves.
In fact, they are pre--Speiser graphs
because
the first vertex $v_1$ of each logarithmic end 
of $\mathfrak{S}_{\tt q}$
has valence $\rho_1<{\tt q}$, instead of the required ${\tt q}$.
On the other hand, the nucleus $\mathfrak{N}_\mathfrak{S}$ of 
$\mathfrak{S}_{\tt q}$ has ``loose edges'' 
(\emph{i.e.}\ homeomorphic to $[0,1)$, see Figure \ref{fig-4-Speiser})
where the logarithmic ends used to be attached to, so it is not even a pre--Speiser graph.

\noindent
2.\
Condition (iv) of Definition \ref{def:Log-end-for-Speiser-graph}.1 is a condition
that allows for the nucleus to be well defined 
and unique. 

\noindent
3.\
Logarithmic ends, and hence the nucleus, are defined for arbitrary Speiser graphs, not only
for those associated to $N$--functions.
See for instance, Figures \ref{fig-4-Speiser}, \ref{fig:Tessellation-AiBi}--\ref{fig:Speiser3-types},
where the nucleus is colored red and the logarithmic ends are black.
\end{remark}

\begin{example}[Speiser graphs following Nevanlinna brothers]
\label{ejemplo-graficas-Speiser-con-etiquetas}
In Figure \ref{fig-4-Speiser},
we illustrate four Speiser graphs $(\mathfrak{S}_{\tt q}, \mathcal{L}_{\mathcal{W}_{\tt q}} )$
of index ${\tt q}=3,\, 4$.
The cyclic order for the labels 
of the faces of $\mathfrak{S}_{\tt q}$
is $\mathcal{W}_{\tt q} = [1,\ldots,{\tt q} ]$;  
in accordance to Remark \ref{rem:notation-of-labels}.
Usually, the labelling is not shown on digons of the Speiser graph,
however throughout Figure \ref{fig-4-Speiser} they are shown for pedagogical reasons.
The corresponding nucleus are red. 

\noindent
Figure \ref{fig-4-Speiser}.a--c are 
Speiser graphs with ${\tt p}={\tt q}=4$ and nuclei consisting of 1, 2, and 3 vertices respectively.
Clearly, there are Speiser graphs with ${\tt p}={\tt q}=4$ and any number $n\in\NN$ 
of vertices in the nucleus.

\noindent
Figure \ref{fig-4-Speiser}.a. and \ref{fig-4-Speiser}.c appear in \cite{Nevanlinna2} p.\,298,
whereas, Figure \ref{fig-4-Speiser}.d.
appears in \cite{Nevanlinna2} p.\,300 as the corresponding 
Speiser $3$--tessellation.

\noindent
Figure \ref{fig-4-Speiser}.e is an example of a planar graph that does not
satisfy the minimality condition (ii) of Definition \ref{def:etiquetado-consistente-Speiser}, 
hence is not a Speiser graph of index $4$.
However, by forgeting the edges/faces labelled 4, 
it reduces to $\mathfrak{S}_3$ the Speiser graph of index 3 shown in
Figure \ref{fig-4-Speiser}.d.

\begin{figure}[h!tbp]
\begin{center}
\includegraphics[width=0.7\textwidth]{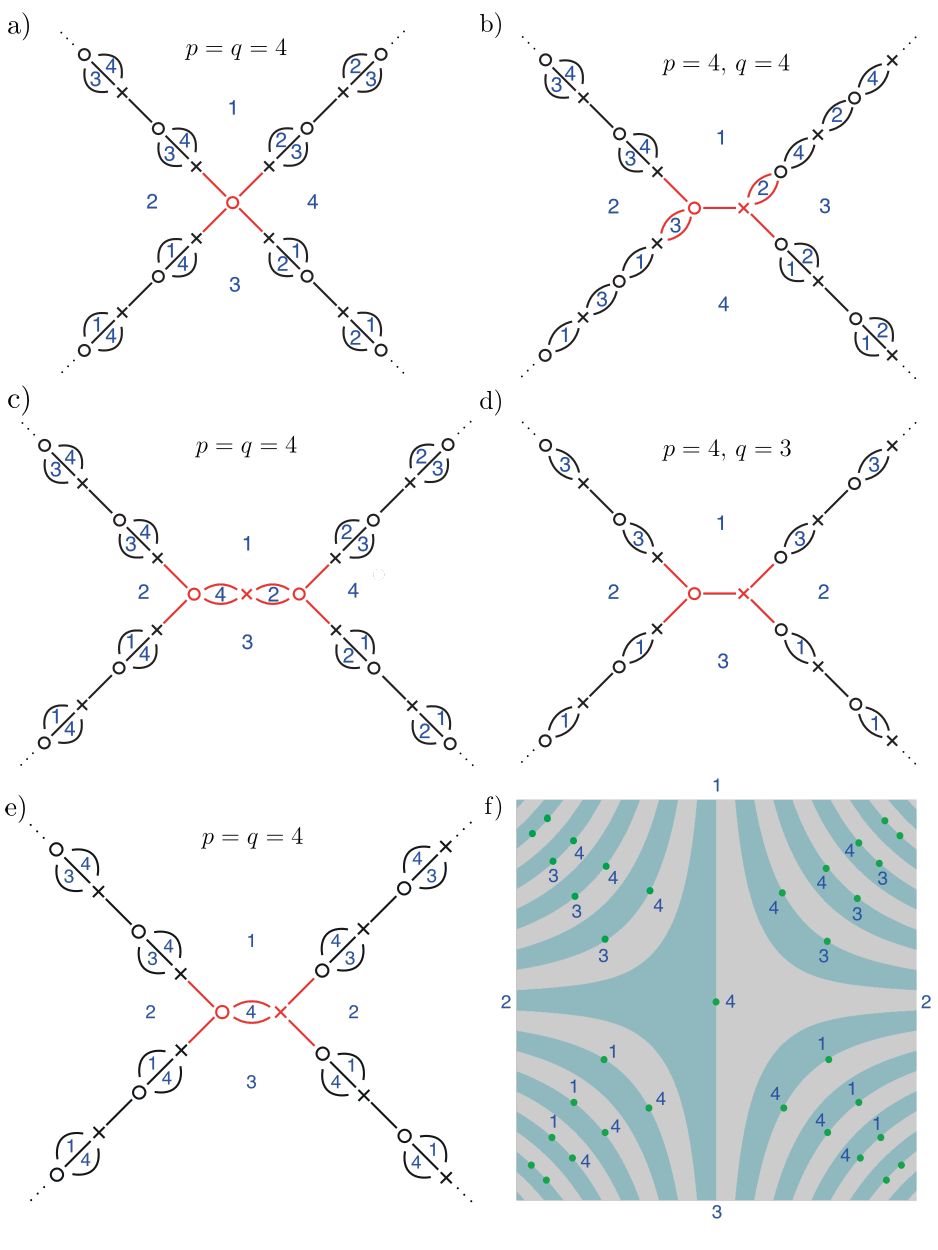}
\caption{ 
Analytic Speiser graphs of index ${\tt q}$ that represent  
$N$--functions $w(z)$; 
(a)--(c)
with ${\tt p}={\tt q}=4$,
(d) with ${\tt p}=4, {\tt q}=3$.
The nuclei are colored red and the ${\tt p}$
logarithmic ends are colored black.
(e) is a labelled Speiser graph that does not have
a consistent 4--labelling,
since every $4$--face is a digon.
(f) The tessellation corresponding to the 
Speiser graph of (e), does not have a
consistent 4--labelling since the label `4' 
only appears on vertices of valence 2.
The labels follow the convention of Remark 
\ref{rem:notation-of-labels}.
}
\label{fig-4-Speiser}
\end{center}
\end{figure}

\end{example}

\begin{lemma}[Nevanlinna \cite{Nevanlinna2}\, ch.\,XI,\,\S12--13]
\label{lemma-q-2-3}
Let $(\mathfrak{S}_{\tt q}, \mathcal{L}_{\mathcal{W}_{\tt q}} )$ be an
analytic Speiser graph 
of index $\tt q$, with $\tt p$ logarithmic ends and whose bounded 
faces are digons.

\begin{enumerate}[label=\arabic*),leftmargin=*]
\item
For each fixed ${\tt p}=2,\, 3$, 
there exist only one Speiser graph $\mathfrak{S}_{\tt q}$ of index $\tt q$.

\item 
For each ${\tt p} \geq 4$, 
there exist infinitely many Speiser graphs.
\end{enumerate}
\end{lemma}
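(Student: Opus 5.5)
The plan is to reduce the Speiser graph to a tree and then count ends. Since every bounded face of $\mathfrak{S}_{\tt q}$ is a digon, collapsing each edge bundle (Remark \ref{rem:despues-de-def-Speiser-graph}.5) to a single edge gives a simple, locally finite planar graph $\mathfrak{S}^{\rm red}$ with no bounded faces. Hence $\mathfrak{S}^{\rm red}$ is a tree.

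The unbounded faces of $\mathfrak{S}_{\tt q}$ are the complementary components of this tree in $B(0,1)$. There are exactly ${\tt p}$ of them (Proposition \ref{prop:speiser-graph-implies-speiser-function}.3), so the tree has exactly ${\tt p}$ ends. Each logarithmic end $\mathcal{T}$ collapses to a ray whose bundles alternate in size $\rho_1,\rho_2$ with $\rho_1+\rho_2={\tt q}$. The nucleus collapses to a finite subtree.

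Next I encode the labelling at each vertex $v$. Going around $v$, the ${\tt q}$ faces are: digons between consecutive edges of a bundle, and one unbounded face between consecutive bundles. So if $v$ has $k$ bundles, then $k$ equals the number of unbounded faces incident to $v$, and the bundle sizes are the gaps between the labels of those unbounded faces in the cyclic order $\mathcal{W}_{\tt q}$. By the minimality condition (ii) of Definition \ref{def:etiquetado-consistente-Speiser}, every label must occur on an unbounded face, so ${\tt q}\le {\tt p}$. Moreover, the labels of two unbounded faces that are adjacent along a ray determine the bundle sizes all along that ray.

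For ${\tt p}=2$ the tree is a bi-infinite path. Every vertex then has $2$ bundles, and only two labels occur, so ${\tt q}=2$ and each bundle is a single edge. This gives a single graph, the one of $\e^z$ and $\tanh z$. For ${\tt p}=3$, a tree with three ends has exactly one vertex $v_0$ of degree $3$, and everything else lies on three rays issuing from $v_0$. At $v_0$ there are three bundles of size at least $1$, so ${\tt q}\ge 3$, and combined with ${\tt q}\le{\tt p}$ this forces ${\tt q}=3$. Then all bundles at $v_0$ are single edges, and along each ray the bundle sizes alternate $1,2$ with the phase fixed by the two adjacent unbounded-face labels. I would check that these choices determine the graph up to isomorphism and relabelling by the cyclic order, giving the Airy-type graph of Example \ref{example:tessellation-Airy}. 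The main obstacle is this step: I must verify carefully that there is no freedom left in how the rays attach. This includes the phase of alternation and the maximality condition (v) in Definition \ref{def:Log-end-for-Speiser-graph}, which absorbs any degree-$2$ vertices near $v_0$ into the ends.

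For ${\tt p}\ge 4$ I will give explicit families. Take ${\tt q}={\tt p}$ with distinct labels on the unbounded faces, or ${\tt q}=3$ as in Figure \ref{fig-4-Speiser}.d. Build the tree with two branching vertices joined by a path of $n$ extra vertices, and attach the remaining rays, as in Figure \ref{fig-4-Speiser}.a--c. Label it consistently, choosing bundle sizes from the cyclic gaps of the unbounded-face labels. The nucleus then has a number of vertices growing with $n$. The nucleus is intrinsic by Remark \ref{rem:On-logarithmic-ends}.2, so different values of $n$ give non-isomorphic Speiser graphs, and hence infinitely many.
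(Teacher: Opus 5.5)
Your argument is correct, and its skeleton matches the paper's. The unique graphs for ${\tt p}=2,3$ are those of $\e^z$ (or $\tanh z$) and of Example~\ref{example:tessellation-Airy}, and infinitely many graphs for ${\tt p}\ge 4$ come from letting the nucleus grow.

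The difference is that the paper's proof is essentially an appeal to Figures~\ref{fig:Tessellation-Exp}.c, \ref{fig:Tessellation-AiBi}.c and \ref{fig-4-Speiser}.a--c. It asserts ${\tt q}={\tt p}$ for ${\tt p}=2,3$ and the uniqueness without argument, and it only treats ${\tt p}=4$. Your reduction actually proves these claims: you collapse bundles to a tree, read bundle sizes as cyclic gaps between labels of unbounded faces, and use the minimality condition (ii) of Definition~\ref{def:etiquetado-consistente-Speiser} to get ${\tt q}\le{\tt p}$. Your two--branch--vertex family also covers every ${\tt p}\ge 4$, not just ${\tt p}=4$.

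A few steps need tightening.

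(a) The claims that ${\tt p}=2$ gives a bi--infinite path and ${\tt p}=3$ a single trivalent vertex require the tree to have no leaves. This holds because a vertex with only one bundle sends all ${\tt q}$ edges to one neighbour. That neighbour then has all its ${\tt q}$ edges going back, so the graph would be finite. For a leafless tree with finitely many ends, ${\tt p}=2+\sum_v(\deg v-2)$, which gives both claims.

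(b) The step you flag as the main obstacle is immediate. The three bundles at $v_0$ are single edges, so each ray's bundle sequence is forced to be $1,2,1,2,\ldots$. The three unbounded faces must carry the three distinct labels. Hence the labelled graph is unique up to isomorphism and cyclic relabelling.

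(c) For ${\tt p}\ge 4$ you should actually exhibit the labelling. Number the unbounded faces so that at each branch vertex the labels increase in that vertex's orientation: anticlockwise at a $\circ$--vertex, clockwise at a $\times$--vertex. With ${\tt q}={\tt p}$ distinct labels this is always possible, whatever the parity of $n$, and it then forces the edge labels and bundle sizes.

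(d) Equating ${\tt p}$ with the number of unbounded faces is the $N$--function reading of the hypothesis (\S\ref{subsec:Speiser-for-N-functions}); the paper uses it implicitly too. It is genuinely needed. Replace one ray at $v_0$ in the ${\tt p}=3$ graph by a trivalent tree of single edges. The result has index $3$, only digons as bounded faces, and exactly two logarithmic ends in the sense of Definition~\ref{def:Log-end-for-Speiser-graph}, yet it is not the graph of $\e^z$.
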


\noindent 
The above Speiser graphs $\mathfrak{S}_{\tt q}$,
characterize families of $N$--functions $\{w(z)\}$, as we will see
in Corollary  \ref{cor:Speiser-graph-characterization}.3.

\begin{proof}
First note that if ${\tt p}=2,3$, then the index ${\tt q} = {\tt p}$.

Case ${\tt p}=2$, 
is in Figure \ref{fig:Tessellation-Exp}.c,
the vertices have valence 2, it is infinite in both 
directions. 
It corresponds to the exponential function (up to post--composition
with a M\"obius transformation), see \cite{Nevanlinna-Paatero}\,\S\,6.16.  

Case ${\tt p}=3$ is in Figure  
\ref{fig:Tessellation-AiBi}.c.

For assertion (2), we consider ${\tt p} = 4$:
the existence of an infinite number of Speiser graphs of index 4,
follows from Figure \ref{fig-4-Speiser}.a--c
by noting that the nuclei can contain an arbitrary number of vertices.
\end{proof}

\begin{remark}[On the multiplicity of asymptotic values]
Note that for the Speiser graph of an $N$--function, 
if some asymptotic values have multiplicity $\geq 1$, then necessarily the number of 
logarithmic ends, namely $\tt p$, is strictly greater than the index $\tt q$. 
This can be observed for instance in Figure \ref{fig-4-Speiser}.d.
\end{remark}

For more examples of Speiser graphs of index $\tt q$, the reader is invited to
consult Example \ref{example:N-exp-tanh}.d for the simplest case of an $N$--function, 
Examples \ref{example:tessellation-Airy}.d and \ref{example:function-wM4alt}.d 
for two non--trivial cases of $N$--functions when ${\tt q} = 3$;
Examples \ref{example:exp-exp}.d, \ref{example:tessellation-expsin}.d, 
and \ref{example:sinexpsin}.d show Speiser graphs of index $\tt q$
for Speiser functions that are not $N$--functions, 
the first for ${\tt q}=3$ and the last two for ${\tt q}=4$;
Example \ref{example:Speiser3-types} shows a Speiser graph of index ${\tt q}=4$ with
one logarithmic end, an infinite number of unbounded faces and 
an infinite number of bounded $4$--gons.

\begin{example}[There are  ``ends'' 
of a Speiser graph $\mathfrak{S}_{\tt q}$ of index $\tt q$, which are not logarithmic ends]
\label{example:ends-not-logarithmic}
Consider 
$w(z)=\sin(z^2)$, it is a Speiser function with $\mathcal{SV}_w=\{ -1,1, \infty \}$.
It has an infinite number of algebraic singularities and 4 logarithmic singularities.
However, its Speiser graph of index 3 has no logarithmic ends; 
the unbounded faces are separated by an
infinite collection of 4--gons and digons, 
see figure 14 in \cite{GoldbergOstrovskii}\,p.~360.
\end{example}

Moreover, it is easy to see that
between two contiguous ``ends'' there must be an unbounded face of $\mathfrak{S}_{\tt q}$. 
This in turn implies that there are the same number of ``ends'' as unbounded faces.

\section{A complete correspondence}

\begin{theorem}
\label{th:main-theorem}
Let $\Omega_z$ be a simply connected Riemann surface, 
and let
${\tt q} \geq 2$.
There exists a one to one correspondence between:

\begin{enumerate}[label=\arabic*),leftmargin=*]

\item
Speiser functions 

\centerline{$w(z): \Omega_z \longrightarrow \CW_w$,}

\noindent  
provided with a cyclic order $\mathcal{W}_{\tt q}$ 
for its ${\tt q}$ singular values.

\item 
Speiser Riemann surfaces 

\centerline{$\R_{w(z)} \subset  \Omega_z \times \CW_w$, }

\noindent 
provided with a cyclic order $\mathcal{W}_{\tt q}$ 
for its ${\tt q}$ singular values.

\item
Speiser $\tt q$--tessellations 

\centerline{
$
\big( 
\underbrace{(\Omega_z \cup \partial_{\mathcal{I}} \Omega_z ) \backslash w(z)^*\gamma}_{\text{tessellation}},  
\underbrace{w(z)^*\mathcal{L}_\gamma}_{\substack{ \text{consistent}\\
{\tt q}-\text{labelling}} } 
\big)
$.
}

\item
Analytic Speiser graphs of index $\tt q$

\centerline{
$(
\underbrace{\mathfrak{S}_{w(z)} }_{\substack{ \text{Speiser} \\ \text{graph} }}, 
\underbrace{ w(z)^*\mathcal{L}_\gamma }_{
\substack{ \text{consistent}\\
{\tt q}-\text{labelling}}
} )$.}

\end{enumerate}
\end{theorem}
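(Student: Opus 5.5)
The plan is to prove Theorem \ref{th:main-theorem} by chasing Diagram \eqref{dia:correspondencia-completa}: establish a bijection for each horizontal/diagonal arrow and check that the compositions close up. Throughout, objects are understood modulo the natural equivalences already identified in the paper. Functions are taken modulo the action of $Aut(\Omega_z)\times\text{Stab}(\mathcal{W}_{\tt q})$ (Lemma \ref{lem:no-unicidad-funciones-teselaciones}). Tessellations and graphs are taken up to orientation preserving homeomorphism of $\Omega_z$.

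\textbf{Step 1: (1)$\leftrightarrow$(2).} This is the classical correspondence via Diagram \eqref{diagramaRX}. Given $w(z)$ we form $\R_{w(z)}$. Conversely, since $\pi_1$ is a biholomorphism, $w=\pi_2\circ\pi_1^{-1}$. The branch points of $\R_{w(z)}$ project under $\pi_2$ exactly onto $\mathcal{SV}_w$ (Remark \ref{rem:correspondencia-caminos-asintoticos}). Hence the cyclic order $\mathcal{W}_{\tt q}$ transfers verbatim.

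\textbf{Step 2: (1)$\to$(3)$\to$(1).} The forward map is the Schwarz--Klein--Speiser algorithm, i.e. Theorem \ref{teo:principal}.1. It produces $\big((\Omega_z\cup\partial_{\mathcal{I}}\Omega_z)\backslash w(z)^*\gamma,\ w(z)^*\mathcal{L}_\gamma\big)$. This is independent of the representative $\gamma\in[\gamma]$ by Remark \ref{rem:no-unicidad}.3.

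For the backward map we invoke Theorem \ref{teo:principal}.2. A labelled tessellation gives $\R(\widehat{\Gamma}_{\tt q},\mathcal{L}_{\mathcal{W}_{\tt q}})$: each tile is glued isometrically (Corollary \ref{cor:pegado-isometrico}) to the blue or grey half $T,T'$ of $\CW_w\backslash\gamma$, along edges dictated by the labels. This yields a Speiser function.

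We must check the two composites are identities.
\begin{itemize}
\item Tessellation $\to$ function $\to$ tessellation: the pullback $w^*\gamma$ of the constructed $w$ reproduces the glued graph, since each tile maps homeomorphically onto $T$ or $T'$.
\item Function $\to$ tessellation $\to$ function: two Speiser functions $w_1,w_2$ with the same labelled tessellation admit a homeomorphism $h$ of $\Omega_z$ with $w_2\circ h=w_1$, built tile by tile. This $h$ is holomorphic off $\widehat{\Gamma}_{\tt q}$, hence holomorphic everywhere by removability of analytic arcs and isolated points. So $h\in Aut(\Omega_z)$, which is exactly the equivalence we quotient by.
\end{itemize}
The same argument shows that the conformal type $\CW_z$, $\CC_z$ or $\Delta_z$ is determined by the tessellation, by uniformization of the simply connected glued surface.

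\textbf{Step 3: (3)$\leftrightarrow$(4).} This is Proposition \ref{prop:bijection-extends-to-action}: duality of $\tt A$--maps and Speiser graphs. It is compatible with the group action, and consistent labellings of vertices and of edges/faces correspond (Remark \ref{rem:despues-de-def-Speiser-graph}.2). The minimality conditions also match: condition (ii) of Definition \ref{def:etiquetado-consistente} corresponds to (ii) of Definition \ref{def:etiquetado-consistente-Speiser} via Proposition \ref{prop:speiser-graph-implies-speiser-function}, since a label has a non-digon face iff it has a vertex of valence $\ge4$. The composite (1)$\to$(4) is Corollary \ref{cor:Speiser-graph-a-Speiser-function} read forward, so the diagram commutes.

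\textbf{Main obstacle.} The delicate point is the injectivity in Step 2 in the presence of ideal points and infinitely many tiles. The tile-by-tile homeomorphism $h$ must be well defined across the infinite-valence vertices in $\partial_{\mathcal{I}}\Omega_z$, and must extend to a homeomorphism of $\Omega_z$ itself. I would handle this by lifting through $\R_{w}$. Both surfaces are obtained by the same combinatorial gluing of copies of $T,T'$, so the identification is a sheet-preserving isometry of flat surfaces in the sense of Proposition \ref{prop:Rw-is-a-union-of-sheets}. The ideal points are then never touched, since they are not points of $\Omega_z$. Local finiteness of the Speiser graph guarantees that the identity map is a homeomorphism on compacta.
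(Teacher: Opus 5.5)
Your route is the paper's. Its proof consists of exactly the citations you use: Definition~\ref{def:Speiser-q-function}.2 for (1)$\Leftrightarrow$(2), Theorem~\ref{teo:principal}.1--2 for (1)$\Leftrightarrow$(3), and Proposition~\ref{prop:bijection-extends-to-action} for (3)$\Leftrightarrow$(4). Your extra checks supply what the paper leaves implicit: that both composites are identities, the tile-by-tile automorphism $h$, and the matching of the two minimality conditions under duality.

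One thing in your set-up must be fixed. Your own tile argument shows that the fibres of (1)$\to$(3) are exactly the $Aut(\Omega_z)$--orbits. So you cannot also quotient the function side by $\text{Stab}(\mathcal{W}_{\tt q})$ while taking labelled tessellations only up to homeomorphism.

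For ${\tt q}\geq 3$, every nontrivial $s\in\text{Stab}(\mathcal{W}_{\tt q})$ permutes the ${\tt w}_{\tt j}$, since a M\"obius map fixing three points is the identity. Then $s\circ w$ carries the relabelled tessellation, which in general is a different labelled object. For example, take $w(z)=\frac{256}{27}z^3(1-z)$, $\mathcal{W}_3=[0,1,\infty]$, $\gamma=\RR\cup\{\infty\}$ and $s(w)=1/(1-w)$. The unique vertex of valence $6$, at $z=0$, is labelled $0$ for $w$ and $1$ for $s\circ w$, so no label-preserving homeomorphism relates the two. With your stated conventions the map (1)$\to$(3) is therefore not well defined.

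You have two ways to repair this. You can drop $\text{Stab}(\mathcal{W}_{\tt q})$; nothing is lost, because for elements fixing every label (such as the homotheties when ${\tt q}=2$) your argument already places $s\circ w$ in the $Aut(\Omega_z)$--orbit of $w$. Alternatively, let $\text{Stab}(\mathcal{W}_{\tt q})$ act on the labellings as well, as in Proposition~\ref{prop:bijection-extends-to-action}.2.
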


\begin{proof}
The proof of Theorem \ref{th:main-theorem} proceeds as in 
rows two and three of
Diagram \ref{dia:correspondencia-completa} in the Introduction.

\noindent
(1) $\Longleftrightarrow$ (2) is 
Definition \ref{def:Speiser-q-function}.2 (classical).

\noindent
(1) $\Longrightarrow$ (3) is Theorem \ref{teo:principal}.1
(Schwarz--Klein--Speiser's algorithm)

\noindent
(3) $\Longrightarrow$ (1) is Theorem \ref{teo:principal}.2
 (lifting the tessellation $(\mathscr{T},\mathcal{L}_{\mathcal{W}_{\tt q}})$ to $\R_{w(z)}$
 followed by (1) $\Longleftrightarrow$ (2)).

\noindent
(3) $\Longleftrightarrow$ (4) is given by Proposition \ref{prop:bijection-extends-to-action}
(duality of the $\tt A$--map and the analytic Speiser graph).

\end{proof}

\begin{table}[htp]
\caption{Some relationships between the different objects, 
note that
${\tt w}_{{\tt j}(\cdot )}, \ {\tt a}_{{\tt j}(\cdot )} 
\in \mathcal{W}_{\tt q} 
\subset \CW_w $:
}
\begin{center}
\begin{tabular}{|c|c|c|c|}
\hline
&&& \vspace{-.3cm}
\\
Speiser function & Speiser Riemann & Speiser $\tt q$--tessellation & Analytic Speiser graph 
\\
$w(z)$ & surface $\R_{w(z)}$ & $ (\Omega_z \cup \partial_{\mathcal{I}} \Omega_z )\backslash \widehat{\Gamma}_{\tt q}$ & $(\mathfrak{S}_{\tt q}, \mathcal{L}_{\mathcal{W}_{\tt q}} )$ of index $\tt q$ 
\\
&&& \vspace{-.3cm}
\\
\hline
\hline
singular value & & vertex label & 
 \\
${\tt w}_{{\tt j}(\iota)} \in \CW_w$ & $(z_\iota, {\tt w}_{{\tt j}(\iota)}, m_{\iota} )$ & for a vertex of $\widehat{\Gamma}_{\tt q}$ & face label 
\\
 & & with valence $\geq 4$ & 
\\
\hline
&&& \vspace{-.35cm}
\\
critical point $z_\kappa \in \Omega_z$ & finitely ramified  & vertex of $\widehat{\Gamma}_{\tt q}$ & bounded  
\\
of order 
& branch point & with valence & $\tt w_{{\tt j}(\kappa)}$--region 
\\
$2\leq m_\kappa < \infty$ 
& $(z_\kappa, {\tt w}_{{\tt j}(\kappa)}, m_{\kappa} )$ & $2 m_{\kappa}$ & is a $2 m_{\kappa}$--gon
\\ 
&&& \vspace{-.35cm}
\\
\hline
&&&  \vspace{-.35cm}
\\
$z_\sigma = U_{{\tt a}_\sigma} \in \partial_{\mathcal{I}} \Omega_z$ & infinitely ramified & vertex of $\widehat{\Gamma}_{\tt q}$ & unbounded  \\
a logarithmic & branch point & with $\infty$ valence & ${\tt a}_{{\tt j}(\sigma)}$--region \\
singularity of $w^{-1}(z)$ & $(z_\sigma, {\tt a}_{{\tt j}(\sigma)}, \infty )$ & on $\partial_\mathcal{I} \Omega_z \subset$ & $\infty$--sided polygon \\
over ${\tt a}_{{\tt j}(\sigma)} \in \CW_w$ &  & $\partial B(0,1)\cong \partial \RR^2$,& 
\\
&&& \vspace{-.35cm}
\\
\hline
cosingular point & regular point & ``hollow'' vertex of & digon 
\\
$z \in \Omega_z$  & $(z, {\tt w}_{{\tt j}}, 1 )$
 & $\widehat{\Gamma}_{\tt q}$ with valence 2 &
\\ 
\hline
\end{tabular}
\end{center}
\label{table:tabla-relaciones}
\end{table}%

\begin{corollary}[Speiser graph characterization of 
rational functions and $N$--functions]
\label{cor:Speiser-graph-characterization}
Consider an analytical Speiser graph 
$(\mathfrak{S}_{\tt q}, \mathcal{W}_{\tt q})$ of index ${\tt q}$ in $\Omega_z$.
For each case (1)--(3), the following statements are equivalent.
\begin{enumerate}[label=\arabic*.,leftmargin=*]
\item 
\begin{enumerate}[label=\roman*),leftmargin=*]
\item 
The associated function $w(z)$ is rational.
	
\item 
The Speiser graph $\mathfrak{S}_{\tt q}$ of index $\tt q$ is finite.

\item 
The Speiser graph 
$\mathfrak{S}_{\tt q}$ of index $\tt q$ is properly embedded in $\esf$.

\item 
Considering the cell decomposition 
$\Omega_z \backslash \mathfrak{S}_{\tt q}$, 
the number of unbounded faces
is zero and the number of bounded faces,
that are not digons, is finite 
(equal to the number of critical points of 
the associated function $w(z)$).

\end{enumerate}
	
\smallskip	
\item	
\begin{enumerate}[label=\roman*),leftmargin=*]
\item The associated function $w(z)$ is, up to M\"obius transformation,
a polynomial of degree $n\geq 2$.

\item 
The Speiser graph $\mathfrak{S}_{\tt q}$ of index $\tt q$ is finite and,
considering the cell decomposition	
$\Omega_z \backslash \mathfrak{S}_{\tt q}$,
there is an ${\tt w}_{\tt j}$--face which is a 
$2n$--gon, $n\geq 2$.

\end{enumerate}

\smallskip	
\item 
\begin{enumerate}[label=\roman*),leftmargin=*]	
\item 
The associated $w(z)$ is an $N$--function.

\item 
The Speiser graph $\mathfrak{S}_{\tt q}$ of index $\tt q$ is infinite and,
considering the cell decomposition	
$\Omega_z \backslash \mathfrak{S}_{\tt q}$,
there are a finite number ${\tt p}\geq 2$ of unbounded faces 
(equal to the number of singular values of the associated function $w(z)$, counted with multiplicity),
and all bounded faces are digons.	
Note that the index of the Speiser graph is ${\tt q}\leq{\tt p}$.
	
\end{enumerate}
\end{enumerate}
\end{corollary}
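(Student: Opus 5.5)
The plan is to read everything off the Main Correspondence (Theorem \ref{th:main-theorem}), the face/singularity dictionary of Proposition \ref{prop:speiser-graph-implies-speiser-function} and Table \ref{table:tabla-relaciones}. Under these, bounded non-digon ${\tt w}_{\tt j}$--faces correspond to critical points, unbounded faces to logarithmic singularities of $w^{-1}(z)$, and digons to cosingular points. Each of the three cases then reduces to translating a property of $w(z)$ into a count of faces, supplemented by a classical analytic fact.

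\textbf{Case 1.} For (i)$\Rightarrow$(ii), a rational $w$ of degree $n$ gives $2n$ tiles (Theorem \ref{teo:principal} and the remark following it), hence a finite dual graph. For (ii)$\Leftrightarrow$(iii), use Definition \ref{def:Speiser-graph}(i): a finite Speiser graph is embedded in $\esf$ and an infinite one in $B(0,1)$. For (ii)$\Rightarrow$(iv), a finite graph has only bounded faces, and finitely many of them. Conversely, (iv)$\Rightarrow$(i): with no unbounded faces there are no logarithmic singularities, so $\partial_{\mathcal{I}}\Omega_z=\varnothing$. Finitely many critical points together with Proposition \ref{prop:dos-tipos-singularidades}.2 force $\Omega_z=\CW_z$, because an infinite graph on $\CC_z$ or $\Delta_z$ would have to be exhausted by finitely many non-digon faces plus digons, which is impossible without an end. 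The Corollary after Proposition \ref{prop:dos-tipos-singularidades} then gives rationality. I expect this last implication to be the delicate step. I would argue it as follows. If the graph were infinite with finitely many non-digon faces and all other faces digons, then outside a compact set every vertex would lie in a chain of edge bundles, and such chains produce unbounded faces (compare Remark \ref{rem:despues-de-def-Speiser-graph}.4–5). This contradicts the assumption of zero unbounded faces.

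\textbf{Case 2.} A polynomial of degree $n$ has a critical point of multiplicity $n$ at $\infty$ over $\infty$. By Proposition \ref{prop:speiser-graph-implies-speiser-function}.2 its face is a $2n$--gon, and the graph is finite by Case 1. Conversely, given a finite graph with a $2n$--gon face, we need to identify the Möbius normalization. The function is rational by Case 1. I would then argue that the $2n$--gon face corresponds to a point $z_0$ of local degree $n$. I suspect the intended reading is that $n$ equals the degree, i.e.\ the face has as many edges as there are $\times$ vertices, and I would make this explicit. Under that reading $w^{-1}(w(z_0))=\{z_0\}$, so composing with Möbius maps sending $z_0$ and $w(z_0)$ to $\infty$ yields a polynomial.

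\textbf{Case 3.} For (i)$\Rightarrow$(ii), an $N$--function has no critical points, so every bounded face is a digon. It has ${\tt p}<\infty$ logarithmic singularities, which give exactly ${\tt p}$ unbounded faces by Proposition \ref{prop:speiser-graph-implies-speiser-function}.3. The graph is infinite because $\Omega_z=\CC_z$ (\S\ref{subsec:Speiser-for-N-functions}). Conversely, all bounded faces being digons gives no algebraic singularities, and finitely many unbounded faces gives ${\tt p}<\infty$ logarithmic singularities. This is precisely the characterization of $N$--functions recalled in \S\ref{sec:N-functions}, via Nevanlinna's theorem on the Schwarzian equation. The inequality ${\tt q}\le{\tt p}$ follows from Definition \ref{def:etiquetado-consistente-Speiser}(ii): each label must label a non-digon face, and here every such face is unbounded.
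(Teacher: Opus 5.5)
Your proposal is correct and follows the same route as the paper. The paper's proof reads the three cases off Theorem \ref{th:main-theorem}, the face dictionary of Proposition \ref{prop:speiser-graph-implies-speiser-function}, Definition \ref{def:Speiser-graph}(i), the M\"obius normalization placing the $2n$--gon face at the critical point $\infty$ over the value $\infty$ (where, as you rightly note, $n$ must be the degree), and the definition of $N$--functions. The one step you leave heuristic, (iv)$\Rightarrow$(i), is best closed by a local remark rather than by chains of edge bundles: a vertex all of whose incident faces are digons sends all ${\tt q}$ edges to a single neighbour, so by connectedness the graph is the finite two--vertex ${\tt q}$--dipole; hence under (iv) every vertex lies on one of the finitely many bounded non--digon faces, and $\mathfrak{S}_{\tt q}$ is finite.
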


\begin{proof}
Both Theorem \ref{th:main-theorem} and 
Proposition \ref{prop:speiser-graph-implies-speiser-function}.2 
play key roles in all statements.

Statement (1) now follows directly from Definition \ref{def:Speiser-graph}.i.

Statement (2),
up to M\"obius transformations: 
the label assigned to  
the $\tt w_j$--face, that is a $2n$--gon, is the critical
value ${\tt w}_{\tt j} = \infty\in\mathcal{W}_{\tt q}$ of 
the critical point $\infty \in \CW_z$.

Statement (3)
follows from the definition of $N$--function in 
\S \ref{sec:N-functions}. Note that the singular values
are in fact asymptotic values. 
\end{proof}

The analogous Speiser tessellation characterization of Speiser functions is left for the interested reader.

\section{When does a pre–Speiser graph represent a Speiser function?}
\label{sec:planar-graphs-Thrurston}
The original question 
\emph{what is the shape of a rational function?}, 
was posed in 2010 by  W.\,P.\,Thurston in MathOverflow \cite{Thurston2}.  
This can be translated in terms of Speiser functions,
$\tt t$--graphs and pre--Speiser graphs.
\begin{equation}
\tag{\ref{eq:pregunta-mosaico-speiser}}
\begin{array}{c}
\text{Question: \it is it possible to characterize whether a {\tt t}--graph }\Gamma, 
\\
\text{\it or equivalently a pre--Speiser graph }\mathfrak{S},
\text{\it represents a Speiser function?}
\end{array}
\end{equation}

\noindent 
As far as we known, 
the problem of characterizing when a 
$\tt t$--graph $\Gamma$
arises from a rational function was considered by 
Speiser \cite{Speiser}.
In 2020, 
a report of the results of W.\,P.\,Thurston, S.\,Koch and T.\,Lei
for generic rational functions
$R(z)$ appeared in \cite{Koch-Lei},
\emph{generic} of degree $n$ means that $R(z)$ has $2n-2$ distinct critical values.
The report provides negative examples and 
states conditions under which a planar
tessellation arises from generic rational functions $R(z)$ and suitable  paths $\gamma$.
In 2015, J.\,Tomasini \cite{Tomasini}
proved a characterization for rational functions
in the general case,
with a different presentation. 
A constructive method
for $\tt t$--graphs $\Gamma$ 
originating from generic polynomials 
was studied in L.\,Gonz\'alez--Cely \emph{et al.}
in \cite{GonzalezMucino};
they provide a different characterization by 
showing an explicit construction of a consistent $\tt q$--labelling. 

In \S\ref{sec:Certain-contrains} we explore the suitable values
of $\tt q$ for a pre--Speiser graph.
\S\ref{sec:bipartite-transportation-problem} provides the necessary and sufficient conditions for a pre--Speiser 
graph to be extendable to a Speiser graph of index $\tt q$. 
These subsections consider pre--Speiser Graphs with an arbitrary number of faces. 
For the rational case; in
\S\ref{sec:Koch-Lei} we review W.P.\,Thurston's \emph{et al.}  
approach, and in
\S \ref{sec:Tomasini} a comparison of Tomasini's method is considered.

\subsection{Certain constraints on the extension of pre--Speiser graphs to Speiser graphs.}
\label{sec:Certain-contrains}

Recalling that a cyclic order
$\mathcal{W}_{\tt q}$ is 
equivalent to the isotopy class $[\gamma]$
of paths $\gamma$ relative to the values $\{ {\tt w}_\ell \}_{\ell=1}^{\tt q}$, 
we can now answer Question \eqref{eq:pregunta-mosaico-speiser}.

\begin{corollary}[What is the shape of a Speiser function?]
\label{cor:shape-of-Speiser-function}
A $\tt t$--graph $\Gamma$, or equivalently a pre--Speiser graph $\mathfrak{S}$, 
supports a consistent $\tt q$--labelling $\mathcal{L}_{\mathcal{W}_{\tt q}}$
if and only if 
there exist Speiser functions $w(z)$ with
cyclic orders $\mathcal{W}_{\tt q}$ on their singular values such that 
$\mathscr{T}_\gamma(w(z))=\mathscr{T}(\Gamma)$.
\end{corollary}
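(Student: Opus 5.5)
The plan is to derive the corollary from Theorem \ref{teo:principal}, with Lemma \ref{lem:consistent-labelings-Gamma-hatGamma} and Proposition \ref{prop:bijection-extends-to-action} serving as the bridges between the four kinds of objects. First we reduce the pre--Speiser graph formulation to the $\tt t$--graph one. A pre--Speiser graph $\mathfrak{S}$ is the dual of a $\tt t$--graph $\Gamma$. Consistent $\tt q$--labellings of edges of $\mathfrak{S}$ (Definition \ref{def:etiquetado-consistente-Speiser}) correspond to consistent $\tt q$--labellings of vertices of $\Gamma$ (Definition \ref{def:etiquetado-consistente}). To see this, note that a $\Gamma$--vertex is dual to a face of $\mathfrak{S}$, and labelling faces is the same as labelling edges by Remark \ref{rem:despues-de-def-Speiser-graph}.2. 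Under this dictionary, vertices of $\Gamma$ of valence $\ge 4$ correspond to faces that are not digons, so condition (ii) passes across. The ideal boundary is handled exactly as in the proof of Proposition \ref{prop:bijection-extends-to-action}. It therefore suffices to prove the statement for $\Gamma$.

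($\Rightarrow$) Suppose $\Gamma$ carries a consistent $\tt q$--labelling $\mathcal{L}_{\mathcal{W}_{\tt q}}$. We apply Theorem \ref{teo:principal}.2 to the tessellation $\mathscr{T}(\Gamma)$ with this labelling. Internally, this performs the edge subdivision of Step 2, which gives a compatible $\tt A$--map $\widehat{\Gamma}_{\tt q}$ whose labelling is still consistent by Lemma \ref{lem:consistent-labelings-Gamma-hatGamma}.1. It then lifts to a Speiser Riemann surface and thus produces a Speiser function $w(z)$ and a path $\gamma$ realizing $\mathcal{W}_{\tt q}$. These satisfy $(\mathscr{T}_\gamma(w(z)),w(z)^*\mathcal{L}_\gamma)=(\mathscr{T},\mathcal{L}_{\mathcal{W}_{\tt q}})$ up to orientation preserving homeomorphism. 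The tessellation forgets the valence two vertices, so set-theoretically $\mathscr{T}_\gamma(w(z))=\mathscr{T}(\Gamma)$. The phrase ``functions'' in the plural is justified by Lemma \ref{lem:no-unicidad-funciones-teselaciones}: composing with $Aut(\Omega_z)\times\text{Stab}(\mathcal{W}_{\tt q})$ gives the whole family.

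($\Leftarrow$) Suppose $w(z)$ is a Speiser function with cyclic order $\mathcal{W}_{\tt q}$ and $\mathscr{T}_\gamma(w(z))=\mathscr{T}(\Gamma)$. By Theorem \ref{teo:principal}.1 (the Schwarz--Klein--Speiser algorithm), $\widehat{\Gamma}_{\tt q}=w^*\gamma$ carries the consistent labelling $w(z)^*\mathcal{L}_\gamma$. Condition (ii) holds because every element of $\mathcal{W}_{\tt q}$ is a genuine singular value, so a vertex of valence $\ge4$ (or of infinite valence) lies over it. Forgetting vertices of valence $2$ turns $\widehat{\Gamma}_{\tt q}$ into the $\tt t$--graph $\Gamma$, and Lemma \ref{lem:consistent-labelings-Gamma-hatGamma}.2 restricts the labelling to a consistent $\tt q$--labelling of $\Gamma$.

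The main obstacle I expect is the identification ``$\mathscr{T}_\gamma(w(z))=\mathscr{T}(\Gamma)$'', which holds only up to homeomorphism. In the $\Leftarrow$ direction we must make sure that the equality of tessellations identifies the $\tt t$--graph obtained by forgetting valence $2$ vertices with $\Gamma$ itself. This works because a $\tt t$--graph has no valence $2$ vertices and is recovered from its tiles. In the $\Rightarrow$ direction we must check that the homeomorphism supplied by Theorem \ref{teo:principal}.2 respects the ideal boundary points of infinite valence. I would handle both by saying that everything is understood up to orientation preserving homeomorphism of $\Omega_z\cup\partial_{\mathcal{I}}\Omega_z$, as in Theorem \ref{teo:principal}.2.
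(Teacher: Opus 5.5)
Your argument is correct and is essentially the paper's proof. The paper simply cites Theorem~\ref{teo:principal} (part 2 for one implication, part 1 via the Schwarz--Klein--Speiser algorithm for the other), Lemma~\ref{lem:no-unicidad-funciones-teselaciones} for the plural ``functions'', and Remark~\ref{rem:minimality-condition} for the role of condition (ii); your version additionally spells out the duality and the passage between $\Gamma$ and $\widehat{\Gamma}_{\tt q}$ via Lemma~\ref{lem:consistent-labelings-Gamma-hatGamma}, and you should also invoke condition (ii) in the ($\Rightarrow$) direction, where it is what guarantees that every label of $\mathcal{W}_{\tt q}$ is a genuine singular value of the constructed $w(z)$ rather than a fake one.
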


\begin{proof}
Follows directly from Theorem \ref{teo:principal}, Lemma 
\ref{lem:no-unicidad-funciones-teselaciones} and Remark \ref{rem:minimality-condition}.
\end{proof}
Note that the consistent $\tt q$--labelling associated to the Speiser function with 
cyclic order $\mathcal{W}_{\tt q}$ is given by 
$\mathcal{L}_{\mathcal{W}_{\tt q}} = w(z)^*\mathcal{L}_\gamma$.

\noindent
From the theory developed up to this point, it is clear that
Question \eqref{eq:pregunta-mosaico-speiser}
is equivalent to 
\emph{finding necessary and sufficient conditions for 
when a $\tt t$--graph $\Gamma$ 
(or its dual the pre--Speiser graph $\mathfrak{S}$)
can be extended to 
at least one 
$\tt A$--map $\widehat{\Gamma}_{\tt q}$
(or their duals Speiser graphs $\mathfrak{S}_{\tt q}$ of 
index $\tt q$).}

\smallskip
As Example \ref{example:non-uniqueness-extension-Speiser-graph} below shows, 
both a $\tt t$--graph $\Gamma$ and a pre--Speiser graph $\mathfrak{S}$ can be extended
to $\tt A$--maps $\widehat{\Gamma}_{\tt q}$ and Speiser graphs $\mathfrak{S}_{\tt q}$
of index $\tt q$ for distinct values of ${\tt q}<\infty$.

\begin{example}[Non uniqueness of the extended Speiser graph of index $\tt q$]
\label{example:non-uniqueness-extension-Speiser-graph}
Consider the planar tessellation 
$\mathscr{T}(\Gamma)$, with $\tt t$--graph $\Gamma$, 
attributed to W.\,P.\, Thurston, 
that appears as figure 10 in \cite{Koch-Lei}, and that we
reproduce here in Figure \ref{fig:Thurston-Koch-TanLei}.a \emph{without labels}. 
In Figure \ref{fig:Thurston-Koch-TanLei}.b is the corresponding dual; the pre--Speiser graph $\mathfrak{S}$.

\begin{figure}[h!tbp]
\begin{center}
\includegraphics[width=0.6\textwidth]{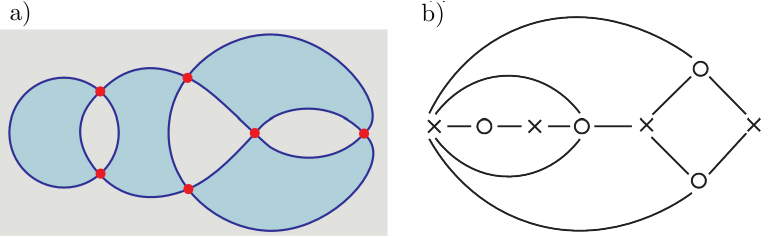}
\caption{
(a) Tessellation $\mathscr{T}(\Gamma)$
of the sphere $\esf$, its $\tt t$--graph $\Gamma$
has $k$--gons, $k=2,3,4,5$, as tiles.
(b) The dual of (a) is a planar bipartite
graph on $\esf$, the pre--Speiser graph $\mathfrak{S}$, 
with vertices of valence $k$.
}
\label{fig:Thurston-Koch-TanLei}
\end{center}
\end{figure}

\noindent
From Figure \ref{fig:Thurston-Koch-TanLei}.a we see that, since the largest tiling is a 5--gon 
(equivalently in Figure \ref{fig:Thurston-Koch-TanLei}.b the largest valence for a vertex is 5), 
then the minimum number of labels that are needed to specify the Speiser tessellation 
(equivalently the corresponding $\tt A$--map $\widehat{\Gamma}_{\tt q}$ 
and Speiser graph $\mathfrak{S}_{\tt q}$) is ${\tt q}=5$. 
Moreover, since there are 6 vertices of the $\tt t$--graph $\Gamma$ of 
Figure \ref{fig:Thurston-Koch-TanLei}.a
(equivalently in Figure \ref{fig:Thurston-Koch-TanLei}.b there are 6 faces 
on the pre--Speiser graph $\mathfrak{S}_{\tt q}$),
then the maximum number of labels has to be ${\tt q}=6$.  
Thus there are two possibilities for ${\tt q}$, namely 5 and 6.

Case ${\tt q}=5$. 
Consider Figure \ref{fig:MosaicoSpeiser-q5};
in (a) labels 
$\mathcal{W}_5 \doteq [1,2,3,4,6]$ 
are added to the 6 vertices 
of the $\tt t$--graph $\Gamma$ (thus necessarily at least one label is repeated); 
recall Remark \ref{rem:notation-of-labels}.
In (b), by edge subdivision, vertices of valence two are added 
so as to make each tile a 
$5$--gon with a consistent 
5--labelling $\mathcal{L}_{\mathcal{W}_5}$
as in Definition \ref{def:etiquetado-consistente}; 
thus a Speiser $5$--tessellation $(\mathscr{T}(\widehat{\Gamma}_5),\mathcal{L}_5)$.
In (c) the corresponding Speiser graph $\mathfrak{S}_5$ of index ${\tt q}=5$ is shown.
The corresponding rational function $R(z)$, 
obtained from the specific choice of $\mathcal{W}_5$,
has  simple critical points two of
which lie over the same critical value ${\tt w}_4$. 
Thus, $R(z)$ is not a generic rational function.

\begin{figure}[h!tbp]
\begin{center} 
\includegraphics[width=0.6\textwidth]{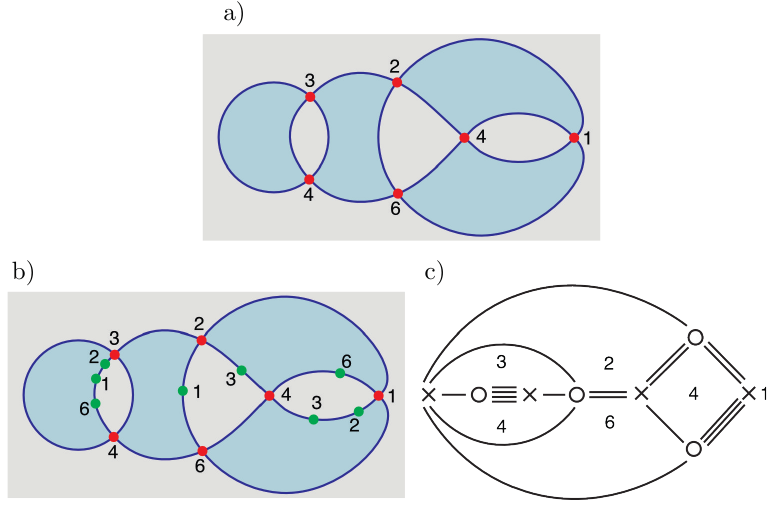}
\caption{
(a) Labelled $\tt t$--graph with labels 
$\mathcal{W}_5 \doteq [1,2,3,4,6]$.
(b) Vertices of valence 2 are added so as to have a consistent 
labelling $\mathcal{L}_{\mathcal{W}_5}$, 
thus obtaining a Speiser $5$--tessellation.
(c) The dual of (b), \emph{i.e.}\ a Speiser graph of index $5$.
(b)--(c) represent a rational function 
$R(z)$
with 6 simple critical points and
5 distinct critical values;
the critical value of multiplicity two is the one corresponding to the label ${\tt w}_4=4$. 
Thus, $R(z)$
is not a generic rational function.
}
\label{fig:MosaicoSpeiser-q5}
\end{center}
\end{figure}

Case ${\tt q}=6$. 
Consider Figure \ref{fig:MosaicoSpeiser-q6}; 
in (a) labels $\mathcal{W}_6 \doteq [1,2,3,4,5,6]$ 
are added to the 6 vertices 
of the $\tt t$--graph $\Gamma$;
recall Remark \ref{rem:notation-of-labels}.
In (b), by edge subdivision, vertices of valence two are added so as to make each tile a 
$6$--gon with
a consistent 6--labelling $\mathcal{L}_{\mathcal{W}_6}$ as in Definition \ref{def:etiquetado-consistente}; 
thus a Speiser $6$--tessellation $(\mathscr{T}(\widehat{\Gamma}_6),\mathcal{L}_6)$. 
In (c) the corresponding Speiser graph $(\mathfrak{S}_6,\mathcal{L}_6)$ of index $6$ is shown.
The corresponding rational function $R(z)$, obtained from the specific choice of the 
6 distinct ordered critical values $\mathcal{W}_6$, has 6 simple critical points, one over
each of the 6 distinct critical values. Thus, a generic rational function 
$R(z)$.
This corresponds to the the two colored tilling of \cite{Koch-Lei} that appears in 
figure 5 (a).

\begin{figure}[h!tbp]
\begin{center}
\includegraphics[width=0.6\textwidth]{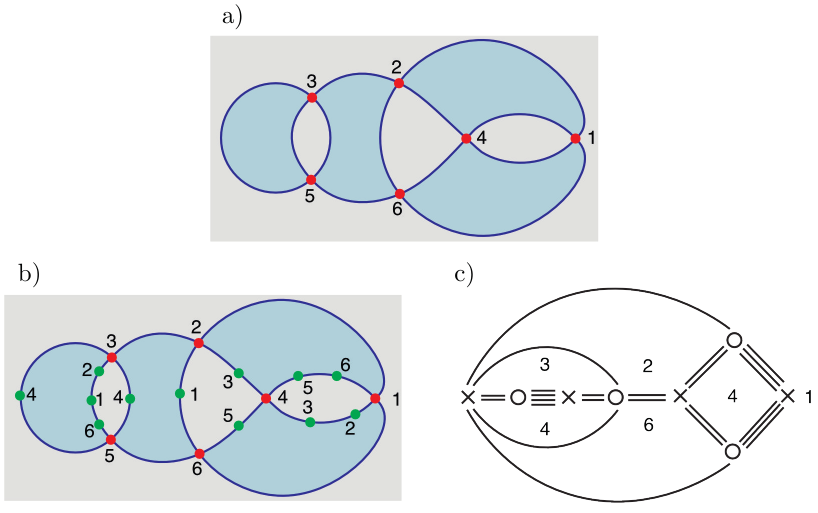}
\caption{
(a) Labelled $\tt t$--graph with labels 
$\mathcal{W}_6 \doteq [1,2,3,4,5,6]$.
(b) Vertices of valence 2 are added so as to have a consistent 
labelling $\mathcal{L}_{\mathcal{W}_6}$, 
thus obtaining a Speiser $6$--tessellation.
(c) The dual of (b), \emph{i.e.}\ a Speiser graph of index $6$.
(b)--(c) represent a rational function 
$R(z)$
with 6 simple critical points and 
exactly 6 distinct critical values. 
Thus, a generic rational function 
$R(z)$ of degree 4.
}
\label{fig:MosaicoSpeiser-q6}
\end{center}
\end{figure}

\end{example}

With this in mind, the following result is useful and follows immediately from 
Table \ref{table:tabla-relaciones}, 
Proposition \ref{prop:speiser-graph-implies-speiser-function}, Theorem \ref{th:main-theorem}, 
and the bounds \eqref{cota-q-mosaicos} and \eqref{cota-q-graficas}.

\begin{lemma}
\label{lem:cotas-q}
Given a $\tt t$--graph $\Gamma$ or a pre--Speiser graph $\mathfrak{S}$, 
upper and lower bounds for ${\tt q}\geq 2$, 
in order that they extend to a 
Speiser $\tt q$--tessellation 
$\big(\mathscr{T}(\widehat{\Gamma}_{\tt q}),\, \mathcal{L}_{\mathcal{W}_{\tt q}} \big)$
or an analytic Speiser graph 
$(\mathfrak{S}_{\tt q}, \mathcal{L}_{\mathcal{W}_{\tt q}})$
of index $\tt q$, 
are as follow
\begin{align}
\tag{\ref{cota-q-mosaicos}}
\max \# \left \{ \begin{array}{c}
\text{vertices on } \partial \overline{T_\alpha},  \\ 
\text{ for }
T_\alpha \in \mathscr{T}( \Gamma)
\end{array}
\right \}
\leq  & \, {\tt q} \leq 
\# \left \{ 
\begin{array}{c}
\text{vertices of } \Gamma \text{ with}\\
\text{valence } \geq 4
\end{array}
\right\}  ,
\\
\tag{\ref{cota-q-graficas}}
\max \left\{ 
\begin{array}{c}
\text{valence of the}\\ 
\text{vertices of } \mathfrak{S}
\end{array}
\right \} 
\leq & \, {\tt q} \leq 
\# \left \{ \begin{array}{c}
\text{faces of } \mathfrak{S}, \\
\text{that are not digons}
\end{array}
\right\} .
\end{align}
\hfill\qed
\end{lemma}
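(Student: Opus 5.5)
The plan is to prove the two pairs of inequalities separately for the $\tt t$--graph $\Gamma$ and then transfer them to $\mathfrak{S}$ by duality, Proposition \ref{prop:bijection-extends-to-action}. Assume $\Gamma$ extends to a Speiser $\tt q$--tessellation $\big(\mathscr{T}(\widehat{\Gamma}_{\tt q}),\mathcal{L}_{\mathcal{W}_{\tt q}}\big)$, with $\Gamma$ and $\widehat{\Gamma}_{\tt q}$ compatible. By Lemma \ref{lem:consistent-labelings-Gamma-hatGamma}.2 the labelling restricts to a consistent $\tt q$--labelling on $\Gamma$, so it suffices to work with $(\Gamma,\mathcal{L}_{\mathcal{W}_{\tt q}})$.

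\emph{Lower bound.} Take any blue tile $T_\alpha$ of $\mathscr{T}(\Gamma)$ with $\rho_\alpha$ vertices on $\partial\overline{T_\alpha}$. Condition (i) of Definition \ref{def:etiquetado-consistente} says these $\rho_\alpha$ vertices carry pairwise distinct labels from $\mathcal{W}_{\tt q}$, so $\rho_\alpha\le{\tt q}$. Taking the maximum over tiles gives ${\tt q}_{\min}\le{\tt q}$. For grey tiles, propagating the labelling across a shared edge gives the same conclusion; alternatively, Definition \ref{def:de-teselacion}.ii already imposes $\rho\le{\tt q}$ for every tile.

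\emph{Upper bound.} By condition (ii) of Definition \ref{def:etiquetado-consistente}, each of the $\tt q$ distinct labels is attained at some vertex of $\Gamma$ of valence $\ge4$. This gives a surjection from the set of such vertices onto $\mathcal{W}_{\tt q}$, hence ${\tt q}\le{\tt q}_{\max}$, and the bound is trivially true when ${\tt q}_{\max}=\infty$. Analytically, by Theorem \ref{th:main-theorem} and Table \ref{table:tabla-relaciones}, the vertices of valence $\ge4$ are exactly the singular points, and each singular value has at least one singular point over it.

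\emph{Graph version \eqref{cota-q-graficas}.} Under duality, tiles of $\mathscr{T}(\Gamma)$ correspond to vertices of $\mathfrak{S}$, and the number of vertices of a tile equals the valence of the dual vertex; this gives the lower bound. Vertices of $\Gamma$ of valence $2m\ge4$ (or $\infty$) correspond to ${\tt w}_{\tt j}$--faces of $\mathfrak{S}$ that are not digons, by Proposition \ref{prop:speiser-graph-implies-speiser-function}. Condition (ii) of Definition \ref{def:etiquetado-consistente-Speiser} then yields the upper bound in the same way as above.

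The main point requiring care is that the vertices of valence $\ge4$ in $\widehat{\Gamma}_{\tt q}$ coincide with those of $\Gamma$, so that counting on $\Gamma$ is legitimate. This holds because edge subdivision only adds valence--2 vertices. One must also check that vertices of infinite valence on $\partial_{\mathcal{I}}\Omega_z$, and the corresponding unbounded faces, are counted correctly, which follows from Definition \ref{def:de-t-graph}.iii.
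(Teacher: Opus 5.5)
Your proof is correct and follows essentially the paper's route. The lower bound comes from every tile being at most a ${\tt q}$--gon; the paper invokes Definition \ref{def:de-teselacion}.ii via Remark \ref{rem:bounds-for-q}, as in your alternative. The upper bound comes from condition (ii) of the consistent labelling, and \eqref{cota-q-graficas} follows by duality.

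For grey tiles, a cleaner justification than ``propagating across a shared edge'' is the homogeneity of $\widehat{\Gamma}_{\tt q}$. Every tile is a ${\tt q}$--gon there, and forgetting valence--2 vertices can only lower the vertex count.
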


In the case of $\tt t$--graphs with $2n<\infty$ tiles 
(presumably corresponding to rational functions, 
say of degree $n\geq 2$), 
the Riemann--Hurwitz formula implies that
the right hand side 
of Equation \eqref{cota-q-mosaicos} 
is also bounded by $2n-2$.
Since the upper bounds
\eqref{cota-q-mosaicos}--\eqref{cota-q-graficas} are specific to the 
particular $\tt t$--graph or pre--Speiser graph, it is usually better than that 
given by the Riemann--Hurwitz formula. 
Moreover, 
\eqref{cota-q-mosaicos}--\eqref{cota-q-graficas}
also apply to infinite $\tt t$--graphs and pre--Speiser graphs.

\subsection{Necessary and sufficient conditions for a pre--Speiser graph to be extendable to a Speiser graph of index $\tt q$}
\label{sec:bipartite-transportation-problem}
Recalling Definition \ref{def:pre-Speiser-graph}.1,
we shall now proceed to answer  
Question \eqref{eq:pregunta-mosaico-speiser}
in terms of pre--Speiser graphs and Speiser graphs.
The setup is as follows.

\noindent
Suppose you have a connected bipartite 
pre--Speiser 
graph $\mathfrak{S}=(V_\circ \cup V_\times, E)$ embedded in the 
plane with the sets of vertices
$V_\circ$ and $V_\times$ having the same cardinality, say 
$n$ (possibly infinite). 
The set $E=E(\mathfrak{S})$ denotes the edges of $\mathfrak{S}$.
Given a vertex $v\in V_\circ \cup V_\times$, denote by $\deg(v) = \rho_v$ the valence of $v$. 
Further assume that $2 \leq \rho_v \leq {\tt q}$ for all vertices and that 
${\tt q}<\infty$ lies within the bounds \eqref{cota-q-graficas} given by Lemma \ref{lem:cotas-q}.

We are looking for necessary and sufficient conditions 
such that $\mathfrak{S}$ can be extended to a 
planar $\tt q$--regular bipartite 
multigraph\footnote{Recall that
a multigraph admits edge bundles, see 
Remark \ref{rem:despues-de-def-Speiser-graph}.5.
}

\centerline{
$\mathfrak{S}_{\tt q} = (V_\circ \cup V_\times, E\cup E_{new})$,
}

\noindent
by just adding copies of the edges $E$ 
(\emph{i.e.}\ $E_{new}$ are copies of $E$) to $\mathfrak{S}$.

\begin{definition}
\label{def:deficiency-neighbors-of-sets-within-part}
Given a vertex $v\in V_\circ \cup V_\times$, with valence $\deg{v}=\rho_v$, 
we say that $d_v \doteq {\tt q}-\rho_v$ is the \emph{deficiency of vertex $v$}.
\end{definition}
 
Recalling Definition \ref{def:neighbors-of-sets}.2, we 
see that.

\begin{lemma}
Given a finite set $S\subset V_\circ$, it follows that 
the neighborhood\footnote{
To be completely clear, this is the open neighborhood as in 
Definition \ref{def:neighbors-of-sets}.3.
}
of $S$ is a subset of $V_\times$, in other words
$N(S)\subset V_\times$. 
Similarly for a finite set $T\subset V_\times$, the neighborhood of $T$ is 
a subset of $V_\circ$, in other words
$N(T)\subset V_\circ$.
\hfill\qed
\end{lemma}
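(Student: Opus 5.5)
The plan is to unwind the definitions: the statement is essentially that bipartiteness of $\mathfrak{S}$ forces every edge to join a $\circ$--vertex to a $\times$--vertex, and that $N(S)$ is obtained from the neighbours of $S$.

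First fix a finite $S\subset V_\circ$ and recall Definition~\ref{def:neighbors-of-sets}: $N^{\rm o}(S)=\bigcup_{v\in S}N(v)$ and $N(S)=N^{\rm o}(S)\backslash S$. Take any $u\in N(S)$. Then $u\in N(v)$ for some $v\in S$, so $u$ is joined to $v$ by an edge of $\mathfrak{S}$. By Definition~\ref{def:Speiser-graph}.iii, which is inherited by pre--Speiser graphs through Definition~\ref{def:pre-Speiser-graph}, $\mathfrak{S}$ is bipartite with parts $V_\circ$ and $V_\times$. Since $v\in V_\circ$, its neighbour satisfies $u\in V_\times$. This gives $N^{\rm o}(S)\subset V_\times$, and therefore $N(S)\subset N^{\rm o}(S)\subset V_\times$.

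It is worth noting that here $N^{\rm o}(S)\cap S=\varnothing$, because $S\subset V_\circ$ is an independent set. So in this situation the open neighbourhood complex and the open neighbourhood coincide, $N(S)=N^{\rm o}(S)$. This is not needed for the inclusion, but it is the form used later in the Hall--type inequalities.

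The statement for a finite $T\subset V_\times$ follows by the same argument with the roles of $\circ$ and $\times$ swapped, giving $N(T)\subset V_\circ$.

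There is no real obstacle here. The only point needing care is the orientation convention: multiple edges in the multigraph do not affect adjacency, and the absence of loops, already noted in the Example after Definition~\ref{def:neighbors-of-sets}, is implied by bipartiteness. Finiteness of $S$ and $T$ plays no role in the inclusion. It is only relevant later, when cardinalities of neighbourhoods are compared.
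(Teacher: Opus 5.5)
Your argument is correct and matches what the paper intends: the paper states the lemma with no written proof because it follows immediately from bipartiteness, which forces every neighbour of a $\circ$--vertex to be a $\times$--vertex and vice versa. Your side remark is also correct: since $S$ and $N^{\rm o}(S)$ lie in different parts, $N(S)=N^{\rm o}(S)$, which is consistent with how the neighbourhoods enter the later Hall-type inequalities.
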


We now have a \emph{bipartite transportation problem:}
a classical optimization problem in operation research that models the distribution of resources from multiple supply sources to multiple demand destinations. It can be naturally formulated using bipartite graphs, where supply nodes form one partition and demand nodes form another, with edges representing possible routes and their associated costs 
(in our case the cost is the same for each edge).

The vertices $v\in V_\circ$ have supply $d_v$
and vertices $w\in V_\times$ have demand $d_w$, 
and it is possible to ship only along existing 
edges with unlimited capacity.

The question now is, 
whether we can choose non--negative integers 
$x(e)$ for each edge 
$e = \overline{\times_\alpha \circ_\beta}  \in E$
(how many parallel copies of $e$ to add, in order to 
form edge bundles),
such that every vertex $v \in V_\circ \cup V_\times $ reaches valence $\tt q$.

\noindent 
For a given edge $e\in E(\mathfrak{S})$, 
the  corresponding edge in the extended graph 
$\mathfrak{S}_{\tt q}$ 
will have multiplicity $1 + x(e)$; 
recall rows 4 and 5 in Diagram \ref{dia:correspondencia-completa}.

The solution for this bipartite transportation
problem is classic and well known. 
See for instance \cite{Schrijver}, \cite{Lovasz-Plummer}, \cite{Ford-Fulkerson} 
and references therein; for foundational resources see \cite{Gale}, \cite{Ryser}. 

There exist non--negative integers $x(e)$ solving the above question if and only if 
the following conditions are satisfied.
\begin{enumerate}[label=\arabic*),leftmargin=*]	
\item 
Global balance   

\centerline{$
\sum\limits_{v\in V_\circ } d_v = \sum\limits_{w\in V_\times } d_w .
$} 

\item 
Local balance / Hall type neighborhood inequalities;
for every finite sets $S\subset V_\circ$ and $T\subset V_\times$, the following inequalities hold

\centerline{$
\sum\limits_{v\in S } d_v \leq \sum\limits_{w\in N(S) } d_w ,
$} 
 
\centerline{$
\sum\limits_{w\in T } d_w \leq \sum\limits_{v\in N(T) } d_v .
$} 
\end{enumerate}
These are the max--flow / min--cut conditions on the bipartite transportation network with
capacities $d_v$ on the vertex arcs and infinite capacities on edge arcs; 
by unimodularity, a feasible real solution is integral.

For countably infinite graphs with finite $\tt q$, the same conditions, required for all 
finite subsets $S\subset V_\circ$ and $T\subset V_\times$ are necessary and sufficient
(sufficiency follows by an exhaustion by finite induced subgraphs and a compactness/limit
argument).

The above proves the result below, which completes
Diagram \ref{dia:correspondencia-completa}.

\begin{theorem}[Pre--Speiser graph extension to Speiser graph of index $\tt q$]
\label{prop:balance-conditions-Speiser-graphs}
A pre--Speiser graph
$\mathfrak{S}=(V_\circ \cup V_\times, E)$ embedded in the 
plane
extends to a Speiser graph $\mathfrak{S}_{\tt q}$ of index $\tt q$ 
if and only if 
$\mathfrak{S}$ satisfies the following conditions:

\begin{enumerate}[label=\arabic*),leftmargin=*]	
\item 
Global balance

\centerline{$
\sum\limits_{v\in V_\circ} ({\tt q} - \rho_v) = \sum\limits_{w\in V_\times} ({\tt q} - \rho_w) .
$} 

\item Local balance / Hall neighborhood inequalities:
for every finite sets $S\subset V_\circ$ and $T\subset V_\times$, 
\begin{eqnarray}
\label{eq:Hall-S}
\sum\limits_{v\in S} ({\tt q} - \rho_v) &\leq \sum\limits_{w\in N(S)} ({\tt q} - \rho_w) ,
\\
\label{eq:Hall-T}
\sum\limits_{w\in T} ({\tt q} - \rho_w) &\leq \sum\limits_{v\in N(T)} ({\tt q} - \rho_v) .
\end{eqnarray}

\end{enumerate}

\hfill\qed
\end{theorem}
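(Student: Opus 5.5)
We reduce the extension problem to an integer flow problem on $\mathfrak{S}$ and then apply max--flow/min--cut (equivalently Gale--Hall supply--demand) in the finite case, followed by a compactness argument in the infinite case.

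An extension of $\mathfrak{S}$ to $\mathfrak{S}_{\tt q}$ by parallel copies of existing edges is the same as a function $x:E\to\ZZ_{\geq 0}$ with $\sum_{e\ni v} x(e) = d_v = {\tt q}-\rho_v$ at every vertex $v$. Parallel copies can always be inserted in the plane next to the original edge, so planarity and bipartiteness are automatic, and every vertex then has valence ${\tt q}$.

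\textbf{Necessity.} Sum the vertex equations over $V_\circ$ and over $V_\times$; each $x(e)$ is counted once on each side. When $n<\infty$ this gives global balance; when $n=\infty$ we interpret global balance as an equality of (possibly infinite) sums, as in the text. For a finite $S\subset V_\circ$, every edge at a vertex of $S$ ends in $N(S)\subset V_\times$. Hence $\sum_{v\in S}d_v=\sum_{e\cap S\neq\varnothing}x(e)\le\sum_{w\in N(S)}d_w$, which is \eqref{eq:Hall-S}; \eqref{eq:Hall-T} follows symmetrically.

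\textbf{Sufficiency, finite case.} Build a network with a source $s$, arcs $s\to v$ of capacity $d_v$ for $v\in V_\circ$, arcs $v\to w$ of infinite capacity for each edge of $E$, and arcs $w\to t$ of capacity $d_w$ for $w\in V_\times$.
\begin{enumerate}[label=\arabic*),leftmargin=*]
\item A finite cut must avoid the infinite arcs. So if the source side contains $A\subset V_\circ$, it must contain $N(A)$, and the cut capacity is at least $\sum_{v\notin A}d_v+\sum_{w\in N(A)}d_w$.
\item By \eqref{eq:Hall-S}, this is at least $\sum_{V_\circ}d_v$, so the maximum flow saturates all source arcs.
\item Global balance then forces all sink arcs to be saturated as well.
\item Integrality of max flow, from total unimodularity of the bipartite incidence matrix, gives integer values $x(e)$.
\end{enumerate}
Condition \eqref{eq:Hall-T} is not needed here, but it is necessary and symmetric, so stating both is harmless.

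\textbf{Sufficiency, infinite case.} This is the main obstacle, and I would use a compactness argument.
\begin{enumerate}[label=\arabic*),leftmargin=*]
\item Exhaust $\mathfrak{S}$ by finite connected sets $F_1\subset F_2\subset\cdots$.
\item For each $k$, solve a relaxed finite problem: require exact valence ${\tt q}$ on $F_k$ and valence $\le{\tt q}$ on the boundary neighbours $N(F_k)$. By local finiteness this is a finite problem.
\item Its feasibility follows from the finite Hall inequalities applied to subsets of $F_k$. This needs a Gale-type theorem with upper bounds only on the boundary vertices, not full global balance.
\item Since $0\le x(e)\le{\tt q}$, the solutions lie in the compact space $\prod_{e\in E}\{0,\ldots,{\tt q}\}$.
\item A diagonal argument (König's lemma, or Tychonoff) extracts a subsequence converging on every edge.
\item The limit satisfies every vertex equation, because each equation involves only finitely many edges and holds for all large $k$.
\end{enumerate}
The delicate point is step 3, verifying feasibility of the truncated problems from the two Hall inequalities alone. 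I expect to need \eqref{eq:Hall-S} and \eqref{eq:Hall-T} simultaneously: one controls supply leaving $F_k\cap V_\circ$, the other the demand of $F_k\cap V_\times$. Global balance is then automatic in the limit.
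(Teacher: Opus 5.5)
Your proposal takes the same route as the paper. You recast the extension as the bipartite transportation problem, settle the finite case by max--flow/min--cut with integrality, and pass to infinite graphs by exhaustion and compactness. Your necessity argument and the finite-case cut computation are correct. As you note, \eqref{eq:Hall-T} is redundant there given global balance; conversely, global balance itself follows from \eqref{eq:Hall-S} with $S=V_\circ$ together with \eqref{eq:Hall-T} with $T=V_\times$.

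The one real gap is your step 3, which carries the whole infinite case and which you leave as an expectation. The paper is no more explicit: it only invokes ``exhaustion by finite induced subgraphs and a compactness/limit argument''. Your relaxation (exact valence on $F_k$, valence at most ${\tt q}$ on $N(F_k)$) is the right way to make that precise, since the exact problem on a finite induced subgraph is generally infeasible.

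Step 3 does not follow from the single source--sink max flow of your finite case. That flow saturates the supplies of $F_k\cap V_\circ$ but says nothing about the demands of $F_k\cap V_\times$; you need a feasibility result with lower bounds. Hoffman's circulation theorem does it. Work on the truncated graph with vertices $F_k\cup N(F_k)$ and the edges of $\mathfrak{S}$ having an endpoint in $F_k$.
\begin{enumerate}
\item Give the arc joining a vertex $y$ to $s$ or $t$ the bounds $[d_y,d_y]$ if $y\in F_k$ and $[0,d_y]$ if $y\in N(F_k)$.
\item Give the edge arcs infinite capacity, and add an arc $t\to s$ of infinite capacity.
\end{enumerate}
Now check the cut condition $\ell(\delta^{\mathrm{in}}(X))\le u(\delta^{\mathrm{out}}(X))$:
\begin{enumerate}
\item sets $X$ containing exactly one of $s,t$ are trivial;
\item sets containing neither are equivalent to \eqref{eq:Hall-S} for $S\subseteq F_k\cap V_\circ$;
\item sets containing both are equivalent to \eqref{eq:Hall-T} for $T\subseteq F_k\cap V_\times$.
\end{enumerate}
The last two reductions use that $N(S),N(T)\subseteq F_k\cup N(F_k)$ and that every edge at $S$ or $T$ lies in the truncated graph. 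The integral form of Hoffman's theorem then gives integer $x$.

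Equivalently, split each vertex $v$ into $d_v$ copies. The two Hall conditions give a matching covering the copies of $F_k\cap V_\circ$ and one covering the copies of $F_k\cap V_\times$, and the Mendelsohn--Dulmage theorem merges them. With step 3 established, your steps 4--6 go through as written, and both Hall inequalities are indeed needed, as you anticipated.
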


Finally a use of Lemma \ref{lem:Speiser-graph-admite-etiquetados} proves.

\begin{corollary}
A pre--Speiser graph $\mathfrak{S}$ represents a Speiser function
if and only if it satisfies conditions (1) and (2) of 
Theorem \ref{prop:balance-conditions-Speiser-graphs}.
\hfill\qed
\end{corollary}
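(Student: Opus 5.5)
The plan is to reduce the corollary to Theorem \ref{prop:balance-conditions-Speiser-graphs} together with the correspondence results already established, namely Corollary \ref{cor:shape-of-Speiser-function}, Theorem \ref{th:main-theorem} and Proposition \ref{prop:bijection-extends-to-action}. The argument runs through two equivalences:
\[
\mathfrak{S}\ \text{represents a Speiser function}
\iff
\mathfrak{S}\ \text{extends to an analytic Speiser graph } (\mathfrak{S}_{\tt q},\mathcal{L}_{\mathcal{W}_{\tt q}})\ \text{by adding parallel edges},
\]
and the right hand side holds if and only if conditions (1)--(2) hold, for some admissible ${\tt q}$ in the range \eqref{cota-q-graficas}.

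\emph{Necessity.} Suppose $\mathfrak{S}$ represents a Speiser function $w(z)$ with cyclic order $\mathcal{W}_{\tt q}$, so that $\mathscr{T}_\gamma(w(z))=\mathscr{T}(\Gamma)$ with $\Gamma$ dual to $\mathfrak{S}$. Then $w(z)^*\gamma=\widehat{\Gamma}_{\tt q}$ is compatible with $\Gamma$. Dually, by Proposition \ref{prop:bijection-extends-to-action}, the analytic Speiser graph $\mathfrak{S}_{w(z)}$ is obtained from $\mathfrak{S}$ by adding edges to form digons, i.e.\ parallel copies of existing edges (Diagram \ref{dia:correspondencia-completa}, rows 4--5). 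Setting $x(e)+1$ equal to the multiplicity of $e$ in $\mathfrak{S}_{w(z)}$ gives a non-negative integer solution of the transportation problem. Summing the relations $\sum_{e\ni v}x(e)=d_v$ then yields the global balance (1), since each added edge contributes once to each side. For a finite $S\subset V_\circ$, every added edge at $S$ lands in $N(S)$, which gives \eqref{eq:Hall-S}; \eqref{eq:Hall-T} follows symmetrically.

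\emph{Sufficiency.} If (1)--(2) hold, Theorem \ref{prop:balance-conditions-Speiser-graphs} gives a ${\tt q}$-regular planar bipartite multigraph $\mathfrak{S}_{\tt q}$ containing $\mathfrak{S}$. Planarity holds because parallel copies can be drawn inside a thin neighbourhood of the original edge. By Lemma \ref{lem:Speiser-graph-admite-etiquetados}, $\mathfrak{S}_{\tt q}$ supports a consistent ${\tt q}_{\rm o}$-labelling for some ${\tt q}_{\min}\le{\tt q}_{\rm o}\le{\tt q}$. Its construction, labelling one vertex and propagating through simple connectivity, then deleting labels whose faces are all digons as in Remark \ref{rem:minimality-condition}, only forgets digon edges. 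The result is therefore still an extension of $\mathfrak{S}$, because $\mathfrak{S}$ itself contains no added digons from a forgotten label unless they were original edges. Corollary \ref{cor:Speiser-graph-a-Speiser-function} then produces a Speiser function, and Corollary \ref{cor:shape-of-Speiser-function} confirms that $\mathscr{T}_\gamma(w(z))=\mathscr{T}(\Gamma)$.

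\emph{Main obstacle.} The delicate point is this last compatibility. Forgetting a label ${\tt w}_{j_0}$ whose faces are all digons could remove an \emph{original} edge of $\mathfrak{S}$ that happens to bound only digons. To rule this out, I would argue that such an edge, together with its twin, forms a digon in $\mathfrak{S}_{\tt q}$, so at least one of the pair is an added copy. I would then choose to delete the added copy, so the resulting graph still contains $\mathfrak{S}$ with the reduced index ${\tt q}_{\rm o}$. For this I would also check that conditions (1)--(2) are preserved when ${\tt q}$ drops to ${\tt q}_{\rm o}$; this holds because the new graph is ${\tt q}_{\rm o}$-regular and contains $\mathfrak{S}$, so necessity applies to it.
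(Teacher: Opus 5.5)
Your proposal is correct and follows the paper's route: Theorem \ref{prop:balance-conditions-Speiser-graphs} gives a ${\tt q}$--regular extension by parallel copies, and Lemma \ref{lem:Speiser-graph-admite-etiquetados} together with the Main Correspondence turns that extension into an analytic Speiser graph, hence a Speiser function; you additionally spell out the necessity direction, which the paper leaves implicit. Your handling of the index-reduction issue, which the paper's one-line proof glosses over, is sound, and the one step worth justifying explicitly is that $\mathfrak{S}$ has no digon faces, being the dual of a ${\tt t}$--graph whose finite-valence vertices all have valence $\geq 4$, so that every digon of $\mathfrak{S}_{\tt q}$ contains at least one added copy.
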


\subsection{W.\,P.\,Thurston \emph{et al.}'s approach}
\label{sec:Koch-Lei}
In \cite{Koch-Lei}, they consider a planar tessellation with alternating colors, 
which in our language corresponds to
$\mathscr{T}(\Gamma)$ arising from a $\tt t$--graph 
$\Gamma$.
Three conditions are required 
in order to 
characterize the planar
tessellations that represent generic
rational functions.

\noindent 
i) The tiles/faces of $\mathscr{T}(\Gamma)$ 
are Jordan regions.

\noindent 
ii) Global balance. 
For finite graphs $\Gamma$, 
with an alternating blue--gray 
colouring of the faces of $\mathscr{T}(\Gamma)$, 
there are the same number of blue faces as there are of gray faces.

\noindent 
iii) Local balance. 
For any oriented simple closed path 
in $\Gamma$, say $\Upsilon$, that is bordered by blue faces on the left and grey on the right 
(except at the corners), 
there are strictly more blue faces than grey faces on the left side of $\Upsilon$.

In figure 3 of \cite{Koch-Lei},
a tessellation that is globally balanced but not locally 
balanced is shown.
The next example illustrates that lack of local balance is actually 
very easy to obtain.

\begin{example}[Every $\tt t$--graph can be modified to one
without local balance]
\label{example:non-locally-balanced}
Let $\mathscr{T}(\Gamma)$ 
be a globally
and locally balanced tessellation, finite or infinite.
Consider 
any edge of $\Gamma$
as in Figure \ref{fig:mosaico-speiser-graph-No-localbalance}.a.
Replace the edge with the graph shown in  
Figure \ref{fig:mosaico-speiser-graph-No-localbalance}.c, 
to obtain the graph of Figure \ref{fig:mosaico-speiser-graph-No-localbalance}.b.
This new $\tt t$--graph $\Gamma^\prime$ is still globally balanced, 
but it is not locally balanced;
the path $\alpha$, that does not satisfy the 
requirements for local balance, 
is colored green in the figures. 
Note that one could also use Figure \ref{fig:mosaico-speiser-graph-No-localbalance}.d, 
instead of Figure \ref{fig:mosaico-speiser-graph-No-localbalance}.c.

\noindent
By duality, the corresponding statement for the pre--Speiser graph is: replace 
the single horizontal edge by the dual graph of 
Figure \ref{fig:mosaico-speiser-graph-No-localbalance}.c, as indicated
in Figures \ref{fig:mosaico-speiser-graph-No-localbalance}.e--f.

We shall use 
Theorem \ref{prop:balance-conditions-Speiser-graphs}, 
particularly \eqref{eq:Hall-T}, to show that 
it is not possible to add edges to the 
pre--Spesiser graph $\mathfrak{S}^\prime$ to
make it a regular graph (thus a Speiser graph $\mathfrak{S}_{\tt q}$ of index $\tt q$). 
The problem lies with the vertices inside the green area.
We shall work with Figure \ref{fig:mosaico-speiser-graph-No-localbalance}.f: 
a subset of the pre-Speiser graph $\mathfrak{S}$, dual to $\Gamma^\prime$.
The vertices of the subset of $\mathfrak{S}$, have been labelled as 
$\{ \circ_1, \ldots, \circ_5\}\subset V_\circ$ and 
$\{ \times_1, \ldots, \times_5\}\subset V_\times$.
Consider the set
$T=\{\times_2, \times_3\}$, thus $N(T)=\{\circ_4, \circ_5\}$, 
and observe that 

\centerline{
$\sum\limits_{w\in T} ({\tt q}-\rho_w) > \sum\limits_{v\in N(T)} ({\tt q}-\rho_v)$,
\quad for $5\leq {\tt q} \leq 8$.
}

\noindent
Thus, $\mathfrak{S}$ can not be extended to a Speiser graph of index $\tt q$, 
for $5\leq {\tt q} \leq 8$ (and consequently by Lemma \ref{lem:cotas-q}, 
for any $\tt q$).
Equivalently, $\Gamma^\prime$ can not be extended to an 
$\tt A$--map 
$\widehat{\Gamma_{\tt q}^\prime}$. 

\begin{figure}[h!tbp]
\begin{center}
\includegraphics[width=0.6\textwidth]{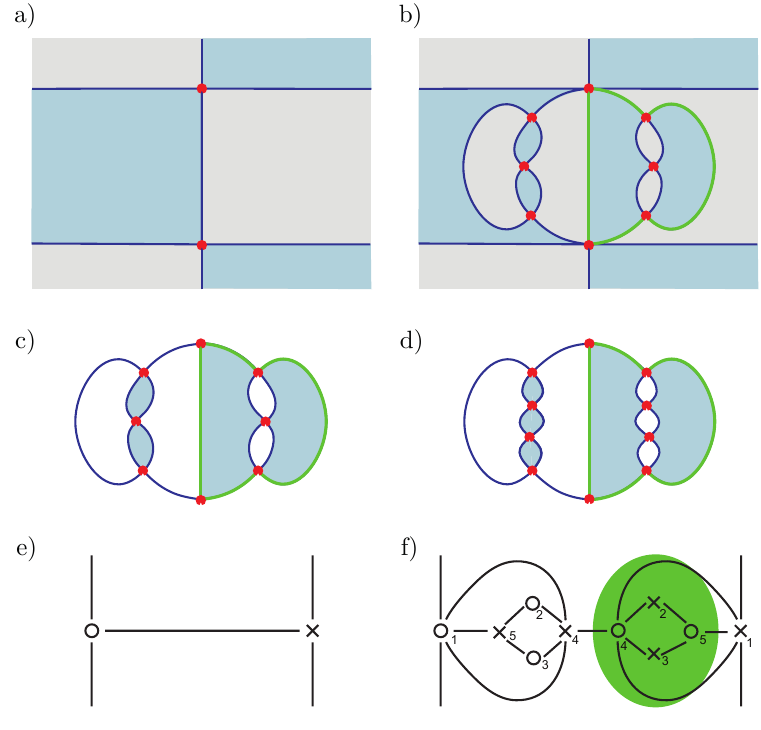}
\caption{
Modifying a $\tt t$--graph to make it non locally balanced.
(a) An edge $e$ 
of an arbitrary $\tt t$--graph $\Gamma$, 
that is globally and locally balanced.
(b) By replacing the edge $e$ with the graph in (c), we obtain a globally balanced
but not locally balanced $\tt t$--graph. 
Of course one could also use (d) to make another different $\tt t$--graph that is not locally balanced.
In all cases, the green path is the one that does not satisfy the local balance requirement.
By duality, in (e) is the pre--Speiser graph, corresponding to (a) 
and (f) is the modified pre--Speiser graph, corresponding to (b).
In (f) it is impossible to make the subgraph, enclosed by the green area, a regular subgraph.
}
\label{fig:mosaico-speiser-graph-No-localbalance}
\end{center}
\end{figure}
\end{example}

Recalling Definition \ref{def:de-teselacion} of tessellation,
the above example shows that:

\begin{corollary}
\label{cor:existence-of-tessellations-dont-represent-function}
There are finite and infinite 
tessellations $\mathscr{T}(\Gamma)$  
that do not represent any Speiser function.
\hfill\qed
\end{corollary}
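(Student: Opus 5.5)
The claim to establish is Corollary \ref{cor:existence-of-tessellations-dont-represent-function}: there exist finite and infinite tessellations $\mathscr{T}(\Gamma)$ that do not represent any Speiser function. The plan is to produce one explicit $\tt t$--graph $\Gamma^\prime$ in each case, and to show that its dual pre--Speiser graph $\mathfrak{S}^\prime$ fails a Hall inequality for every admissible $\tt q$. Corollary \ref{cor:shape-of-Speiser-function} reduces ``represents a Speiser function'' to ``supports a consistent $\tt q$--labelling for some $\tt q$''. Via duality (Proposition \ref{prop:bijection-extends-to-action}) and Lemma \ref{lem:Speiser-graph-admite-etiquetados}, this is in turn equivalent to $\mathfrak{S}^\prime$ extending to a Speiser graph $\mathfrak{S}_{\tt q}$ of some index $\tt q$. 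Theorem \ref{prop:balance-conditions-Speiser-graphs} then characterizes such extensions. So the task is to exhibit one finite and one infinite pre--Speiser graph for which either (1) or (2) of that theorem fails for every $\tt q$ within the bounds of Lemma \ref{lem:cotas-q}.

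For the construction, start with any globally and locally balanced $\tt t$--graph $\Gamma$. For the finite case take $\Gamma$ to be the tessellation of a rational function, e.g.\ Figure \ref{fig:tres-mosaicos-etiquetas-h}. For the infinite case take $\Gamma$ to be the tessellation of a transcendental Speiser function, e.g.\ $\e^z$ (Figure \ref{fig:Tessellation-Exp}). Perform the local surgery of Example \ref{example:non-locally-balanced}: replace a single edge by the gadget of Figure \ref{fig:mosaico-speiser-graph-No-localbalance}.c. The surgery is local, so the result $\Gamma^\prime$ is still a $\tt t$--graph. It is finite exactly when $\Gamma$ is, and it lives in the same ambient space $\Omega_z\cup\partial_{\mathcal{I}}\Omega_z$. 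Global balance is preserved, because the gadget adds equally many blue and grey tiles.

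Next comes the obstruction. In the dual gadget (Figure \ref{fig:mosaico-speiser-graph-No-localbalance}.f), take $T=\{\times_2,\times_3\}$, whose neighbourhood is $N(T)=\{\circ_4,\circ_5\}$. These four vertices lie in the interior of the gadget, so their valences $\rho$ are determined by the gadget alone and do not depend on the ambient $\Gamma$. Inequality \eqref{eq:Hall-T} then reads $2{\tt q}-(\rho_{\times_2}+\rho_{\times_3})\le 2{\tt q}-(\rho_{\circ_4}+\rho_{\circ_5})$. The $\tt q$ cancels, so the inequality is violated for \emph{every} $\tt q$ as soon as $\rho_{\times_2}+\rho_{\times_3}<\rho_{\circ_4}+\rho_{\circ_5}$. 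I expect the main obstacle to be this step: reading off the gadget's valences and checking the strict inequality independently of $\tt q$. If the valence count does not work out, the fallback is to restrict to the finite range $5\le{\tt q}\le 8$ allowed by \eqref{cota-q-graficas} for the gadget, as done in Example \ref{example:non-locally-balanced}, and to note that the lower bound ${\tt q}_{\min}$ coming from the gadget's largest valence excludes the smaller values of $\tt q$.

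Finally, conclude. Since \eqref{eq:Hall-T} fails for every admissible $\tt q$, Theorem \ref{prop:balance-conditions-Speiser-graphs} shows that $\mathfrak{S}^\prime$ extends to no Speiser graph. By Corollary \ref{cor:shape-of-Speiser-function}, $\mathscr{T}(\Gamma^\prime)$ then represents no Speiser function. Applying this to both choices of $\Gamma$ gives the finite and the infinite example.
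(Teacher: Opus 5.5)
Your proposal follows the paper's own route: the corollary is read off from Example \ref{example:non-locally-balanced}, where an edge of an arbitrary finite or infinite balanced $\tt t$--graph is replaced by the gadget, and \eqref{eq:Hall-T} fails for $T=\{\times_2,\times_3\}$, $N(T)=\{\circ_4,\circ_5\}$. Your observation that $\tt q$ cancels because $|T|=|N(T)|=2$ is exactly what justifies the paper's passage from $5\le{\tt q}\le 8$ to every $\tt q$. It also makes your fallback unnecessary; that fallback would be shaky in any case, since the bounds of Lemma \ref{lem:cotas-q} are computed on all of $\mathfrak{S}^\prime$, not on the gadget alone.
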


\subsection{J.\,Tomasini's approach}
\label{sec:Tomasini}
In \cite{Tomasini}, an intermediate approach between considering tessellations or 
their duals the Speiser graphs, is taken.

Instead of considering the pullback of a Jordan path that traverses the labelled singular values,
J.\,Tomasini considers the pullback via the rational function 
$w(z)=R(z)$ of a ``spider'', $T_{\tt q}$,
consisting of a central black vertex (corresponding to a regular point), 
and simple edges to labelled red vertices (the singular values). 
From this \emph{increasing bipartite map} (a planar labelled bipartite graph) 
$w(z)^*T_{\tt q}$, he erases the labels and the 
valence 1 (red) vertices, together with their incident edges; 
obtaining a \emph{skeleton of $w(z)$}.
See Figure \ref{fig:arana-de-tomasini}, 
specifically the top row and first two left hand figures.

\begin{figure}[h!tbp]
\begin{center}
\includegraphics[width=0.95\textwidth]{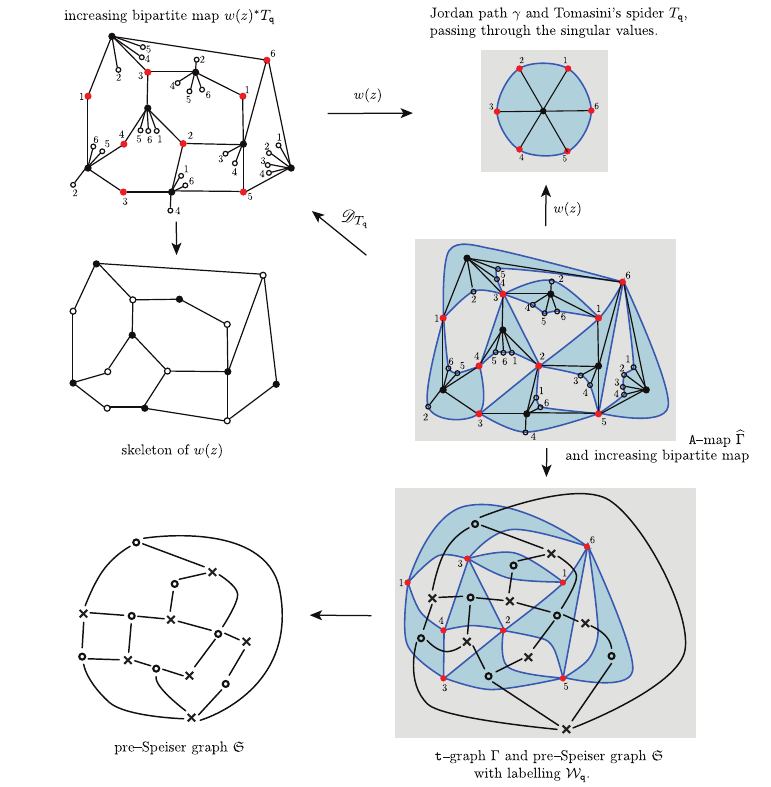}
\caption{
Starting on the top left is Tomasini's increasing bipartite map 
$w(z)^*T_{\tt q}$;
below it is the skeleton of $w(z)$.
On the top right are  
the Jordan path $\gamma$ and the spider $T_{\tt q}$, 
passing through the singular values.
Middle right is the $\tt A$--map $\widehat{\Gamma}_{\tt q}$
superimposed with the increasing bipartite map 
$w(z)^*T_{\tt q}$ to exemplify the deformation retract $\mathscr{D}_{T_{\tt q}}$.
Bottom row contains the pre--Speiser graph and the $\tt t$--graph $\Gamma$ for 
comparison with Tomasini's combinatorial objects.
}
\label{fig:arana-de-tomasini}
\end{center}
\end{figure}

Using the above he proves.

\begin{theorem*}[Tomasini's characterization of finite branched self covers of $\esf$ \cite{Tomasini}]
A finite planar bipartite  graph $G$ is realized as a skeleton of a 
branched self cover of $\esf$ if and only if
$G$ is 

\begin{enumerate}[label=\arabic*),leftmargin=*]	
\item 
Globally balanced;
there are the same number of black vertices as faces of $G$.

\item Locally balanced;
for any subgraph $H$ of $G$ with more than one black vertex,
the number of black vertices of $H$ is greater than or equal to the number of faces of $H$.
\end{enumerate}
\end{theorem*}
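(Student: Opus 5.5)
We would prove this by reducing Tomasini's skeleton to the objects of Diagram \ref{dia:correspondencia-completa}, and then applying Theorem \ref{teo:principal}.2 together with a finite Hall-type argument. The key observation is about the spider $T_{\tt q}$: it is a tree, so $\CW_w\backslash T_{\tt q}$ is a single open disk containing no singular value. Hence for a branched self cover $w=R(z)$ of degree $n$:
\begin{itemize}
\item the preimage of the central black vertex consists of exactly $n$ black points;
\item $\CW_z\backslash w(z)^*T_{\tt q}$ consists of exactly $n$ disks, each mapped homeomorphically onto $\CW_w\backslash T_{\tt q}$.
\end{itemize}
Erasing the valence-one red vertices does not merge faces in a way that changes this count. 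So the faces of the skeleton $G$ are in bijection with these $n$ sheets, which gives necessity of global balance (1) at once. We would then record the dictionary with our earlier objects. Each black vertex $b$ sits at the centre of a small star, and each face $F$ of $G$ is one sheet $\mathfrak{L}_\Upxi$ in the sense of Proposition \ref{prop:Rw-is-a-union-of-sheets}. Joining $b$ to $F$ whenever $b$ lies on $\partial F$ produces a planar bipartite graph, which we expect to be (up to edge multiplicities) the pre--Speiser graph $\mathfrak{S}$ of Figure \ref{fig:arana-de-tomasini}, bottom row. In this graph black vertices play the role of $V_\circ$ and faces the role of $V_\times$.

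For necessity of local balance (2), take a subgraph $H\subset G$ with at least two black vertices. Consider the closed region $K_H$ formed by the stars of the black vertices of $H$ and the faces of $H$ that are faces of $G$. Applying Riemann--Hurwitz, or simply an Euler characteristic count, to $w|_{K_H}$ should give the inequality
\begin{center}
$\#\{\text{black vertices of } H\} \geq \#\{\text{faces of } H\}$.
\end{center}
The reason is that $w$ restricted to $K_H$ covers the spider neighbourhood once per black vertex, and it covers the complementary disk at most once per bounded face of $H$. The unbounded face of $H$ is exactly what prevents a surplus of faces. This is the analogue of W.\,P.\,Thurston's local balance in \S\ref{sec:Koch-Lei}.

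For sufficiency, we would construct the cover combinatorially by producing a bijection $\phi$ from black vertices to faces with $b\in\partial\phi(b)$: each face-disk will be glued to the spider star at its matched black vertex. By Hall's marriage theorem (the finite, unweighted case of the transportation problem of \S\ref{sec:bipartite-transportation-problem}), such a $\phi$ exists if and only if every set $F'$ of faces is adjacent to at least $|F'|$ black vertices; together with global balance this gives a perfect matching. The main obstacle is to show that Tomasini's condition (2) implies this Hall condition. We would try the following: given $F'$, let $H$ be the subgraph spanned by the boundaries of the faces in $F'$. Then the faces of $H$ include $F'$ plus possibly one outer face, and the black vertices of $H$ are exactly $N(F')$. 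Condition (2) then yields $|N(F')|\geq |F'|$, up to an off-by-one caused by the outer face. We expect the off-by-one to be absorbed by planarity and connectivity, though that needs checking. The degenerate case where $H$ has a single black vertex is handled directly.

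With $\phi$ in hand, we label edges around each black vertex cyclically by the legs of $T_{\tt q}$. This yields a consistent ${\tt q}$--labelling of the associated ${\tt t}$--graph, after edge subdivision as in Step 2 of the proof of Theorem \ref{teo:principal}. Theorem \ref{teo:principal}.2 then furnishes a rational $R(z)$ whose skeleton is $G$.
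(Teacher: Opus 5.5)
The paper gives no proof of this statement: it is quoted from Tomasini, and Conjecture~\ref{prop:shape-of-Speiser-function} leaves its relation to Theorem~\ref{prop:balance-conditions-Speiser-graphs} open. Measured against what a proof needs, your sufficiency argument has a genuine gap.

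\textbf{Sufficiency.} A single perfect matching $\phi$ between black vertices and incident faces does not determine a branched cover. The step ``label edges around each black vertex cyclically by the legs of $T_{\tt q}$'' hides the whole difficulty: a red vertex shared by several black vertices must receive one label, and every face must contain exactly one black corner of each type $j\in\ZZ_{\tt q}$. In an actual pullback $w(z)^*T_{\tt q}$, the corners of type $j$ give ${\tt q}$ perfect matchings $\phi_1,\ldots,\phi_{\tt q}$ simultaneously. Equivalently, the corner--incidence graph of $G$ (the pre--Speiser graph $\mathfrak{S}$, with multiplicities) must become ${\tt q}$--regular after adding pendant edges inside corners, i.e.\ parallel edges of $\mathfrak{S}$. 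That is the weighted problem of \S\ref{sec:bipartite-transportation-problem} with deficiencies ${\tt q}-\rho_v$, and one matching is strictly weaker.

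Here is a concrete example. Join black $b_1,b_2$ to red $r_1,r_2,r_3$, producing faces $A$ (between $r_1,r_2$), $B$ (between $r_2,r_3$) and $C$ (between $r_3,r_1$), and draw the path $r_1b_3r_4b_4r_3$ inside $C$. This graph has four black vertices, four faces and a perfect matching. But in any ${\tt q}$--regular extension, the $2{\tt q}$ corners of $A$ and $B$ all sit at $b_1,b_2$. These two vertices have only $2{\tt q}$ corners in total, and each already has one in a half of $C$. So the graph is not a skeleton, yet your construction would accept it.

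What is needed is $|N(T)|\geq |T|+1$ for every proper non--empty set $T$ of faces; by complementation and global balance the same then holds for sets of black vertices. With this, the conditions of Theorem~\ref{prop:balance-conditions-Speiser-graphs} hold for all large ${\tt q}$, and the transportation problem has an integral solution. The labelling then exists by propagation as in Lemma~\ref{lem:Speiser-graph-admite-etiquetados}, since the increments around every blue and every grey tile sum to ${\tt q}\equiv 0$. Theorem~\ref{teo:principal}.2 finishes.

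The ``off--by--one'' you worried about is exactly what supplies this strict inequality. Take $H$ spanned by the boundaries of the faces in a proper $T$. Each face in $T$ is a face of $H$, and at least one further face of $H$ lies outside them. So (2) gives $|N(T)|\geq |T|+1$ at once; $H$ cannot have a single black vertex, since a star has only one face.

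\textbf{Necessity.} Your argument for (2) is not a proof either. Keeping only ``faces of $H$ that are faces of $G$'' discards precisely the faces of $H$ that contain several faces of $G$. The claim that $w$ covers the complementary disk ``at most once per bounded face of $H$'' fails for those faces, and $\esf$ has no distinguished unbounded face.

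The corner matchings give a clean argument. Let $G$ have $n$ black vertices and $n$ faces, and let $H$ have $k\geq 1$ black vertices and $m$ faces. For a face $U$ of $H$, let $f_U$ and $b_U$ be the numbers of faces and black vertices of $G$ inside $U$.
\begin{itemize}
\item Every $\phi_j$ maps the black vertices inside $U$ injectively to faces inside $U$, because all corners of such a vertex lie in $U$.
\item Since $H$ has a black vertex, some black vertex of $H$ lies on $\partial U$ with a corner opening into $U$. Taking $j$ to be the type of that corner, its image under $\phi_j$ is one more face inside $U$.
\end{itemize}
Hence $f_U\geq b_U+1$. Summing over the faces of $H$ gives $n\geq (n-k)+m$, that is $k\geq m$.
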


Roughly speaking, the relationship between the tessellation scheme of \S\ref{sec:tessellations}
and Tomasini's planar bipartite graphs 
is through a deformation retract $\mathscr{D}_{T_{\tt q}}$ of the Speiser $\tt q$--tessellation 
$\mathscr{T}(w(z)^*\gamma)$ 
(equivalently the $\tt A$--map $\widehat{\Gamma}_{\tt q} = w(z)^*\gamma$)
to the increasing bipartite map $w(z)^* T_{\tt q}$.
Thus,  $\mathscr{D}_{T_{\tt q}}$ retracts 
each blue tile/face 
to a regular point in a blue face 
(obtaining a black vertex for each blue face), 
fixing the vertices and coloring them red. 
The same deformation retract $\mathscr{D}_{T_{\tt q}}$ can be applied to the $\tt t$--graph
$\Gamma$ to obtain the skeleton $G$.
Of course the labelling of the increasing bipartite map $w(z)^*T_{\tt q}$ coincides with the
consistent $\tt q$--labelling $\mathcal{L}_{\mathcal{W}_{\tt q}}$ of the $\tt A$--map 
$\widehat{\Gamma}_{\tt q} = w(z)^*\gamma$ 
and its dual the Speiser graph $\mathfrak{S}_{\tt q}$ of index $\tt q$.
See Figure \ref{fig:arana-de-tomasini}.

\smallskip
Corollary \ref{cor:existence-of-tessellations-dont-represent-function} together with all of the above suggest the following.

\begin{conjecture}
\label{prop:shape-of-Speiser-function}
The three local balance conditions of 
Theorem \ref{prop:balance-conditions-Speiser-graphs}, section \S\ref{sec:Koch-Lei}, and
section \S\ref{sec:Tomasini}, are equivalent.
\end{conjecture}


\section{Geometrical decomposition of Speiser functions}
\label{sec:Soul-tower-decomposition}
Let $w(z)$ be a Speiser function 
with Riemann surface $\R_{w(z)}$. 
In this section we revisit Speiser Riemann surfaces, 
as in \S\ref{sec:Speiser-Riemann-surfaces}, in order
to obtain a unique geometrical decomposition of $\R_{w(z)}$ into 
\emph{maximal logarithmic towers} and a \emph{soul},
closely related to its Speiser graph $\mathfrak{S}_{\tt q}$.
The notation differs slightly from that of \S\ref{sec:Speiser-Riemann-surfaces},
where we were interested in obtaining a decomposition of $\R_{w(z)}$ in
maximal domains of single--valuedness of $w^{-1}(z)$. 

\noindent
As a first step, recall notation previous to Remark \ref{rem:val-asint-con-as};
$\delta = {\tt p}+{\tt r}$ indicates the number\footnote{
Both, ${\tt p}$ and ${\tt r}$, can be infinite or zero.
} 
of singularities of $w^{-1}(z)$,
\emph{i.e.}\ the number of branch points of $\R_{w(z)}$.

\noindent
If ${\tt p}=0$, then there are no logarithmic singularities; thus $w(z)$ only has $\tt r$
algebraic singularities. 

\noindent
Furthermore, since logarithmic singularities occur only on the ideal boundary of $\Omega_z$,
it follows that $\Omega_z = \CW_z$ does not support meromorphic functions $w(z)$ with logarithmic singularities.
Thus the case ${\tt p}\neq 0$, does not occur for $\Omega_z = \CW_z$.

\subsection{The pieces: flat $\tt p$--gons, maximal logarithmic towers, the soul}
The following definitions, describing 
flat $\tt p$--gon, logarithmic tower, soul, 
rational block, exponential and $h$--tangent blocks,
are the elementary pieces of our decomposition 
of the Riemann surface $\R_{w(z)}$.

\begin{definition}
\label{def:poligono}
Let ${\tt p}\geq 2$, a \emph{flat $\tt p$--gon  
$\big(\overline{\mathcal{P}}, \, \mathpzc{w} (\zeta) \big)$} 
is a pair consisting of a Riemann surface with boundary $\overline{\mathcal{P}}$ 
furnished with a meromorphic function 
$\mathpzc{w} (\zeta): \mathcal{P} \longrightarrow \CW_w$,
with 
$0 \leq \rho \leq \infty$ critical points,
satisfying the following.

\begin{enumerate}[label=\roman*),leftmargin=*] 
\item
The interior $\mathcal{P}$ of $\overline{\mathcal{P}}$ 
is an open Jordan domain, with oriented
boundary $\partial \overline{\mathcal{P}}$ homeomorphic to $\mathbb{S}^1$.

\item 
$\mathcal{P}$ is on the left side of the boundary.

\item
The boundary $\partial \overline{\mathcal{P}}$ has 

\centerline{
vertices $\{  \zeta _\beta 
\ \vert \ \beta \in 1, \ldots , {\tt p} \}$
and sides
$\{ S_{\beta} = 
\overline{\zeta_\beta \zeta_{\beta+1}}  
\ \vert \ \beta \in 1, \ldots , {\tt p} \}$, }

\noindent
cyclically enumerated.

\item 
The directional derivative of $ \mathpzc{w}(\zeta)$
in the interior of the sides $\{ S_{\beta} \}$
is non zero.

\item 
The image $ \mathpzc{w}(S_{\beta})$
is a geodesic segment in $(\CW_w, \del{}{w})$
with extreme points 
$$ 
\mathpzc{w}(\zeta_\beta) = {\tt a}_\beta, \
\mathpzc{w}(\zeta_{\beta+1}) = {\tt a}_{\beta+1}, 
\ \ \ 
{\tt a}_{\beta}\neq {\tt a}_{\beta+1}.
$$
\end{enumerate}
\end{definition}

Consider the sides
$\{ S_\beta \}_{\beta =1}^{\tt p}$ 
of a flat ${\tt p}$--gon 
$\overline{\mathcal{P}}$, 
with extreme points $\{ \zeta_\beta, \, \zeta_{\beta +1} \} \subset 
S_\beta$. 
By identifying $\sim$
all the vertices  $\{ \zeta_\beta\} $
to one point, say $\infty_\sim$, we obtain
a Riemann surface $\overline{\mathcal{P}}/\hspace{-.1cm}\sim$, which is 
homeomorphic to a sphere $\esf$ with $\tt p$ open discs
$U_\beta$ removed; 
the closure of the disks
share only one common point. 
This common point is also denoted as $\infty_\sim$ 
in $\overline{\mathcal{P}}/\hspace{-.1cm}\sim$.

Note that a flat $\rho$--gon contains $\rho$ critical points; 
the case when $\rho < \infty$ will be relevant. 
To see this, consider the following equivalence relation.

\begin{definition}
\label{def:holo-right-left-equiv}
Two meromorphic functions  
\emph{$w_\ell(z): V_\ell \longrightarrow\CW_w$, 
$\ell=1,2$,
are right--left equivalent} 
when there exist biholomorphisms 
$\phi:V_1 \longrightarrow V_2$, and
$\varphi: w_2(V_2) \longrightarrow w_1 (V_1)$ such that
\begin{equation}
\label{eq:right-equiv}
 w_1 = \varphi \circ w_2 \circ \phi .
\end{equation}
\end{definition}

\begin{lemma}
\label{lem:polygon-rational-block}
Let $\big(\overline{\mathcal{P}}, \, \mathpzc{w} (\zeta) \big)$ 
be a flat $\tt p$--gon
with $\rho<\infty$ critical points. 
Then $\mathpzc{w}(\zeta):
\mathcal{P}\longrightarrow\CW_w$ is 
right--left equivalent to
a rational function 

\centerline{$
R(z): 
\mathscr{P} \subset \CW_z \longrightarrow\CW_w$,}

\noindent 
where $\mathscr{P}$ is 
an appropriate Jordan domain.
\end{lemma}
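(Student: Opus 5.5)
The plan is to build a closed Riemann surface by doubling $\overline{\mathcal{P}}$ across its sides, extending $\mathpzc{w}$ to a meromorphic function of finite degree on it, and then uniformizing to $\CW_z$.

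\textbf{Step 1: Schwarz reflection across a side.} By Definition \ref{def:poligono}(v), each side $S_\beta$ is mapped by $\mathpzc{w}$ onto a geodesic segment of $(\CW_w,\del{}{w})$, that is, an arc of a circle through $\infty$. Let $\tau_\beta\in Aut(\CW_w)$ denote the anticonformal reflection in that circle. By (iv), $\mathpzc{w}$ is a local diffeomorphism on the interior of $S_\beta$. Hence the interior of $S_\beta$ is a real analytic arc in the conformal structure pulled back by $\mathpzc{w}$, and $\mathpzc{w}$ extends across it by Schwarz reflection: on a mirror copy $\overline{\mathcal{P}}^{*}$ one sets $\mathpzc{w}^*=\tau_\beta\circ\mathpzc{w}$. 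In the language of Corollary \ref{cor:pegado-isometrico}, this is the isometric glueing of $(\mathcal{P},\mathpzc{w}^*\del{}{w})$ with its mirror image along the geodesic boundary $S_\beta$.

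\textbf{Step 2: the double and the vertices.} I would perform this glueing simultaneously along all $\tt p$ sides. For this to give a single-valued function on the double $D=\overline{\mathcal{P}}\cup_{\partial}\overline{\mathcal{P}}^{*}$, the mirror copy must carry one function. I would therefore not reflect side by side. Instead I would use only the local structure: near each interior point of $S_\beta$, the function $\mathpzc{w}$ is a chart to a half-disk bounded by the circle.

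The cleanest approach is to avoid global reflection altogether. Since $\partial\overline{\mathcal{P}}$ is a Jordan curve, $\mathcal{P}$ is simply connected. Uniformize $\mathcal{P}$ by a Riemann map $\phi:\mathscr{P}'\to\mathcal{P}$ from a Jordan domain, and extend $\phi$ homeomorphically to the closure by Carath\'eodory. The function $R=\mathpzc{w}\circ\phi$ is then meromorphic on the Jordan domain $\mathscr{P}'$ with $\rho<\infty$ critical points. It extends continuously to the boundary, and the boundary is made of $\tt p$ arcs, each mapped locally injectively onto circular arcs.

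\textbf{Step 3: finite degree and rationality.} I expect this step to be the main obstacle. The claim to prove is that $R$ is the restriction of a rational function after a further conformal change of domain. My approach is to show that the Riemann surface $\R$ of the pair $(\mathcal{P},\mathpzc{w})$, viewed over $\CW_w$, has finitely many sheets. The argument runs as follows. There are $\rho<\infty$ critical points. The boundary is compact, and $\mathpzc{w}$ is a local homeomorphism on the sides except at the $\tt p$ vertices. Hence $\mathpzc{w}$ is a branched covering of finite degree onto its image off the finite set of critical values, the images of the sides, and the vertex images ${\tt a}_\beta$. Properness near the boundary comes from continuity up to $\partial\overline{\mathcal{P}}$.

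Next, glue to $\overline{\mathcal{P}}$, along each side, the complementary region swept by the corresponding circle. By Corollary \ref{cor:pegado-isometrico} this yields a flat surface whose structure comes from $\pi_2^*\del{}{w}$. Capping off in this way produces a compact simply connected Riemann surface $\widehat{\mathcal{P}}$ carrying a meromorphic extension $\widehat{\mathpzc{w}}$ with finitely many branch points, namely the $\rho$ critical points and the $\tt p$ vertices, where the local angle is finite.

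A compact simply connected Riemann surface is $\CW_z$ by uniformization. So $\widehat{\mathpzc{w}}$ becomes a meromorphic function $R(z)$ on $\CW_z$, and such a function is rational. The image of $\mathcal{P}$ is a Jordan domain $\mathscr{P}\subset\CW_z$, and the uniformizing map provides the biholomorphism $\phi$ of Definition \ref{def:holo-right-left-equiv}, with $\varphi=Id$.

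The delicate point is that the vertices must give only finite-order branching, that is, a finite interior angle at each $\zeta_\beta$. I would deduce this from compactness of $\overline{\mathcal{P}}$ together with the finiteness of $\rho$. If the angle at a vertex were infinite, there would be an infinitely ramified point, and no compact surface could carry it.
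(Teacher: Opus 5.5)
You follow the same route as the paper: treat $(\overline{\mathcal{P}},\mathpzc{w})$ as a bordered surface spread over $\CW_w$, close it up to a compact surface with finitely many finitely ramified branch points, uniformize, and read off a rational function. The paper's proof is no more explicit than yours at the decisive step; it simply asserts that $\varphi(\mathcal{P})$ ``can be extended'' to a surface without boundary. However, the one concrete capping recipe you give does not work. Suppose that along each side $S_\beta$ you glue the disc $H_\beta\subset\CW_w$ bounded by the circle $C_\beta\supset\mathpzc{w}(S_\beta)$, on the side opposite to $\mathcal{P}$. Then only the arc $\mathpzc{w}(S_\beta)$ of $\partial H_\beta$ is used, and the complementary arc $C_\beta\setminus\mathpzc{w}(S_\beta)$ remains free boundary. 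At a vertex ${\tt a}_\beta$, the free arcs of $H_{\beta-1}$ and $H_\beta$ leave ${\tt a}_\beta$ along $C_{\beta-1}$ and $C_\beta$, which in general are different circles. So they cannot be identified isometrically, Corollary \ref{cor:pegado-isometrico} gives nothing, and the glued object still has boundary. Hence the existence of $\widehat{\mathcal{P}}$, which is the whole content of the lemma, is not established.

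What you need is a single closed disc $K$ with the following properties:
\begin{itemize}
\item it is branched over $\CW_w$;
\item it is bounded by the boundary curve of $\mathcal{P}$ run backwards, and lies to the right of every $\mathpzc{w}(S_\beta)$;
\item its corner angle $\phi_\beta$ at ${\tt a}_\beta$ makes $\theta_\beta+\phi_\beta$ a positive multiple of $2\pi$, where $\theta_\beta$ is the angle of $\mathcal{P}$ at $\zeta_\beta$.
\end{itemize}

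One way to build it: choose $w_0\in\CC_w$ off all the circles $C_\beta$. Join $w_0$ to each ${\tt a}_\beta$ by a segment $\lambda_\beta$ (a suitable ray if ${\tt a}_\beta=\infty$, measuring angles in the chart $1/w$). Let $T_\beta$ be the Jordan region to the left of the closed curve that runs from ${\tt a}_{\beta+1}$ back along $\mathpzc{w}(S_\beta)$ to ${\tt a}_\beta$, then along $\lambda_\beta$ to $w_0$, then along $\lambda_{\beta+1}$ to ${\tt a}_{\beta+1}$. Glue $T_\beta$ to $S_\beta$, and glue $T_\beta$ to $T_{\beta+1}$ along $\lambda_{\beta+1}$; the two regions lie on opposite sides of $\lambda_{\beta+1}$, so this is legitimate. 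The corner angles at $w_0$ telescope to a positive multiple of $2\pi$. Likewise $\theta_\beta$ plus the corners of $T_{\beta-1}$ and $T_\beta$ at ${\tt a}_\beta$ telescope to a positive multiple of $2\pi$. You obtain a sphere branched over $\CW_w$ at the $\rho$ critical points, at $w_0$ and at the vertices, and your uniformization step then applies.

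Your list of branch points must include such points created in the cap, and the vertices genuinely branch. For instance, $\mathcal{P}=\CW_w\setminus\overline{{\tt a}_1{\tt a}_2}$ has $\rho=0$, yet any closed extension has degree at least $2$ and both vertices become critical points; so your count is better than the paper's ``at most $\rho$''.

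Finally, the point you call delicate is not the real difficulty, and your argument for it is circular, since it invokes the compact surface that has not yet been built. Finiteness of $\theta_\beta$ follows directly from continuity of $\mathpzc{w}$ at $\zeta_\beta$ together with $\rho<\infty$. Preimages of any fixed value $\neq{\tt a}_\beta$ cannot accumulate at $\zeta_\beta$, so near the vertex $\mathcal{P}$ is a finite chain of sectors. A finite angle does not by itself give a finite branch point; the $2\pi$-integrality has to be arranged by the cap.
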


\begin{proof}
Recall that the sides $\overline{\zeta_\beta \zeta_{\beta+1}}$ of $\mathcal{P}$ 
are straight line segments.

There exists an embedding $\varphi(z):\mathcal{P}\hookrightarrow \CW_z \times \CW_w$
such that $\mathpzc{w}(z) = \pi_2\circ\varphi(z)$, according to
Diagram \eqref{diagramaRX}.
Moreover, the critical points of $\mathpzc{w}(z)$ correspond to finitely ramified branch points
of the Riemann surface with boundary $\varphi(\mathcal{P})$.
Because of (v) of Definition \ref{def:poligono}, it is clear that $\varphi( \overline{\zeta_\beta \zeta_{\beta+1}} ) = \overline{{\tt a}_\beta {\tt a}_{\beta+1}}$.
Thus the Riemann surface with boundary $\varphi(\mathcal{P})$ can be extended, 
in $\CW_z\times\CW_w$, to a Riemann surface $\widehat{\mathcal{P}}$ 
without boundary such that $\pi_1(\widehat{\mathcal{P}})= \CW_z$
which has at most $\rho$
finitely ramified branch points
and no infinitely ramified branch points.
Since $\rho<\infty$, 
this proves the existence of a rational function $R(z)$ on $\CW_z$ such that 
on $\mathscr{P}= \pi_1(\mathcal{P})$,
and $R(z)$ is right--left equivalent to $\mathpzc{w}(z)$ 
on $\mathcal{P}$.
\end{proof}

\begin{definition}
\label{def:rational-block}
A function
$\mathpzc{w}(\zeta):\mathcal{P}\longrightarrow\CW_w$ 
arising from a flat $\tt p$--gon, with $\rho<\infty$ critical points,
is a \emph{rational block $R(z): \mathscr{P}\subset\CW_z \longrightarrow\CW_w$}.
\end{definition}

\begin{lemma}[Surgery of a flat $\tt p$--gon to 
pairs of logarithmic singularities]
\label{lem:extention-to-petal}
Let $\big(\overline{\mathcal{P}}, \, \mathpzc{w} (\zeta) \big)$ 
be a flat $\tt p$--gon, and
let $S_\beta$ be a side on the boundary of $\overline{\mathcal{P}}$,
where its extreme points 
has values 
\begin{enumerate}[label=\roman*),leftmargin=*] 

\item
$\mathpzc{w}( \zeta_\beta )= {\tt a}_\beta \in \CC_w, \, 
\mathpzc{w}(\zeta_{\beta +1})= {\tt a}_{\beta +1}= \infty \in \CW_w$, or

\item
$\mathpzc{w}( \zeta_\beta)={\tt a}_\beta, \, 
\mathpzc{w}(\zeta_{\beta +1} )={\tt a}_{\beta +1} \in \CC_w$.
\end{enumerate}

\noindent 
Then, there exist an extension of
$\mathpzc{w}(\zeta)$, in $\mathcal{P}$, to the interior of 
the corresponding open disk $U_\beta$, 
the $\beta$--th component of the boundary
in $\mathcal{P}/\hspace{-.1cm}\sim$,
such that $\mathpzc{w}(\zeta) \vert_{U_\beta}$ is 
right--left equivalent to 

\begin{enumerate}[label=\roman*),leftmargin=*] 
\item
$\exp(\zeta)$ or

\item
$\tanh(\zeta)$, 

\end{enumerate}
\noindent   
respectively.
\end{lemma}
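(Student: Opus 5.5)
The plan is to build the extension explicitly as a Riemann surface over $\CW_w$. We then glue it to $\mathcal{P}$ along the side $S_\beta$ using Corollary \ref{cor:pegado-isometrico}, and finally identify the glued piece with the model block through the vector field $X_{w(z)}=\frac{1}{w'(z)}\del{}{z}$.

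\textbf{Normalization.} By condition (v) of Definition \ref{def:poligono}, the side $S_\beta$ is mapped by $\mathpzc{w}$ isometrically onto a geodesic segment of $(\CW_w,\del{}{w})$. Its endpoints are ${\tt a}_\beta\neq{\tt a}_{\beta+1}$. We apply an affine map $w\mapsto aw+b$, which belongs to $Aut(\CW_w)$, is allowed by Definition \ref{def:holo-right-left-equiv}, and preserves the flat structure up to scale.
\begin{itemize}
\item In case (i) we normalize ${\tt a}_\beta=0$ and ${\tt a}_{\beta+1}=\infty$. The side then becomes one of the rays ${}_{\pm}\overline{0\,\infty}$ of Definition \ref{def:geodesic-segments}.
\item In case (ii) we normalize ${\tt a}_\beta=-1$ and ${\tt a}_{\beta+1}=1$. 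The side then becomes either the segment $[-1,1]$ or the arc $\overline{-1\,\infty\,1}$.
\end{itemize}

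\textbf{The model blocks.} Next we read off the models on $\overline{\HH}$.
\begin{itemize}
\item For $\exp(z)$, the boundary $\RR$ is sent onto the ray $(0,\infty)$. Over $0$ and over $\infty$ there is exactly one logarithmic singularity, attained as $\Re{z}\to-\infty$ and as $\Re{z}\to+\infty$ respectively. The image of $\HH$ is the logarithmic Riemann surface of $\log w$ cut along the ray: infinitely many copies of $\CW_w\backslash[0,\infty]$, glued cyclically, with one free edge left.
\item For $\tanh(z)$, the boundary $\RR$ goes onto $(-1,1)$, with logarithmic singularities over $\pm1$ at $\Re{z}\to\pm\infty$. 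Its image is the analogous infinite staircase of copies of $\CW_w\backslash\overline{-1\,1}$, because $\tanh=(e^{2z}-1)/(e^{2z}+1)$ is obtained from $\exp$ by post-composing with a Möbius map sending $0,\infty$ to $-1,1$.
\end{itemize}
If $S_\beta$ is the complementary arc (for example $\overline{-1\,\infty\,1}$), we precompose with $z\mapsto z+i\pi/2$ or use $\coth$. These are again right--left equivalent, so it suffices to take the model whose boundary image is exactly $\mathpzc{w}(S_\beta)$.

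\textbf{Gluing.} We take the copy $U_\beta$ to be the model surface $\R_{\exp}|_{\overline\HH}$, respectively $\R_{\tanh}|_{\overline\HH}$. Its boundary is a trajectory of $\Re{\e^{i\theta}\pi_2^*\del{}{w}}$ of the same infinite length and with the same parametrization by $w$ as $S_\beta$. Orientations are chosen so that $\mathcal{P}$ lies on the left and $U_\beta$ on the right. Corollary \ref{cor:pegado-isometrico} then gives a flat surface $\mathcal{P}\cup_{S_\beta}U_\beta$ carrying a vector field $Z$ that extends both. The function $w=\pi_2$ is globally well defined, since both pieces are developed by $\pi_2$ and agree on $S_\beta$. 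On $U_\beta$ it is by construction the model function, up to the biholomorphism $\phi$ between $U_\beta$ and $\HH$. This gives the right--left equivalence. The endpoints $\zeta_\beta,\zeta_{\beta+1}$, which are identified to $\infty_\sim$, become the two logarithmic singularities of the tower. This is consistent with the quotient $\overline{\mathcal{P}}/\hspace{-.1cm}\sim$ picture, in which $U_\beta$ is a disc touching $\infty_\sim$.

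\textbf{Main obstacle.} I expect the hardest step to be the rigorous identification of the glued half-tower with $\overline{\HH}$ together with the model function. This requires the model's boundary image to coincide exactly with $\mathpzc{w}(S_\beta)$ as a geodesic segment, with the correct choice of the two geodesics and the correct side. It also requires the conformal structure of $U_\beta$ to be the one induced by $\pi_2$. I would handle this by using the dictionary \cite{AlvarezMucino3} prop.\,2.5: the vector field $X_{\exp}=e^{-z}\del{}{z}$, respectively $X_{\tanh}=\cosh^2(z)\del{}{z}$, restricted to $\HH$ is characterized up to biholomorphism by its flat surface. It then suffices to check that the flat surfaces are isometric, which holds because both are assembled from the same sheets $\mathfrak{L}_\Upxi$ in the same combinatorial pattern, as in Proposition \ref{prop:Rw-is-a-union-of-sheets}.
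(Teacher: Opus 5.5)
Your proposal is correct and follows essentially the paper's route. You normalize $S_\beta$ by an affine map and attach the model half-plane carrying $\exp$ or $\tanh$ along $S_\beta$; the paper takes $U_\beta\cong\HH_-$, which is your ``correct side'', and phrases the gluing as extending the complex structure $J$ of $\mathcal{P}$ rather than invoking Corollary~\ref{cor:pegado-isometrico}. Your ``main obstacle'' is automatic, since $U_\beta$ is the model block by construction, and your treatment of the arc $\overline{-1\,\infty\,1}$ via $\coth=1/\tanh$ covers a case that the paper's proof, which only treats the straight segment $[-1,1]$, leaves implicit.
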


Figure \ref{fig:hexagono3} illustrates the Lemma.

\begin{figure}[h!tbp]
\begin{center} 
\includegraphics[width=0.8\textwidth]{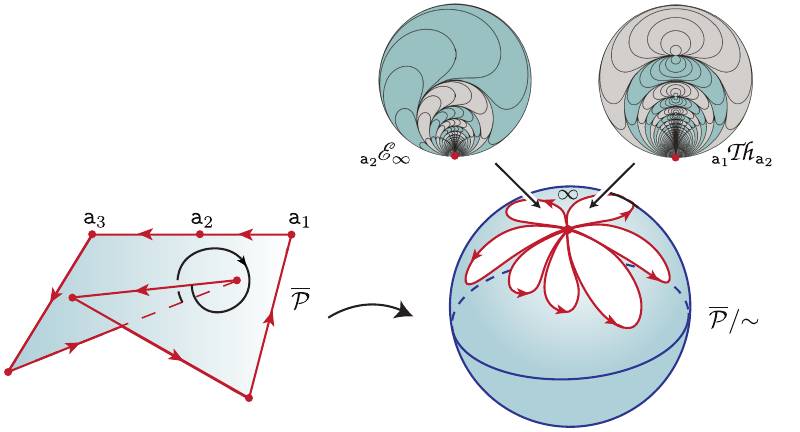}
\caption{
A flat ${\tt p}$--gon $\big(\overline{\mathcal{P}}, \, \mathpzc{w} (\zeta) \big)$,
whose vertices have been identified to one point $\infty_\sim$, 
obtaining a Riemann surface 
$\overline{\mathcal{P}}/\hspace{-.1cm}\sim$ homeomorphic to a sphere
$\esf$ with $\tt p$ open disks removed. 
As second step, we glue an
$h$--tangent block $\htan{{\tt a}_1}{{\tt a}_2}$
or
an exponential block $\ent{{\tt a}_2}{\infty}$
on each boundary component.
We sketch a 7--gon with a vertex 
${\tt a}_5$ of cone angle $> 2\pi$.
Note that ${\tt a}_3=\infty\in\CW_w$.
}
\label{fig:hexagono3}
\end{center}
\end{figure}

\begin{proof}
By (v) in Definition \ref{def:poligono}, we have that 
${\tt a}_\beta \neq  {\tt a}_{\beta+1}$. 

\noindent 
When ${\tt a}_\beta, \, {\tt a}_{\beta+1} \in \CC_w$, 
then there is an affine biholomorphism $\varphi_\beta$
that takes the oriented segment $[-1,1]$ to 
the oriented side 
$S_\beta = \overline{\zeta_\beta \zeta_{\beta+1}}$, 
with
$\varphi_\beta^{-1}({\tt a}_\beta) = -1$ and  
$\varphi_\beta^{-1}({\tt a}_{\beta+1}) = 1$.
The required analytic extension of $\mathpzc{w}(\zeta)$ is
\begin{equation*}
\label{eq:extension-a-un-hueco-de-disco}
\begin{array}{rcl}
\mathpzc{w}(\zeta): 
\big( (\overline{\mathcal{P}} /\hspace{-.1cm}\sim ) \cup 
\overline{U}_\beta 
\big)
\backslash \{ \infty_\sim \} 
\subset \esf 
& \longrightarrow   &  \CW_w
\\
\zeta & \longmapsto &
\hspace{-5pt}
\left\{
\hspace{-5pt}
\begin{array}{ll}
\mathpzc{w}(\zeta) &  \zeta \in \overline{\mathcal{P}} \backslash \{ \infty_\sim \}
\\
& \vspace{-.3cm}
\\
(\varphi_\beta \circ \tanh)(\zeta)
& 
\zeta \in 
\overline{U}_\beta 
\backslash \{ \infty_\sim \}.
\end{array}
\right. 
\end{array}
\end{equation*}

\noindent 
The argument is as follows. 
Since $\mathcal{P}$ is a Riemann surface, let $J$
denote its complex structure. 
By using  the affine map $\varphi_\beta$, 
it is enough to perform the extension of 
$J$ to a complex structure $J_\beta$ on 
$ \overline{U}_\beta 
\backslash \{ \infty_\sim \}$. 
We can recognize that $( U_\beta, J_\beta)$ is 
biholomorphic to the lower half plane 
$\HH_- = \{ \Im{\zeta} < 0\}$. 
Hence the function $\tanh(\zeta)$ makes sense.

When $\mathpzc{w}({\tt a}_\beta) \in \CC_w, \, \mathpzc{w}({\tt a}_{\beta+1})=\infty \in \CW_w$, 
then there is a biholomorphism $\varphi_\beta$
taking $[0,\, +\infty ]$ to 
the oriented side 
$S_\beta = \overline{{\tt a}_\beta {\tt a}_{\beta+1}}$.
By an analogous argument, 
the required extension of 
$\mathpzc{w}(\zeta)$ 
is $\big(\varphi_\beta \circ \exp\big)(\zeta)$. 
\end{proof}

In view of Lemma \ref{lem:extention-to-petal}, the following is natural.
\begin{definition}[Exponential and $h$--tangent blocks]
\label{def:piezas-elementales}
Consider $\dominio{U}{{\tt a}_\alpha}{{\tt a}_\beta} \subset \CW_z$
a Jordan domain, 
and ${\tt a}_\alpha \neq  {\tt a}_\beta \in \CW_w$.
Let 

\centerline{
$w(z): \dominio{U}{{\tt a}_\alpha}{{\tt a}_\beta} \longrightarrow \CW_w, $
}

\noindent  
be a meromorphic function
with an essential singularity at 
$\mathfrak{e}=\infty\in \partial ( \dominio{U}{{\tt a}_\alpha}{{\tt a}_\beta} )$ 
and exactly two distinct 
asymptotic values ${\tt a}_\alpha, {\tt a}_\beta \in\CW_w$.

\noindent
1.\ 
An \emph{exponential block $\ent{\infty}{{\tt a}_\beta}$ } 
is a function  
$w(z)$ on $\dominio{U}{\infty }{{\tt a}_\beta}$
right--left equivalent to 
$\e^z: \overline{\HH} \longrightarrow \CW$, 
\emph{i.e.}\ there are
biholomorphisms, as in Equation \eqref{eq:right-equiv}, 

\begin{enumerate}[label=\roman*),leftmargin=*]
\item 
$\phi(z):  \dominio{\overline{U}}{\infty }{{\tt a}_\beta}   \longrightarrow \overline{ \HH }$
taking $\mathfrak{e}$ to $\infty$, and

\item 
$\varphi(w):\CW_{w} \longrightarrow \CW_w$ taking 
$0$ to ${\tt a}_\beta$ and 
$\infty$ to ${\tt a}_\alpha $.
\end{enumerate}

\noindent 
The \emph{exponential block $\ent{{\tt a}_\alpha}{\infty}$ } 
is defined in an analogous way.

\noindent
2.\ An \emph{$h$--tangent block 
$\htan{{\tt a}_\alpha }{{\tt a}_\beta }$} is 
a function $w(z)$ on $\dominio{U}{{\tt a}_\alpha }{{\tt a}_\beta }$
right--left equivalent to 
$\tanh (z) : \overline{\HH} \longrightarrow \CW$, 
\emph{i.e.}\ there are
biholomorphisms, as in Equation \eqref{eq:right-equiv},

\begin{enumerate}[label=\roman*),leftmargin=*]
\item 
$\phi(z):  \dominio{\overline{U}}{{\tt a}_\alpha }{{\tt a}_\beta }   \longrightarrow \overline{ \HH }$
taking $\mathfrak{e}$ to $\infty$, and

\item 
$\varphi(w):\CW_{w} \longrightarrow \CW_w$ taking 
$-1$ to ${\tt a}_\alpha $ and 
$1$ to ${\tt a}_\beta  $.
\end{enumerate}

\end{definition}

The elementary blocks 
$\ent{{\tt a}_\alpha }{{\tt a}_\beta }$ and  
$\htan{{\tt a}_\alpha }{{\tt a}_\beta }$
can be easily understood with 
the following commutative diagram

\begin{center}
\begin{picture}(180,55)(-30, 0)

\put(-181,20){\vbox{\begin{equation}\label{dia:piezas-elementales}\end{equation}}}

\put(-15,42){$\dominio{\overline{U}}{{\tt a}_\alpha }{{\tt a}_\beta }\subset\CW_z$}

\put(120,42){\ $\CW_w$}

\put(38,43){\vector(1,0){77}}
\put(60,50){$w(z)$}

\put(10,37){\vector(0,-1){23}}
\put(0,25){$\phi$}

\put(128,14){\vector(0,1){23}}
\put(130,25){$\varphi$}

\put(6,0){$\overline{\HH}$}

\put(120,0){\ $\CW_w$\,.}

\put(30,3){\vector(1,0){85}}
\put(40, 10){$\e^\zeta$ \ or \ $\tanh(\zeta)$}

\end{picture}
\end{center}

\begin{remark}[Using vector fields to distinguishing between topologically equivalent functions]
\label{rem:vec-fields-distinguish-elementary-blocks}
Note that the elementary blocks 
$\ent{{\tt a}_\alpha }{{\tt a}_\beta }$ and  
$\htan{{\tt a}_\alpha }{{\tt a}_\beta }$
are all
right--left equivalent functions;
in particular topologically indistinguishable
(their underlying tessellations, and/or pre Speiser graphs, 
are equivalent under homeomorphisms). 
However, as meromorphic functions, 
they are very different: the exponential block 
$\ent{{\tt a}_\alpha }{{\tt a}_\beta }$ has 
one finite and one infinite asymptotic values defining it, 
but the $h$--tangent block  $\htan{{\tt a}_\alpha }{{\tt a}_\beta }$ 
has two finite asymptotic values defining 
it, and is strictly meromorphic in the interior of its domain.
The use of the associated canonical 
vector fields\footnote{We will drop the subindices 
when those are not essential, 
thus $X_{\ent{}{}}(z)$, $X_{\htan{}{}}(z)$.
}

\centerline{
$X_{\ent{{\tt a}_\alpha }{{\tt a}_\beta }} (z) \doteq \frac{1}{ \ent{{\tt a}_\alpha }{{\tt a}_\beta } ^{\prime} (z) }
\del{}{z}$ 
\quad and \quad
$X_{\htan{{\tt a}_\alpha}{{\tt a}_\beta }  } (z) \doteq \frac{1}{ \htan{{\tt a}_\alpha }{{\tt a}_\beta } ^{\prime} (z) }
\del{}{z}$ }

\noindent
see \cite{AlvarezMucino3}\,prop.\,2.5,
allows us to very easily distinguish between them
by considering their phase portraits. 
See Figure \ref{fig:piezas-elementales}, and
Example \ref{example:N-exp-tanh}.c for further details.
\end{remark}

The next definition is illustrated in
Figure \ref{fig:tower-construction}.

\begin{definition}
\label{def:logarithmic-tower-for-Riemann-surface}
Let $\R_{w(z)}$ be the Speiser Riemann surface associated to a Speiser function $w(z)$
provided with a cyclic order $\mathcal{W}_{\tt q}$ for its $\tt q$ singular values.
By Remark \ref{rem:gamma-poligonal-y-grafica},

\centerline{$\gamma= 
\overline{{\tt a}_{\alpha} {\tt a}_{\beta}} \cup
\overline{{\tt a}_{\beta} {\tt a}_{\alpha}}
\subset \CW_w
$ }

\noindent 
is a Jordan path,
 the union of two geodesic polygonals, 
as in Definition \ref{def:orden-ciclico-y-losetas}, 
that runs through $\mathcal{W}_{\tt q}$. 

\begin{enumerate}[label=\arabic*),leftmargin=*] 

\item The
\emph{closed hemispheres $\mathfrak{H}^\pm$} 
associated to $\gamma$ satisfy 

\centerline{$
\mathfrak{H}^+ \cap \mathfrak{H}^- = \gamma$, \ and\ \ 
$\CW_w = \mathfrak{H}^+ \cup \mathfrak{H}^-$,
}

\noindent
with $ \mathfrak{H}^+ $  on the left hand side of $\gamma$.
Let $\Upxi=\{
\overline{ {\tt w}_{{\tt j}( \msigma )} 
{\tt w}_{{\tt j}( \mrho )} } 
\ \vert \
{\tt j}( \msigma ) \neq {\tt j}( \mrho )
\} 
\subsetneqq \gamma$
be a non--empty collection of polygonal branch cuts.
A \emph{positive half--sheet $\mathfrak{L}^+_\Upxi$} is 

\centerline{$\mathfrak{H}^+ \backslash \Upxi$.}

\noindent
Analogously, a \emph{negative half--sheet $\mathfrak{L}^-_\Upxi$} is 
$\mathfrak{H}^- \backslash \Upxi$.

\item
Let $\overline{{\tt a}_{\alpha} {\tt a}_{\beta}}$
be a polygonal branch cut,
a \emph{logarithmic tower\footnote{
See also 
\cite{AlvarezMucino1}\,p.\,152 and
\cite{AlvarezMucino3}\,p.\,22, where the term `semi--infinite helicoid' is used. }
$\mathcal{T}( {\tt a}_{\alpha}, {\tt a}_{\beta} )$
of $\R_{w(z)}$ }
is a Riemann surface 
\begin{enumerate}[label=\roman*),leftmargin=*] 

\item
associated to an exponential or an $h$--tangent block on 
$\dominio{U}{{\tt a}_\alpha}{{\tt a}_\beta} $ of $w(z)$,

\item 
whose boundary is an element of 
$\pi_2^{-1} (\overline{{\tt a}_{\alpha} {\tt a}_{\beta}}) $.
\end{enumerate}

\item
A $\mathcal{T} ( {\tt a}_{\alpha}, {\tt a}_{\beta} )$ is a 
\emph{maximal logarithmic tower in $\R_{w(z)}$} if 
given any logarithmic tower 
$\widehat{\mathcal{T}} ( {\tt a}_{\alpha}, {\tt a}_{\beta} )$ such that 
$\mathcal{T} ({\tt a}_{\alpha} , {\tt a}_{\beta} )
\subset 
\widehat{\mathcal{T}} ( {\tt a}_{\alpha} ,{\tt a}_{\beta} )$,
then 
$\mathcal{T} ( {\tt a}_{\alpha},{\tt a}_{\beta} ) 
=
\widehat{\mathcal{T}} ({\tt a}_{\alpha} ,{\tt a}_{\beta} )$.
\end{enumerate}
\end{definition}

\begin{figure}[htbp!]
\begin{center}
\includegraphics[width=0.83\textwidth]{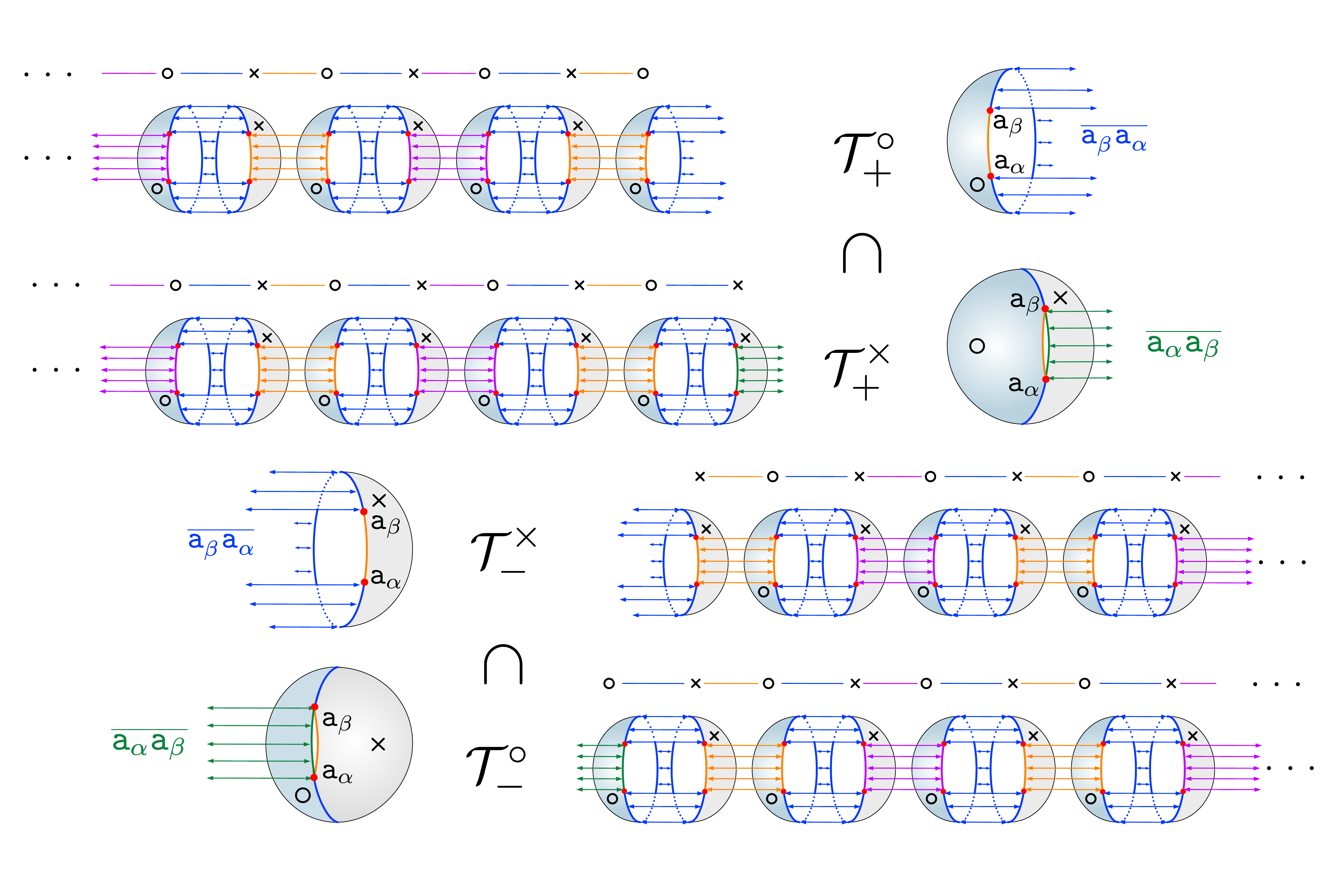}
\caption{
Accurate 
construction of 
logarithmic towers by surgery,
four abstract and qualitatively distinct
cases appear.
Consider $\gamma = 
\overline{{\tt a}_\alpha {\tt a}_\beta} \cup
\overline{{\tt a}_\beta {\tt a}_\alpha}$, 
the blue (resp. gray) hemisphere
is on the left (resp. right)
side of $\overline{{\tt a}_\alpha {\tt a}_\beta}$.
The gluing  is denoted by the same horizontal colored lines.
The towers $\mathcal{T}^\circ_-$ and $\mathcal{T}^\times_+$ have boundary 
$\overline{{\tt a}_{\alpha} {\tt a}_{\beta}}$ (in green), 
while the towers $\mathcal{T}^\times_-$ and $\mathcal{T}^\circ_+$ have boundary 
$\overline{{\tt a}_{\beta} {\tt a}_{\alpha}} \doteq \gamma \backslash\overline{{\tt a}_{\alpha} {\tt a}_{\beta}}$ (in blue).
}
\label{fig:tower-construction}
\end{center}
\end{figure}

\begin{remark}[Construction and notation for logarithmic towers]
1.\ 
A logarithmic tower $\mathcal{T}({\tt a}_{\alpha}, {\tt a}_{\beta})$ is determined by
a pair $({\tt a}_{\alpha}, {\tt a}_{\beta})$ 
of asymptotic values of $w(z)$,
however, not all pairs of asymptotic values of $w(z)$ determine logarithmic towers.
However,
four abstract and qualitatively distinct cases appear:
\begin{equation}
\label{fig:4-torres}
\begin{array}{l}
\mathcal{T}^{\circ}_{+} ( {\tt a}_{\alpha}, {\tt a}_{\beta} )=
\left[ 
(\mathfrak{H}^+ \backslash  \overline{{\tt a}_{\alpha} {\tt a}_{\beta}} )
\ \cup \left(
\bigcup\limits_{\vartheta=2}^{\infty} \big( 
\mathfrak{H}^- \backslash  \overline{{\tt a}_{\alpha} {\tt a}_{\beta}} 
\cup
\mathfrak{H}^+ \backslash  \overline{{\tt a}_{\alpha} {\tt a}_{\beta}} 
\big)_{\vartheta} 
\right)
\right] / \sim ,
\\[10pt]
\mathcal{T}^\times_{+} ( {\tt a}_{\alpha}, {\tt a}_{\beta} )= 
\left[
\bigcup\limits_{\vartheta=1}^{\infty} \big( 
\mathfrak{H}^- \backslash  \overline{{\tt a}_{\alpha} {\tt a}_{\beta}} 
\cup
\mathfrak{H}^+ \backslash  \overline{{\tt a}_{\alpha} {\tt a}_{\beta}} 
\big)_{\vartheta} 
\right] / \sim ,
\\[10pt]
\mathcal{T}^\times_{-} ( {\tt a}_{\alpha}, {\tt a}_{\beta} )= 
\left[ 
(\mathfrak{H}^- \backslash  \overline{{\tt a}_{\alpha} {\tt a}_{\beta}} )
\ \cup \left(
\bigcup\limits_{\vartheta=2}^{\infty} \big( 
\mathfrak{H}^+ \backslash  \overline{{\tt a}_{\alpha} {\tt a}_{\beta}} 
\cup
\mathfrak{H}^- \backslash  \overline{{\tt a}_{\alpha} {\tt a}_{\beta}} 
\big)_{\vartheta} 
\right)
\right] / \sim ,
\\[10pt]
\mathcal{T}^\circ_{-} ( {\tt a}_{\alpha}, {\tt a}_{\beta} )= 
\left[
\bigcup\limits_{\vartheta=1}^{\infty} \big( 
\mathfrak{H}^+ \backslash  \overline{{\tt a}_{\alpha} {\tt a}_{\beta}} 
\cup
\mathfrak{H}^- \backslash  \overline{{\tt a}_{\alpha} {\tt a}_{\beta}} 
\big)_{\vartheta} 
\right]/ \sim .
\end{array}
\end{equation}

\noindent 
The super index $\circ$ or $\times$ indicates that the
first hemisphere of the tower is  blue $\mathfrak{H}^+$ or gray 
$\mathfrak{H}^-$.
As for notation, when the context is clear, we shall sometimes drop the pair of asymptotic values and 
simply write 
$\mathcal{T}^{\circ}_{+}$ instead of 
$\mathcal{T}^{\circ}_{+} ( {\tt a}_{\alpha}, {\tt a}_{\beta} )$, the other cases are analogous.

\noindent
2.\
Recalling Definitions \ref{def:geodesic-segments} and \ref{def:sheet-branchcut},
together with Proposition \ref{prop:Rw-is-a-union-of-sheets}, 
some useful features of 
the construction of
logarithmic towers, by gluing, are the following.
The use of Figure \ref{fig:tower-construction} is recommended.

\begin{enumerate}[label=\roman*),leftmargin=*]

\item 
The isometric glueing\footnote{
Recall Corollary \ref{cor:pegado-isometrico}, 
this technique also appeared in
\cite{AlvarezMucino3} pp.~60 and 61.
} 
for 

\centerline{
$\mathcal{T}^{\circ}_{-} 
=\left[
\bigcup\limits_{\vartheta=1}^{\infty} \big( 
\mathfrak{H}^+ \backslash  \overline{{\tt a}_{\alpha} {\tt a}_{\beta}} 
\cup
\mathfrak{H}^- \backslash  \overline{{\tt a}_{\alpha} {\tt a}_{\beta}} 
\big)_{\vartheta} 
\right]/ \sim $}

\noindent
is as follows.

\begin{enumerate}[label=\alph*),leftmargin=*]
\item
First consider $( \mathfrak{H}^+ \backslash  \overline{{\tt a}_{\alpha} {\tt a}_{\beta}} 
\cup
\mathfrak{H}^- \backslash  \overline{{\tt a}_{\alpha} {\tt a}_{\beta}} )_{\vartheta}$
for fixed $\vartheta\geq 1$,
and glue the half sheets
$\mathfrak{H}^+ \backslash  \overline{{\tt a}_{\alpha} {\tt a}_{\beta}} $
to $\mathfrak{H}^- \backslash  \overline{{\tt a}_{\alpha} {\tt a}_{\beta}} $
along their common boundary 
$\overline{{\tt a}_{\beta} {\tt a}_{\alpha}}
\doteq
\gamma\backslash \overline{{\tt a}_{\alpha} {\tt a}_{\beta}}$.

\item
Clearly, after gluing as in (a), $( \mathfrak{H}^+ \backslash  \overline{{\tt a}_{\alpha} {\tt a}_{\beta}} 
\cup
\mathfrak{H}^- \backslash  \overline{{\tt a}_{\alpha} {\tt a}_{\beta}} )_{\vartheta} / \sim$ 
is a sheet\footnote{
Note that Definition \ref{def:sheet-branchcut} 
of sheet with branch cuts satisfies  
$\mathfrak{L}_\Upxi = 
\CW \backslash \Upxi =
(\mathfrak{H}^+ \cup \mathfrak{H}^-)\backslash \Upxi
=\mathfrak{L}^+_\Upxi  \cup \mathfrak{L}^-_\Upxi $.}
$\big( \CW_w \backslash  \overline{{\tt a}_{\alpha} {\tt a}_{\beta}} \big)_{\vartheta}$,
with boundary consisting of two copies
of the polygonal geodesic segment
$\overline{{\tt a}_{\alpha} {\tt a}_{\beta}}$, namely
$\overline{{\tt a}_{\alpha} {\tt a}_{\beta}}_+$ and $\overline{{\tt a}_{\alpha} {\tt a}_{\beta}}_-$.

\item
Next, 
for each $j\geq 1$,
glue one copy, say $\overline{{\tt a}_{\alpha} {\tt a}_{\beta}}_+$ from the $j$--th sheet
$\big( \CW_w \backslash  \overline{{\tt a}_{\alpha} 
{\tt a}_{\beta}
} \big)_{\vartheta=j}$, 
to a copy
$\overline{{\tt a}_{\alpha} {\tt a}_{\beta}}_-$ from the $(j+1)$--th sheet
$\big( \CW_w \backslash  \overline{{\tt a}_{\alpha} 
{\tt a}_{\beta}
} \big)_{\vartheta=j+1}$;
this leaves the geodesic segment 
$\overline{{\tt a}_{\alpha} {\tt a}_{\beta}}_-$ as the boundary of the sheet 
$\big( \CW_w \backslash  \overline{{\tt a}_{\alpha} 
{\tt a}_{\beta}
} \big)_{\vartheta=1}$; no other boundaries are left.

\item
Thus, the boundary of $\mathcal{T}^{\circ}_{-}$
is the open polygonal 
$\overline{{\tt a}_{\alpha} {\tt a}_{\beta}}_-$ 
coming from the 1--st sheet
$\big( \CW_w \backslash  \overline{{\tt a}_{\alpha} 
{\tt a}_{\beta}
} \big)_{\vartheta=1}$.
\end{enumerate}

\item
The construction of the logarithmic tower 

\centerline{
$\mathcal{T}^{\times}_{-} =
\left[ 
(\mathfrak{H}^- \backslash  \overline{{\tt a}_{\alpha} {\tt a}_{\beta}} )
\ \cup \left(
\bigcup\limits_{\vartheta=2}^{\infty} \big( 
\mathfrak{H}^+ \backslash  \overline{{\tt a}_{\alpha} {\tt a}_{\beta}} 
\cup
\mathfrak{H}^- \backslash  \overline{{\tt a}_{\alpha} {\tt a}_{\beta}} 
\big)_{\vartheta} 
\right)
\right] / \sim $}

\noindent
is similar with the following modifications:
Steps (a)--(d) are the same as above, with $\vartheta\geq 2$;
obtaining a logarithmic tower with boundary 
$\overline{{\tt a}_{\alpha} {\tt a}_{\beta}}_-$ 
coming from the 2--nd sheet
$\big( \CW_w \backslash  \overline{{\tt a}_{\alpha} 
{\tt a}_{\beta}
} \big)_{\vartheta=2}$.
\begin{enumerate}[label=\alph*),leftmargin=*]
\item[d)]
Finally, glue a half sheet 
$(\mathfrak{H}^- \backslash  \overline{{\tt a}_{\alpha} {\tt a}_{\beta}} )$ along the boundary
$\overline{{\tt a}_{\alpha} {\tt a}_{\beta}}_+$
leaving the boundary $\overline{{\tt a}_{\beta} {\tt a}_{\alpha}}$ as the only boundary of
$\mathcal{T}^\times_-$.

\end{enumerate}

\item The construction/glueing of the logarithmic towers $\mathcal{T}^\times_+$ and $\mathcal{T}^\circ_+$ is 
analogous to the above, recall Equation \eqref{fig:4-torres}.
\end{enumerate}

\noindent
3.\
It is clear from the above construction and from Figure \ref{fig:tower-construction}, that
given a pair $({\tt a}_{\beta}, {\tt a}_{\alpha})$ of asymptotic values:
\begin{enumerate}[label=\alph*),leftmargin=*]
\item
there are an infinite number of logarithmic towers 
in $\R_{w(z)}$ of each type,

\item
containment, and thus maximality as in Definition \ref{def:logarithmic-tower-for-Riemann-surface}.3, 
only makes sense when the logarithmic towers are considered 
in $\R_{w(z)}$,
see Examples \ref{example:N-exp-tanh}.b and \ref{example:tessellation-Airy}.b.
\end{enumerate}
\end{remark}

\begin{definition}
\label{def:soul-of-surface}
The \emph{soul $\mathfrak{N}_{w(z)}$,  
of a Riemann surface $\R_{w(z)}$},
is the subset obtained as the complement, in $\R_{w(z)}$, of the
maximal logarithmic towers of $\R_{w(z)}$.
\end{definition}

\begin{remark}
\label{nomenclatura-logarithmic-tower}
The above concepts appear as ``logarithmic end'' and ``nucleus'' 
in the classic literature (recall Remark \ref{rem:historical-origin-log-ends}),
both for the combinatorial objects as for 
their corresponding Riemann surfaces analogous,
surely because of the bijection between them 
(recall Definition \ref{def:Log-end-for-Speiser-graph}.1--2 and see
Lemma \ref{lem:correspondence-log-towers} below).
However, the term ``kernel'' is used by \cite{GoldbergOstrovskii} instead of
nucleus when speaking of the complement of the logarithmic towers on 
the Riemann surface.
We prefer to make the distinction clear and thus use

\noindent 
$\bigcdot$
``logarithmic end'' and ``nucleus'' when considering the combinatorial
objects (Speiser graphs), and 

\noindent 
$\bigcdot$
``logarithmic towers'' and ``soul''
when considering the analytic objects
(Riemann surfaces).
\end{remark}

\begin{example}[Two elementary $N$--functions]
\label{example:N-exp-tanh}
\hfill
\begin{enumerate}[label=\alph*),leftmargin=*]
\item
The functions 

\centerline{
$\e^z  \doteq  \ent{}{} (z)$ 
\ and \
$\tanh(z) \doteq \htan{}{}  (z)$,
}

\noindent
have asymptotic values 

\centerline{$\mathcal{AV}_{\ent{}{}} = \{ {\tt a}_1, {\tt a}_2 \} = \{0,\infty\}$ 
\ and \ 
$\mathcal{AV}_{\htan{}{} } = \{ {\tt a}_1, {\tt a}_2 \} = \{-1,1\}$}

\noindent 
respectively,
as their only singular values, compare also with Definition \ref{def:piezas-elementales}.
Further note that
$\tanh(z) = \frac{\e^{2z} - 1}{\e^{2z} + 1}$,
thus they are right--left $Aut(\CW)$--equivalent,
as in Definition \ref{def:holo-right-left-equiv},
so both have constant
Schwarzian derivative 

\centerline{
$Sw\{ \e^z, z \} = -\frac{1}{2}, 
\quad 
Sw\{ \tanh(z) , z \} = -2$.
}

\noindent 
Thus, they are $N$--functions.

\item
Their Riemann surfaces 
$\R_{w(z)}$, 
have two infinitely ramified branch points,
by Remark \ref{rem:ramindex-subindex}.2 they are denoted as,

\centerline{
$\circled{1}=(\infty_1, {\tt a}_1, \infty)$ 
\ and \ 
$\circled{2}=(\infty_2, {\tt a}_2, \infty)$.
}

\noindent  
For $\R_{\ent{}{}(z)}$ one of the branch points lies over $\infty\in\CW_w$, 
the other over the finite asymptotic value $0$.
For $\R_{\htan{}{} (z)}$ the branch points lie over the finite asymptotic values
$\{-1,1\}$.
The diagonals\footnote{
Recall Definition \ref{def:diagonal}.
In particular that if two diagonals have the same $\vartheta$, then their 
corresponding  branch points share the same sheet.
}
are $\{ \Delta_{\vartheta\, 0\, \infty} = [0,+\infty]_\vartheta \}_{\vartheta\in\ZZ}$ and 
$\{ \Delta_{\vartheta\, -1\, 1} = [-1,1]_\vartheta \}_{\vartheta\in\ZZ}$ 
for $\R_{\ent{}{} (z)}$ and $\R_{\htan{}{} (z)}$,
respectively.
\\
According to Proposition \ref{prop:Rw-is-a-union-of-sheets},
the decomposition in sheets (maximal domains of single--valuedness) is
$$
\R_{w(z)} = 
\bigg(
\bigcup_{\vartheta_1=-\infty}^{\infty} \big( \CW_w \backslash  
\overline{{\tt a}_{1} {\tt a}_{2} } 
\big)_{\vartheta_1}  
\bigg) / \sim ,
$$

\noindent
where we can immediately recognize $\R_{w(z)}$ as the union of two logarithmic towers.
In fact,
considering the cyclic order $\mathcal{W}_2=[{\tt a}_1,{\tt a}_2]$,
there are basically two choices for the decomposition:

\centerline{
$\R_{w(z)} = \mathcal{T}^\times_+ ({\tt a}_{1}, {\tt a}_{2}) 
\cup \mathcal{T}^\circ_- ({\tt a}_{1}, {\tt a}_{2})$
}

\noindent
or

\centerline{
$\R_{w(z)} = \mathcal{T}^\circ_+ ({\tt a}_{1}, {\tt a}_{2}) 
\cup \mathcal{T}^\times_- ({\tt a}_{1}, {\tt a}_{2})$.
}

\noindent 
Note that, both
$\R_{\ent{}{} (z)}$, $\R_{\htan{}{} (z)}$,
are exceptional
in the sense that there are no maximal logarithmic towers, and hence
the soul is empty.

\item For the tessellation, with the cyclic order $\mathcal{W}_2$, 
it follows that $\gamma=\RR\cup\{\infty\}$ and
the Speiser $2$--tessellation

\centerline{
$\big(
(\CC_z \cup \{ \infty_{{\tt a}_1}, \, 
\infty_{{\tt a}_2} \} ) \backslash w(z)^*\gamma, 
w(z)^*\mathcal{L}_\gamma \big)$
}

\noindent 
is shown in Figure \ref{fig:Tessellation-Exp}.a--b. 
Note that the difference between the case for 
$\e^z$ and $\tanh(z)$
is the actual choice of the asymptotic values.
The two vertices of infinite valence of 
the graph $w(z)^*\gamma$
are the points $\infty_{{\tt a}_1}$, $\infty_{{\tt a}_2}$ in the non Hausdorff compactification 

\centerline{$\CC_z \cup \{ 
\infty_{{\tt a}_1}, \, 
\infty_{{\tt a}_2}
\}$}

\noindent 
determined by the two asymptotic values 
$\{ {\tt a}_1, \, {\tt a}_2 \}$.

\item
Their analytic Speiser graph of index ${\tt q}=2$ is
drawn in Figure \ref{fig:Tessellation-Exp}.c.
Note that the tessellations and Speiser graphs for the two functions 
$\e^z$ and $\tanh(z)$
only differ in the choice of 
singular values, \emph{i.e.}\ the cyclically ordered asymptotic values $\mathcal{W}_2$.

\begin{figure}[htbp!]
\begin{center}
{a)}
\includegraphics[width=0.3\textwidth]{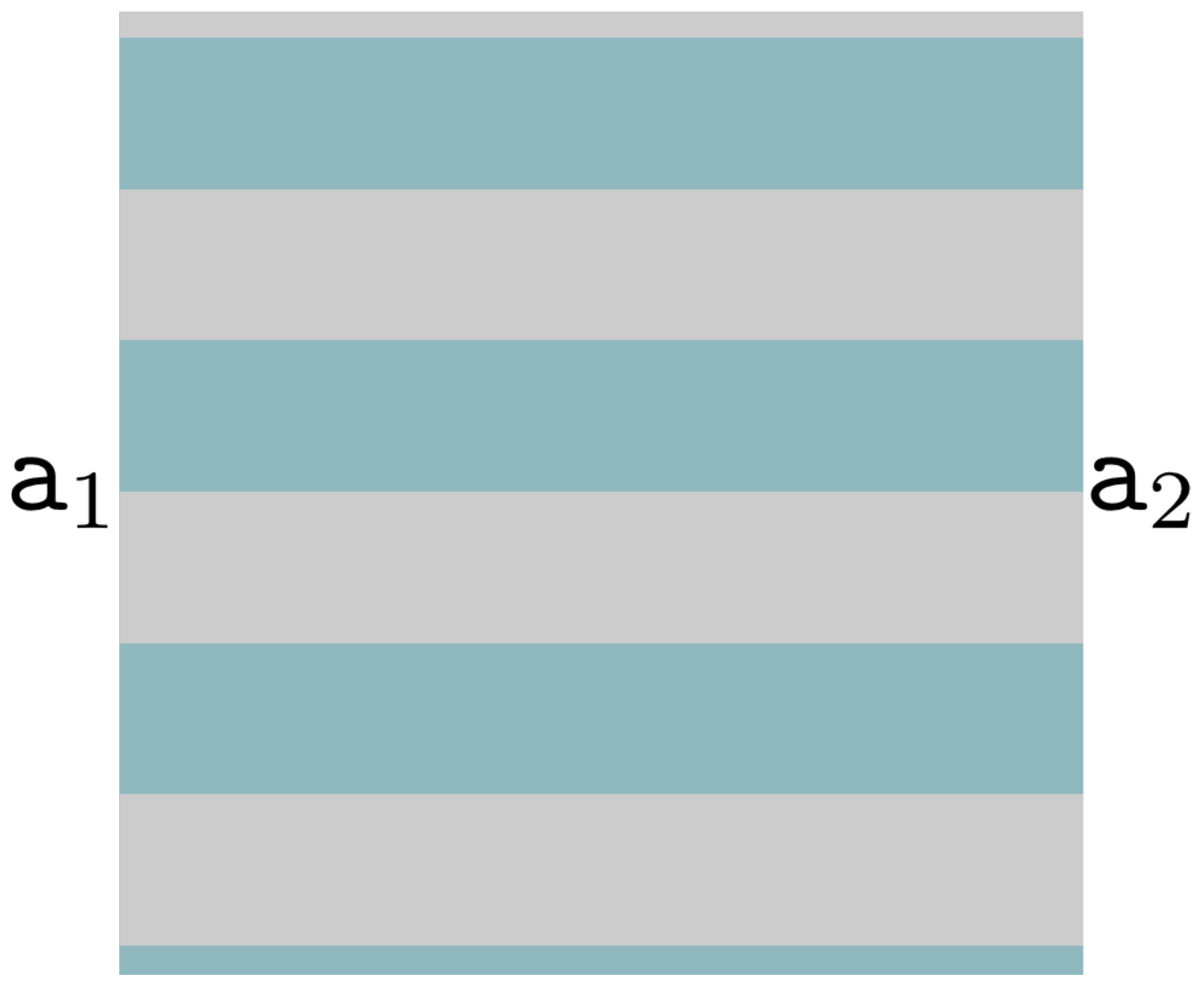}
{c)}
\includegraphics[width=0.25\textwidth]{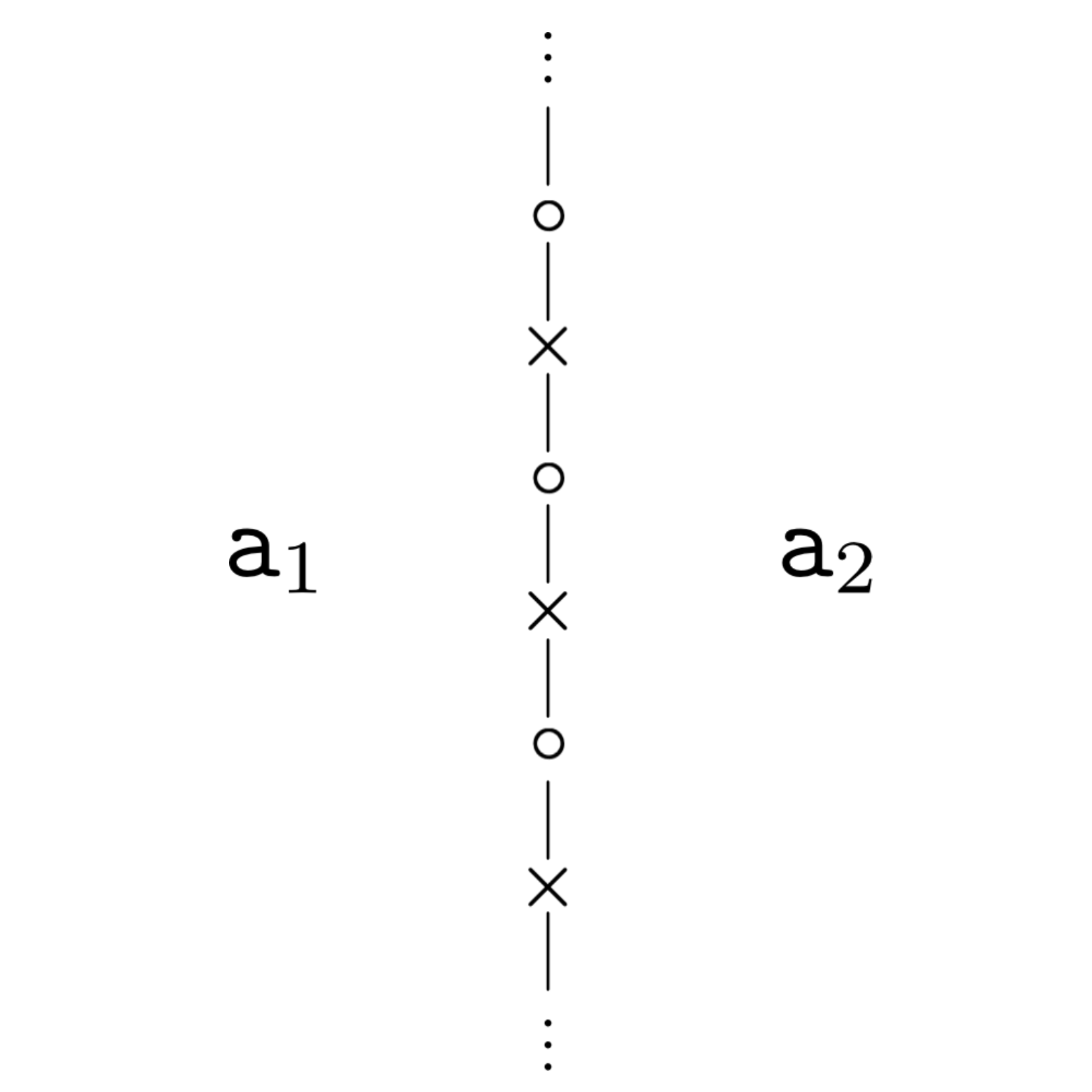}
\\
\vspace{0.3cm}
{b)}
\includegraphics[width=0.3\textwidth]{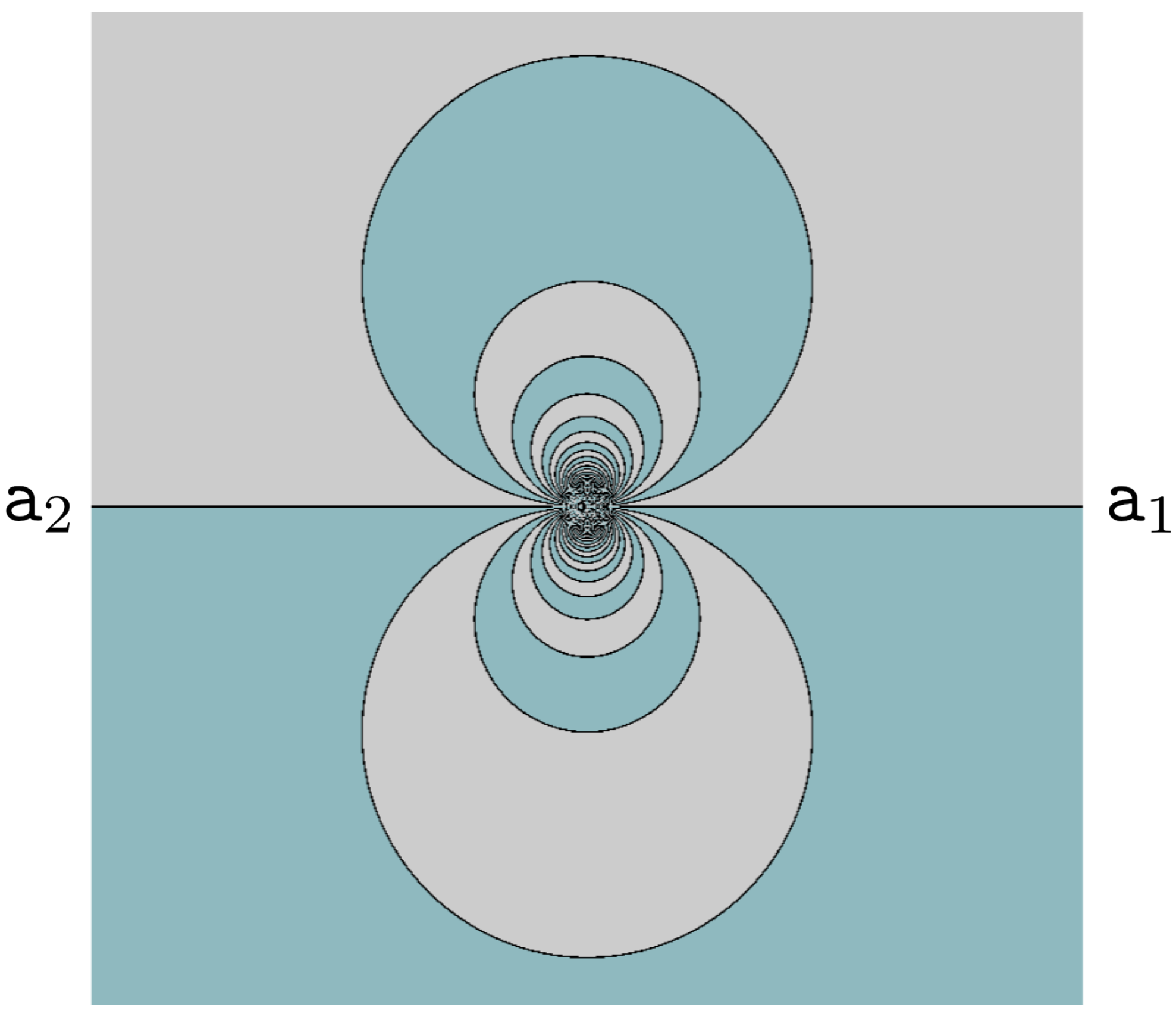}
\caption{
Consider  
$\big(
(\CC_z \cup \{ \infty_{{\tt a}_1}, \, 
\infty_{{\tt a}_2} \} ) \backslash w(z)^*\gamma, 
w(z)^*\mathcal{L}_\gamma \big)$ 
the tessellation
corresponding to the
$N$--functions 
$w(z)=\e^{z}$ 
(when  ${\tt a}_1 = 0, {\tt a}_2 = \infty$)
and 
$w(z)=\tanh(z)$ 
(when  ${\tt a}_1 = -1, {\tt a}_2 = 1$),
for $\gamma = \RR\cup\{\infty\}$. 
(a) The tessellation near the origin, 
(b) near the essential singularity at $\infty\in\CW_z$.
(c) The corresponding Speiser graph of index $2$.
}
\label{fig:Tessellation-Exp}
\end{center}
\end{figure}

\item 
Moreover, by considering the canonical vector fields 

\centerline{
$X_{\ent{}{} }(z) \doteq \frac{1}{ \ent{}{}^{\prime} (z) } \del{}{z} = \e^{-z} \del{}{z}$ 
\quad and \quad
$X_{ \htan{}{}  } (z) \doteq \frac{1}{ \htan{}{}^{\prime} (z) } \del{}{z} = \cosh ^2(z) \del{}{z}$ }

\noindent
associated to 
$\e^z$ and $\tanh(z)$
respectively, we can observe
a correspondence\footnote{
See \cite{AlvarezMucino3}\,proposition\,2.5 for the complete correspondence.
} between 
the elementary blocks (left column) and the pairs (domain, vector field):
\begin{equation}
\begin{aligned}
\label{eq:mini-correspondencia}
\big(\overline{\HH}, \ent{}{} (z) \big) \ &\longleftrightarrow \ \big(\overline{\HH}, X_{\ent{}{} }(z) \big) \\
\big(\overline{\HH}, \htan{}{} (z) \big)  \ &\longleftrightarrow \ \big(\overline{\HH}, X_{\htan{}{} }(z) \big).
\end{aligned}
\end{equation}
\noindent
With this correspondence, we can now visualize the phase portrait of $\Re{X_{\ent{}{} }}(z)$
and $\Re{X_{\htan{}{} }}(z)$ as shown in Figure \ref{fig:piezas-elementales}.
In order to distinguish the two elementary blocks, we abuse notation and use 
the visualization of the corresponding vector fields instead of the usual 
(indistinguishable) tessellations.

\begin{figure}[htbp!]
\begin{center}
\includegraphics[width=0.385\textwidth]{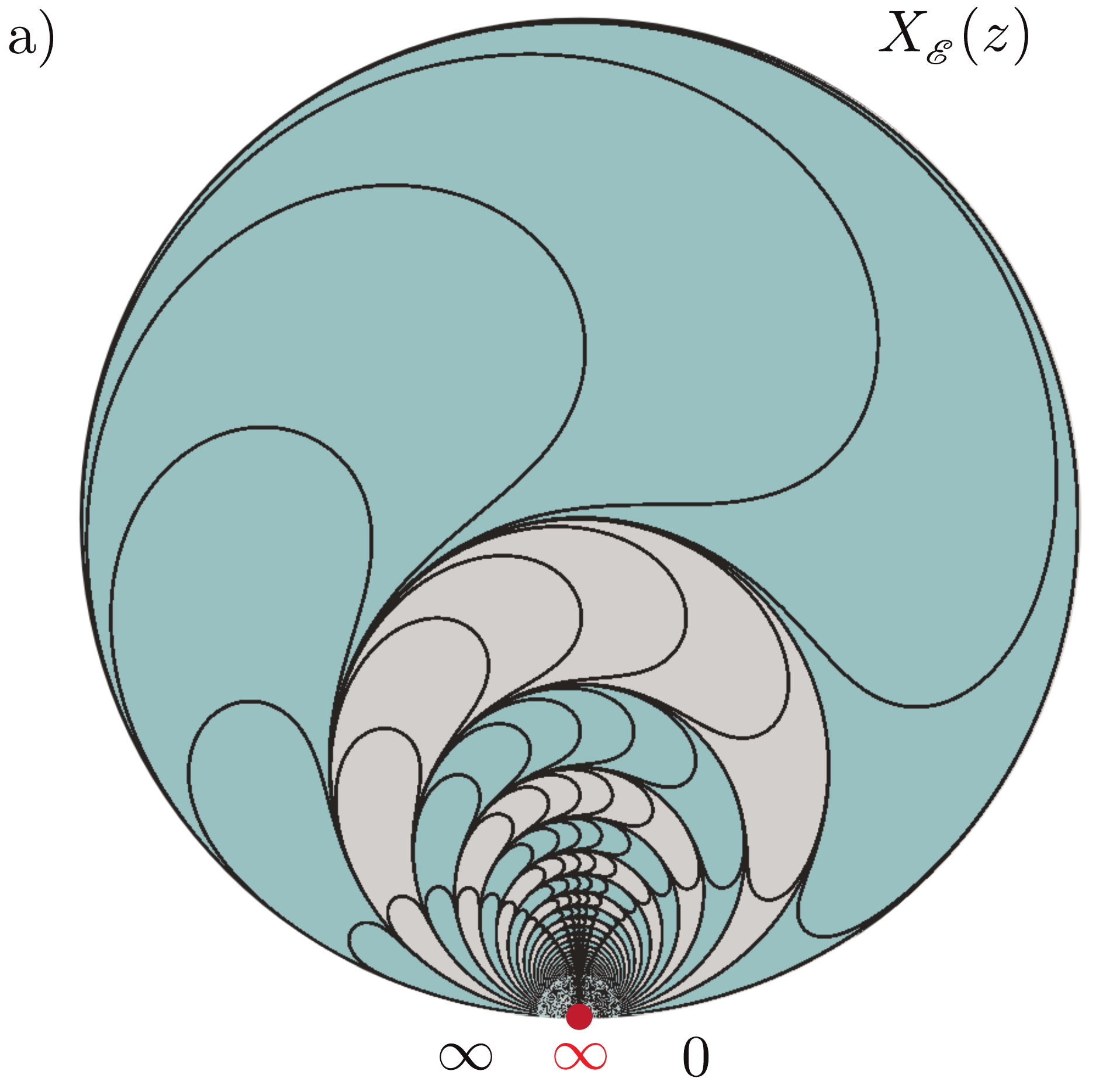}
\hspace{10 pt}
\includegraphics[width=0.38\textwidth]{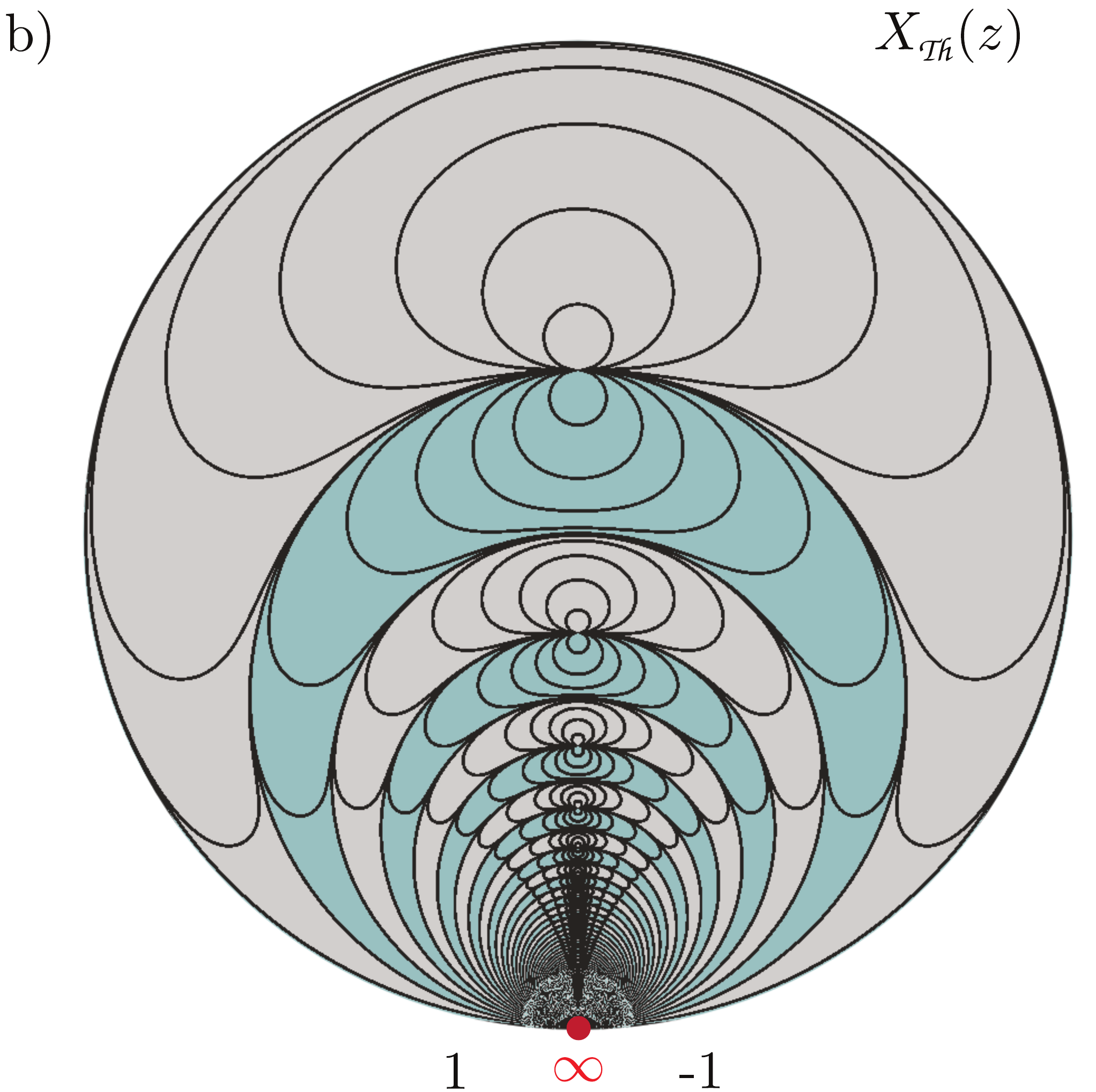}
\caption{
Two elementary blocks. 
(a) Exponential block
$\ent{0}{\infty}$ corresponding to $\ent{}{} (z) = \e^{z}:\HH\longrightarrow\CW_w$.
(b) $h$--tangent block
$\htan{-1}{1} $  corresponding to 
$\htan{}{} (z) = \tanh(z):\HH\longrightarrow\CW_w$.
Visualized using the associated
vector fields $X_{\ent{}{} } (z) = \e^{-z} \del{}{z}$ and
$X_{\htan{}{}  } (z) = \cosh ^2(z) \del{}{z}$.
Note that since the Riemann surfaces
$\R_{\ent{}{} (z)}$ and $\R_{\htan{}{} (z)}$,
do not have maximal logarithmic towers, there is no first blue or gray
region; the blue--gray coloring is not canonical.
The asymptotic values are indicated 
on each side of the essential singularity 
(marked red) at $\infty\in\partial \overline{\HH}$.
The ``dipoles'' in (b) correspond to the simple poles
of $\tanh(z)$.
}
\label{fig:piezas-elementales}
\end{center}
\end{figure}
\end{enumerate}
\end{example}

Recalling the decomposition of a sheet into half sheets, 

\centerline{
$\mathfrak{L}_\Upxi = 
\CW \backslash \Upxi =
(\mathfrak{H}^+ \cup \mathfrak{H}^-)\backslash \Upxi
=\mathfrak{L}^+_\Upxi  \cup \mathfrak{L}^-_\Upxi $,
}

\noindent
we can now decompose $\R_{w(z)}$ in a different way than that of 
Proposition \ref{prop:Rw-is-a-union-of-sheets}.

\begin{theorem}[Decomposition of $\R_{w(z)}$ into the soul and maximal logarithmic towers]
\label{theo:decomposition-soul-logtowers}
Let $w(z)$ be a  Speiser function 
provided with a cyclic order $\mathcal{W}_{\tt q}$ 
for its ${\tt q}\geq 2$ distinct singular values.
\begin{enumerate}[label=\arabic*),leftmargin=*]

\item 
The Riemann surface $\R_{w(z)}$ associated to $w(z)$ can be constructed by 
isometric glueing of 
half sheets 
$\mathfrak{H}^+\backslash\Upxi$ and 
$\mathfrak{H}^-\backslash\Upxi^\prime$,
denoted by $\sim$,  as follows
\begin{multline}
\label{eq:descomposicion-Rw}
\R_{w(z)} = 
\Bigg(
\underbrace{
\bigcup_{{\vartheta_+} =1}^{n_+} 
\bigg[
\mathfrak{H}^+\backslash \Big(\bigcup\limits_{
{\tt j}( \msigma ) , {\tt j}( \mrho ) \in  \Upxi_{\vartheta_+} 
} 
\overline{ {\tt w}_{{\tt j}( \msigma )} 
{\tt w}_{{\tt j}( \mrho )} } \Big)
\bigg]
\bigcup_{{\vartheta_-} =1}^{n_-}
\bigg[
\mathfrak{H}^-\backslash \Big(\bigcup\limits_{
{\tt j}( \msigma ) , {\tt j}( \mrho ) \in  \Upxi_{\vartheta_-} 
} 
\overline{ {\tt w}_{{\tt j}( \msigma )} 
{\tt w}_{{\tt j}( \mrho )} } \Big)
\bigg]
}_{\text{soul}}
\cup
\\ 
\underbrace{
\mathcal{T}({\tt a}_{\alpha_1}, {\tt a}_{\beta_1})
}_{1^{\text{st}} \text{ logarithmic tower}} 
\cup
\ldots 
\cup
\underbrace{ 
\mathcal{T} ( {\tt a}_{\alpha_{\tt p}}, {\tt a}_{\beta_{\tt p}} )
}_{ {\tt p}^{\text{th}} \text{ logarithmic tower}} 
\Bigg) \, \Bigg/ \sim .
\end{multline}

\noindent
In the above expression the following conventions are observed.

\noindent
$\bigcdot$
The singular values 
of $w(z)$
are denoted by $ \{{\tt w}_{{\tt j} (\iota) } \}_{\iota=1}^\delta $, and as usual
the asymptotic values are $\{ {\tt a}_{\alpha_\sigma} , {\tt a}_{\beta_\sigma} \}_{\sigma=1}^{\tt p}$,
recall
Equation \eqref{eq:enumerating-branch-points}.

\noindent
$\bigcdot$
The number of half sheets in the soul is 
$(n_+ + n_-) \in \NN \cup\{\infty\}
$, 
the half sheets $\mathfrak{H}^\pm \backslash\Upxi_{\vartheta_\pm}$ 
are indexed by $\vartheta_\pm$.

\noindent
$\bigcdot$
The $\tt p$ maximal logarithmic towers 
of $\R_{w(z)}$ 
are distinguished by suitable pairs of 
asymptotic values 

\centerline{
$\{ 
\underbrace{
\overline{{\tt a}_{\alpha_1} {\tt a}_{\beta_1}  } , 
\ldots  ,
\overline{{\tt a}_{\alpha_{\tt p} } {\tt a}_{\beta_{\tt p} } }
}_{\star\mbox{--boundary} }
\} 
\subseteq 
\{ \overline{{\tt w}_{{\tt j}( \msigma )} {\tt w}_{{\tt j}( \mrho )} }  \}$,
}

\noindent 
where $\star$--boundary coincides with the boundary of the 
maximal logarithmic towers.

\item 
On each maximal logarithmic tower 

\centerline{
$\mathcal{T}( {\tt a}_{\alpha_{\tt j}} , {\tt a}_{\beta_{\tt j}} )$,
\quad
${\tt j}\in\{1,\ldots,{\tt p} \}$, 
}

\noindent 
the function $w(z)$ 
is an exponential or an $h$--tangent block, 
\emph{i.e.}\ 
is right--left equivalent to 
\begin{enumerate}[label=\alph*),leftmargin=*]

\item 
the function $\exp(z): \overline{\HH} \longrightarrow \CW$ 
when one of the asymptotic values 
$ {\tt a}_{\alpha_{\tt j}} , {\tt a}_{\beta_{\tt j}} \in\CW_w$ 
is infinite,

\item 
the function $\tanh (z): \overline{\HH} \longrightarrow \CW$ 
when both asymptotic values 
$ {\tt a}_{\alpha_{\tt j}}, {\tt a}_{\beta_{\tt j}} \in\CC_w$ are finite.

\end{enumerate}

\item 
The soul is a flat $\tt p $--gon,
with at most 
$0 \leq {\tt r} + {\tt p} \leq \infty$
critical points. 
Furthermore, the soul determines the Riemann surface $\R_{w(z)}$.

\item 
Assume in addition that
$w(z)$ is a finite Speiser function. 
\begin{enumerate}[label=\alph*),leftmargin=*]
\item
The number $\tt p$ of maximal logarithmic towers is zero 
if and only if $w(z)$ is a rational function on $\CW_z$,
otherwise $2 \leq {\tt p} < \infty$.

\item
The soul is a flat $\tt p $--gon, 
with at most 
$0 \leq {\tt r} + {\tt p} < \infty$
critical points,
\emph{i.e.} the function $w(z)$ is right--left 
equivalent to a rational function 
$R(z)$ restricted to a certain Jordan domain 
$\mathscr{P}\subset\CW_z$.
\end{enumerate}
\end{enumerate}
\end{theorem}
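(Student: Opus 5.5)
The plan is to work on the Speiser graph side, where logarithmic ends and the nucleus are already defined combinatorially, and then transport the decomposition to $\R_{w(z)}$ through the Main Correspondence, Theorem \ref{th:main-theorem}. Fix the cyclic order $\mathcal{W}_{\tt q}$ and a polygonal Jordan path $\gamma$ as in Remark \ref{rem:gamma-poligonal-y-grafica}. Every tile of the Speiser ${\tt q}$--tessellation $\mathscr{T}_\gamma(w(z))$ projects by $w$ onto $\mathfrak{H}^+$ or $\mathfrak{H}^-$. Each tile is dual to a vertex $\circ$ or $\times$ of $\mathfrak{S}_{w(z)}$, and lifts via $\pi_1^{-1}$ to a half sheet of $\R_{w(z)}$. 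Two adjacent tiles are glued along edges of $\widehat{\Gamma}_{\tt q}$, and these edges project to segments $\overline{{\tt w}_{\tt j}{\tt w}_{{\tt j}+1}}$. The union of all tiles is then $\R_{w(z)}$, realized as half sheets glued isometrically in the sense of Corollary \ref{cor:pegado-isometrico}. The branch cuts $\Upxi_{\vartheta_\pm}$ on each half sheet are exactly those edges of the tile boundary along which the gluing is \emph{not} to the neighbour across $\gamma$ on the same sheet.

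\textbf{Step 1 (towers).} I will prove a correspondence between logarithmic ends $\mathcal{T}$ of $\mathfrak{S}_{w(z)}$ (Definition \ref{def:Log-end-for-Speiser-graph}) and maximal logarithmic towers of $\R_{w(z)}$. An end is an infinite chain of edge bundles with alternating sizes $\rho_1,\rho_2$, where $\rho_1+\rho_2={\tt q}$. Its dual vertices, which are tiles, alternate in $\mathfrak{H}^+$, $\mathfrak{H}^-$. Consecutive tiles are glued along the $\rho_1$ (resp.\ $\rho_2$) consecutive $\gamma$--edges making up one arc $\overline{{\tt a}_\beta{\tt a}_\alpha}$ (resp.\ the complementary arc $\overline{{\tt a}_\alpha{\tt a}_\beta}$). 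This is exactly the gluing pattern of one of the four towers in \eqref{fig:4-torres}, and which of the four occurs is read off from the type ($\circ$ or $\times$) of $v_1$ and the parity of the first bundle. The single vertex of $N(\mathcal{T})$ prescribes the boundary arc, which gives condition (ii) of Definition \ref{def:logarithmic-tower-for-Riemann-surface}. Combinatorial maximality (condition (v)) translates into maximality of the tower. By Definition \ref{def:Log-end-for-Speiser-graph}.2 and Definition \ref{def:soul-of-surface}, the remaining tiles, those dual to the nucleus, lift to the soul. This yields \eqref{eq:descomposicion-Rw}. Uniqueness follows because maximal ends are unique (Remark \ref{rem:On-logarithmic-ends}.2).

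\textbf{Step 2 (the blocks).} A tower is simply connected and contains only one branch point, which is logarithmic over the pair $\{{\tt a}_\alpha,{\tt a}_\beta\}$. Composing $\pi_2$ with the universal covering gives the following. Since the boundary arc is a single geodesic segment, the tower is the image of $\overline{\HH}$ under $\exp$ or $\tanh$, after a M\"obius map $\varphi$ sending the asymptotic values to $\{0,\infty\}$ or $\{-1,1\}$. The case split is determined by whether $\infty\in\{{\tt a}_\alpha,{\tt a}_\beta\}$; this is precisely the reasoning of Lemma \ref{lem:extention-to-petal}, read in reverse. This proves (2).

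\textbf{Step 3 (the soul).} For (3), the soul has boundary equal to the union of the ${\tt p}$ $\star$--boundary arcs. Its vertices are the ideal points where consecutive towers meet, and each side maps to a geodesic segment with distinct endpoints. Hence it satisfies Definition \ref{def:poligono}. Its interior contains the algebraic branch points together with at most the ${\tt p}$ corner points, which bounds the number of critical points by ${\tt r}+{\tt p}$. The soul determines $\R_{w(z)}$ because, by Lemma \ref{lem:extention-to-petal}, each side admits a canonical tower extension. For (4), finiteness gives ${\tt r},{\tt p}<\infty$. If ${\tt p}=0$, then $\Omega_z=\CW_z$ by Proposition \ref{prop:dos-tipos-singularidades}, and the function is rational. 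Otherwise the non-Hausdorff compactification requires ${\tt p}\ge2$. Lemma \ref{lem:polygon-rational-block} then gives (4b).

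\textbf{Main obstacle.} The hardest step is showing that the boundary of the union of the towers is a single Jordan polygon with exactly ${\tt p}$ sides, i.e.\ that the soul is a Jordan domain. This means excluding a nucleus that is empty or degenerate, as happens for $\e^z$ and $\tanh$, and handling ${\tt p}=\infty$. My first attempt is to use planarity of $\mathfrak{S}_{w(z)}$: the ends are cyclically ordered at infinity, with one unbounded face between consecutive ends. For the degenerate cases I would treat ${\tt q}=2$ separately, as in Example \ref{example:N-exp-tanh}.
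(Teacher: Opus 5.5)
Apart from one step, your route is the paper's. You transport through Theorem \ref{th:main-theorem}, identify the vertices of $\mathfrak{S}_{w(z)}$ (equivalently the tiles) with half sheets and edge bundles with isometric gluings, and match logarithmic ends with maximal towers; this matching is the paper's Lemma \ref{lem:correspondence-log-towers}. You take the soul as the lift of the nucleus, and you obtain (2), the reconstruction claim in (3), and (4b) from Lemmas \ref{lem:extention-to-petal} and \ref{lem:polygon-rational-block}. The obstacle you single out is settled in the paper, for finite Speiser functions only, by the planarity count you suggest. Finitely many non-digon faces and finitely many unbounded faces force every end to be a logarithmic end. Contiguous ends are separated by an unbounded face, so there are exactly ${\tt p}$ ends and the nucleus is connected with these ends as its boundary. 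Outside the finite case the paper's argument for (3) is no more detailed than yours. There is also a minor slip in Step 2: a tower does not contain ``one branch point over the pair''. It contains a semi-infinite portion of a logarithmic singularity over each of ${\tt a}_\alpha$ and ${\tt a}_\beta$, both sitting at $\infty\in\partial\overline{\HH}$. Since a tower is an exponential or $h$--tangent block by Definition \ref{def:logarithmic-tower-for-Riemann-surface}, (2) is unaffected.

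The step that genuinely fails is your proof of (4a). You deduce ``${\tt p}=0\Rightarrow\Omega_z=\CW_z$'' and ``otherwise ${\tt p}\geq 2$'' from Proposition \ref{prop:dos-tipos-singularidades}.2, without using finiteness at all. That cannot be correct: the same reasoning would make $\wp(z)$ rational (${\tt p}=0$ on $\CC_z$) and would exclude $\cos\sqrt{z}$ (${\tt p}=1$ on $\CC_z$). Both are listed in the remark after Corollary \ref{Cor:caract-Speiser-finitas}. The hypothesis ${\tt r}<\infty$ has to be used, and the paper does so as follows.
\begin{enumerate}
\item If ${\tt p}=0$, the Speiser graph has no unbounded faces and only finitely many non-digon faces. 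Hence it is finite, and $w$ is rational by Corollary \ref{cor:Speiser-graph-characterization}.1.
\item If ${\tt p}=1$, the single infinitely ramified branch point forces infinitely many sheets. These can only be tied together by infinitely many finitely ramified branch points, contradicting ${\tt r}<\infty$.
\end{enumerate}
A clean way to make both steps rigorous is via monodromy on a regular fiber. When ${\tt r}<\infty$, each local monodromy permutation around a singular value that has no infinite cycle moves only finitely many sheets. The product of all these permutations is the identity. So if exactly one had an infinite cycle, it would be a product of finitary permutations, which is absurd. If none has an infinite cycle, any sheet outside their finite joint support is fixed by the whole monodromy group. Transitivity then forces finitely many sheets, and $w$ is rational.
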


\begin{proof}[Proof of Theorem \ref{theo:decomposition-soul-logtowers}]
The proof is basically a direct application of Theorem \ref{th:main-theorem}, 
specifically (4) $\Longleftrightarrow$ (2).
Thus the converse: a Riemann surface as in \eqref{eq:descomposicion-Rw} produces a
Speiser function $w(z)$, is also immediate.

In full detail,
recall Diagram \eqref{diagramaRX}, particularly that $\pi_1$ is a biholomorphism.
Thus the analytic Speiser graph $(\mathfrak{S}_{\tt q},\mathcal{L}_{\mathcal{W}_{\tt q}})$,
together with the cell decomposition $\Omega_z\backslash\mathfrak{S}_{w(z)}$, 
and Proposition \ref{prop:speiser-graph-implies-speiser-function}, 
provide an accurate representation of $\R_{w(z)}$:
\begin{enumerate}[label=\roman*),leftmargin=*]
\item  
each vertex $v=\circ$,  of 
the Speiser graph $(\mathfrak{S}_{w(z)},\mathcal{W}_{\tt q})$,
corresponds to a half sheet 
$\big( \mathfrak{H}^+ \backslash \overline{ {\tt a}_{\alpha} {\tt a}_{\beta} } \big) \subset \R_{w(z)}$,

\item
each vertex $v=\times$,  of 
the Speiser graph $(\mathfrak{S}_{w(z)},\mathcal{W}_{\tt q})$,
corresponds to a half sheet 
$\big( \mathfrak{H}^- \backslash \overline{ {\tt a}_{\alpha} {\tt a}_{\beta} } \big) \subset \R_{w(z)}$,

\item
each edge of the Speiser graph indicates the gluing of the corresponding half sheets,

\item
each ${\tt w}_{\tt j}$--face of
$\Omega_z\backslash\mathfrak{S}_{w(z)}$
represents a branch point of the surface $\R_{w(z)}$, with ramification
index half the number of sides of the ${\tt w}_{\tt j}$--face.
\end{enumerate}
 
\smallskip
Denote the set of asymptotic values (counted with multiplicity) by 
$\{{\tt a}_\beta \} \subset \{ {\tt w}_{{\tt j}(\iota)} \}_{\iota=1}^\delta$; 
necessarily its cardinality is $0 \leq {\tt p}\leq\delta < \infty$.

Given (an arbitrary) Speiser function $w(z)$, 
the following result relates logarithmic ends 
of the Speiser graph 
$(\mathfrak{S}_{w(z)},\mathcal{W}_{\tt q})$, see Definition \ref{def:Log-end-for-Speiser-graph},
to the
logarithmic towers of the Riemann surface $\R_{w(z)}$, 
see Definition \ref{def:logarithmic-tower-for-Riemann-surface}.

\begin{lemma}
\label{lem:correspondence-log-towers}
Let $w(z):\Omega_z \longrightarrow \CW_w$ be a Speiser function with ${\tt q }\geq 2$ singular values.

\begin{enumerate}[label=\arabic*),leftmargin=*]
\item
There is a bijection between:

\begin{enumerate}[label=\roman*),leftmargin=*]
\item
the logarithmic ends $\mathcal{T}$ of the analytic Speiser graph
$(\mathfrak{S}_{w(z)},\mathcal{W}_{\tt q})$, 
determined by  ${\tt a}_\alpha$ and ${\tt a}_\beta$, the asymptotic values corresponding to 
the unbounded ${\tt a}_\alpha$  and ${\tt a}_\beta$--faces 
of $\mathcal{T}$,

\item
the maximal logarithmic towers of $\R_{w(z)}$, 
$\mathcal{T} ({\tt a}_\alpha, {\tt a}_\beta ) $,   
determined by the pair $({\tt a}_\alpha {\tt a}_\beta)$ of asymptotic values of $w(z)$.
\end{enumerate}

\item
The analogous statement applies for the nucleus of $(\mathfrak{S}_{w(z)},\mathcal{W}_{\tt q})$
and the soul of $\R_{w(z)}$.
\end{enumerate}
\end{lemma}

\begin{proof}
By Proposition \ref{prop:speiser-graph-implies-speiser-function}.2.c, 
$\mathcal{T}$ has associated to itself
asymptotic values ${\tt a}_\alpha$ and ${\tt a}_\beta$, 
where ${\tt a}_\alpha\neq{\tt a}_\beta$. 
Furthermore, it is easy to check that the edge bundle in $\mathfrak{S}_{w(z)}$ between the pair of vertices  
$(v_{2\uptau -1}, v_{2\uptau})$ and the pair $(v_{2(\uptau +1)-1}, v_{2(\uptau +1)})$ corresponds to 
the isometric glueing of the sheets
$\big( \CW\backslash \overline{ {\tt a}_{\alpha} {\tt a}_{\beta} } \big)_{\vartheta}$ 
and $\big( \CW\backslash \overline{ {\tt a}_{\alpha} {\tt a}_{\beta} } \big)_{\vartheta +1}$ 
in $\R_{w(z)}$, 
with $\vartheta = \uptau$,
recall Corollary \ref{cor:pegado-isometrico}.
Thus, each pair $ (v_{2\uptau -1}, v_{2 \uptau} )$ of adjacent vertices, represents
a sheet
$\big( \CW\backslash \overline{ {\tt a}_{\alpha} {\tt a}_{\beta} } \big)_{\vartheta }$,
with $\vartheta = \uptau$.

\noindent
The rest of the proof is left to the reader.
\end{proof}

\noindent
Recalling Example \ref{example:ends-not-logarithmic}, 
note that the set of logarithmic towers may be empty.

\noindent
From Definition \ref{def:soul-of-surface}, the complement of the $\tt p$ maximal logarithmic towers 
in the decomposition \eqref{eq:Rw-decomposition-in-sheets}, is the soul, proving statement (1). 

\smallskip

Statement (2) follows immediately from Lemma 
\ref{lem:extention-to-petal}, 
where the pair of logarithmic singularities can be 
recognized
as an exponential or $h$--tangent block. 

\smallskip

Statement (3) follows from the definition of the soul as the complement of the maximal logarithmic towers,
and an accurate interpretation of Lemma \ref{lem:extention-to-petal}.

\smallskip

For statement (4), 
since $w(z)$ is a finite Speiser function, its Speiser graph has a finite number
of bounded and unbounded faces, thus it follows that the ends are in fact logarithmic ends, say $0\leq{\tt p}<\infty$, 
${\tt p}\neq 1$ of them (see below).

\noindent
A simple argument, 
shows that the nucleus $\mathfrak{N}$ is connected
and has $\tt p$ logarithmic ends as its boundary.
Hence by Remark \ref{nomenclatura-logarithmic-tower}
and Lemma \ref{lem:correspondence-log-towers},
the soul is connected and 
can be recognized as a flat $\tt p$--gon, 
thus Lemma \ref{lem:polygon-rational-block} proves
statement (4.b).

\smallskip

Finally, staying in the finite Speiser case, let us describe the Riemann surfaces 
$\R_{w(z)}$ appearing for different values of $\tt p$.

\noindent 
$\bigcdot$ 
Case ${\tt p}=0$. In this case, 
the cell decomposition arising from the Speiser graph $\mathfrak{S}_{w(z)}$
consists of a finite number of bounded $\tt w_j$--faces. 
Thus $\mathfrak{S}_{w(z)}$ is finite and we conclude that $w(z)$ is a rational function.
Moreover, $\R_{w(z)}$ is the soul.

\smallskip

\noindent 
$\bigcdot$ 
Case ${\tt p}=1$ does not appear. Suppose the contrary, hence $\R_{w(z)}$ 
has exactly one infinitely ramified branch point, thus $\R_{w(z)}$ has an 
infinite number of sheets, and hence an infinite number of finitely ramified 
branch points. 
A contradiction, since $w(z)$ is a finite Speiser function.

\smallskip

\noindent 
$\bigcdot$ 
Case ${\tt p}=2$ and no bounded faces other than digons.
By Lemma \ref{lemma-q-2-3}
the cell decomposition arising from the 
Speiser graph $\mathfrak{S}_{w(z)}$, has two unbounded faces, 
as in Figure \ref{fig:Tessellation-Exp}.c.
Note that the nucleus $\mathfrak{N}_\mathfrak{S}$ of $\mathfrak{S}_{w(z)}$ is empty; 
thus the soul $\mathfrak{N}_{w(z)}$ of $\R_{w(z)}$ is also empty.
Example \ref{example:N-exp-tanh} describes explicitly the possible
functions $w(z)$. 

\smallskip

\noindent 
$\bigcdot$ 
Case ${\tt p}\geq 3$.
The proof for the generic cases uses the arguments presented before 
Lemma \ref{lem:correspondence-log-towers} and is as follows.

\noindent
Step 1.\
The Speiser graph
has exactly $\tt p$ logarithmic ends.
Thus, from the fact that there are only a finite number of singularities of $w^{-1}(z)$
and Proposition \ref{prop:speiser-graph-implies-speiser-function}.2, 
the number of bounded faces that are not digons, 
and the number of unbounded faces, are both finite.
From this, it is easy to see that the only way that the cell decomposition arising from a 
Speiser graph $\mathfrak{S}_{w(z)}$ has $\tt p$ unbounded faces, 
is that it has exactly $\tt p$ logarithmic ends.

\noindent
Step 2.\
By Lemma \ref{lem:correspondence-log-towers}, 
we see that $\R_{w(z)}$ has 
exactly $\tt p$ maximal logarithmic towers $\mathcal{T} ({\tt a}_\alpha, {\tt a}_\beta )$ of $\R_{w(z)}$.
\end{proof}

\subsection{Characterization of finite Speiser functions on $\Omega_z=\CC_z, \, \CW_z$}
\label{sec:caract-Speiser-finitas}
Recall that a \emph{finite Speiser function} is a Speiser function with a finite
number $\delta = {\tt p} + {\tt r}$ of singularities of $w^{-1}(z)$.
It is a classical
result of R. Nevanlinna that only $\Omega_z = \CC_z$ appears. See \cite{Nevanlinna1} \S8 
and \cite{Nevanlinna2} p.\ 301.

Considering Theorem \ref{th:main-theorem},
Theorem \ref{theo:decomposition-soul-logtowers}, 
Lemma \ref{lem:extention-to-petal} and Definition \ref{def:piezas-elementales},
we have proved.

\begin{corollary}[Characterization of finite Speiser functions on
$\Omega_z=\CC_z, \, \CW_z$]
\label{Cor:caract-Speiser-finitas}
The following objects are equivalent.
\begin{enumerate}[label=\arabic*),leftmargin=*]

\item 
A finite Speiser function $w(z):\Omega_z\longrightarrow \CW_w$.

\item 
A meromorphic function $w(z):\Omega_z\longrightarrow \CW_w$, 
constructed by surgery of: 

\begin{enumerate}[label=\alph*),leftmargin=*]
\vspace{0.1cm}
\item 
a rational block 
$R(z) : \overline{ \mathscr{P}} \subset \CW_z 
\longrightarrow \CW_w$, and 

\item a finite number of 

\noindent 
$\bigcdot$
exponential blocks, 
$\exp(z):\overline{\HH}  \longrightarrow \CW_w$,

\noindent 
$\bigcdot$
$h$--tangent blocks, 
$\tanh(z):\overline{\HH} \longrightarrow \CW_w$.
\end{enumerate}

\item
A flat ${\tt p}$--gon $\big(\overline{\mathcal{P}}, \, \mathpzc{w} (\zeta) \big)$,
with $2 \leq {\tt p} < \infty$,
whose function $\mathpzc{w} (\zeta)$ has
a finite number of critical points
in the interior of $\overline{\mathcal{P}}$,
with an exponential or $h$--tangent blocks glued 
to each side.

\item
A Speiser Riemann surface $\R_{w(z)}$ with a finite number of branch points.

\item
A Speiser graph of index $\tt q$ with only a finite number of faces that are not digons.

\item 
A Speiser $\tt q$--tessellation with a finite number of vertices of valence 
greater than or equal to 4.
\hfill\qed
\end{enumerate}
\end{corollary}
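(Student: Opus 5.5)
The plan is to prove the equivalences as a cycle (1) $\Rightarrow$ (4) $\Rightarrow$ (5) $\Rightarrow$ (6) $\Rightarrow$ (1), and separately (1) $\Leftrightarrow$ (3) $\Leftrightarrow$ (2). The cycle should come essentially for free from the Main Correspondence, Theorem \ref{th:main-theorem}, together with Table \ref{table:tabla-relaciones}. The branch points of $\R_{w(z)}$ are exactly the singularities of $w^{-1}(z)$, so (1) $\Leftrightarrow$ (4) is just Definition \ref{def:Speiser-q-function}.3 read through Diagram \eqref{diagramaRX}. By Proposition \ref{prop:speiser-graph-implies-speiser-function}, the branch points correspond bijectively to the faces of $\mathfrak{S}_{w(z)}$ that are not digons: a bounded $2m$--gon with $m\geq 2$ is a critical point, and an unbounded face is a logarithmic singularity. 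This gives (4) $\Leftrightarrow$ (5). Dually, the vertices of $\widehat{\Gamma}_{\tt q}$ of valence $\geq 4$, finite or infinite, are the same singular points, so Proposition \ref{prop:bijection-extends-to-action} gives (5) $\Leftrightarrow$ (6).

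For (1) $\Rightarrow$ (3) I will invoke Theorem \ref{theo:decomposition-soul-logtowers}, statement (4). Take $w(z)$ finite Speiser.
\begin{enumerate}[label=\roman*),leftmargin=*]
\item If $w(z)$ is transcendental, then $2\leq{\tt p}<\infty$. The soul is a flat ${\tt p}$--gon with finitely many critical points, and on each maximal logarithmic tower $w(z)$ is an exponential or $h$--tangent block by statement (2).
\item If $w(z)$ is rational, then ${\tt p}=0$ and I will treat it as the degenerate case of (2) with no blocks attached.
\end{enumerate}
I will note explicitly that (3) as written requires ${\tt p}\geq 2$, so the rational case is covered only through (2) and (4). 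Then (3) $\Rightarrow$ (2) is Lemma \ref{lem:polygon-rational-block}: a flat ${\tt p}$--gon with $\rho<\infty$ critical points is right--left equivalent to a rational block $R(z)\colon \mathscr{P}\to\CW_w$. The glued pieces are then, by Definition \ref{def:piezas-elementales}, right--left equivalent to $\exp$ or $\tanh$ on $\overline{\HH}$.

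For the converse (2) $\Rightarrow$ (1) I will perform the surgery. Lemma \ref{lem:extention-to-petal} shows that a block can be glued to each side $S_\beta$, the choice between $\exp$ and $\tanh$ being dictated by whether an endpoint value is $\infty$. Corollary \ref{cor:pegado-isometrico} makes the isometric glueing produce a simply connected Riemann surface with a meromorphic function, hence, by uniformization, a function on some $\Omega_z$.

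Next comes the count. The singular values are contained in the finite set $\{{\tt a}_\beta\}\cup R(\text{critical points})$, so $w(z)$ is Speiser. The singularities of $w^{-1}(z)$ are then
\begin{itemize}[label=$\bigcdot$,leftmargin=*]
\item the finitely many critical points of the rational block, and
\item exactly two logarithmic singularities per block, one of which is shared via the common vertex $\infty_\sim$.
\end{itemize}
Hence $\delta<\infty$ and $w(z)$ is finite Speiser. Since the surface has infinitely many sheets, it is not compact, so $\Omega_z\neq\CW_z$. Nevanlinna's result quoted above, \cite{Nevanlinna1}\,\S8, then forces $\Omega_z=\CC_z$.

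I expect the main obstacle to be this last step, namely checking that the glued surface has only ${\tt p}$ logarithmic singularities. This requires that the tails of adjacent blocks meeting at $\infty_\sim$ assemble into a single logarithmic end of the Speiser graph rather than producing spurious branch points. I will handle it through Lemma \ref{lem:correspondence-log-towers} applied to the Speiser graph obtained by attaching ${\tt p}$ logarithmic ends to the finite nucleus dual to the rational block.
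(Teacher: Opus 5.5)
Your proposal is correct and follows the paper's route. The paper obtains the corollary directly from Theorem \ref{th:main-theorem}, Theorem \ref{theo:decomposition-soul-logtowers} (especially statement (4)), Lemma \ref{lem:extention-to-petal} and Definition \ref{def:piezas-elementales}; you spell out the individual implications, including the use of Lemma \ref{lem:polygon-rational-block}, that the paper leaves implicit.

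Your observation that the rational case ${\tt p}=0$ is not covered by (3) agrees with the paper's subsequent remark. In the converse direction, finiteness of the logarithmic singularities is all you need. In fact each one is formed by the halves of the two blocks meeting at a vertex of the polygon, so there are exactly ${\tt p}$ of them; your phrasing ``one of which is shared'' should read that both are shared.
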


The above result extends the notion of
\emph{structurally finite entire functions} considered
by M. Taniguchi \cite{Taniguchi1}, \cite{Taniguchi2}.

\begin{remark}
1.\ Note that if the set of logarithmic towers is empty,
\emph{i.e.}\ ${\tt p}=0$,
the soul of $\R_{w(z)}$ is itself.
Moroever, Statement (3) is empty (there is no flat $\tt p$--gon).
In particular, if $w(z)$ has no asymptotic values (for instance if $w(z)$ is rational) then 
the set of logarithmic towers is empty.

\noindent
2.\
An immediate consequence of Corollary \ref{Cor:caract-Speiser-finitas},
is that $N$--functions, can be constructed via surgery of:

\noindent
$\bigcdot$
a rational block, without interior singular points, and

\noindent
$\bigcdot$
a finite number $2\leq {\tt p} < \infty$ of 
exponential and $h$--tangent blocks.

\noindent
3.\ 
If we consider \emph{non finite Speiser functions},
other cases appear: 

\noindent
$\bigcdot$ 
$w(z)= \wp(z)$ on $\CC_z$, 
with ${\tt p}=0$, ${\tt r}=\infty$ and ${\tt q}=4$,
its associated Riemann surface has no logarithmic towers 
(yet it has an infinite number of 
algebraic branch points, see \cite{AlvarezGutierrezMucino} example 5.1 for the tessellation),

\noindent
$\bigcdot$
$w(z)=\cos\sqrt{z}$,
with ${\tt p}=1$, ${\tt r}=\infty$ and ${\tt q}=3$, 
recall Example \ref{ex:examples-finite-non-finite-Speiser-fns}.4.b,

\noindent
$\bigcdot$ 
$w(z)=\sin(z)$,
with ${\tt p}=2$, ${\tt r}=\infty$ and ${\tt q}=3$, 

\noindent
$\bigcdot$
$w(z)=\sin^2(z)$, 
with ${\tt p}=4$, ${\tt r}=\infty$ and ${\tt q}=3$,

\noindent
their Speiser graphs have no logarithmic ends, 
see  \cite{GoldbergOstrovskii}\,p.~360.

\noindent
4.\  
Speiser functions $w(z)$ on $\Omega_z=\Delta$, always have an
infinite number of singularities of $w^{-1}(z)$; a characterization similar
to Corollary \ref{Cor:caract-Speiser-finitas} is an open question.
\end{remark}


\section{Examples}
\label{sec:examples}

\begin{example}[$N$--function with ${\tt q}=3$]
\label{example:tessellation-Airy}
Recall the Schwarzian differential equation 
\eqref{eq:Schwarzian-diff-eqn},
$$
Sw\{ w, z \} = -2z.
$$
\hfill
\begin{enumerate}[label=\alph*),leftmargin=*]
\item
Up to M\"obius transformations, 
the solutions $w(z)$ are quotients of two Airy functions,
in particular

\centerline{
$w_{\Ai} (z) = \dfrac{ \Bi(z)}{\Ai(z)}$,
}

\noindent 
has the asymptotic values 

\centerline{$
\{ {\tt a}_1, {\tt a}_2, {\tt a}_3 \} = \{ -i, i, \infty \}$ }

\noindent 
as its only singular values.
Thus, $w_{\Ai}(z)$ is an $N$--function.

\item 
The branch points in the Riemann surface $\R_{w_{\Ai}(z)}$ are 

\centerline{$
\circled{\text{-}i}=(\infty_1, -i, \infty ), \ 
\circled{i}=(\infty_2, i, \infty ), \
\circled{\infty}=(\infty_3, \infty , \infty ).
$}

\noindent
Amongst the different possible sheets, consider the following 4 different types,
with indices $\Upxi=\{1,2,3,4\}$
\begin{equation}
\label{eq:eleccion-hojas}
\begin{array}{lrl}
\mathfrak{L}_{1} = \CW_w \backslash  \overline{-i\, i}, & &
\mathfrak{L}_{2} = \CW_w \backslash  \overline{i \, \infty},
\\
\mathfrak{L}_{3} = \CW_w \backslash \overline{-i \, i\, \infty}, & &
\mathfrak{L}_4 = \CW_w \backslash 
\big( \overline{-i \, i} \cup  \overline{i \,  \infty} \cup 
\overline{-i \, i\, \infty} \big) .
\end{array}
\end{equation}

\noindent
The sheets $\mathfrak{L}_3$ and $\mathfrak{L}_4$ are different in 
the following sense: 

\noindent
$\bigcdot$
the boundary of $\mathfrak{L}_3$ consists of two copies of the segment 
$\overline{-i \, i\, \infty}$, 

\noindent
$\bigcdot$
the boundary of $\mathfrak{L}_4$ consists of a copy of the segment
$\overline{-i \, i\, \infty}$, 
and a copy of the segments $\overline{-i \, i}$ and $\overline{i \,  \infty}$.

\noindent
In both cases, $i$ is a co--singular value.

\noindent
The diagonals in $\R_{w_{\Ai}(z)}$ are: 

$\Delta_{\vartheta_1 \, -i \,  i}$, with $\vartheta_1 \in \NN$,
note that that $\pi_2 (\Delta_{\vartheta_1 \, -i \, i} ) = \overline{-i \, i}$,

$\Delta_{\vartheta_2 \,  i \, \infty}$, with $\vartheta_2 \in \NN$,
note that that $\pi_2 (\Delta_{\vartheta_2 \, i \, \infty} ) = \overline{i \, \infty}$,

$\Delta_{\vartheta_3 \, -i\, \infty }$, with $\vartheta_3 \in \NN$,
note that that $\pi_2 (\Delta_{\vartheta_3 \, -i\, \infty } ) = \overline{-i\, i\, \infty }$,

$\Delta_{\vartheta_4 \, -i \, i}  \cup 
\Delta_{\vartheta_4 \, i \, \infty}  \cup
\Delta_{\vartheta_4 \, -i \, \infty } $, with $\vartheta_4=1$.

\noindent
Thus, the actual sheets that appear in $\R_{w_{\Ai}(z)}$ are:

$\mathfrak{L}_{1,\vartheta_1} = \CW_w \backslash \big( \pi_2 (\Delta_{\vartheta_1 \, -i \, i} ) \big)
= \big( \CW_w \backslash  \overline{-i \, i} \big)_{\vartheta_1}$,
with $\vartheta_1\in\NN$,

$\mathfrak{L}_{2,\vartheta_2} = \CW_w \backslash \big( \pi_2 (\Delta_{\vartheta_2 \, i \, \infty} ) \big)
= \big( \CW_w \backslash  \overline{i \, \infty} \big)_{\vartheta_2}$,
with $\vartheta_2\in\NN$,

$\mathfrak{L}_{3,\vartheta_3} = \CW_w \backslash \big( \pi_2 (\Delta_{\vartheta_3 \, -i\, \infty } ) \big)
= \big( \CW_w \backslash  \overline{-i\, i\ \infty } \big)_{\vartheta_3}$,
with $\vartheta_3\in\NN$,

$\mathfrak{L}_{4,\vartheta_4} = \CW_w \backslash 
\big( 
\pi_2 (\Delta_{\vartheta_4 \, -i \, i} ) \cup 
\pi_2 (\Delta_{\vartheta_4 \, i \, \infty} ) \cup
\pi_2 (\Delta_{\vartheta_4  \, \infty \, -i} ) 
\big)
$

$
\quad \quad  =\Big( \CW_w \backslash 
\big( \overline{-i \, i} \cup  \overline{i \, \infty} \cup 
\overline{-i\, i\, \infty } \big) \Big)_{\vartheta_4}
$, 
with $\vartheta_4=1$.

\noindent
Thus by gluing $\mathfrak{L}_{4,1}$ to $\mathfrak{L}_{1,1}$,
$\mathfrak{L}_{2,1}$ and $\mathfrak{L}_{3,1}$, along their common boundaries, 
we obtain a
decomposition of $\R_{w_{\Ai}(z)}$ into maximal domains of 
single--valuedness,
as in Proposition \ref{prop:Rw-is-a-union-of-sheets},
namely
\begin{equation}
\label{eq:decomposicion-dominios-maximales-Ai}
\R_{w_{\Ai}(z)} = 
\Bigg(
\mathfrak{L}_{4,1} 
\ \ \cup
\bigcup_{\vartheta_1= 1}^\infty 
\mathfrak{L}_{1, \vartheta_1}
\cup
\bigcup_{\vartheta_2= 1}^\infty 
\mathfrak{L}_{2, \vartheta_2 }
\cup
\bigcup_{\vartheta_3= 1}^\infty 
\mathfrak{L}_{3, \vartheta_3 }
\Bigg) \, \Big/ \sim \, .
\end{equation}

\noindent
Clearly, we could have chosen different types of sheets in Equation \eqref{eq:eleccion-hojas}; 
for instance by choosing $\mathfrak{L}_3 =  \CW_w \backslash \overline{\infty\, -i}$, 
$\mathfrak{L}_1 = \CW_w \backslash \overline{i\, \infty\, -i}$ and 
$\mathfrak{L}_4 = \CW_w \backslash 
\big( \overline{i\, \infty\, -i} \cup  \overline{i \,  \infty} \cup 
\overline{\infty\, -i} \big)$, 
the decomposition \eqref{eq:decomposicion-dominios-maximales-Ai} into 
maximal domains of single--valuedness would be different.

On the other hand,
considering the cyclic order 

\centerline{$\mathcal{W}_3 = [-i,i,\infty]$,}

\noindent
the decomposition of $\R_{w_\Ai(z)}$ into ${\tt p}=3$
maximal logarithmic towers 
and the unique soul is 
\begin{equation*}
\R_{w_\Ai(z)} = \Big[
\underbrace{\mathfrak{H}^+\backslash \big( \overline{-i \, i} \cup  \overline{i \, \infty} \cup 
\overline{\infty \, -i} \big) }_{\text{soul}}
\ \cup
\underbrace{ \mathcal{T}^\times_- (-i,i) }_{\text{logarithmic tower}}
\cup
\underbrace{ \mathcal{T}^\times_- (i,\infty) }_{\text{logarithmic tower}}
\cup
\underbrace{ \mathcal{T}^\times_- (\infty,-i) }_{\text{logarithmic tower}} 
\Big] \, \Big/ \sim.
\end{equation*}

\item
For the tessellation, with the cyclic order $\mathcal{W}_3$,
it follows that
$\gamma=i\RR\cup\{\infty\}$ and the Speiser $3$--tessellation 
$\big(\CC_z \backslash  w_{\Ai}(z)^*\gamma, 
w_{\Ai}(z)^*\mathcal{L}_\gamma \big)$ is
drawn in Figure \ref{fig:Tessellation-AiBi}.a--b. 
The tiles are $3$--gons, with vertices of valence two represented by green dots 
(the cosingular points). 
The three vertices of infinite valence of 
the graph $w_{\Ai}(z)^*\gamma$
are the points in the non Hausdorff compactification 

\centerline{$\CC_z \cup \{ 
\infty_{{\tt a}_1}, \, 
\infty_{{\tt a}_2}, \,
\infty_{{\tt a}_3}
\}$}

\noindent 
determined by the three asymptotic values. 

\item
Its Speiser graph of index $3$ is drawn in
Figure \ref{fig:Tessellation-AiBi}.c. 
Note that the nucleus consists of one vertex $v=\circ$ with 
three edges (in red), surrounded by three logarithmic ends (in black).

\begin{figure}[htbp!]
\begin{center}
{a)}
\includegraphics[width=0.25\textwidth]{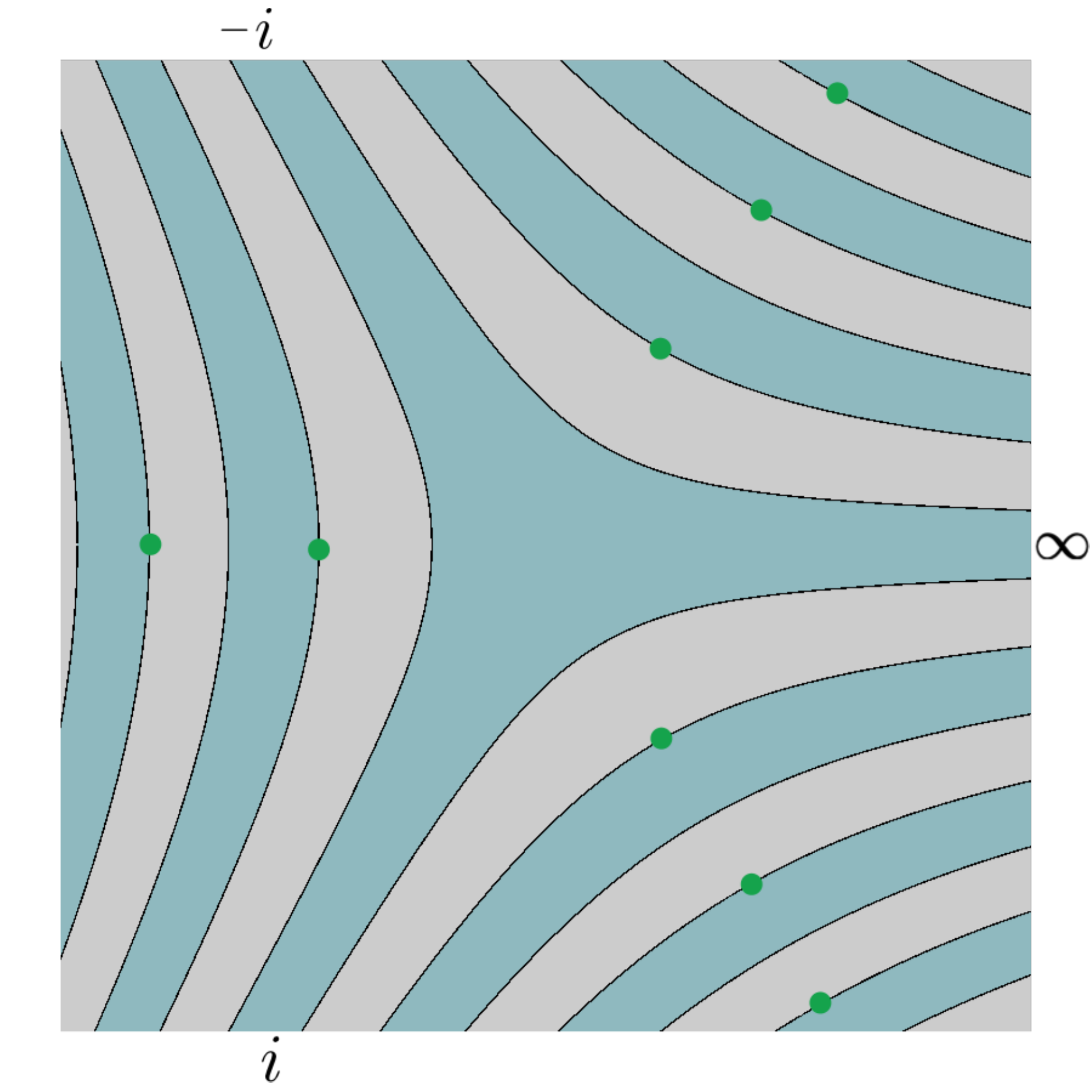}
\hspace{10 pt}
{c)}
\includegraphics[width=0.25\textwidth]{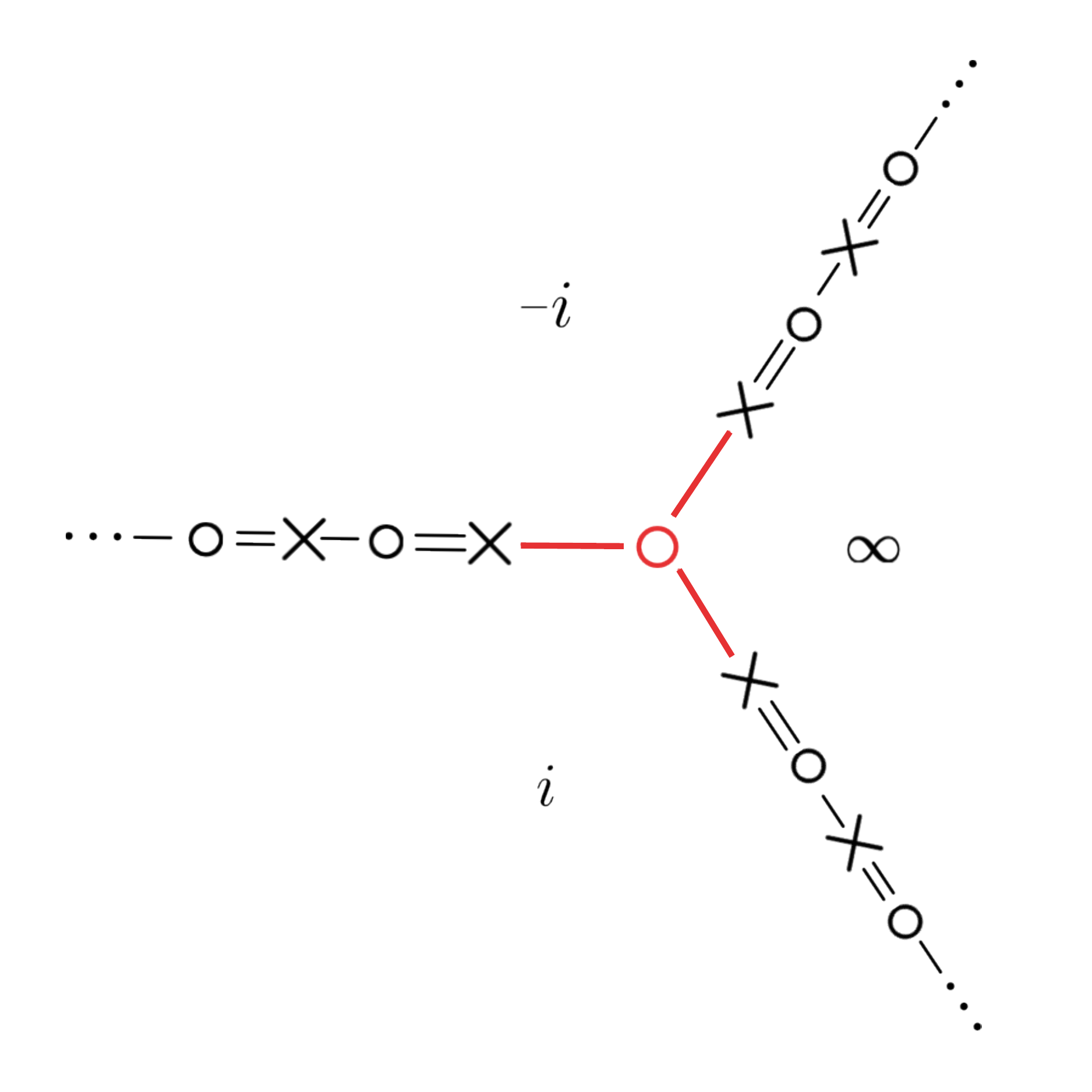}
\\
{b)}
\includegraphics[width=0.28\textwidth]{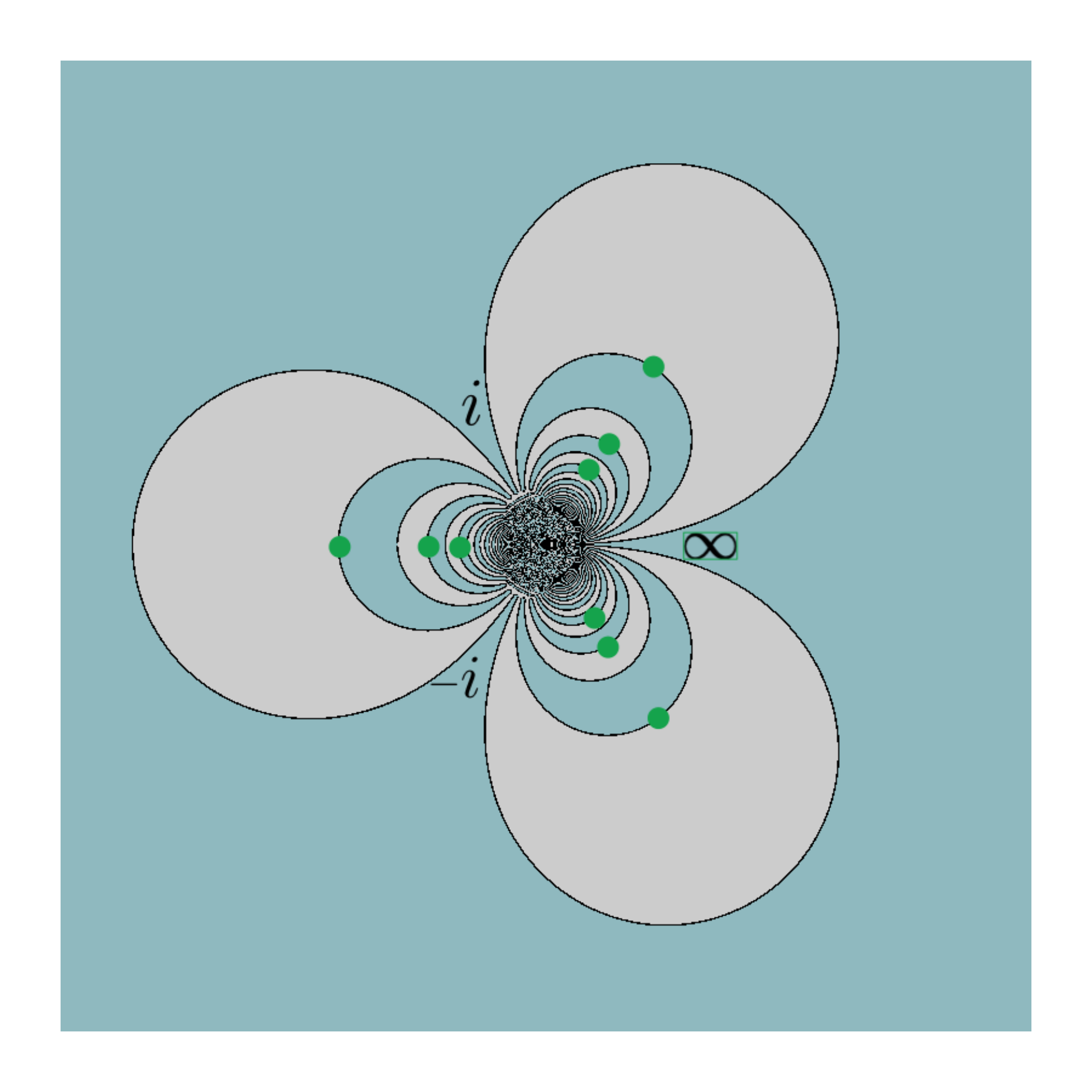}
\caption{
The tessellation 
$\big( \mathscr{T}(w_{\Ai}(z)^*\gamma), w_{\Ai}(z)^*\mathcal{L}_\gamma \big)$ 
corresponding to 
the $N$--function $w_{\Ai}(z) = \Bi(z) / \Ai(z)$
and $\gamma = i\RR\cup\{\infty\}$; 
(a) near the origin, (b) near the essential singularity
at $\infty\in\CW_z$.
(c) The corresponding Speiser graph of index $3$.
The nucleus is colored red and the ${\tt p}$
logarithmic ends are colored black.
}
\label{fig:Tessellation-AiBi}
\end{center}
\end{figure}
\end{enumerate}
\end{example}

\begin{example}
\label{example:function-wM4alt}
Consider the function

\centerline{
$w_4(z) = \dfrac{12 z^4-36 z^2+9}{\sqrt{\pi } \left(4 z^4-12 z^2+3\right) \text{erfi}(z)-2 e^{z^2} z \left(2 z^2-5\right)}$.
}

\begin{enumerate}[label=\alph*),leftmargin=*]
\item
It has the asymptotic values 
$\{ {\tt a}_1, {\tt a}_2, {\tt a}_1, {\tt a}_3 \} =
\left\{0,-\frac{3 i}{\sqrt{\pi }},0,\frac{3 i}{\sqrt{\pi }}\right\}$ as its only singular values.
The Schwarzian derivative is $Sw\{ w_4, z \} = -(z^2 + 9)$,
thus it is an $N$--function with ${\tt q}=3$.

\item 
The Riemann surface $\R_{w_4(z)}$ has 4 infinitely ramified branch points, namely
$\circled{1}=(\infty_1, 0, \infty)$,
$\circled{2}=(\infty_2, -\frac{3i}{\sqrt{pi}}, \infty)$,
$\circled{3}=(\infty_3, 0, \infty)$, and
$\circled{4}=(\infty_4, \frac{3i}{\sqrt{pi}}, \infty)$.

\noindent
There are 6 different types of diagonals:

\centerline{
$\Delta_{\vartheta {\tt a}_1 {\tt a}_2}$,
$\Delta_{\vartheta {\tt a}_2 {\tt a}_1}$,
$\Delta_{\vartheta {\tt a}_1 {\tt a}_3}$,
$\Delta_{\vartheta {\tt a}_3 {\tt a}_1}$,
$\Delta_{\vartheta {\tt a}_2 {\tt a}_3}$, and
$\Delta_{\vartheta {\tt a}_3 {\tt a}_2}$.
}

\noindent
The actual sheets that appear in $\R_{w_4(z)}$ are:

\begin{align*}
\mathfrak{L}_{1,\vartheta_1} &= \CW\backslash \big( \pi_2 (\Delta_{\vartheta_1 {\tt a}_1 {\tt a}_2}  ) \big)
= \big( \CW_w \backslash  \overline{{\tt a}_1 {\tt a}_2} \big)_{\vartheta_1},
\\
\mathfrak{L}_{2,\vartheta_2} &= \CW\backslash \big( \pi_2 (\Delta_{\vartheta_2 {\tt a}_2 {\tt a}_1} ) \big)
= \big( \CW_w \backslash  \overline{{\tt a}_2 {\tt a}_1} \big)_{\vartheta_2},
\\
\mathfrak{L}_{3,\vartheta_3} &= \CW\backslash \big( \pi_2 (\Delta_{\vartheta_3 {\tt a}_1 {\tt a}_3} ) \big)
= \big( \CW_w \backslash  \overline{{\tt a}_1 {\tt a}_3} \big)_{\vartheta_3},
\\
\mathfrak{L}_{4,\vartheta_4} &= \CW\backslash \big( \pi_2 (\Delta_{\vartheta_4 {\tt a}_3 {\tt a}_1} ) \big)
= \big( \CW_w \backslash  \overline{{\tt a}_3 {\tt a}_1} \big)_{\vartheta_4}, 
\\
\mathfrak{L}_{5,\vartheta_5} &= \CW\backslash \big( 
\pi_2 (\Delta_{\vartheta_5 {\tt a}_2 {\tt a}_3} ) \cup
\pi_2 (\Delta_{\vartheta_5 {\tt a}_3 {\tt a}_2} )
\big) 
\\
&= \Big( \CW_w \backslash \big( 
\overline{{\tt a}_2 {\tt a}_1 {\tt a}_3} \cup
\overline{{\tt a}_3 {\tt a}_1 {\tt a}_2}
\big)
\Big)_{\vartheta_5},
\\
\mathfrak{L}_{6,\vartheta_6} &= \CW\backslash \big( 
\pi_2 (\Delta_{\vartheta_6 {\tt a}_1 {\tt a}_2} ) \cup 
\pi_2 (\Delta_{\vartheta_6 {\tt a}_2 {\tt a}_3} ) \cup
\pi_2 (\Delta_{\vartheta_6 {\tt a}_3 {\tt a}_1} ) 
\big)
\\
&= \Big( \CW_w \backslash  \big(
\overline{{\tt a}_1 {\tt a}_2} \cup
\overline{{\tt a}_2 {\tt a}_1 {\tt a}_3} \cup
\overline{{\tt a}_3 {\tt a}_1} 
\big)
\Big)_{\vartheta_6},  \text{ with } \vartheta_6=1, \text{ and }
\\
\mathfrak{L}_{7,\vartheta_7} &= \CW\backslash \big( 
\pi_2 (\Delta_{\vartheta_7 {\tt a}_2 {\tt a}_1} ) \cup 
\pi_2 (\Delta_{\vartheta_7 {\tt a}_1 {\tt a}_3} ) \cup
\pi_2 (\Delta_{\vartheta_7 {\tt a}_3 {\tt a}_2} ) 
\big)
\\
&= \Big( \CW_w \backslash  \big(
\overline{{\tt a}_2 {\tt a}_1} \cup
\overline{{\tt a}_1 {\tt a}_3} \cup
\overline{{\tt a}_3 {\tt a}_1 {\tt a}_2 } 
\big)
\Big)_{\vartheta_7}, \text{ with } \vartheta_7=1.
\end{align*}

\noindent
The decomposition of $\R_{w_4(z)}$ into the maximal domains of single--valuedness is
\begin{equation*}
\R_{w_4(z)} = \Bigg(
\mathfrak{L}_{6,1} \cup 
\mathfrak{L}_{5,1} \cup
\mathfrak{L}_{5,2} \cup
\mathfrak{L}_{5,3} \cup
\mathfrak{L}_{7,1} 
\ \cup
\bigcup_{\vartheta_1= 1}^\infty 
\mathfrak{L}_{1, \vartheta_1}
\cup
\bigcup_{\vartheta_2= 1}^\infty 
\mathfrak{L}_{2, \vartheta_2 }
\cup
\bigcup_{\vartheta_3= 1}^\infty 
\mathfrak{L}_{3, \vartheta_3 }
\cup
\bigcup_{\vartheta_4= 1}^\infty 
\mathfrak{L}_{4, \vartheta_4 }
\Bigg) \, \Big/ \sim \, .
\end{equation*}

On the other hand, considering the cyclic order 

\centerline{$\mathcal{W}_3 = [ {\tt a}_2, {\tt a}_1, {\tt a}_3 ] =
[-\frac{3 i}{\sqrt{\pi }},0,\frac{3 i}{\sqrt{\pi }} ]$,}

\noindent  
the decomposition of $\R_{w(z)}$ into ${\tt p}=4$ maximal logarithmic towers 
and the unique soul is
\begin{multline*}
\R_{w_4(z)} = \Big[
\underbrace{
\mathfrak{H}^-\backslash ( \overline{{\tt a}_1 {\tt a}_3} \cup  \overline{{\tt a}_2 {\tt a}_1}
\cup  \overline{{\tt a}_3 {\tt a}_2} ) 
\cup
\bigcup_{\vartheta=1}^4 \big( 
\mathfrak{H}^+\backslash \overline{{\tt a}_3 {\tt a}_2} 
\cup
\mathfrak{H}^-\backslash \overline{{\tt a}_3 {\tt a}_2} 
\big)_\vartheta
\cup 
\mathfrak{H}^+\backslash ( \overline{{\tt a}_1 {\tt a}_3} \cup  \overline{{\tt a}_2 {\tt a}_1}
\cup  \overline{{\tt a}_3 {\tt a}_2} )
}_{\text{soul}}
\\ 
\cup
\underbrace{ \mathcal{T}^\circ_- ({\tt a}_1, {\tt a}_3) }_{\text{logarithmic tower}}
\cup
\underbrace{ \mathcal{T}^\circ_- ({\tt a}_2, {\tt a}_1) }_{\text{logarithmic tower}}
\cup
\underbrace{ \mathcal{T}^\times_+ ({\tt a}_2, {\tt a}_1) }_{\text{logarithmic tower}} 
\cup
\underbrace{ \mathcal{T}^\times_+ ({\tt a}_1, {\tt a}_3) }_{\text{logarithmic tower}}
\Big] \, \Big/ \sim.
\end{multline*}

\item
For the tessellation, with the cyclic order $\mathcal{W}_3$, it follows that
$\gamma=i\RR\cup\{\infty\}$ and the Speiser $3$--tessellation 
$\big(\CC_z \backslash w_4(z)^*\gamma, w_4(z)^*\mathcal{L}_\gamma \big)$ is
shown in Figure \ref{fig:Tessellation-p=4}.a--b.
The tiles are $3$--gons, with the vertices of valence two (the cosingular points) 
represented by green dots, only a finite number of them are shown. 
The four vertices of infinite valence of 
the graph $w_4(z)^*\gamma$
are the points in the non Hausdorff compactification 

\centerline{$\CC_z \cup \{ 
\infty_{1}, \, 
\infty_{2}, \, 
\infty_{3}, \, 
\infty_{4}
\}$}

\noindent 
determined by the four asymptotic values (with multiplicity)
$\left\{  0, \,  -\frac{3 i}{\sqrt{\pi }}, 
\,  0, \, \frac{3 i}{\sqrt{\pi }} \right\}$.

\item
Its analytic Speiser graph $\mathfrak{S}_{w_4(z)}$ of index $3$ is
drawn in Figure \ref{fig:Tessellation-p=4}.c. 
Note that the nucleus consists of ten vertices and four ``loose'' edges (in red), 
surrounded by four logarithmic ends (in black).

\begin{figure}[h!tbp]
\begin{center}
{ a)}
\includegraphics[width=0.25\textwidth]{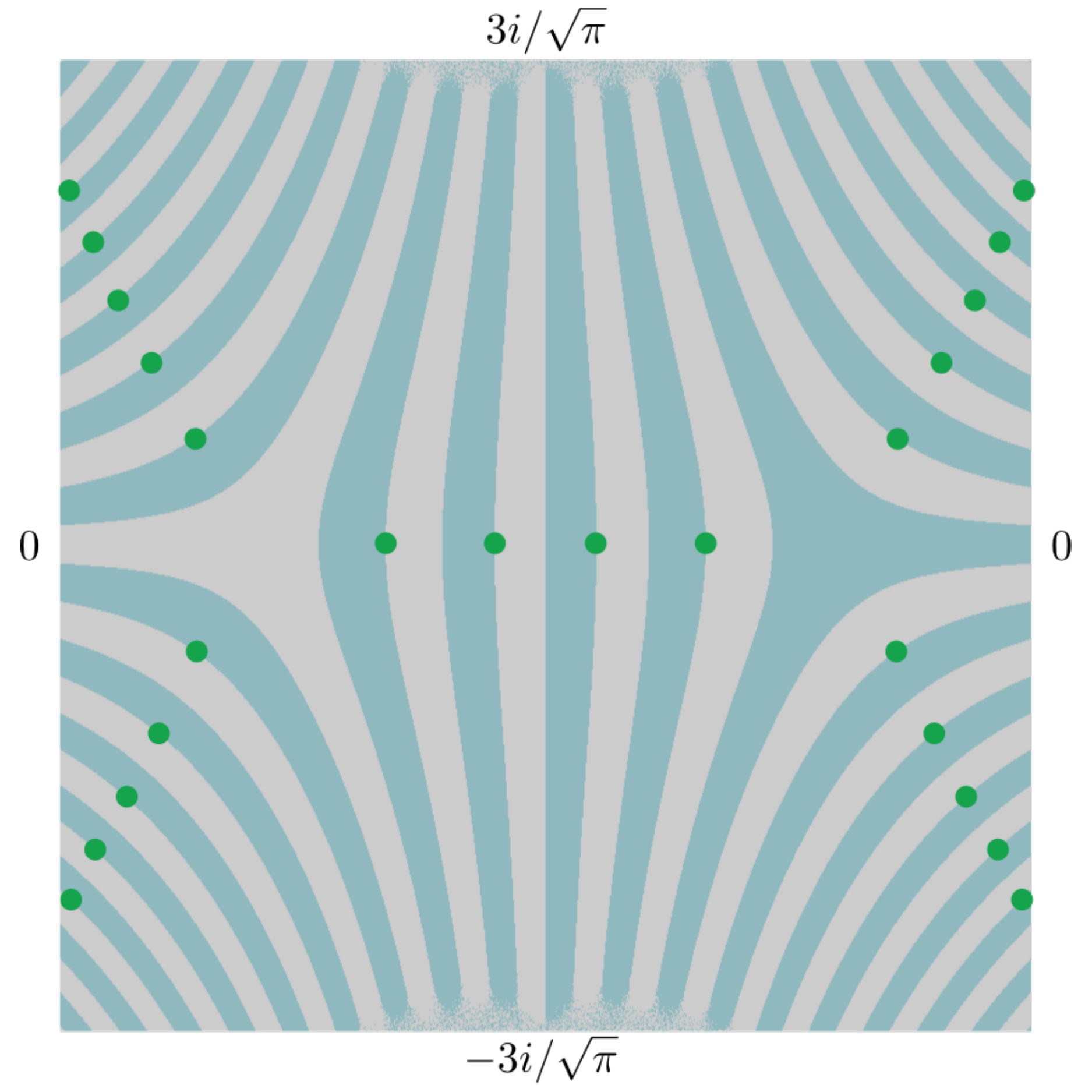}
\hspace{5 pt}
{ c)}
\includegraphics[width=0.30\textwidth]{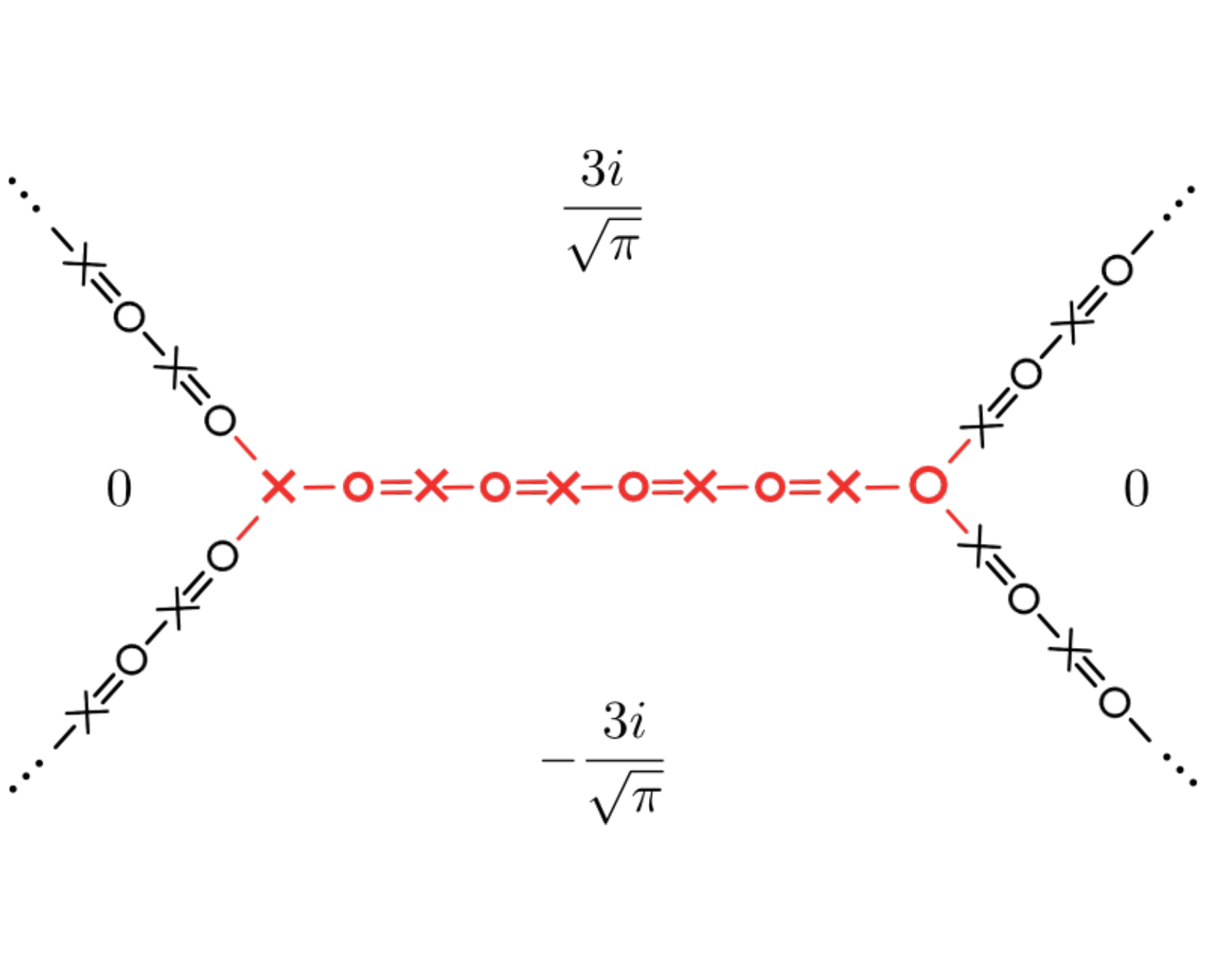}
\\
{ b)}
\includegraphics[width=0.28\textwidth]{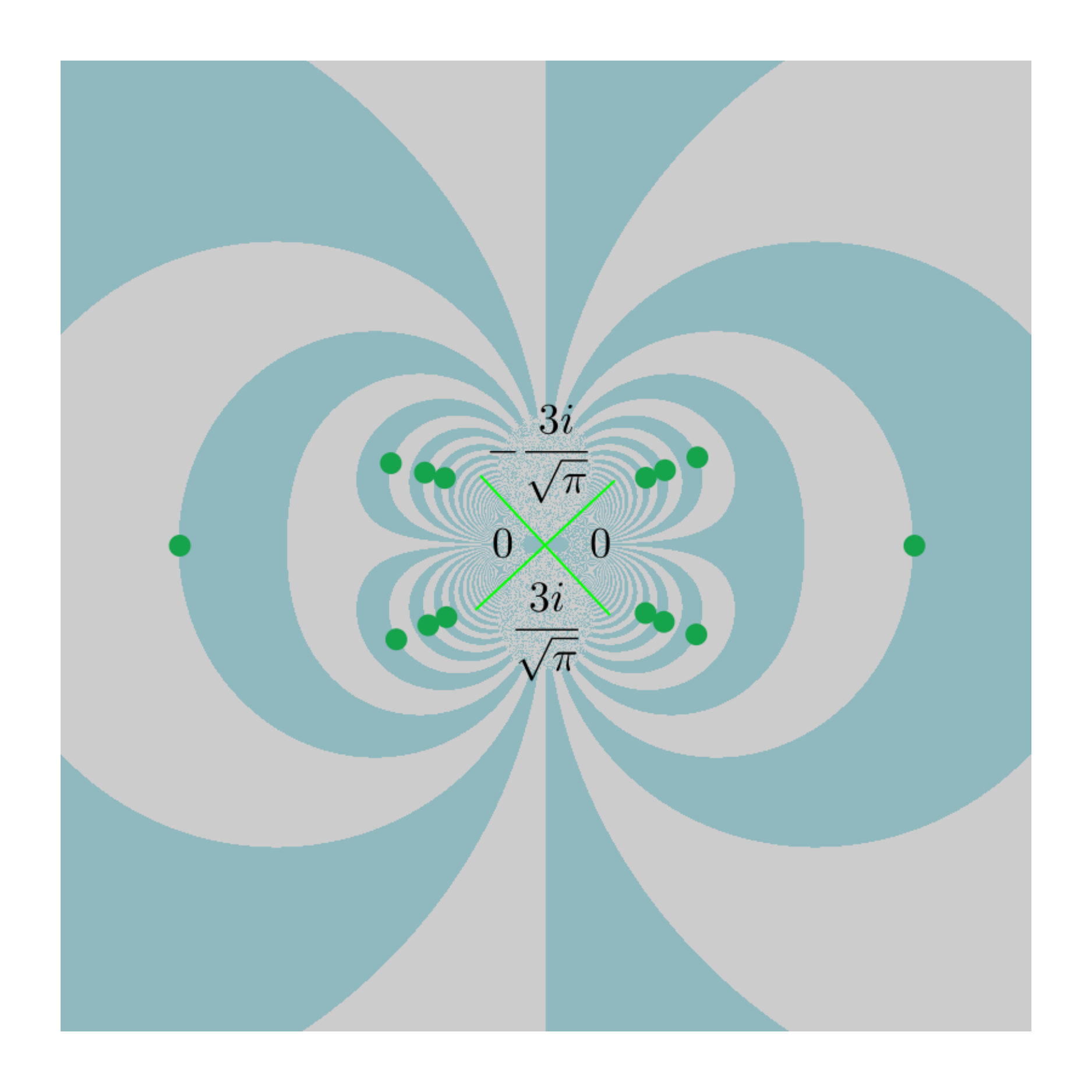}
\caption{
The tessellation $\big( \CC_z \backslash w_4(z)^*\gamma, 
w_4(z)^*\mathcal{L}_\gamma \big)$ 
corresponding to $\gamma = i\RR\cup\{\infty\}$ and the 
$N$--function $w_4(z)$, the green dots indicate the vertices of 
valence 2;
(a) near the origin, (b) near the essential singularity
at $\infty\in\CW_z$; in (b)
the green lines at the center of the drawing have been added to indicate 
the location of the four vertices of infinite valence.
(c) The corresponding Speiser graph of index ${\tt p}= 4$.
The nucleus is colored red and the $4$
logarithmic ends are colored black.
}
\label{fig:Tessellation-p=4}
\end{center}
\end{figure}
\end{enumerate}
\end{example}

\begin{example}
\label{example:exp-exp}
Consider the function 

\centerline{$w(z)=\exp(\exp(z))$.}

\begin{enumerate}[label=\alph*),leftmargin=*]
\item 
The singular values are $\mathcal{SV}_w = \{ {\tt a}_1, {\tt a}_2, {\tt a}_3 \} =\{0,1,\infty\}$; 
note that all of them are asymptotic values.
Thus it is a Speiser function with $\tt q = 3$.
It is not a finite Speiser function since it has an infinite number of logarithmic singularities of $w^{-1}(z)$:

\noindent
$\bigcdot$
A singularity $ U_1$ over the asymptotic value $1$, 
with asymptotic path $\alpha_1(\tau) = -\tau$, for $\tau\in(0,\infty)$. 

\noindent
$\bigcdot$
An infinite number of singularities, 
$\{ U_{0, \sigma} \}_{\sigma \in \ZZ}$, over the asymptotic value $0$, 
with asymptotic paths $\alpha_{0, \sigma}(\tau) = (2\sigma+1) \pi i + \tau$, for $\tau\in(0,\infty)$. 

\noindent
$\bigcdot$
An infinite number of singularities, 
$\{ U_{\infty, \sigma} \}_{\sigma \in \ZZ}$, over the asymptotic value $\infty$,
with asymptotic paths $\alpha_{\infty, \sigma}(\tau) = 2\sigma \pi i + \tau$, for $\tau\in(0,\infty)$.

\item
The Riemann surface $\R_{w(z)}$
has an infinite number of infinitely ramified branch points, namely

\centerline{
$\circled{1}=(\infty_{\widehat{1}}, 1,\infty)$, 
$\Big\{\circled{0}_\sigma = (\infty_{2\sigma+1}, 0, \infty) \Big\}_{\sigma \in \ZZ}$, and
$\Big\{\circled{\infty}_\sigma = (\infty_{2\sigma}, \infty, \infty) \Big\}_{\sigma \in \ZZ}$.}

\noindent
The actual sheets that appear in $\R_{w(z)}$ are:
\begin{align*}
\mathfrak{L}_{1,\vartheta_1} &= \CW\backslash \big( \pi_2 (\Delta_{\vartheta_1 {\tt a}_1 {\tt a}_2}  )
\cup \pi_2 (\Delta_{\vartheta_1 {\tt a}_3 {\tt a}_1}  )
 \big)
= \Big( \CW_w \backslash \big( \overline{01} \cup \overline{ \infty 0} \big) \Big)_{\vartheta_1},
\\
\mathfrak{L}_{2,\vartheta_2} &= \CW\backslash \big( \pi_2 (\Delta_{\vartheta_2 {\tt a}_3 {\tt a}_1} ) \big)
= \big( \CW_w \backslash  \overline{\infty 0} \big)_{\vartheta_2}.
\end{align*}

\noindent
The decomposition of $\R_{w(z)}$ into maximal domains of single--valuedness is
\begin{equation*}
\R_{w(z)} = 
\Bigg[
\bigcup_{\vartheta_1 \in\ZZ} \mathfrak{L}_{1,\vartheta_1}
\ \cup
\bigcup_{\vartheta_3 \in\ZZ}
\bigg(
\bigcup_{\vartheta_2= 1}^\infty 
\mathfrak{L}_{2, \vartheta_2}
\bigg)_{\vartheta_3}
\Bigg]
\, \Bigg/ \sim \, .
\end{equation*}

Considering the cyclic order 

\centerline{$\mathcal{W}_3=[0,1,\infty]$,}

\noindent 
the decomposition
of $\R_{w(z)}$ into an infinite number of maximal logarithmic towers 
and the unique soul is
\begin{equation*}
\R_{w(z)} = \bigg[
\underbrace{
\bigcup_{\vartheta=-\infty}^\infty \Big( 
\mathfrak{H}^+\backslash ( \overline{0 1} \cup  \overline{1 \infty}
\cup  \overline{\infty 0} ) 
\cup
\mathfrak{H}^-\backslash  ( \overline{0 1} \cup  \overline{1 \infty}
\cup  \overline{\infty 0} ) 
\Big)_\vartheta
}_{\text{soul}}
\quad
\bigcup_{\vartheta=-\infty}^\infty \Big( 
\underbrace{ 
\mathcal{T}^\circ_- (\infty, 0) 
}_{\text{logarithmic tower}}
\cup
\underbrace{ 
\mathcal{T}^\times_+ (\infty, 0) 
}_{\text{logarithmic tower}}
\Big)_\vartheta
\bigg] \, \Big/ \sim.
\end{equation*}

\noindent
Clearly, the soul is not a rational block.

\item
For the tessellation, with the cyclic order $\mathcal{W}_3$, it follows that
$\gamma=\RR\cup\{\infty\}$ and the Speiser $3$--tessellation is 
$\big( \mathscr{T}(w(z)^* \gamma),w(z)^* \mathcal{L}_\gamma \big)$, as shown in 
Figure \ref{fig:mosaico-exp-exp}.a. 
The tiles are topological $3$--gons, 
with vertices of valence two represented by 
green dots (the cosingular points, with singular values ${\tt a}_2=1$), 
only two ``columns'' are drawn, however there are an infinite number of them. 
There are an infinite number of vertices of infinite valence of 
the graph $w(z)^*\gamma$, these
are points in the non Hausdorff compactification 

\centerline{$\CC_z \cup \{ 
\infty_{\widehat{1}}, \} \cup 
\{  \infty_{2\sigma}, \infty_{2\sigma+1}  \}_{\sigma\in\ZZ}$}

\noindent 
determined by the asymptotic values.

\item
Its analytic Speiser graph of index $3$ is drawn in Figure \ref{fig:mosaico-exp-exp}.b.
In this case the Speiser graph has 
an unbounded 1--face corresponding to the logarithmic singularity over the asymptotic value 1, 
an infinite number of unbounded 0--faces 
and $\infty$--faces, which correspond to logarithmic singularities over the 
asymptotic values 0 and $\infty$, respectively. 
It also has an infinite number of digons corresponding to the coasymptotic value 1.
Note that the nucleus consists of an infinite number of vertices and an infinite number of ``loose'' edges (in red), 
surrounded by an infinite number of logarithmic ends (in black).

\end{enumerate}
\begin{figure}[h!tbp]
\begin{center}
{a)} \includegraphics[width=0.3\textwidth]{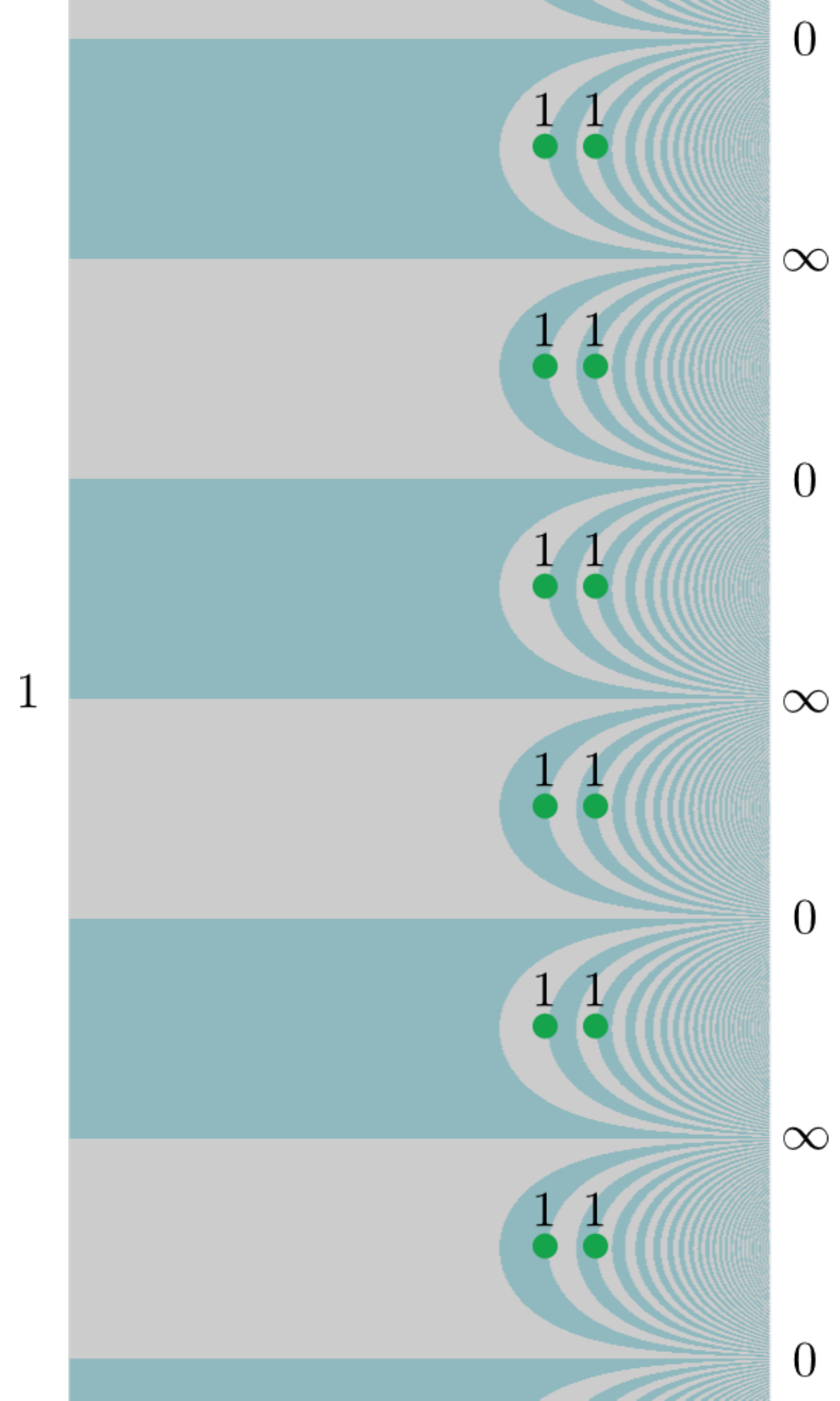}
\hspace{10 pt}
{b)} \includegraphics[width=0.3\textwidth]{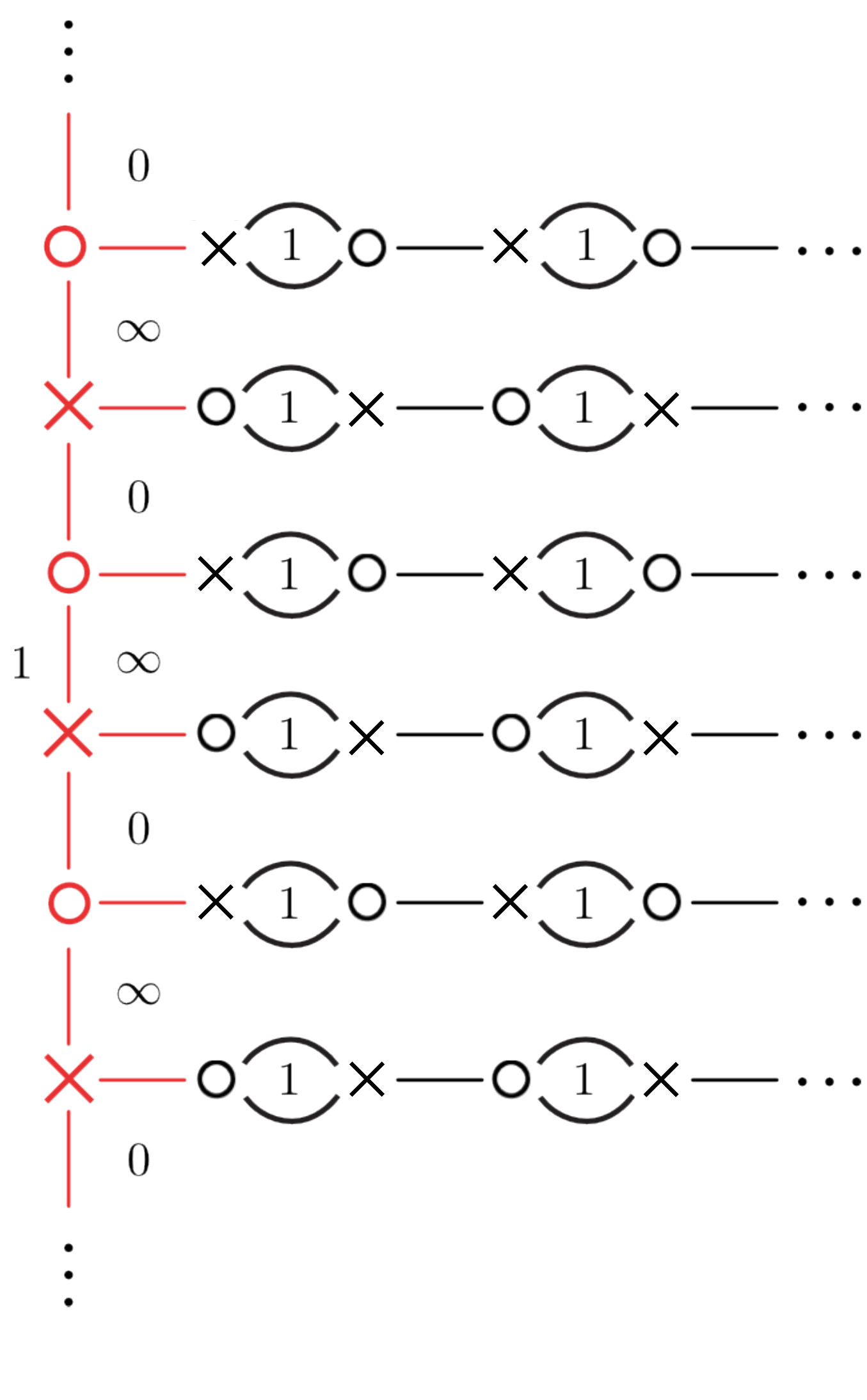}
\caption{
Consider the Speiser function $w(z)=\exp(\exp(z))$ whose singular values are 
$\{0,1,\infty\}$, note that all of them are asymptotic values.
a) Speiser 3--tessellation $\mathscr{T}_\gamma(w(z))$, here 
$\gamma = \RR \cup \{\infty\}$. The green dots are the cosingular points
labelled with the corresponding cosingular value `1'; these
continue indefinitely to the right 
as suggested.
b) Speiser graph $\mathfrak{S}_{w(z)}$ of index ${\tt q}=3$; the digons are labelled `1' since
they are the dual of the cosingular points.
The nucleus is colored red and the 
logarithmic ends are colored black.
}
\label{fig:mosaico-exp-exp}
\end{center}
\end{figure}
\end{example}

\begin{example}
\label{example:tessellation-expsin}
Consider the function 

\centerline{
$w(z)=\e^{\sin(z)}$.
}

\begin{enumerate}[label=\alph*),leftmargin=*]
\item
The singular values are 
$\mathcal{SV}_w=\{{\tt w}_1, {\tt w}_2, {\tt a}_1, {\tt a}_2 \}=\{\e, \e^{-1},0,\infty\}$.
Thus it is a Speiser function with $\tt q = 4$.
It is not a finite Speiser function since:

\noindent
$\bigcdot$ 
It has an infinite number of critical points
$\mathcal{CP}_w \doteq \{z_k = \frac{2k+1}{2}\pi \ \vert \ k\in\ZZ\}$ 
corresponding to the critical values $\mathcal{CV}_w \doteq \{\e, \e^{-1}\}$.

\noindent
$\bigcdot$
It has an infinite number of logarithmic singularities 
$\{ U_{0, \sigma \pm} \}_{\sigma \in \ZZ}$ over the asymptotic value $0$, and
an infinite number of logarithmic singularities 
$\{ U_{\infty, \sigma \pm} \}_{\sigma \in \ZZ}$ over the asymptotic value $\infty$.
The asymptotic paths are 
$\alpha_{{\tt a}_{\sigma \pm}}( \tau )=(2\sigma+1)\frac{\pi}{2} \pm i \tau $, 
for $\sigma\in\ZZ$, $\tau\in (0,\infty)$ associated to the asymptotic values

\centerline{
${\tt a}_{\sigma\pm}=\begin{cases}
  0_{\sigma\pm}=0, \text{ for odd }\sigma,\\
  \infty_{\sigma\pm}=\infty, \text{ for even }\sigma.
\end{cases}$
}

\item The Riemann surface has an infinite number of finitely ramified branch points

\centerline{
$\circled{z_k}= \begin{cases} (z_k, \e^{-1}, 2), \text{ for odd } k,\\
(z_k, \e, 2), \text{ for even } k, \end{cases}$ $k\in \ZZ,$
}

\noindent
and an infinite number of infinitely ramified branch points 

\centerline{
$\circled{\sigma\pm }= \begin{cases} (\infty_{\sigma\pm}, 0, \infty), \text{ for odd } \sigma,\\
(\infty_{\sigma\pm}, \infty, \infty), \text{ for even } \sigma, \end{cases}$ $\sigma \in \ZZ.$
}

\noindent
The actual sheets that appear in $\R_{w(z)}$ are:
\begin{align*}
\mathfrak{L}_{1,\vartheta_1} &= \CW\backslash \big( \pi_2 (\Delta_{\vartheta_1 {\tt a}_2 {\tt a}_1}  ) \big)
= \big( \CW_w \backslash  \overline{\infty 0} \big)_{\vartheta_1},
\\
\mathfrak{L}_{2,\vartheta_2} &= \CW\backslash \big( \pi_2 (\Delta_{\vartheta_2 {\tt a}_2 {\tt a}_1} ) \big)
= \big( \CW_w \backslash  \overline{\infty 0} \big)_{\vartheta_2},
\\
\mathfrak{L}_{3,\vartheta_3} &= \CW\backslash \big( 
\pi_2 (\Delta_{\vartheta_3 {\tt a}_1 {\tt w}_1} ) \cup 
\pi_2 (\Delta_{\vartheta_3 {\tt w}_2 {\tt a}_2} ) \cup
\pi_2 (\Delta_{\vartheta_3 {\tt a}_2 {\tt a}_1} ) 
\big)
\\
&= \Big( \CW_w \backslash  \big(
\overline{0 \e^{-1} } \cup
\overline{\e \infty } \cup
\overline{\infty 0 } 
\big)
\Big)_{\vartheta_3}.
\end{align*}

\noindent
The decomposition of $\R_{w(z)}$ into maximal domains of single--valuedness is
\begin{equation*}
\R_{w(z)} = 
\Bigg[
\bigcup_{\vartheta_3 \in\ZZ}
\mathfrak{L}_{3,\vartheta_3}
\ \cup
\bigcup_{\vartheta_3 \in\ZZ}
\bigg(
\bigcup_{\vartheta_1= 1}^\infty 
\mathfrak{L}_{1, \vartheta_1}
\cup
\bigcup_{\vartheta_2= 1}^\infty 
\mathfrak{L}_{2, \vartheta_2 }
\bigg)_{\vartheta_3}
\Bigg]
\, \Big/ \sim \, .
\end{equation*}

Considering the cyclic order 

\centerline{
$\mathcal{W}_4=[{\tt a}_1, {\tt w}_2, {\tt w}_1, {\tt a}_2]=[0, \e^{-1},\e,\infty ]$,}

\noindent
the decomposition of $\R_{w(z)}$ into an infinite number of 
maximal logarithmic towers and the unique soul is
\begin{multline*}
\R_{w(z)} = 
\bigg[
\underbrace{
\bigcup_{\vartheta=-\infty}^\infty \Big( 
\mathfrak{H}^+\backslash ( \overline{\infty 0} \cup  \overline{\e \infty}
\cup \overline{\e^{-1},\e} \cup \overline{0 \e^{-1}} ) 
\cup
\mathfrak{H}^-\backslash ( \overline{\infty 0} \cup  \overline{\e \infty}
\cup \overline{\e^{-1},\e} \cup \overline{0 \e^{-1}} ) 
\Big)_\vartheta
}_{\text{soul}}
\\ 
\bigcup_{\vartheta=-\infty}^\infty \Big( 
\underbrace{ 
\mathcal{T}^\circ_- (\infty, 0) 
}_{\text{logarithmic tower}}
\cup
\underbrace{ 
\mathcal{T}^\times_+ (\infty, 0) 
}_{\text{logarithmic tower}}
\Big)_\vartheta
\bigg] 
\, \Big/ \sim.
\end{multline*}
\begin{figure}[h!]
\begin{center}
{ a)}\includegraphics[width=0.4\textwidth]{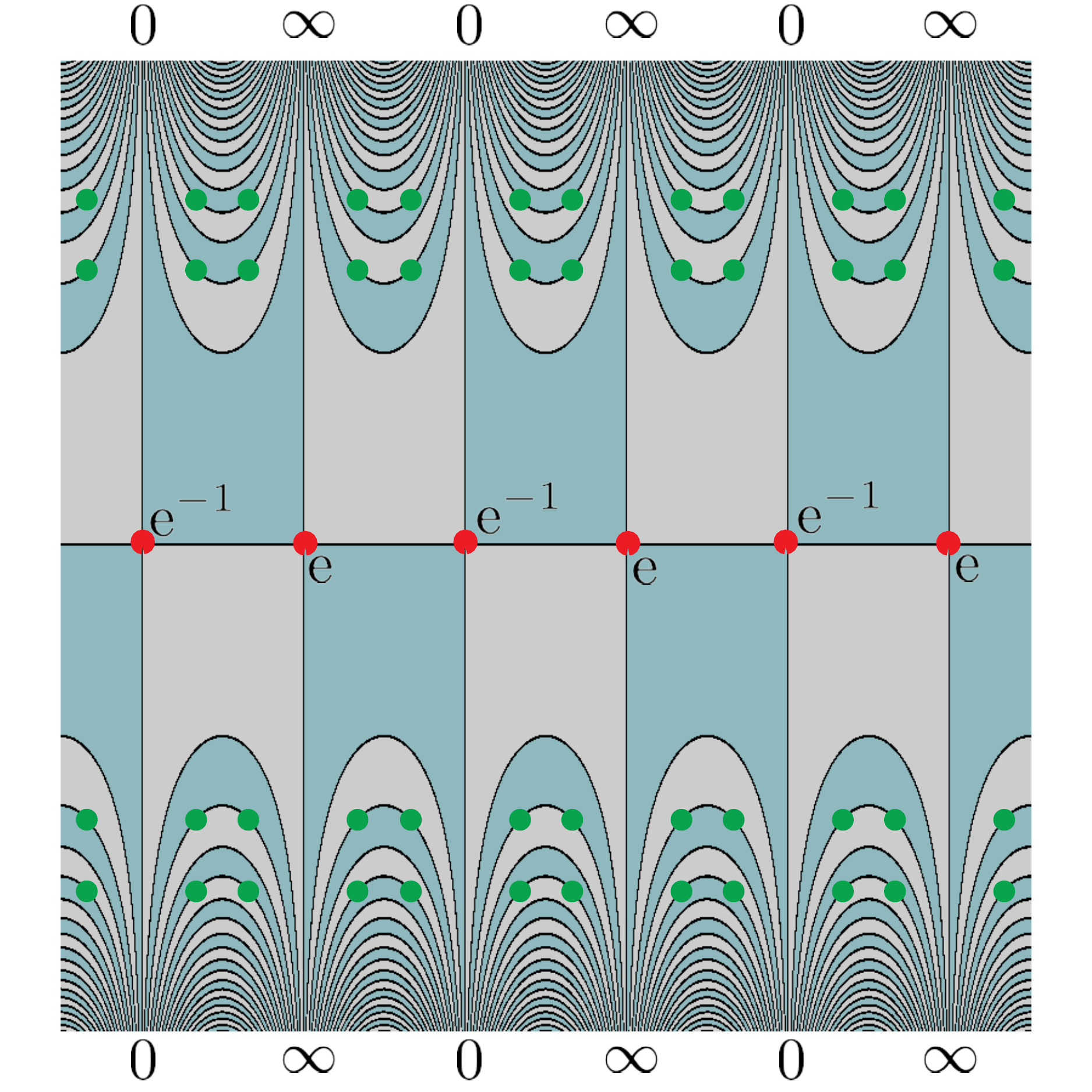}
{ b)}\includegraphics[width=0.5\textwidth]{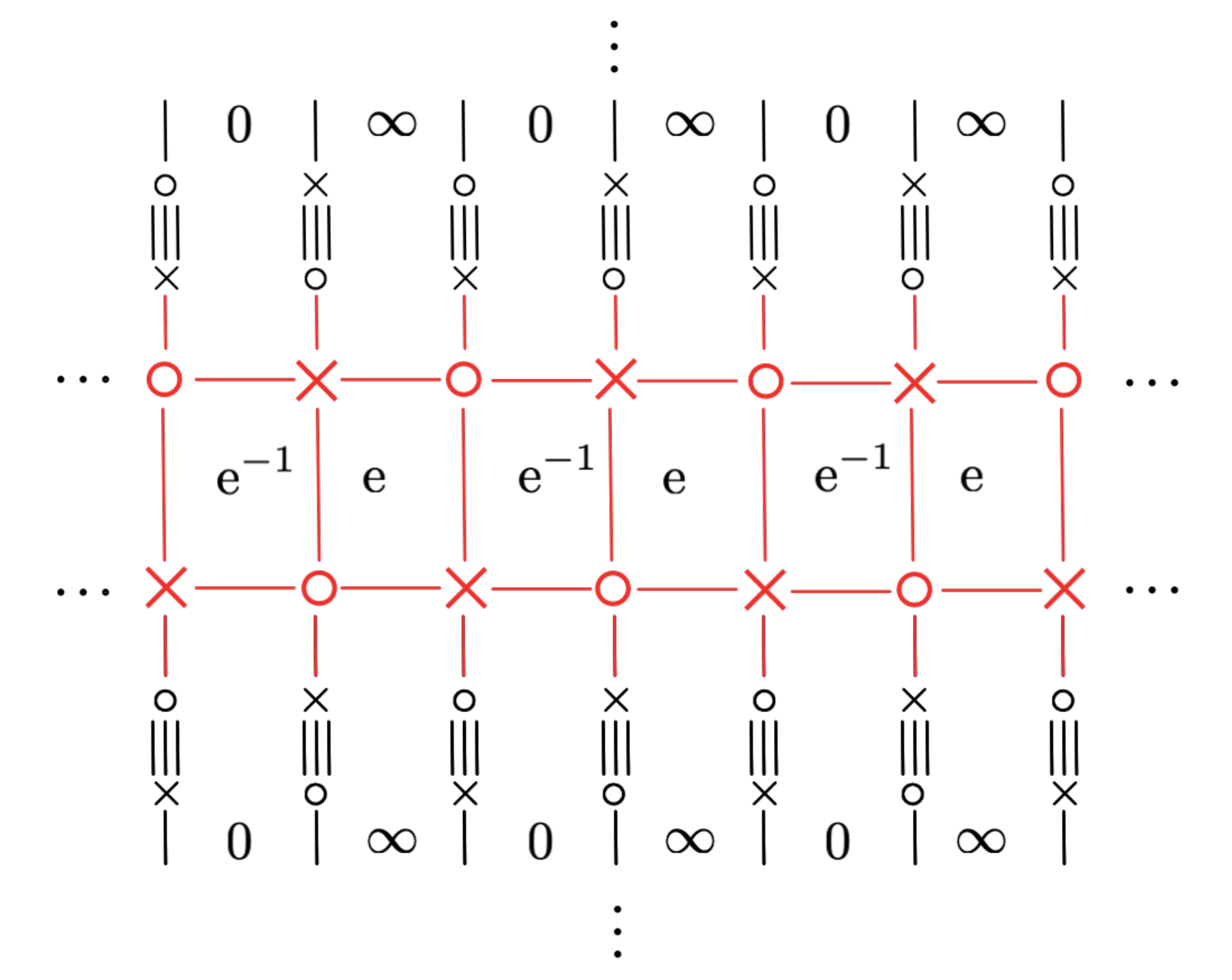}
\caption{
Speiser function $w(z)= \e^{\sin(z)}$.
(a) Speiser 4--tessellation $\big( \mathscr{T}(w(z)^*\gamma), 
w(z)^*\mathcal{L}_\gamma \big)$ corresponding to 
$\gamma=\RR\cup\{\infty\}$ and $w(z)$, 
the green dots indicate the vertices of valence 2.
(b) The corresponding Speiser graph of index ${\tt q}= 4$.  
The nucleus is colored red and the 
logarithmic ends are colored black.
}
\label{figExpSin}
\end{center}
\end{figure}

\item
For the tessellation with the cyclic order 
$\mathcal{W}_4$, it follows that
$\gamma=\RR\cup\{\infty\}$ and its Speiser $4$--tessellation is
$\big( \mathscr{T}(w(z)^* \gamma),w(z)^* \mathcal{L}_\gamma \big)$ which is shown in 
Figure \ref{figExpSin}.a. 
As can be appreciated, the tiles are topological $4$--gons, 
with vertices of valence two represented by 
green dots (the cosingular points, with singular values ${\tt w}_1=\e$ and ${\tt w}_2=\e^{-1}$), 
only four ``rows'' are drawn, however there are an infinite number of them. 
There are an infinite number of vertices of infinite valence of 
the graph $w(z)^*\gamma$. 
These are the ideal points in the non Hausdorff compactification 

\centerline{$\CC_z \cup \{ 
\infty_{{\tt a}_{1\pm}}, \, 
\infty_{{\tt a}_{2\pm}}, \, 
\ldots, 
\infty_{{\tt a}_{\sigma\pm}},\,
\ldots
\}$}

\noindent 
determined by the asymptotic values ${\tt a}_{\sigma\pm}=0$ and 
${\tt a}_{\sigma\pm}=\infty$, for odd or even $\sigma$ respectively.

\item
Its analytic Speiser graph of index $4$ is drawn in Figure \ref{figExpSin}.b.
In this case the Speiser graph has an infinite number of unbounded 0--faces 
and $\infty$--faces, which correspond to logarithmic singularities over the 
asymptotic values 0 and $\infty$, respectively. 
Moreover, it also has an infinite number of bounded $\rm e$--faces and ${\rm e}^{-1}$--faces. 
The bounded faces are $4$--gons, thus they correspond to 
finitely ramified branch points of ramification index 2.
Note that the nucleus consists of an infinite number of squares arranged in a horizontal line 
and an infinite number of ``loose'' edges (in red), 
surrounded by an infinite number of logarithmic ends (in black).

\end{enumerate}
\end{example}

\begin{example}
\label{example:sinexpsin}
Consider the function 

\centerline{$w(z)= \sin (z) \exp (\sin (z))$.}

\begin{enumerate}[label=\alph*),leftmargin=*]
\item
The singular values are 
$\mathcal{SV}_w=\{ {\tt w_1}, {\tt a}_1, {\tt w}_2, {\tt a}_2 \} = \{ -\e^{-1}, 0, \e, \infty \}$,
where $\{ 0, \infty \}$ are asymptotic values and $\{ -\e^{-1}, \e \}$ are critical values.
It is thus a Speiser function, but not a finite Speiser function,
since it has an infinite number 
of singularities of $w^{-1}(z)$:

\noindent
$\bigcdot$
It has an infinite number of critical points
$\mathcal{CP}_w \doteq \{z_k = \frac{2k+1}{2}\pi \ \vert \ k\in\ZZ\}$ 
corresponding to the critical values $\mathcal{CV}_w \doteq \{ {\tt w}_1, {\tt w}_2 \}$;
for odd $\sigma$ the critical values are ${\tt w}_1=-\e^{-1}$ with ramification index 4, 
and for even $\sigma$ the critical values are ${\tt w}_2=\e$ with ramification index 2.

\noindent
$\bigcdot$
It has an infinite number of logarithmic singularities 
$\{ U_{0, \sigma \pm} \}_{\sigma \in \ZZ}$ over the asymptotic value $0$, and
an infinite number of logarithmic singularities 
$\{ U_{\infty, \sigma \pm} \}_{\sigma \in \ZZ}$ over the asymptotic value $\infty$.
The asymptotic paths are 
$\alpha_{{\tt a}_{\sigma \pm}}( \tau )=(2\sigma+1)\frac{\pi}{2} \pm i \tau $, 
for $\sigma\in\ZZ$, $\tau\in (0,\infty)$ associated to the asymptotic values

\centerline{
${\tt a}_{\sigma\pm}=\begin{cases}
  0_{\sigma\pm}=0, \text{ for odd }\sigma,\\
  \infty_{\sigma\pm}=\infty, \text{ for even }\sigma.
\end{cases}$
}

\noindent
Moreover the points $\{ k \pi \}_{k\in\ZZ}$ are cocritical points with cocritical value 0. 
There are many more cocritical as will shortly be seen.

\item
The Riemann surface has an infinite number of finitely ramified branch points

\centerline{
$\circled{z_k}= \begin{cases} (z_k, -\e^{-1}, 4), \text{ for odd } k,\\
(z_k, \e, 2), \text{ for even } k, \end{cases}$ $k\in \ZZ,$
}

\noindent
and an infinite number of infinitely ramified branch points 

\centerline{
$\circled{\sigma\pm }= \begin{cases} (\infty_{\sigma\pm}, 0, \infty), \text{ for odd } \sigma,\\
(\infty_{\sigma\pm}, \infty, \infty), \text{ for even } \sigma, \end{cases}$ $\sigma \in \ZZ.$
}

\noindent
The actual sheets that appear in $\R_{w(z)}$ are:
\begin{align*}
\mathfrak{L}_{1,\vartheta_j} &= \CW\backslash \big( \pi_2 (\Delta_{\vartheta_j {\tt a}_1 {\tt a}_2}  ) \big)
= \big( \CW_w \backslash  \overline{0 \infty} \big)_{\vartheta_j}, \quad j=1,2,3,4,
\\
\mathfrak{L}_{2,\vartheta_k} &= \CW\backslash \big( 
\pi_2 (\Delta_{\vartheta_k {\tt w}_1 {\tt a}_1} ) \cup 
\pi_2 (\Delta_{\vartheta_k {\tt a}_1{\tt a}_2} ) \cup
\pi_2 (\Delta_{\vartheta_k {\tt w}_1 {\tt w}_2} ) \cup
\pi_2 (\Delta_{\vartheta_k {\tt w}_2 {\tt a}_2} ) 
\big),
\\
&= \Big( \CW_w \backslash  \big(
\overline{-\e^{-1} 0 } \cup
\overline{0 \e \infty } \cup
\overline{-\e^{-1} 0 \e } \cup 
\overline{\e \infty }
\big)
\Big)_{\vartheta_k} , \quad \vartheta_k=1,2,3,4.
\end{align*}

\noindent
The decomposition of $\R_{w(z)}$ into maximal domains of single--valuedness is 
\begin{equation*}
\R_{w(z)} = 
\Bigg[
\bigcup_{\vartheta_5 \in\ZZ} \bigg( 
\mathfrak{L}_{2,1} \cup \mathfrak{L}_{2,2}
\cup \mathfrak{L}_{2,3} \cup \mathfrak{L}_{2,4}
\cup
\bigcup_{\vartheta_1= 1}^\infty 
\mathfrak{L}_{1, \vartheta_1}
\cup
\bigcup_{\vartheta_2= 1}^\infty 
\mathfrak{L}_{1, \vartheta_2 }
\cup
\bigcup_{\vartheta_3= 1}^\infty 
\mathfrak{L}_{1, \vartheta_3}
\cup
\bigcup_{\vartheta_4= 1}^\infty 
\mathfrak{L}_{1, \vartheta_4 }
\bigg)_{\vartheta_5}
\Bigg]
\, \Bigg/ \sim \, .
\end{equation*}

Considering the cyclic order 

\centerline{$\mathcal{W}_4 = [ -\e^{-1}, 0, \e, \infty ]$,}

\noindent  
the decomposition
of $\R_{w(z)}$ into an infinite number of maximal logarithmic towers 
and the unique soul is
\begin{multline*}
\R_{w(z)} = 
\Bigg[
\underbrace{
\bigcup_{\vartheta_5 \in\ZZ} \big( 
\mathfrak{L}_{2,1} \cup \mathfrak{L}_{2,2}
\cup \mathfrak{L}_{2,3} \cup \mathfrak{L}_{2,4}
\big)_{\vartheta_5}
}_{\text{soul}} 
\cup
\\
\bigcup_{\vartheta_5 \in\ZZ}
\bigg(
\underbrace{
\mathcal{T}^\times_+(0,\infty)
}_{\substack{\text{left upper}\\ \text{logarithmic tower}} }
\cup
\underbrace{
\mathcal{T}^\circ_-(0,\infty)
}_{\substack{\text{left lower}\\ \text{logarithmic tower}} }
\cup
\underbrace{
\mathcal{T}^\circ_-(0,\infty)
}_{\substack{\text{right upper}\\ \text{logarithmic tower}} }
\cup
\underbrace{
\mathcal{T}^\times_+(0,\infty)
}_{\substack{\text{right lower}\\ \text{logarithmic tower}} }
\bigg)_{\vartheta_5}
\Bigg]
\, \Bigg/ \sim \, ,
\end{multline*}

\noindent
where for $\vartheta_k=1,2,3,4$
the sheet $\mathfrak{L}_{2,\vartheta_k}$ decomposes into 
half sheets

\centerline{
$\mathfrak{L}_{2,\vartheta_k} = 
\Big( \mathfrak{H}^+ 
\backslash  \big(
\overline{-\e^{-1} 0 } \cup
\overline{0 \infty } \cup
\overline{\infty -\e^{-1} } 
\big)
\cup
\mathfrak{H}^- 
\backslash  \big(
\overline{-\e^{-1} \e } \cup 
\overline{\e \infty } \cup
\overline{\infty -\e^{-1} }
\big)
\Big)_{\vartheta_k}$,
}

\noindent
glued along the common boundary 
$\overline{\infty -\e^{-1} }$.

\begin{figure}[h!tbp]
\begin{center}
{ a)}\includegraphics[width=0.8\textwidth]{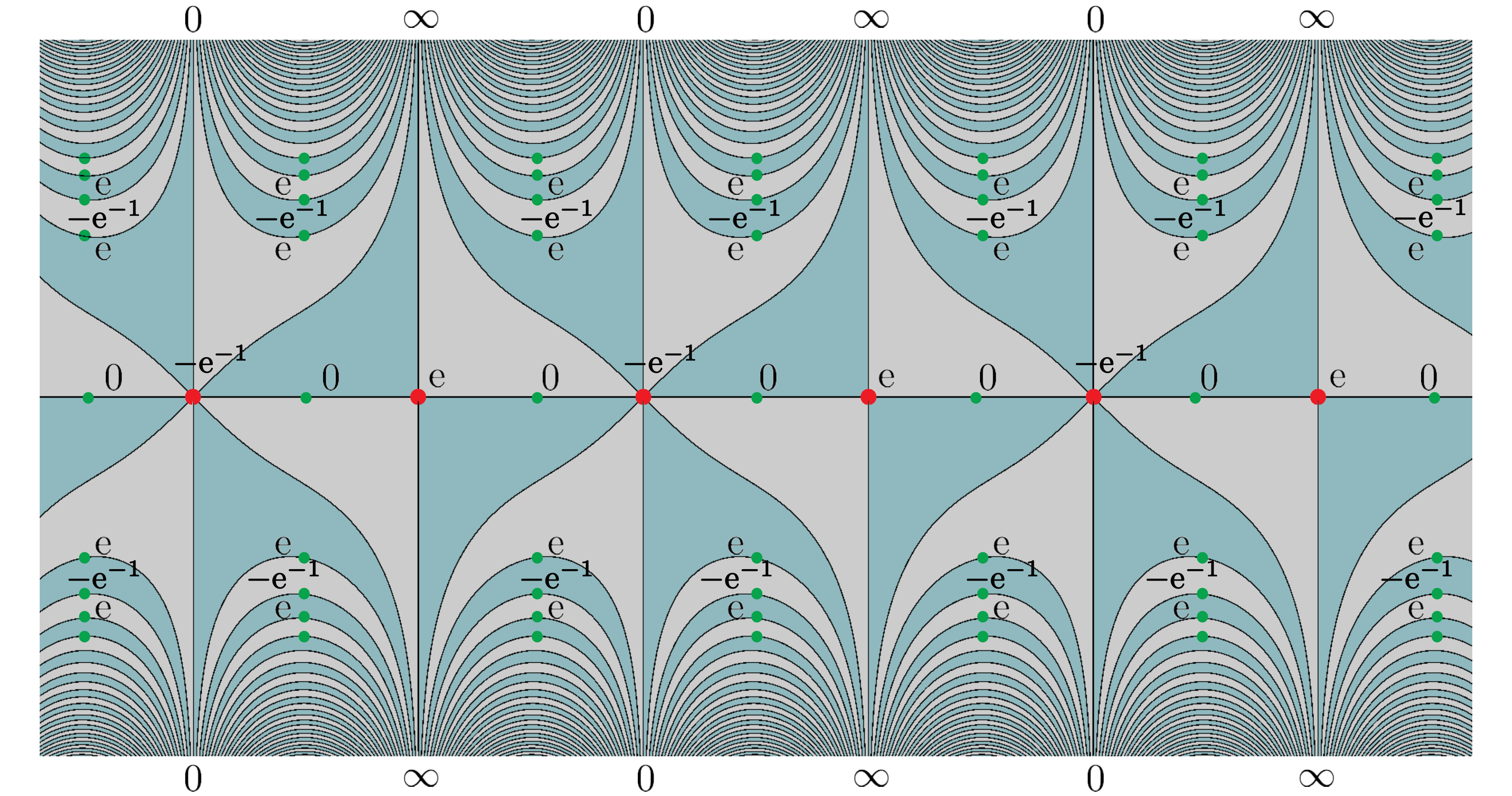}
\\[10pt]
{ b)}\includegraphics[width=0.8\textwidth]{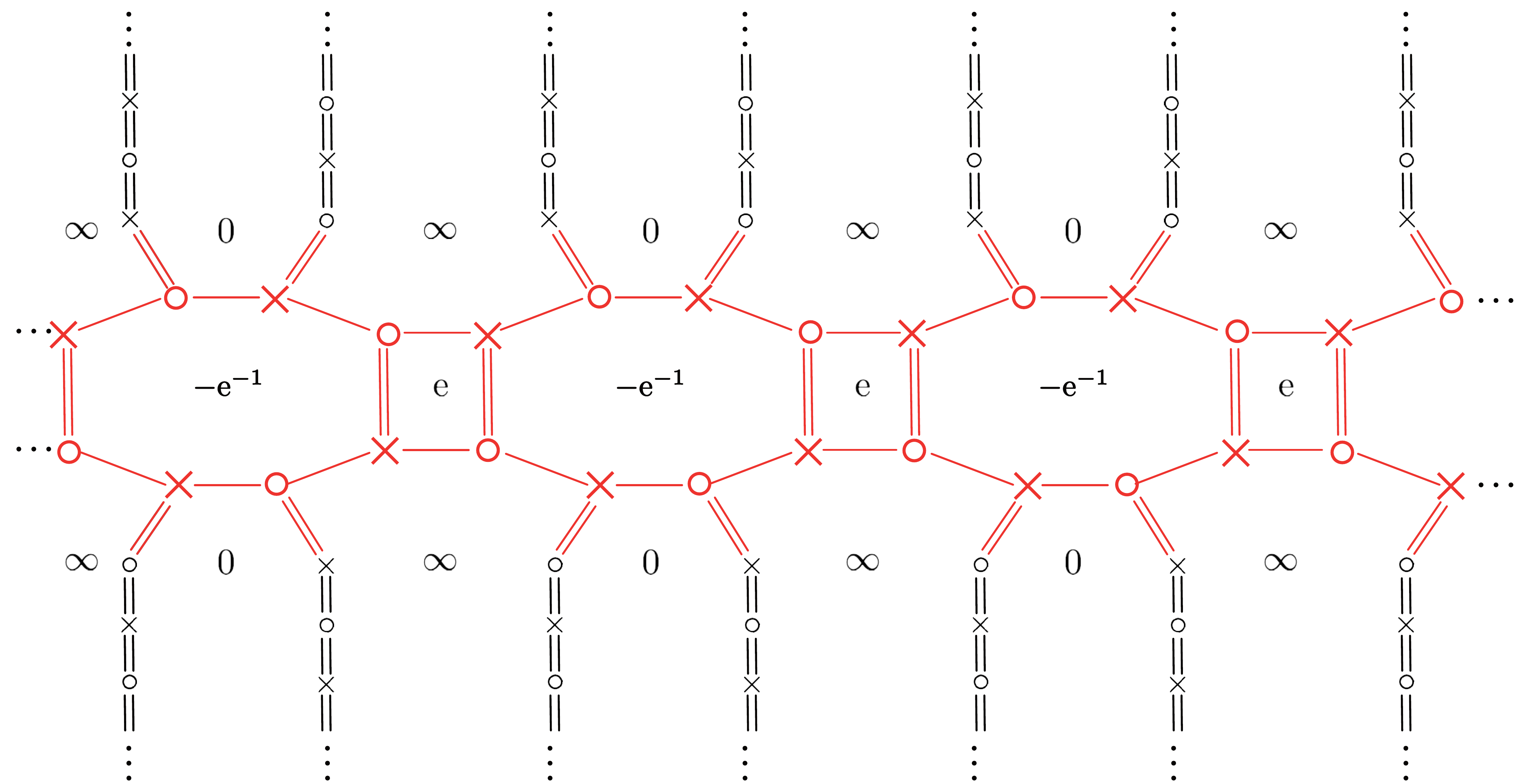}
\caption{ 
Speiser 4--tessellation and analytic Speiser graph of index 
${\tt q}=4$
for $w(z)= \sin (z) \exp (\sin (z))$ with cyclic order 
$\mathcal{W}_4 = [ -\e^{-1}, 0, \e, \infty ]$.
The nucleus is colored red and the 
logarithmic ends are colored black.
}
\label{fig:SinExpSin}
\end{center}
\end{figure}

\item
For the tessellation, with the cyclic order $\mathcal{W}_4$, it follows that 
$\gamma=\RR\cup\{\infty\}$.
Its Speiser $4$--tessellation 
$\big( \mathscr{T}(w(z)^* \gamma),w(z)^* \mathcal{L}_\gamma \big)$ is shown in 
Figure \ref{fig:SinExpSin}.a. 
As can be appreciated, the tiles are topological $4$--gons, 
with vertices of valence two represented by 
green dots (the cosingular points, with singular values ${\tt a}_1=0$ on the real axis, 
${\tt w}_1=\e$, and ${\tt w}_2=\e^{-1}$ on alternating rows symmetric withrespect to the
real axis),
only a couple of ``rows'' are drawn, however there are an infinite number of them. 

There are an infinite number of vertices of infinite valence of 
the graph $w(z)^*\gamma$.
These are the ideal points in the non Hausdorff compactification 

\centerline{$\CC_z \cup \{ 
\infty_{{\tt a}_{1\pm}}, \, 
\infty_{{\tt a}_{2\pm}}, \, 
\ldots, 
\infty_{{\tt a}_{\sigma\pm}},\,
\ldots
\}$}

\noindent 
determined by the asymptotic values ${\tt a}_{\sigma\pm}=0$ and 
${\tt a}_{\sigma\pm}=\infty$, for odd or even $\sigma$ respectively.

\item
Its Speiser graph of index $4$ is drawn in Figure \ref{fig:SinExpSin}.b.
In this case the Speiser graph has an infinite number of unbounded 0--faces 
and $\infty$--faces, which correspond to logarithmic singularities over the 
asymptotic values 0 and $\infty$, respectively. 
Moreover, it also has an infinite number of bounded $\e$--faces and $-\e^{-1}$--faces. 
The bounded $-\e^{-1}$--faces are $8$--gons, thus they correspond to 
finitely ramified branch points of ramification index 4, 
while the bounded $\e$--faces are $4$--gons, thus they correspond to 
finitely ramified branch points of ramification index 2.
Note that the nucleus consists of an infinite number of octagons and squares arranged in a horizontal line 
and an infinite number of ``loose'' edge bundles (in red), 
surrounded by an infinite number of logarithmic ends (in black).
\end{enumerate}
\end{example}

\begin{example}\label{example:Speiser3-types}
With Speiser graphs one can easily specify functions with ``strange'' behavior.
For instance, consider the Speiser graph $\mathfrak{S}_{EPH}$ of index $4$ drawn in Figure \ref{fig:Speiser3-types}.
\begin{enumerate}[label=\arabic*),leftmargin=*]
\item
On the 
`right subgraph' 
of the Speiser graph $\mathfrak{S}_{EPH}$, 
we note a behavior similar to that of the Weirstrass $\wp$--function: 
a lattice 
with an infinite number of bounded faces that are not digons. 
These correspond to an infinite number of critical points of ramification index 2 with 
4 distinct critical values 
$\{ {\tt w}_{1}, {\tt w}_{2}, {\tt w}_{3}, {\tt w}_{4} \}$.
This part of the graph has ``elliptic conformal type behavior''.

\item
On the 
`middle subgraph' 
of the Speiser graph $\mathfrak{S}_{EPH}$, 
we observe a logarithmic end of the Speiser graph 
$\mathfrak{S}_{EPH}$ consisting of an infinite sequence of digons with 
alternating labels $1$ and $3$. The logarithmic end is delimited by two unbounded 
faces labelled $2$ and $4$. 
This part of the graph has ``parabolic conformal type behavior''.

\item
On the 
`left subgraph' 
of the Speiser graph $\mathfrak{S}_{EPH}$,
we observe a tree structure with 4 edges on each vertex. 
Every face, of this part of the Speiser graph, is an unbounded face;
thus we have an infinite number of unbounded faces.
This part of the Speiser graph has ``hyperbolic conformal type behavior''.
\end{enumerate}

\noindent
Of course, the actual conformal type of the associated function is hyperbolic.

\noindent
Moreover, there are three ``special'' unbounded faces: 
\begin{itemize}[label=$\bigcdot$,leftmargin=*]

\item
the first one labelled $2$, is delimited by the \emph{bounded faces 
with labels $3$ and $1$}, of the lattice in (1)
and of the logarithmic end in (2); 

\item
the second one labelled $4$, delimited by the \emph{bounded faces 
with labels $3$ and $1$}, of
the logarithmic end in (2), and by the \emph{unbounded faces 
with labels $3$ and $1$}, of the tree in (3);

\item
the third one labelled $1$, 
is delimited by the \emph{unbounded faces 
with labels $2$ and $4$}, of
the tree in (3), and by the \emph{bounded faces 
with labels $2$ and $4$}, of
the lattice in (1).

\item
Note that the nucleus (colored red) consists of all the Speiser graph minus the one logarithmic end 
(colored black) described in (2) above.

\end{itemize}
\begin{figure}[h!]
\begin{center}
\includegraphics[width=0.75\textwidth]{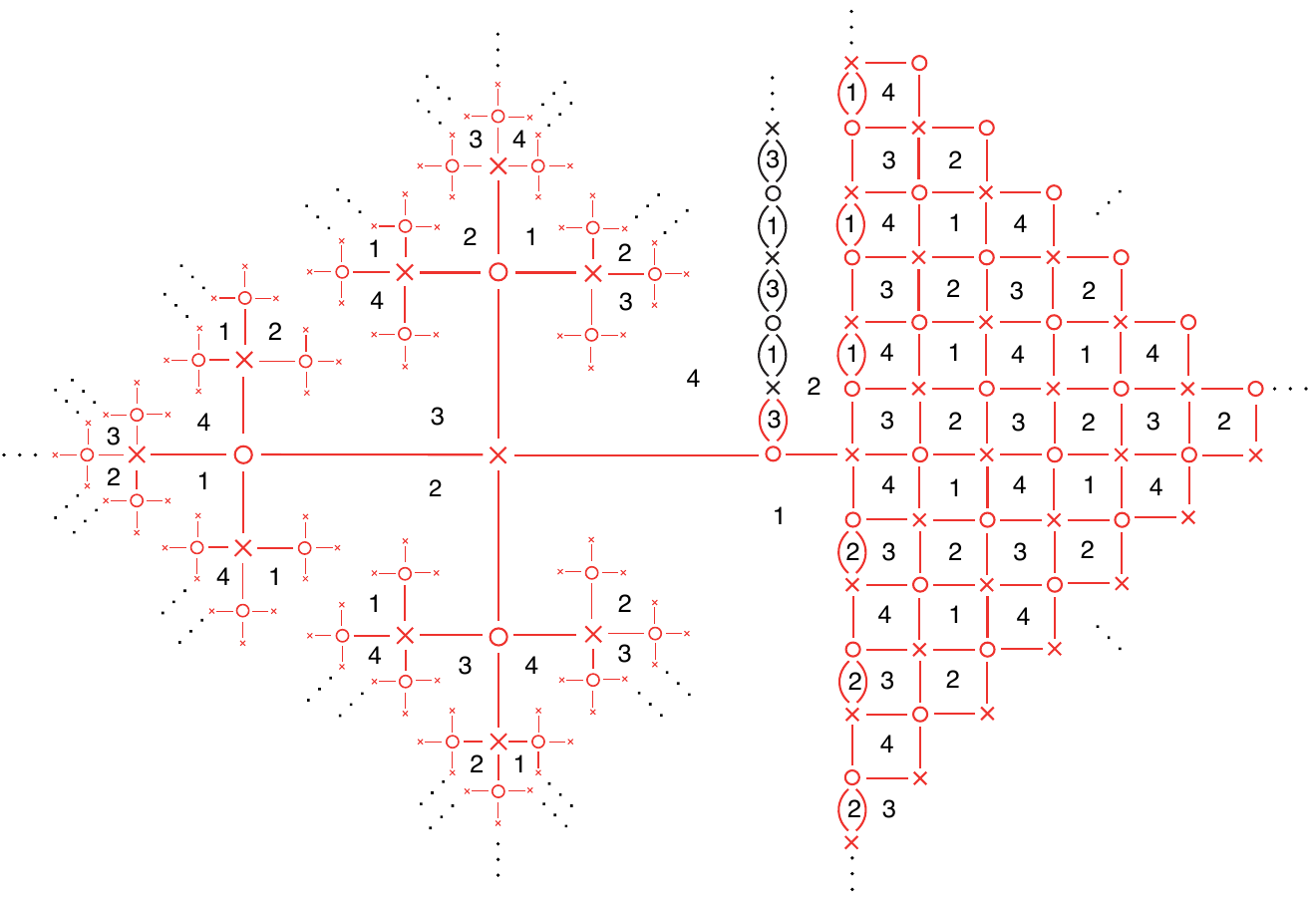}
\caption{
Speiser graph $\mathfrak{S}_{EPH}$
of a function $w(z)$ 
with ${\tt q}=4$ distinct singular values that has 
an infinite number of bounded faces that are not digons,
an infinite number of unbounded faces, and only one logarithmic tower.
The function $w(z)$ exhibits ``behavior'' associated to the three conformal types: elliptic,
parabolic and hyperbolic. 
The nucleus is colored red and the unique
logarithmic end is colored black.
}
\label{fig:Speiser3-types}
\end{center}
\end{figure}

\end{example}


\end{document}